\documentclass[a4paper, 11pt, twoside, leqno]{article}

\usepackage[T1]{fontenc}
\usepackage[utf8x]{inputenc}
\usepackage[english]{babel}
\usepackage{lmodern}               
\usepackage{amsmath,amsthm,amssymb}
\usepackage{mathrsfs,esint,bbm,stmaryrd}
\usepackage[shortlabels]{enumitem}
\usepackage{graphicx,xcolor,subcaption}
\usepackage[textwidth=16.1cm,centering,headheight=24pt]{geometry}
\usepackage{authblk}

\newtheorem{theorem}{Theorem}           
\newtheorem{corollary}[theorem]{Corollary}
\newtheorem{lemma}[theorem]{Lemma}
\newtheorem{prop}[theorem]{Proposition}

\newtheorem{mainthm}{Theorem}           

\theoremstyle{definition}              
\newtheorem{definition}{Definition}

\newtheorem*{notation}{Notation}

\theoremstyle{remark}                  
\newtheorem{step}{Step}

\newtheorem{remark}{Remark}

\DeclareMathOperator{\dist}{dist}                                   
\DeclareMathOperator{\spt}{spt}                                     
\let\div\relax
\DeclareMathOperator{\div}{div}                                     

\DeclareMathOperator{\SBV}{SBV}

\DeclareMathOperator{\diam}{diam}

\newcommand{\abs}[1]{\left| #1 \right|}                             
\newcommand{\norm}[1]{\left\| #1 \right\|}                          
\newcommand{\one}{\mathbbm{1}}                                      

\newcommand{\csubset}{\subset\!\subset}                             
\newcommand{\ip}[2]{\left\langle #1, #2 \right\rangle}

\DeclareMathAlphabet{\mathpzc}{OT1}{pzc}{m}{it}

\newcommand{\D}{\mathrm{D}}       
\newcommand{\T}{\mathrm{T}}
\renewcommand{\d}{\mathrm{d}}
\renewcommand{\o}{\mathrm{o}}

\newcommand{\R}{\mathbb{R}}

\newcommand{\C}{\mathbb{C}}

\newcommand{\Q}{\mathbf{Q}}
\newcommand{\M}{\mathbf{M}}
\newcommand{\I}{\mathbf{I}}
\newcommand{\X}{\mathbf{X}}
\newcommand{\N}{\mathbf{N}}

\newcommand{\n}{\mathbf{n}}
\newcommand{\m}{\mathbf{m}}

\newcommand{\e}{\mathbf{e}}

\renewcommand{\u}{\mathbf{u}}
\newcommand{\NN}{\mathscr{N}}     

\newcommand{\F}{\mathscr{F}}
\renewcommand{\H}{\mathscr{H}}

\newcommand{\eps}{\varepsilon}

\newcommand{\Cpot}{{\rm C}_{\rm pot}}

\SetSymbolFont{stmry}{bold}{U}{stmry}{m}{n}  
\newcommand{\nnu}{{\boldsymbol{\nu}}}

\newcommand{\ttau}{{\boldsymbol{\tau}}}

\newcommand{\Sz}{\mathscr{S}_0^{2\times 2}}
\newcommand{\Qb}{\Q_{\mathrm{bd}}}
\newcommand{\Mb}{\M_{\mathrm{bd}}}

\renewcommand{\v}{\mathbf{v}}

\renewcommand{\S}{\mathrm{S}}

\newcommand{\mres}{\mathbin{\vrule height 1.6ex depth 0pt width
0.13ex\vrule height 0.13ex depth 0pt width 1.3ex}}

\title{The formation of gradient-driven singular structures of
codimension one and two in two-dimensions: \\
The case study of ferronematics \\
Part~{II}: Refined structure of the energy-concentration set}

\date{}
\author{Giacomo~Canevari, Federico~Luigi~Dipasquale, Bianca~Stroffolini}

\newcommand{\Addresses}{{
  \bigskip
  \footnotesize

  Giacomo~Canevari \\
  \textsc{Universit\`{a} di Verona} \\
  Strada Le Grazie 15, 37134 Verona, Italy \\
  \textit{E-mail address}: \texttt{giacomo.canevari@univr.it}

  \medskip

  Federico~Luigi~Dipasquale \\
  \textsc{Scuola Superiore Meridionale}\\
  Via Mezzocannone 4, 80138 Napoli, Italy\\
  \textit{E-mail address}: \texttt{f.dipasquale@ssmeridionale.it}

    \medskip

    Bianca~Stroffolini \\
    \textsc{Dipartimento di Matematica e Applicazioni ``Renato Caccioppoli'',}\\
    \textsc{Universit\`{a} degli studi di Napoli ``Federico II''}\\
    Via Cintia, Monte S. Angelo, 80126 Napoli, Italy\\ 
  \textit{E-mail address}: \texttt{bstroffo@unina.it}
}}

\begin{document}
\maketitle

\begin{abstract}
In this paper, we continue our study, started in~\cite{CDS1}, of a two-dimensional variational model for ferronematics --- composite materials formed by dispersing magnetic nanoparticles into a liquid crystal matrix. The model features two coupled order parameters: a Landau-de Gennes~$\Q$-tensor for the liquid crystal component and a magnetisation vector field~$\M$, both of them governed by a Ginzburg-Landau-type energy. The energy includes a singular coupling term favouring alignment between~$\Q$ and~$\M$. We analyse the asymptotic behaviour of (not necessarily minimizing) critical points as a small parameter~$\eps$ tends to zero. 
While in~\cite{CDS1} we showed that the (rescaled) energy density for the~$\Q$-component concentrates, to leading order, on a finite number of singular points, in this paper we prove the energy density for the~$\M$-component concentrates along a one-dimensional rectifiable set.
Moreover, we prove that the curvature of the singular set for the $\M$-component (technically, the first variation of the associated varifold) is concentrated on a finite number of points, i.e.~the singular set for the~$\Q$-component. 
Crucial to our arguments will be the energy estimates and compactness results proved in~\cite{CDS1}.

 \medskip
 \noindent
 \textbf{Keywords:}
 Ginzburg-Landau functional, Allen-Cahn equation, vectorial problems, topological singularities, rectifiable sets.

 \smallskip
 \noindent
 \textbf{2020 Mathematics Subject Classification:}
         35Q56 
 $\cdot$ 76A15 
 $\cdot$ 49Q15 
 $\cdot$ 26B30 
\end{abstract}

In this paper, we continue our study, started in~\cite{CDS1}, of 
the asymptotic behaviour of critical points of the energy functional
$\F_\eps$, defined in~\eqref{eq:Feps} below, that 
has been proposed in the physical literature~\cite{bisht2019} as a simplified model for  
two-dimensional ferronematics. Ferronematics are
a class of composite materials obtained as suspensions
of magnetic nanoparticles in a nematic liquid crystal host~\cite{Brochard,MLDC}.
According to the approach proposed in~\cite{bisht2019}, ferronematics are described by two order parameters.
The orientation of the liquid crystal molecules
is described by the Landau-de Gennes $\Q$-tensor,
which is a map from the physical domain~$\Omega\subseteq\R^2$
to the space~$\Sz$ of~$2\times 2$, symmetric, real matrices
with trace equal to zero. Nonzero values of~$\Q$ correspond
to liquid crystal configurations with a well-defined direction of
molecular alignment, represented by the
eigenspace of~$\Q$ associated with the positive eigenvalue,
while~$\Q = 0$ indicates an isotropic state,
where all the directions of molecular alignment are equally likely.
The distribution of magnetic nanoparticles is
described by the average magnetisation vector,
$\M\colon\Omega\to\R^2$. The system is governed by a
free energy functional which depends on both~$\Q$ and~$\M$:
\begin{equation}\label{eq:Feps}
	\F_\eps(\Q, \M) :=  \int_\Omega \left\{ \frac{1}{2} \abs{\nabla \Q}^2 + \frac{\eps}{2} \abs{\nabla \M}^2 + \frac{1}{\eps^2} f_\eps(\Q,\M)\right\} {\d}x,
\end{equation}
where~$\eps$ is a non-dimensional parameter.
The interaction between the liquid crystal host and
the magnetic inclusions is mediated by the potential~$f_\eps$,
which takes the form
\begin{equation}\label{eq:potential}
	f_\eps(\Q, \M) := \frac{1}{4}\left(1-\abs{\Q}^2\right)^2
	+ \frac{\eps}{4}\left(1-\abs{\M}^2\right)^2 - \eps \beta \, \Q \M \cdot \M +  \kappa_\eps.
\end{equation}
Here~$\beta> 0$ is given and~$\kappa_\eps$ is an additive
constant, depending on~$\eps$ and~$\beta$ only,
uniquely determined by imposing that~$\inf f_\eps = 0$.
For positive values of~$\beta$, the potential~$f_\eps$
promotes alignment between the liquid crystal molecules
and the magnetisation vector. 
As in \cite{CDS1}, we consider two alternative sets of boundary conditions
for~$\Q_\eps$ and~$\M_\eps$.
One option is to impose
Dirichlet boundary conditions for both~$\Q_\eps$ and~$\M_\eps$:
\begin{equation} \label{bc}
 \Q_\eps = \Qb, \quad \M_\eps = \Mb \qquad \textrm{on } \partial\Omega.
\end{equation}
In this case, the boundary data~$\Qb\in C^1(\partial\Omega, \, \Sz)$,
$\Mb\in C^1(\partial\Omega, \, \R^2)$ are~$\eps$-independent
maps that satisfy
\begin{equation} \label{hp:bc}
 \abs{\Mb} = \left(\sqrt{2}\beta + 1\right)^{1/2},
 \qquad \Qb = \sqrt{2}\left(\frac{\Mb\otimes\Mb}{\sqrt{2}\beta + 1}
- \frac{\I}{2}\right)
\end{equation}
at any point of~$\partial\Omega$.

Alternatively to~\eqref{bc}-\eqref{hp:bc}, we consider 
`mixed' boundary conditions, i.e. Dirichlet boundary
conditions for~$\Q_\eps$ and homogeneous Neumann boundary
conditions for~$\M_\eps$:
\begin{equation} \label{bcbis}
 \Q_\eps = \Qb, \quad \partial_\nnu\M_\eps = 0
 \qquad \textrm{on } \partial\Omega,
\end{equation}
where~$\nnu$ is the exterior unit normal to~$\partial\Omega$.
We assume the boundary datum~$\Qb$
(independent from~$\eps$ and) of the form
\begin{equation} \label{hp:bcbis}
 \Qb = \sqrt{2}\left(\n_{\mathrm{bd}}\otimes\n_{\mathrm{bd}} - \frac{\mathbf{I}}{2}\right)
\end{equation}
on~$\partial\Omega$, for some
map~$\n_{\mathrm{bd}}\in C^1(\partial\Omega, \, \R^2)$.
Regardless of our choice of boundary conditions, we always \emph{assume} that there
exists a constant~${\rm C}_{\rm pot} > 0$, independent of~$\eps$, such that
\begin{equation} \label{hp:potential_bound}
 \frac{1}{\eps^2}\int_\Omega f_\eps(\Q_\eps, \, \M_\eps) \leq {\rm C}_{\rm pot}
\end{equation}
for any~$\eps$ small enough. 
One can provide sufficient conditions for~\eqref{hp:potential_bound} in terms of the domain and the boundary datum, see e.g.~\cite[Remark~1]{CDS1}.

By a \emph{critical point} of the energy functional $\F_\eps$, we mean 
a finite-energy solution~$(\Q_\eps, \, \M_\eps)$ of the
Euler-Lagrange system of equations
\begin{align}
  -&\Delta\Q_\eps + \dfrac{1}{\eps^2}(\abs{\Q_\eps}^2 - 1)\Q_\eps
  - \dfrac{\beta}{\eps}\left(\M_\eps\otimes\M_\eps
  - \dfrac{\abs{\M_\eps}^2}{2}\I\right)  = 0  \label{EL-Q} \\
  -&\Delta\M_\eps + \dfrac{1}{\eps^2}(\abs{\M_\eps}^2 - 1)\M_\eps
  - \dfrac{2\beta}{\eps^2}\Q_\eps\M_\eps = 0. \label{EL-M}
\end{align}
As show in~\cite{CDS1}, under boundary conditions as in~\eqref{bc}-\eqref{hp:bc} or as in~\eqref{bcbis}-\eqref{hp:bcbis} and upon assuming~\eqref{hp:potential_bound}, for any critical pair~$(\Q_\eps,\,\M_\eps)$ we have the fundamental energy bounds  
\begin{align}
	& \F_\eps(\Q_\eps,\,\M_\eps) \leq C \abs{\log \eps}, \label{eq:Feps-bdd-intro} \\
	& \int_\Omega \left\{ \eps \abs{\nabla \M_\eps}^2 + \frac{1}{\eps^2}f_\eps(\Q_\eps,\,\M_\eps) \right\}\,{\d}x \leq C \label{eq:LOT-bdd-intro},
\end{align} 
where the constant $C$ does not depend on $\eps$.
We consider the functions
\begin{align}
	\mu_\eps &:= \frac{1}{\abs{\log\eps}}
	\left( \frac{1}{2}\abs{\nabla \Q_\eps}^2 + \frac{\eps}{2} \abs{\nabla \M_\eps}^2 + \frac{1}{\eps^2}f_\eps(\Q_\eps,\,\M_\eps) \right), \label{eq:def-mu-eps} \\
	\nu_\eps &:=  \frac{\eps}{2} \abs{\nabla \M_\eps}^2 + \frac{1}{\eps^2}f_\eps(\Q_\eps,\,\M_\eps). \label{eq:def-nu-eps}
\end{align}
In view of~\eqref{eq:Feps-bdd-intro},~\eqref{eq:LOT-bdd-intro}, the families~$(\mu_\eps)_{\eps > 0}$, $(\nu_\eps)_{\eps > 0}$ are bounded in~$L^1(\Omega)$.

The following convergence theorem has been obtained in~\cite{CDS1}.
\setcounter{mainthm}{0}
\begin{mainthm}[{\cite{CDS1}}]\label{mainthm:asymp}
	Let~$\Omega\subseteq\R^2$ be a bounded, simply connected domain of class~$C^2$.
	Let $\{(\Q_\eps, \,\M_\eps)\}\subset W^{1,2}(\Omega, \, \Sz)\times W^{1,2}(\Omega, \, \R^2)$ be a sequence of critical
	points of $\F_\eps$ subject to either~\eqref{bc}--\eqref{hp:bc}
	or to~\eqref{bcbis}--\eqref{hp:bcbis}.
	Assume that the condition~\eqref{hp:potential_bound}
	is satisfied. Then, as $\eps \to 0$,
	\begin{enumerate}[(i)]
		\item\label{item:mainthm-asymp-conv-QM} $\Q_\eps \to \Q_\star$ strongly in $W^{1,p}(\Omega)$ for any $p < 2$
		and $\M_\eps \to \M_\star$ strongly in $L^p(\Omega)$ for any $p < +\infty$.
		\item\label{item:mainthm-asymp-Q*M*-j} The limiting maps~$\Q_\star$, $\M_\star$
		satisfy
		\[
			\abs{\Q_\star(x)} = 1, \quad
			\abs{\M_\star(x)} = \left(\sqrt{2}\beta + 1\right)^{1/2}, \quad
			\Q_\star(x) = \sqrt{2} \left( \frac{\M_\star(x) \otimes \M_\star(x)}{\sqrt{2}\beta+1} - \frac{\I}{2} \right)
		\]
		for a.e.~$x\in\Omega$ and
		\begin{equation}\label{eq:Q*-harm-intro}
			\div (\Q_\star \times \nabla \Q_\star) = 0
		\end{equation}
		in the sense of distributions in~$\Omega$.
		\item\label{item:mainthm-asymp-conv-mu*nu*} $\mu_\eps \rightharpoonup \mu_\star$ and
		$\nu_\eps \rightharpoonup \nu_\star$ weakly*
		as measures in $\Omega$.
		\item\label{item:mainthm-asymp-supports} $\spt\mu_\star$ is a finite set of points. 
		\item\label{item:mainthm-asymp-strong-conv} $\Q_\eps \to \Q_\star$ strongly in
		$W^{1,p}_{\rm loc}(\Omega \setminus \spt\mu_\star)$ for any $p < +\infty$.
	\end{enumerate}
\end{mainthm}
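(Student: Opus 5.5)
Theorem~\ref{mainthm:asymp} is quoted from~\cite{CDS1}, so I will recall how its proof is organised rather than build a new argument, taking the bounds \eqref{eq:Feps-bdd-intro}--\eqref{eq:LOT-bdd-intro} as given.

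\emph{Step 1: compactness and the constraints in~(ii).} Weak-* subsequential limits of~$\mu_\eps$ and~$\nu_\eps$ exist because both families are bounded in~$L^1$, which gives~(iii). Since~$\eps^{-2}\int f_\eps\le C$, the potential forces~$\abs{\Q_\eps}\to 1$ and~$\abs{\M_\eps}^2\to\sqrt2\beta+1$ in measure. The coupling term~$-\eps\beta\Q\M\cdot\M$ then forces~$\M_\eps$ to align with the positive eigenvector of~$\Q_\eps$.

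For the~$\M$-component I use a Modica--Mortola trick. Write~$\M_\eps$ near the minimising manifold as~$\pm$ the eigenvector of~$\Q_\eps$, times the correct modulus. Then
\[
\eps\abs{\nabla\M_\eps}^2+\eps^{-1}\times(\text{the }\M\text{-part of }f_\eps)\ \ge\ \abs{\nabla\Phi(\M_\eps)}-C\abs{\nabla\Q_\eps}
\]
for a suitable primitive~$\Phi$. This gives local BV bounds for the sign function away from the points where~$\Q_\eps$ concentrates. Combined with a uniform~$L^\infty$ bound on~$\M_\eps$ (maximum principle on~\eqref{EL-M}), it yields compactness of~$\M_\eps$ in~$L^p$ for all~$p<\infty$.

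\emph{Step 2: bounds and limit equation for~$\Q$.} The~$\Q$-component is bounded in~$W^{1,p}$ for~$p<2$ by Ginzburg--Landau techniques: the~$\abs{\log\eps}$ energy bound together with Jerrard--Sandier ball-construction lower bounds and an~$L^p$ estimate of the gradient. Passing to the limit in the equation~$\div(\Q_\eps\times\nabla\Q_\eps)=\text{(coupling term)}$ gives~\eqref{eq:Q*-harm-intro}. In this step the coupling term is~$O(\eps^{-1}\abs{\M}^2\cdot\abs{1-\abs{\Q}})$, which is small in~$L^1$ thanks to the potential bound.

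\emph{Step 3: the main obstacle, items~(iv) and~(v).} I would prove an~$\eta$-compactness lemma. If~$\mu_\eps(B_r(x_0))\le\eta$ for small~$\eta$, then~$\abs{\Q_\eps}\ge 1/2$ on~$B_{r/2}(x_0)$, and~$\Q_\eps$ is bounded in~$W^{1,p}$ there for every~$p$. The proof would use the Pohozaev identity, i.e.\ stress-energy tensor monotonicity, for critical points, controlling the~$\M$-part via~\eqref{eq:LOT-bdd-intro}.

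Since~$\mu_\star$ has finite mass, and each point of its support carries mass at least~$\eta$ by this lemma, the support is finite, which is~(iv). Strong convergence away from these points, which is~(v), follows by elliptic estimates on the lifted phase equation.

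Making the monotonicity work is the delicate part. The singular coupling of order~$\eps^{-1}$ contributes a term to the Pohozaev identity that must be absorbed using~$\nu_\eps$ being bounded, not merely bounded by~$\abs{\log\eps}$. I expect this to be the hardest step.
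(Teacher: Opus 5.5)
The paper does not reprove this theorem; it is imported from \cite{CDS1}. Your outline nonetheless has a genuine gap in Step~2, and the gap carries over to Step~3.

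Since $\Q_\eps\times\Q_\eps=0$, \eqref{EL-Q} gives
\[
\div(\Q_\eps\times\nabla\Q_\eps)=\Q_\eps\times\Delta\Q_\eps=-\frac{\beta}{\eps}\,\Q_\eps\times\Bigl(\M_\eps\otimes\M_\eps-\frac{\abs{\M_\eps}^2}{2}\I\Bigr).
\]
This term has nothing to do with $1-\abs{\Q_\eps}$. It vanishes exactly when $\M_\eps$ is an eigenvector of $\Q_\eps$. Its size is $\eps^{-1}\abs{\Q_\eps}\abs{\M_\eps}^2\abs{\sin 2\varphi_\eps}$, where $\varphi_\eps$ is the angle between $\M_\eps$ and the eigenframe of $\Q_\eps$.

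The potential bound~\eqref{hp:potential_bound} only gives $\int\sin^2\varphi_\eps\lesssim\eps$, via $V\gtrsim\dist(\M_\eps,\Sigma(\Q_\eps))^2$. That yields an $L^1$ bound of order $\eps^{-1/2}$. Nothing in the energy bounds rules out this term being of order one in $L^1$, for instance on a layer of width $\eps$ across which $\M_\eps$ rotates from one well to the other. Even your proposed bound would not be small: $\eps^{-1}\abs{1-\abs{\Q_\eps}}$ is only bounded in $L^1$, and in fact $(\abs{\Q_\eps}-1)/\eps\to\kappa_\star\neq0$ by~\eqref{eq:strong-p-conv-rhoeps}.

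The missing idea is the conservation law from the invariance of $\F_\eps$ under $(\Q,\M)\mapsto(R\Q R^{-1},R\M)$ with $R\in\mathrm{SO}(2)$. By~\eqref{EL-M}, $\eps\,\M_\eps\times\Delta\M_\eps=-\frac{2\beta}{\eps}\,\M_\eps\times\Q_\eps\M_\eps$. This is a fixed numerical multiple of the right-hand side above, so $\div(\Q_\eps\times\nabla\Q_\eps+c\,\eps\,\M_\eps\times\nabla\M_\eps)=0$ for some constant $c$. Moreover $\eps\abs{\nabla\M_\eps}\lesssim1$ by~\eqref{max-gradients} and $\int\eps^2\abs{\nabla\M_\eps}^2\lesssim\eps$ by~\eqref{eq:M_bound}. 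Hence $\eps\,\M_\eps\times\nabla\M_\eps\to0$ in every $L^p$ with $p<\infty$. This is what yields~\eqref{eq:Q*-harm-intro}. It is also what the Hodge-decomposition proof of the $W^{1,p}$ bound ($p<2$) in Step~2 needs. The lifted phase equation in Step~3 has exactly the same $\eps^{-1}$ source term, so it needs the same treatment.

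A second ingredient is missing for~(v). Projecting~\eqref{EL-Q} onto $\Q_\eps$ shows that $\rho_\eps=\abs{\Q_\eps}$ is driven by a coupling force of order $\eps^{-1}$, of size $\eps^{-1}\abs{\M_\eps}^2\cos 2\varphi_\eps$ up to a constant. This force puts the equilibrium at $\rho_\eps\approx1+\kappa_\star\eps$ rather than at $1$. As a result, $\eps^{-2}(1-\abs{\Q_\eps}^2)^2$ does not vanish away from $\spt\mu_\star$, and the force itself is controlled only in an averaged sense through the alignment of $\M_\eps$. Strong $W^{1,p}_{\rm loc}$ convergence therefore needs~\eqref{eq:strong-p-conv-rhoeps}: $\nabla\abs{\Q_\eps}\to0$ and $(\abs{\Q_\eps}-1)/\eps\to\kappa_\star$ in $L^p_{\rm loc}(\Omega\setminus\spt\mu_\star)$. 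The paper singles this out as the crucial input for~(v), coming from \cite[Lemma~3.7]{CDS1}. \emph{Elliptic estimates on the lifted phase equation} alone do not produce it.
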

We recall that, by~\ref{item:mainthm-asymp-strong-conv} and~\eqref{eq:Q*-harm-intro}, 
$\Q_\star$ is a smooth harmonic map in $\Omega \setminus \spt \mu_\star$. 

While Theorem~\ref{mainthm:asymp} provides detailed information on the asymptotic 
convergence of the $\Q$-component of a sequence of critical pairs and on the structure of the limiting energy concentration set $\spt\mu_\star$, it gives much weaker information on the behaviour of the $\M$-component and no information on the structure of the limiting set $\spt\nu_\star$. 
In this paper, we characterise precisely the set $\spt\nu_\star$, we clarify 
its relationship with the set $\spt\nu_\star$, and we obtain stronger convergence 
properties for the $\M$-component outside the union of $\spt\mu_\star$ and $\spt\nu_\star$.

As the coupling term promotes 
alignment between $\M$ and the eigenvectors of $\Q$, the 
energy of $\M$ should concentrate on singular lines, corresponding 
to jumps in the eigenvector frame. Furthermore, keeping into account the 
results of~\cite{CanevariMajumdarStroffoliniWang, CDS3}, where the  
asymptotic behaviour of minimiser under boundary conditions 
as in~\eqref{bc}-\eqref{hp:bc} or as in~\eqref{bcbis}-\eqref{hp:bcbis} has been studied, 
we expect that the endpoints of these singular lines should be either points of 
$\spt\mu_\star$ or, possibly, points of $\partial \Omega$.

\subsection*{Main results}

At first sight, both~\eqref{EL-Q} and~\eqref{EL-M} look like
perturbed Ginzburg-Landau systems. In \cite{CDS1}, we have 
largely exploited the Ginzburg-Landau-type structure of the 
system~\eqref{EL-Q} in order to study the asymptotic behaviour of 
the $\Q$-component. In the present paper, we focus more closely 
on the system~\eqref{EL-M}. 

The starting point of our analysis is the fact that~\eqref{EL-M} 
can be regarded as the Euler-Lagrange equation of
a `perturbed' vectorial Allen-Cahn-type energy functional, i.e.
the functional $E_\eps$ defined in~\eqref{eq:Eeps-intro} below. 
This observation allows us to pursue the approach 
introduced in the recent paper~\cite{Bethuel-AC-Acta} for 
non-minimising solutions of the Allen-Cahn system. 
As we are going to see, a major difficulty is provided by 
the coupling term. To deal with this issue, we adapt  
the energy methods developed in~\cite{Bethuel-AC-Acta} 
and we combine them with 
the refined energy estimates and convergence results for the 
$\Q$-component obtained in~\cite{CDS1}.

\medskip

In order to make explicit the Allen-Cahn-type structure 
of the system~\eqref{EL-M}, we start by introducing the function 
$\ell\colon \Sz \times \R^2 \to \R$ given by
\begin{equation} \label{ell_eps-intro}
	\ell(\Q,\,\M) :=
	\frac{1}{4}\left(\abs{\M}^2 - 1 \right)^2 - \beta \Q \M \cdot \M
	+ \frac{1}{2}\left( \beta^2 + \sqrt{2}\beta  \right).
\end{equation}
for any $(\Q,\,\M) \in \Sz \times \R^2$.
It is easily checked (see Lemma~\ref{lemma:f-fixedQ})
that for every $\Q \in \Sz \setminus \{0\}$,
the function $\M \mapsto \ell(\Q,\,\M)$ has only two minimisers, given by
\[
	\M_\pm = \pm\left(\sqrt{2}\beta \abs{\Q} + 1\right)^{1/2} \n,
\]
where $\n$ is a unit eigenvector of $\Q$ relative to
its positive eigenvalue.
Now, given any sequence~$\{(\Q_\eps,\,\M_\eps)\}$
of critical points of $\F_\eps$, we define
\begin{equation}\label{eq:V-intro}
	V(\M_\eps) :=
	\ell(\Q_\eps, \, \M_\eps) - \inf_{y \in \R^2} \ell(\Q_\eps,\,y),
\end{equation}
and
\begin{equation}\label{eq:Eeps-intro}
	E_\eps(\M_\eps;\,G) := \int_G \left\{\frac{\eps}{2}\abs{\nabla \M_\eps}^2 +
	\frac{1}{\eps} V(\M_\eps) \right\}\,{\d}x,
\end{equation}
for any measurable set~$G\subseteq\Omega$.
Although not explicitly stressed in the notation,
the function~$V$ and the functional~$E_\eps$ actually depend on~$\Q_\eps$, too.
In particular, the wells of $V(\M_\eps)$ are not fixed, 
but \emph{move}, dependingly on both $x$ and $\eps$.

If $G \subset \Omega$ is a subset where $\abs{\Q_\eps} \geq 1/2$, say, for any
sufficiently small $\eps > 0$, then for any such $\eps$ the maps
\begin{equation}\label{eq:Mpm-intro}
	(\M_\pm)_\eps = \pm\left(\sqrt{2}\beta \abs{\Q_\eps} + 1\right)^{1/2} \n_\eps
\end{equation}
provide the only zeroes of $V(\M_\eps)$ in $G$. Note that, whenever
$(\M_+)_\eps$, $(\M_-)_\eps$ are defined, there holds
$\abs{(\M_+)_\eps - (\M_-)_\eps} > 2$.
We also notice that, basically by~\eqref{hp:potential_bound} and~\eqref{eq:LOT-bdd-intro} (see Proposition~\ref{prop:Eeps-bounded} for more details), the $\eps$-independent bound 
\begin{equation}\label{eq:Eeps-bdd-intro}
	E_\eps(\M_\eps;\,G) \leq \mathcal{E}_0
\end{equation}
is in force.

With the notation introduced above,
the Euler-Lagrange equation~\eqref{EL-M} can be recast in the form
\begin{equation}\label{eq:AC-M-intro}
	- \eps \Delta \M_\eps + \frac{1}{\eps} \nabla_\M V(\M_\eps) = 0,
\end{equation}
which is exactly the Euler-Lagrange equation of the functional $E_\eps$. 
Formally, the Equation~\eqref{eq:AC-M-intro} looks like an Allen-Cahn system.  
Although the fact the wells are not fixed prevents us 
from applying directly the results of~\cite{Bethuel-AC-Acta}, we can 
still follow the same line of thought. 
In particular, exactly as in \cite{Bethuel-AC-Acta}, 
it turns out that, besides the energy densities $\nu_\eps$ 
and their limit $\nu_\star$, another fundamental object is involved in the study for obtaining the structure properties of the set $\spt\nu_\star$: 
the \emph{limiting potential energy density}, $\zeta_\star$.

Proceeding as in~\cite{Bethuel-AC-Acta},
we define the \emph{potential energy densities}
\begin{equation}\label{eq:def-zeta-eps-intro}
	\zeta_\eps := \frac{1}{\eps} V(\M_\eps),
\end{equation}
As a consequence of the uniform energy bound~\eqref{eq:Eeps-bdd-intro}, 
the sequence $(\zeta_\eps)_{\eps > 0}$ is bounded in $L^1(\Omega)$, 
and thus  
we can extract a subsequence and find $\zeta_\star$,
a non-negative Radon measure in $\Omega$, so that
\[
	\zeta_\eps \rightharpoonup^* \zeta_\star
\]
weakly* as measures in $\Omega$ in the limit as~$\eps\to 0$. 
As we discuss in more detail in the last part of this 
introduction, it turns out that
\[
	\zeta_\star \leq \nu_\star \lesssim \zeta_\star
\]
in the sense of measures away from $\spt\mu_\star$, so that, loosely speaking, 
$\nu_\star$ and $\zeta_\star$ retain the same amount 
of information. However, exactly as in~\cite{Bethuel-AC-Acta}, 
$\zeta_\star$ is a much more convenient object to study than 
$\nu_\star$.

We are now ready to state our main theorems, which are best 
expressed using the language of the theory of varifolds 
(see, e.g., \cite{Simon} for a detailed account --- 
a brief review of basic terminology, as needed in this paper, 
is included in Section~\ref{sec:S*}.)
\begin{mainthm}\label{mainthm:B+C}
	The set $\spt\nu_\star$ is $\H^1$-rectifiable, with locally 
	finite measure. Upon setting 
	\[
		\mathfrak{S}_\star := \spt\nu_\star \setminus \spt\mu_\star,
	\]	
	the following holds.
	\begin{enumerate}[(i)]
	\item\label{item:density} 
	The limiting potential energy measure $\zeta_\star$ is absolutely 
	continuous with respect to the measure $\H^1 \mres \mathfrak{S}_\star$. 
	In other words, there exists a function $\mathfrak{v}_\star : \mathfrak{S}_\star \to \R^+$, 
	locally integrable with respect to the measure $\H^1 \mres \mathfrak{S}_\star$, 
	such that
	\[
		\zeta_\star = \mathfrak{v}_\star \,\H^1 \mres \mathfrak{S}_\star.
	\] 
	The function $\mathfrak{v}_\star$ is locally bounded both from above and 
	from below in any compact set $K \subset \Omega\setminus\spt\mu_\star$.
	\item\label{item:zeta*-weight-measure} The measure $\zeta_\star$ is the weight measure of the $\H^1$-rectifiable varifold 
	$\mathbb{V}_\star$ carried by $\mathfrak{S}_\star$ with density function $\mathfrak{v}_\star$. 
	\item\label{item:balance-law} The varifold $\mathbb{V}_\star$ is stationary in $\Omega \setminus \spt\mu_\star$ and 
	its first variation as a varifold in $\Omega$ is concentrated on $\spt\mu_\star$.
	\item\label{item:segments} The set $\spt\nu_\star$ is locally a union of segments, open relative to $\Omega$, 
	each of which having constant density, given by the value of $\mathfrak{v}_\star$ at any 
	point of the segment. In addition, apart from an exceptional, $\H^1$-null set, around any point $x_0$ of 
	$\mathfrak{S}_\star$, the singular set $\spt\nu_\star$ consists of exactly one segment, 
	with constant density $\mathfrak{v}_\star(x_0)$.
	\item\label{item:unif-conv-M} As $\eps \to 0$, $\M_\eps \to \M_\star$ in $L^\infty_{\rm loc}(\Omega \setminus (\spt\mu_\star \cup \spt\nu_\star))$.
	\end{enumerate}
\end{mainthm}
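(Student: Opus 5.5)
The plan is to follow the strategy of~\cite{Bethuel-AC-Acta} for the vectorial Allen--Cahn system, working throughout in $\Omega\setminus\spt\mu_\star$. There, by Theorem~\ref{mainthm:asymp}\ref{item:mainthm-asymp-strong-conv}, $\Q_\eps\to\Q_\star$ in $C^{0,\alpha}_{\rm loc}$ and $W^{1,p}_{\rm loc}$ with $\abs{\Q_\star}=1$. Hence on compact sets $K$ we have $\abs{\Q_\eps}\ge 1/2$, the moving wells $(\M_\pm)_\eps$ in~\eqref{eq:Mpm-intro} are well defined, and they are uniformly Lipschitz. The coupling then enters only as lower-order perturbations controlled by $\norm{\nabla\Q_\eps}_{L^p(K)}$.

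The first step is a \emph{monotonicity formula}. I would write the stress--energy tensor
\[
T_\eps=\Big(\tfrac{\eps}{2}\abs{\nabla\M_\eps}^2+\tfrac1\eps V(\M_\eps)\Big)\I-\eps\nabla\M_\eps\otimes\nabla\M_\eps .
\]
Using~\eqref{eq:AC-M-intro}, its divergence equals $\eps^{-1}\partial_x V$, the explicit $x$-derivative of $V$ through $\Q_\eps$. This term is bounded by $C\eps^{-1}\abs{\nabla\Q_\eps}\,\abs{\M_\eps}\,\abs{\M_\eps-(\M_\pm)_\eps}$, and I would control it via Cauchy--Schwarz by $\zeta_\eps$ plus a term involving $\eps^{-1}\abs{\nabla\Q_\eps}^2\dist(\M_\eps,\{\M_\pm\})^2$. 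This error must be shown small; here I would use the refined estimates from~\cite{CDS1}, namely the bound~\eqref{hp:potential_bound} together with local $W^{1,p}$, $p>2$, convergence of $\Q_\eps$. Testing with radial fields gives almost-monotonicity of $r^{-1}E_\eps(\M_\eps;B_r(x_0))$, with the discrepancy $\xi_\eps=\frac\eps2\abs{\nabla\M_\eps}^2-\frac1\eps V$ appearing. The key estimate, which I expect to be the main obstacle, is that the negative part of the discrepancy vanishes in the limit, $\xi_\star\le 0$. Equivalently, $\nu_\star\le C\zeta_\star$, and the limit tensor $T_\star$ has the form $\zeta_\star(\I-\text{projection})$-type. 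Following Bethuel, I would prove this via a clearing-out lemma (small $E_\eps$ on $B_r$ forces $\M_\eps$ uniformly close to one well on $B_{r/2}$, obtained from the equation and a comparison argument for $\abs{\M_\eps-(\M_\pm)_\eps}^2$) combined with an $L^\infty$ bound on $\xi_\eps^+$ adapted to moving wells. This last bound is where the perturbation from $\nabla\Q_\eps$ is delicate.

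From the clearing-out lemma and monotonicity I would obtain density bounds $c\le r^{-1}\zeta_\star(B_r(x))\le C$ for $x\in\mathfrak S_\star$. By Preiss/Marstrand--Mattila-type arguments (or directly, since the density bounds give $\H^1$-comparability), this yields that $\mathfrak S_\star$ is $\H^1$-rectifiable with locally finite measure, and that $\zeta_\star=\mathfrak v_\star\H^1\mres\mathfrak S_\star$ with $\mathfrak v_\star$ locally bounded above and below. This gives~\ref{item:density}. Part~\ref{item:unif-conv-M} follows from clearing-out as well: off $\spt\nu_\star$, $\M_\eps$ is uniformly close to a well, which converges locally uniformly.

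Passing to the limit in $\int T_\eps:\nabla X=\int \eps^{-1}\partial_xV\cdot X$ for $X\in C^1_c(\Omega\setminus\spt\mu_\star)$, the right-hand side tends to zero by the estimate above. Identifying $T_\star$ with $\mathfrak v_\star(\I-\nnu\otimes\nnu)\H^1\mres\mathfrak S_\star$, where $\nnu$ is the approximate normal, shows that $\delta\mathbb V_\star=0$ away from $\spt\mu_\star$. This identification uses $\xi_\star=0$ and rectifiability to align the limit of $\eps\nabla\M_\eps\otimes\nabla\M_\eps$ with the normal. That gives~\ref{item:zeta*-weight-measure}--\ref{item:balance-law}. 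Finally, for~\ref{item:segments}, I would invoke the Allard--Almgren structure theorem for one-dimensional stationary rectifiable varifolds with density bounded below: such a varifold is locally a finite union of segments with constant density meeting at isolated junctions. The exceptional $\H^1$-null set consists of the junction points.
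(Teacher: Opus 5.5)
There is a genuine gap, and it sits exactly at the step you call the main obstacle. Your scheme needs almost-monotonicity of $r^{-1}E_\eps(\M_\eps;B_r(x_0))$. By \eqref{nonmonotonicity}, this requires the \emph{positive} part of the discrepancy $\xi_\eps=\frac{\eps}{2}\abs{\nabla\M_\eps}^2-\frac1\eps V(\M_\eps)$ to vanish in the limit. You propose to get that from an $L^\infty$ bound on $\xi_\eps^+$ as in \cite{Modica, HutchinsonTonegawa}. That bound comes from a maximum-principle argument which is intrinsically scalar. For systems, the pointwise discrepancy of entire solutions need not have a sign \cite{Smyrnelis}, and nothing in the moving-well setting restores it. Nor is this what \cite{Bethuel-AC-Acta} does: Bethuel's method is built precisely to avoid any monotonicity formula for $\nu_\star$, which is not known in the vectorial case.

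Everything downstream in your plan rests on this step. That includes the density bounds and the appeal to Preiss. Two-sided density bounds alone do not give rectifiability, since Ahlfors-regular purely unrectifiable sets exist; you need the density to \emph{exist}, which is again a monotonicity statement. It also includes $\zeta_\star=\mathfrak v_\star\H^1\mres\mathfrak S_\star$ and the identification of the limit stress tensor. There you additionally assume equipartition $\xi_\star=0$, and an alignment of $\lim\eps\nabla\M_\eps\otimes\nabla\M_\eps$ with the normal that does not follow from rectifiability.

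The coupling error is also mishandled. The integrand $\eps^{-1}\abs{\partial_xV}\lesssim\eps^{-1}\abs{\nabla\Q_\eps}\dist(\M_\eps,\Sigma(\Q_\eps))$ lives on the $\eps$-layer where the distance is of order one. So any absolute-value bound, such as your Cauchy--Schwarz bound which contains $\zeta_\eps$ itself, is only $O(1)$. The integral $\int\eps^{-1}\partial_xV\cdot X$ does tend to zero, but only in the limit. It follows from the exact conservation law $\partial_jT^\eps_{jk}=0$ of the coupled system, the strong convergence of $\Q_\eps$ to the harmonic map $\Q_\star$, and Lemma~\ref{lemma:V-f-limit-Lp}.

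The missing idea is to reverse the order and to prove monotonicity for $\zeta_\star$ rather than $\nu_\star$. In the paper, rectifiability comes first and uses no monotonicity.
\begin{enumerate}[(a)]
\item The clearing-out Theorem~\ref{thm:clearing-out-M} gives lower density bounds for $\nu_\star$ and local finiteness of $\H^1(\mathfrak S_\star)$. Your comparison argument for $\abs{\M_\eps-(\M_\pm)_\eps}^2$ only works once $\M_\eps$ is already near a well, i.e.\ in the regime $E_\eps\lesssim\eps$ of Proposition~\ref{prop:weak-co}. Reaching that regime from $E_\eps\le\eta R$ without monotonicity needs the decay estimate of Proposition~\ref{prop:energy-decreasing}, built on near-well estimates linear in $\kappa$, and its iteration in Proposition~\ref{prop:clearunif}.
\item The Pohozaev-type ``no islands'' Theorem~\ref{thm:analogue-of-Bethuel-8} makes $(\mathfrak S_\star\cap\overline{B_r})\cup\partial B_r$ a continuum of finite length, hence rectifiable.
\item The bound $\nu_\star\lesssim\zeta_\star$ comes from the energy-by-potential estimate of Proposition~\ref{prop:analogue-of-Bethuel-4.3}, not from a discrepancy bound.
\item Only then is the limiting Hopf relation \eqref{eq:omega*-zeta*} used; harmonicity of $\Q_\star$ removes $\omega^{\rm Q}_\star$. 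Tested in squares around good points of the rectifiable set, it gives $2\mathfrak v_\star=\mathfrak m_{\star,\perp,\perp}-\mathfrak m_{\star,||,||}$ and $\mathfrak m_{\star,\perp,||}=0$. Hence $\NN^{\rm ac}_{\star,x_0}=4\sin^2(\gamma_\star-\theta)\zeta^{\rm ac}_\star\ge0$, which yields monotonicity of $r\mapsto\zeta_\star(B(x_0,r))/r$.
\item The same relations, read as modified Cauchy--Riemann equations, give stationarity without any equipartition.
\end{enumerate}
This monotonicity is proved on annuli centred at \emph{every} $x_0\in\Omega$, including points of $\spt\mu_\star$, which yields $\zeta_\star(\spt\mu_\star)=0$. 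That is needed for (i) and (ii) to hold in $\Omega$, and your plan, confined to balls in $\Omega\setminus\spt\mu_\star$, never addresses it.

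Finally, Allard--Almgren gives only a closed $\norm{\mathbb V_\star}$-null exceptional set, not isolated junctions. The single-segment structure around $\H^1$-a.e.\ point of $\mathfrak S_\star$ comes from \cite[Theorem~1.3]{Bethuel-AC-Acta}.
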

Theorem~\ref{mainthm:B+C} provides detailed information on the 
structure of the set $\spt\nu_\star$, obtained through the properties 
of the limiting measure $\zeta_\star$. 
 
Statement~\ref{item:balance-law} of Theorem~\ref{mainthm:B+C} is an abridged 
version of our second main theorem, Theorem~\ref{mainthm:balance-law} below, 
which establishes the precise form of the relationship between $\spt\mu_\star$ and $\mathbb{V}_\star$ (and thus, $\spt\nu_\star$). 
Before stating Theorem~\ref{mainthm:balance-law}, we 
need to introduce another bit of notation. 

Given distinct points~$a_1, \, \ldots, \, a_n$ in~$\Omega$,
we denote by~$\mathbb{W}_\star(a_1, \, \ldots, \, a_n)$ their Ginzburg-Landau renormalised energy, defined exactly as in~\cite{BBH} (we recall its characterising property in~\eqref{eq:ren-energy-BBH} below) and representing 
their interaction energy.
In particular, we will be interested in the renormalised
energy $\mathbb{W}_\star(a_1,\,\ldots,\,a_{n_\star})$
of the (distinct) points $a_1, \, \ldots, \, a_{n_\star}$ in
$\spt\mu_\star\cap\Omega$.
\begin{mainthm}\label{mainthm:balance-law}
	The first variation  $\delta \mathbb{V}_\star$ of the 
	varifold $\mathbb{V}_\star := \v( \mathfrak{S}_\star,\,\mathfrak{v}_\star)$ 
	satisfies
	\begin{equation}\label{eq:first-variation-intro}
		\delta \mathbb{V}_\star(\X) = \sum_{j=1}^{n_\star} z_j \cdot \X(a_j)
	\end{equation}
	for any $\X \in C^1_c(\Omega,\,\R^2)$, where
	\begin{equation}\label{eq:zj-intro}
		z_j = -\frac{1}{2} \nabla_{a_j} \mathbb{W}_\star\left(a_1,\dots,a_{n_\star}\right)
	\end{equation}
	for any $j\in\{1,\dots,n_\star\}$. Here
	$n_\star$ and~$a_1, \dots, a_{n_\star}$ denote, respectively,
	the cardinality and the points of $\spt \mu_\star \cap\Omega$.
\end{mainthm}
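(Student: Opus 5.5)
We will derive \eqref{eq:first-variation-intro} by passing to the limit in the stress--energy tensor identity satisfied by critical points. For smooth critical pairs $(\Q_\eps,\M_\eps)$ of $\F_\eps$, the inner variation (domain variation) of $\F_\eps$ vanishes. Hence for every $\X\in C^1_c(\Omega,\R^2)$ we have
\[
\int_\Omega \Big\{ e_\eps\, \div\X - \nabla\Q_\eps\!:\!(\nabla\Q_\eps \nabla\X) - \eps\,\nabla\M_\eps\!:\!(\nabla\M_\eps\nabla\X)\Big\}\,\d x = 0,
\]
where $e_\eps$ is the full energy density. We split this identity into a $\Q$-part, $T^\Q_\eps := \tfrac12|\nabla\Q_\eps|^2\I - \nabla\Q_\eps^{\T}\nabla\Q_\eps$, and an $\M$-part, $T^\M_\eps := \nu_\eps\I - \eps\nabla\M_\eps^{\T}\nabla\M_\eps$.

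Following \cite{Bethuel-AC-Acta}, the $\M$-part should converge to the first variation of $\mathbb{V}_\star$. First, we use the equipartition property: the discrepancy $\xi_\eps := \tfrac\eps2|\nabla\M_\eps|^2 - \zeta_\eps$ tends to zero, and the limiting tensor $\lim \eps\nabla\M_\eps\otimes\nabla\M_\eps$ equals $2\zeta_\star$ times the projection onto the normal line of $\mathfrak S_\star$. This identification comes from the rectifiability, density bounds and segment structure in Theorem~\ref{mainthm:B+C}, established in earlier sections. With it, $\int T^\M_\eps : \nabla\X \to -\delta\mathbb V_\star(\X)$, up to the convention factor. The coupling term in $f_\eps$ also depends on $\Q_\eps$ and on $x$, so it produces extra terms. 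We plan to rewrite $\tfrac1{\eps^2}f_\eps$ as $\tfrac1{4\eps^2}(1-|\Q|^2)^2 + \tfrac1\eps V(\M)$ plus a remainder depending only on $|\Q_\eps|$. That remainder is controlled by the potential bound \eqref{hp:potential_bound} and by the strong convergence of $\Q_\eps$ from Theorem~\ref{mainthm:asymp}, so it is either killed in the limit or absorbed into the $\Q$-part.

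The $\Q$-part is where the points $a_j$ appear. Away from $\spt\mu_\star$, Theorem~\ref{mainthm:asymp}\ref{item:mainthm-asymp-strong-conv} gives $\Q_\eps\to\Q_\star$ in $W^{1,p}_{\rm loc}$. The term $\tfrac1{\eps^2}(1-|\Q_\eps|^2)^2$ vanishes locally there, using the estimates of \cite{CDS1}. The remaining quantity is $\int T^{\Q_\star}:\nabla\X$, and $\div T^{\Q_\star}=0$ holds in $\Omega\setminus\spt\mu_\star$ because $\Q_\star$ is a smooth harmonic map. Using a partition of unity, we take $\X$ constant near each $a_j$ on a small disc $B_r(a_j)$. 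Near $a_j$, the singular $\Q$-energy is of order $|\log\eps|$, but by the classical BBH/Pohozaev argument its contribution to the stress tensor is a boundary integral $\int_{\partial B_r(a_j)} T^{\Q_\star}\nu$. As $r\to0$, this boundary term converges to $-\tfrac12\nabla_{a_j}\mathbb W_\star$ when $\Q_\star$ is written through its lifting, namely the degree $\pm\tfrac12$ structure of the $\Q$-vortices. The $\M$-part inside the small disc is bounded by $\nu_\star(B_r(a_j))$. We will show this vanishes as $r\to0$ except for a possible atom; an atom contributes only through its divergence term, which is removed by choosing $\X$ constant there. The precise sign and the factor $\tfrac12$ should come from the normalisation $|\Q_\star|=1$, which gives $|\nabla\Q_\star|^2 = 2|\nabla\varphi|^2$ for the angle of the director.

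The main obstacle is the uniform control, near $a_j$, of the cross terms between the $\Q$-core and the $\M$-line energy. One issue is justifying that $\eps\nabla\M_\eps\otimes\nabla\M_\eps$ has no concentration at the points $a_j$ that survives in the limit. Another is that the $|\log\eps|$-sized $\Q$-energy gives no mass in the stress identity beyond the renormalised energy gradient. For this we would use the refined lower and upper energy bounds on annuli $B_r(a_j)\setminus B_{\eps^\alpha}(a_j)$ from \cite{CDS1}. These show the $\Q$-energy there is $\pi\log(r/\eps)/2 + O(1)$, which forces $\Q_\eps$ to be close to a canonical half-degree vortex. The Pohozaev identity on $\partial B_r(a_j)$ then gives the limit.
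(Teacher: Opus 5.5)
Your architecture is the paper's: the inner-variation identity, a split into a $\Q$-part and an $\M$-part, and the flux of the $\Q_\star$ stress tensor around $a_j$ identified through \eqref{eq:ren-energy-BBH}. But the step that converts the $\M$-part into $\delta\mathbb{V}_\star$ rests on a claim that is not available.

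You invoke equipartition, $\tfrac{\eps}{2}\abs{\nabla\M_\eps}^2-\zeta_\eps\to0$ with $\eps\nabla\M_\eps\otimes\nabla\M_\eps\rightharpoonup 2\zeta_\star\,\nnu\otimes\nnu$, and attribute it to rectifiability, density bounds and the segment structure. Those say nothing about how the $\M$-energy splits between tangential and normal derivatives. The paper never proves equipartition: in $\Omega\setminus\spt\mu_\star$ one has $\nu_\star=2\zeta_\star+\mathbbm{m}_{\star,||,||}$, and only $\zeta_\star\le\nu_\star\le 2{\rm K}_\beta\zeta_\star$ is obtained.

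What you actually need, and what the paper uses, is Theorem~\ref{thm:analogue-of-Bethuel-Thm5}: $2\zeta_\star=\mathbbm{m}_{\star,\perp,\perp}-\mathbbm{m}_{\star,||,||}$ and $\mathbbm{m}_{\star,\perp,||}=0$. These come from the limiting Hopf-differential equation \eqref{eq:omega*-zeta*}. Once $\eps^{-2}f_\eps$ is replaced by $\eps^{-1}V$, you have $T^{\M}_\eps=\zeta_\eps\I-(\text{trace-free part of }\eps\nabla\M_\eps^{\T}\nabla\M_\eps)$. So these two relations alone give $\lim_{\eps\to0}\int T^{\M}_\eps:\nabla\X=2\int\div_{\T_x\mathfrak{S}_\star}\X\,\d\zeta_\star$ (in your sign convention) for $\X$ with $\nabla\X$ vanishing near $\spt\mu_\star$. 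This factor $2$ is the source of the $-\tfrac12$ in $z_j$. The normalisation of $\abs{\nabla\Q_\star}^2$ against the director angle plays no role: by \eqref{eq:ren-energy-BBH}, the $\Q_\star$-flux through $\partial B_\rho(a_j)$ is, up to sign, exactly $\e\cdot\nabla_{a_j}\mathbb{W}_\star$ for every small $\rho$, so no limit $\rho\to0$ is involved.

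The second problem is that passing to fields constant near the $a_j$ is a genuine step, not a partition-of-unity triviality. By stationarity off $\spt\mu_\star$, $\delta\mathbb{V}_\star$ is a distribution of order at most one supported on $\{a_1,\dots,a_{n_\star}\}$. Testing with $\X=\e\varphi$, $\varphi\equiv1$ near $a_j$, fixes the coefficient of $\delta_{a_j}$ but cannot exclude terms $\X\mapsto A_j:\nabla\X(a_j)$. You must show $\delta\mathbb{V}_\star(\X)=0$ whenever $\X(a_j)=0$ for all $j$. The paper writes $\delta\mathbb{V}_\star(\X)=\delta\mathbb{V}_\star(\varphi_s\X)$ with a cut-off $\varphi_s$ supported in $B_s(a_j)$. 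Since $\X(a_j)=0$, $\norm{\nabla(\varphi_s\X)}_{L^\infty}\lesssim1$ uniformly in $s$. Hence $\abs{\delta\mathbb{V}_\star(\X)}\lesssim\zeta_\star(B_s(a_j))\to0$, using $\zeta_\star(\spt\mu_\star)=0$ from \eqref{eq:z*-sptmu*}.

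Conversely, the ``main obstacle'' you describe is not there. For $\X=\e\varphi$ one has $\nabla\X=0$ on $B_{s/2}(a_j)$, so only the annulus $B_s(a_j)\setminus B_{s/2}(a_j)$ enters the identity. Equivalently, \eqref{stren4} on $B_\rho(a_j)$ with constant $\X$ makes the total flux through $\partial B_\rho(a_j)$ vanish exactly, for every $\eps$. On that annulus, Theorem~\ref{mainthm:asymp}\ref{item:mainthm-asymp-strong-conv} and Lemma~\ref{lemma:V-f-limit-Lp} suffice to pass to the limit. You need no vortex-core energy expansion, no control of cross terms, and no knowledge of a possible atom of $\nu_\star$ at $a_j$ (which the paper cannot exclude, Remark~\ref{rk:nu*-spt-mu*}).

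Finally, $\frac{1}{4\eps^2}(1-\abs{\Q_\eps}^2)^2$ does not vanish off $\spt\mu_\star$; it tends to $\kappa_\star^2$. Only the full difference $\eps^{-2}f_\eps-\eps^{-1}V$ tends to $0$ in $L^p_{\rm loc}$. That requires \eqref{eq:strong-p-conv-rhoeps}, not just \eqref{hp:potential_bound} and the $W^{1,p}$ convergence of $\Q_\eps$.
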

We can look at~\eqref{eq:first-variation-intro}--\eqref{eq:zj-intro}
as a \emph{balance law} which sheds light on the
relationship between $\spt\mu_\star$ and $\spt\nu_\star$:
the interaction energy between the point singularities
$a_1, \dots,a_{n_\star}$, encoded in the Ginzburg-Landau
renormalised energy $\mathbb{W}_\star(a_1,\dots,a_{n_\star})$,
is compensated by the first variation of $\mathbb{V}_\star$.
In particular, we see from~\eqref{eq:first-variation-intro} that $\mathbb{V}_\star$ is
stationary in $\Omega \setminus \spt\mu_\star$. 
(Notice that this implies that the `boundary' of $\spt \nu_\star$
must consist of points of $\partial \Omega$ or points of $\spt \mu_\star$.)
In the pure Allen-Cahn
setting (both in the scalar case~\cite{HutchinsonTonegawa} and in the
vectorial case on two-dimensional domains~\cite{Bethuel-AC-Acta}),
the stationarity of the limiting varifold is inherited from
Equation~\eqref{eq:hopf-intro}, whose validity ultimately
goes back to the fact that $\M_\eps$ solves the Allen-Cahn equation,
so that 
$\delta \mathbb{V}_\star = 0$ can be seen as a criticality condition
for~$\spt\nu_\star$. Similarly, in
the pure Ginzburg-Landau theory, the condition
$\nabla_{a_j} \mathbb{W}_\star(a_1,\dots,a_{n_\star}) = 0$ for any
$j \in {1,\dots,n_\star}$ represents the criticality condition for
$\spt\mu_\star \cap \Omega$
(cf. e.g. \cite[Theorem~VII.4]{BBH} and \cite[Theorem~IX.1]{BBO}).
It is therefore natural that the interaction between
the two components~$\Q_\eps$ and~$\M_\eps$
induces a balance between the first variation of~$\mathbb{V}$ and the gradient of the renormalised energy, rather than separate criticality of these two objects.
 
\subsection*{Structure of the proofs}

The proofs of the Theorem~\ref{mainthm:B+C} and Theorem~\ref{mainthm:balance-law} are substantially more complex than the analogous results for minimising solutions. Many of the arguments of~\cite{CanevariMajumdarStroffoliniWang, CDS3}
do not carry over to this context, because they are based on energy minimality.
Studying the properties of the limiting measure~$\nu_\star$ 
--- in particular, proving rectifiability of its support --- 
is a much more delicate task. A substantial difficulty is the lack of relevant monotonicity formulae for~$\nu_\eps$. Indeed, since we expect concentration of~$\nu_\eps$ on a $1$-dimensional set, we seek bounds on quantities of the form~$r^{-1}\int_{B_r(x_0)} \nu_\eps \, \d x$. However, a direct computation (based on a Pohozaev identity, see Lemma~\ref{lemma:pohozaev} below) gives
\begin{equation} \label{nonmonotonicity}
 \begin{split}
  r^2 \frac{\d}{\d r}\left(\frac{1}{r}
   \int_{B_r(x_0)} \nu_\eps \, \d x\right)
  &= \int_{B_r(x_0)} \left(
   \frac{1}{\eps^2} f_\eps(\Q_\eps, \, \M_\eps)
   - \frac{\eps}{2}\abs{\nabla\M_\eps}^2\right)\d x \\
  &\qquad + r\int_{\partial B_r(x_0)}
   \left(\eps\abs{\partial_\nnu\M_\eps}^2 + \frac{1}{2} \abs{\partial_\nnu\Q_\eps}^2 - \frac{1}{2}\abs{\partial_\ttau\Q_\eps}^2 \right) \d\H^1,
 \end{split}
\end{equation}
where $\nnu$ and $\ttau$ denote, respectively, the normal and the tangent field to
$\partial B_r(x_0)$.

Leaving aside the terms that depend on~$\Q_\eps$ for the moment, the first integral in the right-hand side of~\eqref{nonmonotonicity} is problematic, because it does not have a definite sign. While in the scalar case similar ``discrepancy'' terms can be bounded from below based on the maximum principle~\cite{Modica, HutchinsonTonegawa}, there is no reason why such bound should extend to the vectorial case.
(For instance, an example in \cite{Smyrnelis} shows that in the vectorial case
the pointwise discrepancy associated with entire solutions may not have a sign, in
sharp contrast with the scalar case \cite{Modica}.)
To circumvent this issue, we follow the strategy of the recent paper~\cite{Bethuel-AC-Acta}, which is ultimately based on refined energy estimates. Once again, we have to adapt this approach to make it compatible with the coupling terms.

The key ingredients to this purpose are the following facts proved in \cite{CDS1}: 
\begin{enumerate}[(a)]
 \item\label{item:intro-unif-conv-Qeps} 
 The clearing-out theorem for $\Q_\eps$, which ensures that,  
 for every compact set $K \subset \Omega \setminus \spt\mu_\star$, 
 there holds $\abs{\Q_\eps} \geq 1/2$ uniformly on $K$, for every $\eps$ 
 small enough, depending only on $K$. 
 In fact, for every compact set $K \subset \Omega \setminus \spt\mu_\star$, we have also 
 \begin{equation}
	\abs{\Q_\eps} \to 1 \qquad \mbox{ uniformly on } K \mbox{ as }\eps \to 0.\label{eq:unif-conv-mod-Qeps}
\end{equation}
 \item For every compact set $K \subset \Omega \setminus \spt\mu_\star$ and every 
 $p$ with $1 \leq p < +\infty$, there holds   
 \begin{equation}\label{eq:strong-p-conv-rhoeps}
 	\int_K \left\{ \abs{\nabla \abs{\Q_\eps}}^p + \left(\frac{\abs{\Q_\eps} - 1}{\eps} - \kappa_\star\right)^p \right\}\,{\d}x \to 0 
 	\end{equation}
 	as $\eps \to 0$, where $\kappa_\star = \frac{\beta}{2\sqrt{2}}\left(\sqrt{2}\beta + 1 \right)$ is a constant that could be seen as the ground level of the `modified' Ginzburg-Landau potential (the term in round brackets above) arising because of the interaction between the two order parameters.\label{item:intro-crucial-lemma} 
 \item The strong convergence $\Q_\eps \to \Q_\star$ in $W^{1,p}_{\rm loc}(\Omega \setminus \spt\mu_\star$) from item~\ref{item:mainthm-asymp-strong-conv} of 
 Theorem~\ref{mainthm:asymp}.\label{item:intro-strong-conv}
 \item $\Q_\star$ is a smooth harmonic map in $\Omega \setminus \spt\mu_\star$.\label{item:intro-harm}
\end{enumerate}
The first part of~\ref{item:intro-unif-conv-Qeps} is a straightforward consequence 
of \cite[Proposition~2.5]{CDS1} and~\eqref{eq:Feps-bdd-intro}. It ensures that, away 
from $\spt\mu_\star$, the Allen-Cahn structure is well defined, making our approach 
viable. The uniform convergence~\eqref{eq:unif-conv-mod-Qeps} comes as a consequence 
of uniform bounds on the $p$-Ginzburg-Landau energy of $\Q_\eps$, for every $p$ 
with $1 \leq p < +\infty$ (see \cite[Proposition~2.6 and Lemma~2.8]{CDS1}).

Item~\ref{item:intro-crucial-lemma} follows from Lemma~3.7 and Remark~3.6 
in \cite{CDS1}. 
It is actually crucial to the proof of the strong convergence in~\ref{item:intro-strong-conv} and it also entails that
\begin{equation}\label{eq:f-V-intro}
	\frac{1}{\eps^2} f_\eps(\Q_\eps,\,\M_\eps) - \frac{1}{\eps} V(\M_\eps)
	\to 0,
\end{equation}
strongly in $L^p(K)$, for every compact $K \subset \Omega \setminus \spt\mu_\star$
and any $p$ with $1 \leq p < +\infty$, see Lemma~\ref{lemma:V-f-limit-Lp} below. 

Concerning~\ref{item:intro-strong-conv}, it implies that, 
if $B(x_0,\,r) \csubset \Omega \setminus \spt\mu_\star$, then, as $\eps \to 0$,
\begin{align*}
	& \int_{\partial B_r(x_0)} \abs{\partial_\nnu\Q_\eps}^2 \,{\d}\H^1 \to \int_{\partial B_r(x_0)} \abs{\partial_\nnu\Q_\star}^2 \,{\d}\H^1 \\
	& \int_{\partial B_r(x_0)} \abs{\partial_\ttau\Q_\eps}^2 \,{\d}\H^1 \to \int_{\partial B_r(x_0)} \abs{\partial_\ttau\Q_\star}^2\,{\d}\H^1
\end{align*}
for almost every radius. 
Finally, the harmonicity of $\Q_\star$ away from $\spt\mu_\star$ implies that 
\[
	\int_{\partial B_r(x_0)} \abs{\partial_\nnu\Q_\star}^2\,{\d}\H^1 
	= \int_{\partial B_r(x_0)} \abs{\partial_\ttau\Q_\star}^2\,{\d}\H^1.
\]

Back to~\eqref{nonmonotonicity}, we observe that, by the above, 
the terms depending
on~$\Q_\eps$ on the second line actually
vanish in the limit as $\eps \to 0$, whenever
$B_r(x_0)$ is well-contained in $\Omega \setminus \spt\mu_\star$. 
Furthermore, by~\eqref{eq:f-V-intro}, as far as we are concerned with the limit 
$\eps \to 0$, we may replace the potential 
$\frac{1}{\eps^2} f_\eps(\Q_\eps,\,\M_\eps)$ with the \emph{simpler} 
potential $\frac{1}{\eps} V(\M_\eps)$ in~\eqref{nonmonotonicity} and, 
instead of the energy densities $\nu_\eps$, we may consider the 
\emph{simpler} energy densities
\[
	\widetilde{\nu}_\eps := \frac{\eps}{2}\abs{\nabla \M_\eps}^2 +
	\frac{1}{\eps} V(\M_\eps).
\]
In view of the uniform energy bound~\eqref{eq:Eeps-bdd-intro},
the sequence $(\widetilde{\nu}_\eps)_{\eps > 0}$ 
is bounded in $L^1(\Omega)$ and so, up to extraction of a subsequence,
$\widetilde{\nu}_\eps$ convergence weakly$^*$ in the sense of measures
as $\eps \to 0$. 
By~\eqref{eq:f-V-intro}, there holds 
\begin{equation}\label{eq:nu-tilde-eps-nu-star-intro}
	\widetilde{\nu}_\eps \rightharpoonup^* \nu_\star
	\qquad {\textrm{weakly$^*$ as measures in } K,}
\end{equation}
for every compact set $K \subset \Omega \setminus \spt\mu_\star$, as $\eps \to 0$.
We stress that here $\nu_\star$ is \emph{exactly} the limit of the measures $\nu_\eps$
that appears in Theorem~\ref{mainthm:asymp}. The convergence
in~\eqref{eq:nu-tilde-eps-nu-star-intro} says essentially
that, at first order,
all the relevant information on the limiting measure $\nu_\star$
is already contained in the simpler energy measures
$\widetilde{\nu}_\eps$. What is more, on the account of the results
in~\cite{Bethuel-AC-Acta}, we may expect ``clearing-out'' properties
and energy decay estimates for $\widetilde{\nu}_\eps$
(see Section~\ref{sec:clout-M} for the precise statements), leading to lower density bounds on~$\nu_\star$ (see Section~\ref{sec:lower-bounds-nu*}).

As already mentioned,
one major consequence of the presence of $\Q_\eps$ in the definition of $V(\M_\eps)$
is the
fact that the wells of $V(\M_\eps)$ are not fixed but \emph{move},
both with $x$ and $\eps$, as shown by~\eqref{eq:Mpm-intro}.
Most arguments
in~\cite{Bethuel-AC-Acta} are based on testing~\eqref{eq:AC-M-intro}
against maps involving the difference $\M_\eps - (\M_\pm)_\eps$, so that
the fact that $(\M_\pm)_\eps$ is not constant gives rise to terms
depending on $\Q_\eps$ and its derivatives. 
We manage them by using the above ingredients.
Combined with the
the strategy in \cite{Bethuel-AC-Acta} and through a
rather delicate analysis, they allow us
to obtain the estimates and the compactness
results for the $\M_\eps$-component
in Section~\ref{sect:Eeps}
and Section~\ref{sect:nustar}, respectively.
The main result at the $\eps$-level is the clearing-out 
property for $\M_\eps$ provided by Theorem~\ref{thm:clearing-out-M},  
which, passing to the limit as $\eps \to 0$, yields a corresponding 
clearing-out property for $\nu_\star$, item~\ref{item:cl-o} of 
Theorem~\ref{thm:properties-nu*} below. 

As in~\cite{Bethuel-AC-Acta}, 
the clearing-out theorem for $\nu_\star$ is a fundamental tool towards 
the analysis of the structure properties of the set $\spt\nu_\star$. First, it 
yields density lower bounds for $\nu_\star$ and the fact that the $\H^1$-measure 
of $\spt\nu_\star$ is locally finite (see Theorem~\ref{thm:properties-nu*}). 
Second, together with an appropriate Pohozaev-type inequality 
(Proposition~\ref{prop:analogue-of-Bethuel-4.6}), which yields the 
non-existence of `islands' of singular set (Theorem~\ref{thm:analogue-of-Bethuel-8}), 
it entails that $\spt\nu_\star$ is locally path connected, and thus an 
$\H^1$-rectifiable set. By exploiting the rectifiability of $\spt\nu_\star$ and, again, 
the clearing-out property, we prove in Theorem~\ref{thm:unif-conv-Meps} 
the locally uniform convergence of $\M_\eps$ towards $\M_\star$ 
away from $\spt\mu_\star \cup \spt \nu_\star$.

Continuing following~\cite{Bethuel-AC-Acta}, the crucial observation to overcome the aforementioned lack of monotonicity 
formulae for the rescaled Allen-Cahn energy is the fact 
$\zeta_\star$ solves the equation
\begin{equation}\label{eq:hopf-intro}
	\frac{\partial \omega_\star}{\partial {\bar{z}}}
	= 2 \frac{\partial \zeta_\star}{\partial z}
\end{equation}
in the sense of distributions in $\Omega \setminus \spt\mu_\star$.
Here, $\omega_\star$ is the \emph{limiting Hopf differential},
defined by
\begin{equation} \label{omega_star}
\begin{split}
	\omega_\star = \omega^{\rm Q} + \omega^{\rm M} := &\mbox{(weak*-)}\lim_{\eps \to 0}
	\left( \abs{\partial_1 \Q_\eps}^2 - \abs{\partial_2 \Q_\eps}^2 - 2i \partial_1 \Q_\eps \cdot \partial_2 \Q_\eps \right) \\
	& + \mbox{(weak*-)}\lim_{\eps \to 0} \eps \left( \abs{\partial_1 \M_\eps}^2 - \abs{\partial_2 \M_\eps}^2 - 2i \partial_1 \M_\eps \cdot \partial_2 \M_\eps \right).
\end{split}
\end{equation}
Because of the strong convergence~$\Q_\eps \to \Q_\star$ in 
$W_{\rm loc}^{1,2}(\Omega \setminus \spt\mu_\star)$ and since 
$\Q_\star$ is smooth harmonic in $\Omega \setminus \spt\mu_\star$, 
we have $\frac{\partial \omega_\star^{\rm Q}}{\partial \bar{z}} = 0$ and, therefore,~\eqref{eq:hopf-intro} can equivalently be written
(see Proposition~\ref{prop:hopf}) as
\[
	\frac{\partial \omega_\star^{\rm M}}{\partial {\bar{z}}}
	= 2 \frac{\partial \zeta_\star}{\partial z}
\]
in the sense of distributions in $\Omega \setminus \spt\mu_\star$.
According to~\cite{Bethuel-AC-Acta}, the system~\eqref{eq:hopf-intro}
could be interpreted as a sort of `modified Cauchy-Riemann conditions'.

A remarkable consequence of~\eqref{eq:hopf-intro} is
that, for any $x_0 \in \Omega$, the function
$r \mapsto \frac{\zeta_\star(B(x_0,\,r))}{r}$ is monotone non-decreasing
(see Proposition~\ref{prop:analogue-of-Bethuel-Prop-6}).
Neither an analogous monotonicity property nor any equation similar
to~\eqref{eq:hopf-intro} are known to hold for~$\nu_\star$.
This motivates us to focus our attention
on~$\zeta_\star$ instead of~$\nu_\star$.
Luckily, from the analysis of~$\zeta_\star$ we
still recover information on the set
$\mathfrak{S}_\star := \spt \nu_\star \setminus \spt \mu_\star$, 
because $\mathfrak{S}_\star$ coincides exactly with
the support of $\zeta_\star$.
Indeed, on the one hand the convergence~\eqref{eq:nu-tilde-eps-nu-star-intro}
and the equality~$\zeta_\star(\spt\mu_\star) = 0$
(which follows from the the monotonicity of $\frac{1}{r}\zeta_\star$)
imply that
\[
	\zeta_\star \leq \nu_\star
\]
in the sense of measures in $\Omega$ (see Proposition~\ref{prop:analogue-of-Bethuel-Prop-6}).

On the other hand, by monotonicity, 
$\zeta_\star$ has a density at every point $x_0 \in \Omega$, which yields 
the first part of statement~\ref{item:density} of Theorem~\ref{mainthm:B+C}. 
The monotonicity of $\zeta_\star$ and, once again, a suitable Pohozaev inequality, 
allow us to prove that the density of $\nu_\star$ exists a $\H^1$-a.e. point of $\mathfrak{S}_\star$ and is controlled by $\mathfrak{v}_\star$ through a constant depending only 
on $\Omega$ and $\beta$. Together with the density lower bounds for $\nu_\star$ 
coming from the clearing-out theorem, it gives the density lower bounds on $\mathfrak{v}_\star$ mentioned in the second part of~\ref{item:density}, while the upper bounds come 
as a consequence of~\eqref{eq:Eeps-bdd-intro} and, again, the monotonicity of $\zeta_\star$. As a consequence, 
\[
	\nu_\star \lesssim \zeta_\star \quad \mbox{as measures in } \Omega \setminus \spt\mu_\star \qquad \mbox{and} \qquad 
	\spt\zeta_\star = \mathfrak{S}_\star.
\]
Once statement~\ref{item:density} is proved, statement~\ref{item:zeta*-weight-measure} 
of Theorem~\ref{mainthm:B+C} is an immediate consequence of the definitions 
(see Section~\ref{sec:S*}). 
Given~\ref{item:zeta*-weight-measure} and assuming for a moment~\ref{item:balance-law} 
(i.e., Theorem~\ref{mainthm:balance-law}), 
item~\ref{item:segments} follows from a classical
structure theorem for stationary $1$-varifolds with bounded
density~\cite{AllardAlmgren}, 
together with~\cite[Theorem~1.3]{Bethuel-AC-Acta}.
Contrary to the case of minimisers, we do not know if
the density $\mathfrak{v}_\star$ is constant, in general,
although we suspect that the limiting
varifold $\mathbb{V}_\star$ has integer multiplicity, see Remark~\ref{rk:integrality}.

Finally, concerning Theorem~\ref{mainthm:balance-law}, 
relying on~\eqref{eq:hopf-intro} exactly as in \cite[Theorem~1.3]{Bethuel-AC-Acta},
we first prove in Theorem~\ref{thm:stationary-varifold} that  
$\mathbb{V}_\star$ is stationary in $\Omega \setminus \spt\mu_\star$, which 
implies that the first variation of $\mathbb{V}_\star$ as a varifold in $\Omega$ 
must be supported on the finite set $\spt\mu_\star$. 
Then, in Theorem~\ref{thm:first-variation}, we compute explicitly 
$\delta \mathbb{V}_\star$. The computation starts from the Pohazaev 
identity at the $\eps$-level, given by Lemma~\ref{lemma:pohozaev} below,
and it exploits once again the above toolkit
\ref{item:intro-unif-conv-Qeps}--\ref{item:intro-harm} in 
a decisive way to obtain the balance law~\eqref{eq:first-variation-intro}--\eqref{eq:zj-intro}.

\subsection*{Plan of the paper}
The paper is organised as follows. Section~\ref{sect:prelim} contains some notation and preliminary results 
imported from the companion paper~\cite{CDS1}. In Section~\ref{sec:clout-M} we develop refined energy 
estimates for $\M_\eps$, along the lines of~\cite{Bethuel-AC-Acta}, that lead in the end to the clearing-out 
theorem for $\M_\eps$, Theorem~\ref{thm:clearing-out-M}. 
Section~\ref{sect:nustar} is devoted to the analysis of the limit measure~$\nu_\star$, based on an adapting the arguments in~\cite{Bethuel-AC-Acta}. (More details on the
structure of the arguments can be found at the beginning of the section.)
The proof of Theorem~\ref{mainthm:B+C} and of Theorem~\ref{mainthm:balance-law} 
are given in Section~\ref{sec:S*}.

\begin{notation}
We use the symbol $\{X_\eps\}$ to denote a family of
objects indexed by the parameter $\eps > 0$, almost always
used as a shorthand to denote
the sequence $\left\{X_{\eps_k}\right\}_k$, where $\eps_k \to 0$ as
$k \to +\infty$.
Usually, for the sake of a lighter notation, we do not relabel subsequences.

In inequalities like $A \lesssim B$, the symbol $\lesssim$ means that there
exists a constant $C$, independent of $A$ and $B$, such that $A \leq C B$.
In particular,
dealing with sequences indexed by $\eps$, we use
$\lesssim$ to denote inequality up to a constant independent of $\eps$.
Whenever it is relevant, we keep track of the dependences
of the implicit constants. 
Whenever possible without inducing 
ambiguities, for the sake of a lighter notation, we avoid writing 
explicitly the measure of integration in the integrals. 
\end{notation}

\tableofcontents

\section{Preliminary results}
\label{sect:prelim}

\setcounter{equation}{0}
\numberwithin{equation}{section}
\numberwithin{definition}{section}
\numberwithin{theorem}{section}
\numberwithin{remark}{section}
\numberwithin{example}{section}

In this Section, we collect the results from the companion 
paper~\cite{CDS1} that will be needed in the following course.

\subsection{Properties of the potential~$f_\eps$.}

For the reader's convenience, we summarise below some properties of the potential~$f_\eps$. All these properties have been already proved either 
in~\cite{CanevariMajumdarStroffoliniWang} or in \cite{CDS1}, to which the 
reader will be referred for detailed proof. 
 
The constant $\kappa_\eps$ in~\eqref{eq:potential} is uniquely determined by imposing 
$\inf f_\eps = 0$. It turns out that (\cite[Lemma~B.3]{CanevariMajumdarStroffoliniWang})
\[
	\kappa_\eps = \frac{1}{2} \left(\beta^2 + \sqrt{2} \beta\right) + \kappa_\star^2 \eps^2 + \o(\eps^2), 
\] 
where
\begin{equation}\label{eq:k*}
	\kappa_\star := \frac{1}{2 \sqrt{2}}\beta\left( \sqrt{2}\beta + 1 \right).
\end{equation}
Upon using the function $\ell(\Q,\,\M)$ defined in~\eqref{ell_eps}, 
we can rewrite
\begin{equation} \label{f-ell}
 \frac{1}{\eps^2} f_\eps(\Q, \, \M)
  = \frac{1}{4\eps^2}\left(\abs{\Q}^2 - 1\right)^2
   + \frac{1}{\eps} \ell(\Q, \, \M) + \chi_\eps,
\end{equation} 
where
\begin{equation} \label{ell_eps}
 \ell(\Q, \, \M) := \frac{1}{4}\left(\abs{\M}^2 - 1\right)^2
  - \beta \, \Q \M \cdot \M
  + \frac{1}{2} \left(\beta^2 + \sqrt{2} \beta\right)
\end{equation}
and where
\begin{equation} \label{chi_eps}
 \chi_\eps := \frac{\kappa_\eps}{\eps^2}
  - \frac{1}{2\eps} \left(\beta^2 + \sqrt{2} \beta\right) \! 
\end{equation}
satisfies
\begin{equation}\label{eq:chi-to-k*}
	\chi_\eps \to \kappa_\star^2 \qquad \mbox{as } \eps \to 0.
\end{equation}

For later reference, we recall the following characterisation of minimisers of
$\ell(\Q, \, \cdot\,)$, for a given~$\Q \neq 0$.

\begin{lemma}[{\cite[Lemma~1.2]{CDS1}}] \label{lemma:f-fixedQ}
 For any~$\Q\in\Sz\setminus\{0\}$, the function~$\ell(\Q, \, \cdot\,)$
 has exactly two minimisers, given by
 \begin{equation} \label{minimisers_l}
  \M_{\pm} := \pm\left(\sqrt{2}\beta\rho + 1\right)^{1/2} \n
 \end{equation}
 where~$\rho:= \abs{\Q}$ and~$\n$ is 
 a unit eigenvector of~$\Q$ corresponding to its positive eigenvalue.
 Moreover, there holds
 \begin{equation} \label{ell-minimum}
  \begin{split}
   \min\ell(\Q, \, \cdot\,)
   = \ell(\Q, \, \M_{\pm})
   = \frac{\beta}{2} (1 - \rho)\left(\sqrt{2} + \beta + \beta\rho\right) \! .
  \end{split}
 \end{equation}
\end{lemma}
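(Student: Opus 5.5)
We reduce the minimisation of $\ell(\Q,\cdot)$ to a one-dimensional problem by diagonalising $\Q$. Since $\Q\in\Sz\setminus\{0\}$ is symmetric and traceless, its eigenvalues are $\pm\lambda$ with $\lambda>0$. Then $\abs{\Q}^2=2\lambda^2$, so $\lambda=\rho/\sqrt{2}$. Let $\n$ be a unit eigenvector for $+\lambda$ and $\m$ a unit eigenvector for $-\lambda$, orthogonal to $\n$. Writing $\M = a\n + b\m$, we get
\[
 \Q\M\cdot\M = \frac{\rho}{\sqrt{2}}(a^2-b^2).
\]
Hence
\[
 \ell(\Q,\M) = \frac14(a^2+b^2-1)^2 - \frac{\beta\rho}{\sqrt2}(a^2-b^2) + \frac12(\beta^2+\sqrt2\beta).
\]

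Next we eliminate $b$. For fixed $s:=\abs{\M}^2=a^2+b^2$, the coupling term $-\frac{\beta\rho}{\sqrt2}(a^2-b^2)$ is strictly minimised exactly when $b=0$, because $\beta\rho>0$. This reduces the problem to minimising
\[
 g(s) = \frac14(s-1)^2 - \frac{\beta\rho}{\sqrt2}\,s
\]
over $s\ge0$. The function $g$ is strictly convex with $g'(s)=\frac12(s-1)-\frac{\beta\rho}{\sqrt2}$. Its unique critical point $s_*=1+\sqrt2\beta\rho$ is positive, so it is the unique minimiser on $[0,\infty)$.

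Combining the two steps, the minimisers of $\ell(\Q,\cdot)$ are exactly the $\M$ with $b=0$ and $a^2=s_*$. These are $\M_\pm=\pm(\sqrt2\beta\rho+1)^{1/2}\n$, and there are precisely two of them since $s_*>0$. Strictness of both steps gives that no other minimisers exist. The eigenvector $\n$ is unique up to sign, so the set $\{\M_+,\M_-\}$ does not depend on the choice of $\n$.

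Finally we compute the minimum value. Plugging in gives
\[
 g(s_*) = \frac14\cdot 2\beta^2\rho^2 - \frac{\beta\rho}{\sqrt2}\bigl(1+\sqrt2\beta\rho\bigr) = -\frac{\beta^2\rho^2}{2} - \frac{\beta\rho}{\sqrt2}.
\]
Adding the constant $\frac12(\beta^2+\sqrt2\beta)$ yields
\[
 \frac{\beta}{2}\bigl(\beta(1-\rho^2) + \sqrt2(1-\rho)\bigr) = \frac\beta2(1-\rho)\bigl(\sqrt2+\beta+\beta\rho\bigr),
\]
which is~\eqref{ell-minimum}. The only delicate point is the strictness bookkeeping in the reduction to $b=0$, which needs $\rho>0$ and $\beta>0$; both hold by hypothesis.
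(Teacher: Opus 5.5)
Your proof is correct: the identity $\ell(\Q,\M) = g(\abs{\M}^2) + \sqrt{2}\beta\rho\, b^2 + \frac12(\beta^2+\sqrt{2}\beta)$ gives minimality and uniqueness in one stroke. Here $g$ is strictly convex on $[0,\infty)$ with unique minimiser $s_* = 1+\sqrt{2}\beta\rho>0$, so equality forces $b=0$ and $\abs{\M}^2 = s_*$, and your evaluation of the minimum value is right. The paper does not reprove this lemma (it is quoted from~\cite{CDS1}); your eigenframe reduction is the natural argument and matches the representation $\Q = \frac{\rho}{\sqrt{2}}(\n\otimes\n - \m\otimes\m)$ the paper uses elsewhere.
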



For~$\Q\in\Sz\setminus\{0\}$, let~$\Sigma(\Q)
:= \{\M_+, \, \M_-\}\subseteq\R^2$ be the set of minimisers
of~$\ell(\Q, \, \cdot\,)$, as given by~\eqref{minimisers_l}.
If~$\Q\neq 0$ and~$\dist(\M, \, \Sigma(\Q)) \leq 1$, we define
\begin{equation} \label{projection}
 \pi(\Q, \, \M) :=
 \begin{cases}
  \M_+ &\textrm{if } \abs{\M - \M_+} \leq 1 \\
  \M_- &\textrm{if } \abs{\M - \M_-} \leq 1.
 \end{cases}
\end{equation}
$\pi(\Q, \, \M)$ is the closest projection of~$\M$
to~$\Sigma(\Q)$. It is defined in a non-ambiguous way,
because~$\abs{\M_+ - \M_-} = 2(\sqrt{2}\beta\rho + 1)^{1/2} > 2$.
%

%
%
\begin{lemma}[{\cite[Lemma~1.4]{CDS2}}] \label{lemma:ell2}
 For any~$\Lambda > 1$, there exist positive
 constants~$\delta_0(\Lambda)$ and~$c_0(\Lambda)$
 (depending on~$\Lambda$ and~$\beta$ only)
 such that the following statement holds:
 for any~$(\Q, \, \M)\in\Sz\times\R^2$ such
 that~$\Lambda^{-1} \leq \abs{\Q} \leq \Lambda$
 and~$\dist(\M, \, \Sigma(\Q)) \leq \delta_0(\Lambda)$, there holds
 \[
  \nabla_{\M} \ell(\Q, \, \M)\cdot(\M - \pi(\Q, \, \M))
   \geq c_0(\Lambda)\Big(\ell(\Q, \, \M) - \ell(\Q, \, \pi(\Q, \, \M))\Big)
   \geq 0.
 \]
\end{lemma}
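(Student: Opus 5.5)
The plan is a second-order Taylor expansion of $\ell(\Q,\,\cdot\,)$ around the well $\pi(\Q,\,\M)$, with all constants made uniform in $\Q$ by reducing to a one-parameter family indexed by $\rho=\abs{\Q}\in[\Lambda^{-1},\Lambda]$.

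\emph{Reduction.} The function $\ell$ is invariant under the simultaneous action of rotations, $(\Q,\M)\mapsto(R\Q R^{\T},R\M)$, and under $\M\mapsto-\M$. Both sides of the inequality are invariant under these actions, and so is the projection $\pi$. We may therefore assume $\Q=\frac{\rho}{\sqrt2}\,\mathrm{diag}(1,-1)$ and $\pi(\Q,\M)=\M_+=(a,0)$ with $a^2=\sqrt2\beta\rho+1$. With this normalisation,
\[
\ell(\Q,\M)=\tfrac14(\abs{\M}^2-1)^2-\tfrac{\beta\rho}{\sqrt2}(M_1^2-M_2^2)+\mathrm{const},\qquad
\nabla_\M\ell=(\abs{\M}^2-1)\M-\sqrt2\beta\rho\,(M_1,-M_2).
\]

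\emph{Hessian at the well.} Differentiating once more gives $\nabla^2_\M\ell=(\abs{\M}^2-1)\I+2\M\otimes\M-\sqrt2\beta\rho\,\mathrm{diag}(1,-1)$. At $\M_+$ we have $\abs{\M_+}^2-1=\sqrt2\beta\rho$, so
\[
H:=\nabla^2_\M\ell(\Q,\M_+)=\mathrm{diag}\bigl(2a^2,\;2\sqrt2\beta\rho\bigr).
\]
Since $\rho\in[\Lambda^{-1},\Lambda]$, we get $H\geq\lambda\I$ with $\lambda:=\min\{2,\,2\sqrt2\beta\Lambda^{-1}\}>0$, which depends only on $\Lambda$ and $\beta$.

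\emph{Taylor estimates.} Put $h:=\M-\M_+$, so $\abs{h}\leq\delta_0\leq1$. Since $\ell$ is a polynomial whose coefficients are bounded for $\rho\leq\Lambda$, and $\M$ ranges in a bounded set, the third derivatives of $\ell$ are bounded by a constant $C=C(\Lambda,\beta)$. Using $\nabla_\M\ell(\Q,\M_+)=0$, Taylor's formula gives
\[
\nabla_\M\ell(\Q,\M)\cdot h\geq Hh\cdot h-C\abs{h}^3,\qquad
\Bigl|\ell(\Q,\M)-\ell(\Q,\M_+)-\tfrac12 Hh\cdot h\Bigr|\leq C\abs{h}^3 .
\]

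\emph{Choice of $\delta_0$.} Take $\delta_0\leq\min\{1,\lambda/(8C)\}$. Then $C\abs{h}^3\leq\frac18\lambda\abs{h}^2\leq\frac18 Hh\cdot h$, and the two estimates above yield
\[
\ell(\Q,\M)-\ell(\Q,\M_+)\in\bigl[\tfrac38 Hh\cdot h,\ \tfrac58 Hh\cdot h\bigr],\qquad
\nabla_\M\ell\cdot h\geq\tfrac78 Hh\cdot h .
\]
Hence the claimed inequality holds with $c_0=1$ (any $c_0\leq 7/5$ works). Nonnegativity of the right-hand side follows from the lower bound $\tfrac38 Hh\cdot h\geq0$. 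The case $\pi(\Q,\M)=\M_-$ follows from the symmetry $\M\mapsto-\M$.

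The only delicate point is that $\delta_0$ and $c_0$ must be uniform in $\Q$. This is where the hypothesis $\Lambda^{-1}\leq\abs{\Q}\leq\Lambda$ enters. The lower bound on $\rho$ prevents the eigenvalue $2\sqrt2\beta\rho$ of $H$ from degenerating; this eigenvalue corresponds to the direction of rotation of $\M$ around the well. The upper bound on $\rho$ keeps the cubic remainder constant $C$ under control. After the rotation reduction, both facts follow from compactness of $[\Lambda^{-1},\Lambda]$.
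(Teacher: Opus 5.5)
Your proof is correct and follows essentially the route the paper relies on. The paper imports this lemma from \cite{CDS2} without proof, but its own remarks around it (the Hessian $\D^2_\M\ell(\Q,\N) = 2\sqrt{2}\beta\rho\,\I + 2\n\otimes\n$ at the wells, and the appeal to ``Taylor-expanding $V$ close to the wells'') point to the same argument: a positive-definite Hessian at $\pi(\Q,\M)$, with smallest eigenvalue $2\sqrt{2}\beta\rho \geq 2\sqrt{2}\beta\Lambda^{-1}$, combined with a cubic Taylor remainder that is uniform for $\Lambda^{-1}\leq\abs{\Q}\leq\Lambda$. Your $H = \mathrm{diag}(2a^2,\,2\sqrt{2}\beta\rho)$ agrees with that Hessian, and your choice $\delta_0 \leq \min\{1,\lambda/(8C)\}$ does give the inequality with $c_0 = 7/5$ (or any smaller positive constant).
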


\subsection{The maximum principle}
A first consequence of the Euler-Lagrange 
equations~\eqref{EL-Q},~\eqref{EL-M} is the 
following maximum principle, the proof of 
which is detailed in \cite[Lemma~1.6]{CDS1}.
\begin{lemma} \label{lemma:max}
 The maps~$\Q^*_\eps$, $\M^*_\eps$ are smooth inside~$\Omega$
 and of class~$C^1$ up to the boundary of~$\Omega$.
 Moreover, there exist 
 a constant~$C_\beta$, depending only on $\beta$,
 and a constant $C_{\beta,\Omega}$, depending only on $\beta$ and $\Omega$,
 such that
 \begin{align}
  \norm{\Q^*_\eps}_{L^\infty(\Omega)}
   + \norm{\M^*_\eps}_{L^\infty(\Omega)} &\leq {C_\beta} \label{max-QM} \\
  \norm{\nabla\Q^*_\eps}_{L^\infty(\Omega)}
   + \norm{\nabla\M^*_\eps}_{L^\infty(\Omega)}
   &\leq \frac{{C_{\beta,\Omega}}}{\eps}. \label{max-gradients}
 \end{align}
\end{lemma}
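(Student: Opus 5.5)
The plan is to follow the standard Ginzburg--Landau/Allen--Cahn argument: $L^\infty$ bounds come from a comparison (maximum principle) argument applied to $\abs{\Q_\eps}^2$ and $\abs{\M_\eps}^2$, and gradient bounds come from rescaling at scale $\eps$ and elliptic regularity. Smoothness in the interior follows from the Euler--Lagrange system~\eqref{EL-Q}--\eqref{EL-M}. The system is semilinear with polynomial nonlinearity; since the maps are $W^{1,2}$ in two dimensions, the nonlinear terms lie in every $L^p$, and a standard bootstrap ($W^{2,p}$ estimates, then Schauder) gives $C^\infty$ regularity inside $\Omega$. Regularity up to the boundary is the same argument with boundary estimates, using the $C^1$ Dirichlet data and the $C^2$ boundary. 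In the Neumann case~\eqref{bcbis} one uses the corresponding oblique/Neumann estimates.

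For~\eqref{max-QM}, I would compute $\Delta\abs{\M_\eps}^2 = 2\abs{\nabla\M_\eps}^2 + 2\M_\eps\cdot\Delta\M_\eps$ and substitute~\eqref{EL-M}, which gives
\[
 \tfrac12\Delta\abs{\M_\eps}^2 \geq \tfrac{1}{\eps^2}\left((\abs{\M_\eps}^2-1)\abs{\M_\eps}^2 - 2\beta\,\Q_\eps\M_\eps\cdot\M_\eps\right).
\]
Likewise, from~\eqref{EL-Q},
\[
 \tfrac12\Delta\abs{\Q_\eps}^2 \geq \tfrac{1}{\eps^2}(\abs{\Q_\eps}^2-1)\abs{\Q_\eps}^2 - \tfrac{\beta}{\eps}\Q_\eps\M_\eps\cdot\M_\eps.
\]
The coupling prevents treating these separately. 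Using $\Q\M\cdot\M\leq \abs{\Q}\abs{\M}^2/\sqrt2$, consider an interior maximum point of a combination such as $\abs{\M_\eps}^2$ and $\abs{\Q_\eps}^2$. At a maximum of $\abs{\M_\eps}^2$ one gets $\abs{\M_\eps}^2\leq 1+\sqrt2\beta\abs{\Q_\eps}$. At a maximum of $\abs{\Q_\eps}^2$ one gets $(\abs{\Q_\eps}^2-1)\abs{\Q_\eps}\leq \eps\beta\abs{\M_\eps}^2/\sqrt2$. Let $A=\max\abs{\Q_\eps}$ and $B=\max\abs{\M_\eps}^2$. If either maximum is attained at the boundary, the data bound it. Otherwise the two inequalities give $B\le 1+\sqrt2\beta A$ and $A^2-1\le \eps\beta B$, which together force $A,B\le C_\beta$ for $\eps\le1$. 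In the Neumann case, a boundary maximum of $\abs{\M_\eps}^2$ with $\partial_\nnu\M_\eps=0$ is handled by Hopf's lemma, or by the same inequality, since the normal derivative vanishes.

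For~\eqref{max-gradients}, rescale: set $\tilde\Q(y)=\Q_\eps(x_0+\eps y)$ and $\tilde\M(y)=\M_\eps(x_0+\eps y)$. These satisfy $-\Delta\tilde\Q = -(\abs{\tilde\Q}^2-1)\tilde\Q + \eps\beta(\ldots)$ and $-\Delta\tilde\M = -(\abs{\tilde\M}^2-1)\tilde\M + 2\beta\tilde\Q\tilde\M$, and both right-hand sides are bounded uniformly by the $L^\infty$ bounds. Interior $W^{2,p}$ estimates on unit balls then give a bound on $\abs{\nabla\tilde\Q}+\abs{\nabla\tilde\M}$ by a constant $C_\beta$, that is, $\abs{\nabla\Q_\eps}+\abs{\nabla\M_\eps}\le C/\eps$. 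Near the boundary, one flattens $\partial\Omega$ (using its $C^2$ regularity) and uses boundary estimates. The rescaled boundary data have derivatives of order $\eps\norm{\Qb}_{C^1}$, hence bounded. This is where the dependence on $\Omega$ enters.

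The main obstacle is the coupled maximum principle: one must choose the test quantity so that the $\eps^{-1}$ versus $\eps^{-2}$ scalings of the coupling terms close. I would handle this by the two-inequality argument above, and check that the boundary data satisfy~\eqref{hp:bc}, so that boundary values do not exceed the interior bound.
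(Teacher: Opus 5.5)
Your proposal is correct and follows the same route as the paper's proof (deferred to \cite[Lemma~1.6]{CDS1}, whose structure is visible in Remark~\ref{rk:max}). Your two maximum-principle inequalities, $B\le 1+\sqrt{2}\beta A$ and $(A^2-1)A\le \frac{\eps\beta}{\sqrt{2}}B$, combined directly rather than through $A^2-1\le\eps\beta B$, give $P(A)\le 0$ with $P(X)=X^3-(1+\beta^2\eps)X-\frac{\beta}{\sqrt{2}}\eps$; this is exactly the bound $\abs{\Q_\eps}\le s_*(\beta,\eps)$, $\abs{\M_\eps}^2\le 1+\sqrt{2}\beta\, s_*(\beta,\eps)$ recorded there, and your gradient bound is the usual $\eps$-rescaling plus elliptic estimates.

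One caveat at the boundary, which the statement itself shares: bounded derivatives of the rescaled Dirichlet data are not enough for boundary $W^{2,p}$ or Schauder estimates, since the harmonic extension of merely $C^1$ data can have unbounded gradient. That step therefore tacitly needs slightly more regularity, e.g.\ $\Qb,\,\Mb\in C^{1,\alpha}$, whose rescaled $C^{1,\alpha}$-seminorms are $O(\eps^{1+\alpha})$.
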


\begin{remark} \label{rk:max}
 As observed in~\cite[Remark~1.6]{CDS1}, the proof of Lemma~\ref{lemma:max} 
 shows a little bit more. Indeed, we have, more precisely, 
 \begin{equation}\label{eq:Q-M-s*}
  \abs{\Q_\eps} \leq s_*(\beta, \, \eps), \quad
  \abs{\M_\eps}^2 \leq 1 + {{\sqrt 2} }\beta \, s_*(\beta, \, \eps)
  \qquad \textrm{in } \Omega,
 \end{equation}
 where~$s_*(\beta, \, \eps) > 1$ is the largest root of the polynomial
 $P(X) = X^3 - (1 + {\beta^2\eps})X - {\frac{\beta}{\sqrt{2}}} \eps$.
 Elementary calculus shows that~$s_*(\beta, \, \eps)\to 1$
  as~$\eps\to 0$, for any given value of~$\beta$. Then, by differentiating
  the constraint $P(s_*(\beta, \, \eps)) = 0$ with respect to~$\eps$,
  we deduce
  \begin{equation}\label{eq:s_*}
   s_*(\beta, \, \eps) = 1 + {\eps \kappa_\star + {\rm O}(\eps^2)}
  \end{equation}
  as~$\eps\to 0$.
\end{remark}

\subsection{Summary of the energy estimates}

The following theorem summarises the main energy estimates 
proved in~\cite{CDS1}.
\begin{theorem}[{\cite[Theorem~2.11]{CDS1}}]\label{lemma:energy-est}
	Let $\{(\Q_\eps,\,\M_\eps)\}$ be a sequence of
	critical points of the functional $\F_\eps$, subject to boundary
	conditions as in~\eqref{bc},~\eqref{hp:bc} or
	as in~\eqref{bcbis},~\eqref{hp:bcbis}. Assume
	that~\eqref{hp:potential_bound} holds. Then,
	\begin{align}
		&\F_\eps(\Q_\eps,\,\M_\eps) \lesssim \abs{\log\eps}, \label{eq:log-bound} \\
		&\int_\Omega \abs{\nabla \Q_\eps}^2 \lesssim \abs{\log\eps}, \\
		&\int_\Omega \eps \abs{\nabla \M_\eps}^2 \lesssim 1, \label{eq:M_bound} \\
		&\frac{1}{\eps^2}\int_\Omega \left(1-\abs{\Q_\eps}^2\right)^2 \lesssim 1 \label{eq:bdd-GL-pot},
	\end{align}
	where the implicit constants on the right-hand side depend only
	on $\beta$, $\Cpot$, and
	the $L^1(\partial \Omega)$- and the $L^2(\partial\Omega)$-norm of
	$\Qb \times \partial_\ttau \Qb$.

	Moreover, on any ball $B = B(x_0,\,R) \csubset \Omega$ on which
	$\abs{\Q_\eps} \geq 1/2$,
	there hold
	\begin{gather}
		\int_B \abs{\nabla \Q_\eps}^2\,{\d}x \leq C(x_0,\,R,\,\beta,\,\Cpot), \\
		\F_\eps(\Q_\eps,\,\M_\eps;\,B) \leq C(x_0,\,R,\,\beta,\,\Cpot) \label{eq:bounded-F-clearing-out},
	\end{gather}
	where the constant
	$C(x_0,\,R,\,\beta,\,\Cpot,\,\Qb)$ depends only on $x_0$, $R$,
	$\beta$, $\Cpot$, and the $L^1(\partial \Omega)$- and the $L^2(\partial\Omega)$-norm of $\Qb \times \partial_\ttau \Qb$.
	Consequently, if $K \subset \Omega$ is any compact set such
	that $\abs{\Q_\eps} \geq 1/2$ on $K$ for any $\eps$ small enough,
	then there holds
	\begin{equation}
		\lim_{\eps \to 0}\frac{\F_\eps(\Q_\eps,\,\M_\eps;\,K)}{\abs{\log\eps}}
		= 0.
	\end{equation}
\end{theorem}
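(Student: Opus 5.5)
The estimates split into three groups of increasing difficulty: the pointwise-in-$\eps$ consequences of~\eqref{hp:potential_bound}, namely~\eqref{eq:bdd-GL-pot} and~\eqref{eq:M_bound}; the global logarithmic bound~\eqref{eq:log-bound}; and the local bounds on balls where $\abs{\Q_\eps}\geq 1/2$. I would prove them in that order.

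\emph{Step 1: the potential and $\M$-gradient bounds.} For~\eqref{eq:bdd-GL-pot} I would use the decomposition~\eqref{f-ell} together with Lemma~\ref{lemma:f-fixedQ}. Since $\ell(\Q,\M)\geq\min\ell(\Q,\cdot) = \frac{\beta}{2}(1-\rho)(\sqrt2+\beta+\beta\rho)$ with $\rho = \abs{\Q}$, we get
\[
\frac{1}{\eps^2}f_\eps \geq \frac{1}{4\eps^2}(\rho^2-1)^2 - \frac{C}{\eps}\abs{1-\rho} + \chi_\eps .
\]
By~\eqref{max-QM} the middle term is at most $\frac{1}{8\eps^2}(\rho^2-1)^2 + C$ after Young's inequality. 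Integrating and using~\eqref{hp:potential_bound} and~\eqref{eq:chi-to-k*} gives~\eqref{eq:bdd-GL-pot}. For~\eqref{eq:M_bound} I would test~\eqref{EL-M} against $\M_\eps$. The boundary term vanishes in the Neumann case, and in the Dirichlet case it is controlled through the $C^1$ bound of Lemma~\ref{lemma:max} and the data. The bulk term $\eps^{-1}\nabla_\M\ell(\Q_\eps,\M_\eps)\cdot\M_\eps$ vanishes at the wells $\M_\pm$. Away from the wells and near them it is bounded by $C\eps^{-1}\big(\ell-\min\ell\big)^{1/2}$ plus $O(\eps^{-1}\abs{1-\rho})$, and Cauchy--Schwarz together with Step 1 then gives $\eps\int\abs{\nabla\M_\eps}^2\lesssim 1$.

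\emph{Step 2: the logarithmic bound.} I would use the current $j_\eps := \Q_\eps\times\nabla\Q_\eps$. Taking the cross product of~\eqref{EL-Q} with $\Q_\eps$ gives
\[
\div j_\eps = -\frac{\beta}{\eps}\,\Q_\eps\times\Big(\M_\eps\otimes\M_\eps-\frac{\abs{\M_\eps}^2}{2}\I\Big).
\]
The right-hand side vanishes when $\M_\eps$ is an eigenvector of $\Q_\eps$, so it is controlled by the misalignment and hence by $\eps^{-1}(\ell-\min\ell)^{1/2}$, which is bounded in $L^1$ by Steps~1 and~\eqref{hp:potential_bound}. I would then Hodge-decompose $j_\eps = \nabla g_\eps + \nabla^\perp H_\eps$, with boundary values determined by $\Qb\times\partial_\ttau\Qb$. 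This is why the constants depend on its $L^1$ and $L^2$ norms. The $\nabla g_\eps$ part is bounded in $W^{1,p}$ for $p<2$ by elliptic estimates with $L^1$ data. The $\nabla^\perp H_\eps$ part satisfies $\Delta H_\eps = \operatorname{curl} j_\eps$, which is a Jacobian-type quantity concentrated, at scale $\eps$, where $\abs{\Q_\eps}$ is small. Writing $\abs{\nabla\Q_\eps}^2 \lesssim \abs{j_\eps}^2/\abs{\Q_\eps}^2 + \abs{\nabla\abs{\Q_\eps}}^2$ where $\abs{\Q_\eps}\geq 1/2$, together with the Lipschitz bound~\eqref{max-gradients} on the bad set, would give $\int\abs{\nabla\Q_\eps}^2\lesssim\abs{\log\eps}$. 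Combined with Step~1, this yields~\eqref{eq:log-bound}.

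\emph{Step 3: the local bounds.} On a ball $B$ where $\abs{\Q_\eps}\geq 1/2$, I would write $\Q_\eps = \sqrt2\,\rho_\eps(\n_\eps\otimes\n_\eps-\I/2)$ with a smooth phase $\theta_\eps$, so that $\n_\eps = (\cos\theta_\eps,\sin\theta_\eps)$. The phase then solves $\div(\rho_\eps^2\nabla\theta_\eps) = O(\text{misalignment})$, and $\rho_\eps$ satisfies a scalar equation with potential $\eps^{-2}(\rho_\eps^2-1)$. The global $W^{1,p}$ bounds from Step~2, elliptic estimates on a slightly smaller ball, and a Caccioppoli/Pohozaev argument for $\rho_\eps$ should give $\int_B\abs{\nabla\Q_\eps}^2\leq C$, and hence~\eqref{eq:bounded-F-clearing-out}. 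The final limit follows by covering $K$ with finitely many such balls.

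The main obstacle is Step~2. Without minimality there is no comparison map, so the $\abs{\log\eps}$ bound must come from the equation. The delicate point is to check that the coupling source in $\div j_\eps$ is really $O(1)$ in $L^1$ rather than $O(\eps^{-1})$. I would try to get this first by bounding the misalignment pointwise by $C\big(\ell-\min\ell\big)^{1/2}$ and then using~\eqref{hp:potential_bound}.
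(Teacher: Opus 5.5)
Your proof of \eqref{eq:bdd-GL-pot} is fine. However, the two estimates everything else rests on fail by a factor $\eps^{-1/2}$, and they fail exactly at the point you flagged as the main obstacle.

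In Step~1 (testing \eqref{EL-M} with $\M_\eps$) and in Step~2 (the coupling source in $\div j_\eps$), you bound an $\eps^{-1}\times$(misalignment) term pointwise by $C\eps^{-1}V^{1/2}$, where $V=\ell(\Q_\eps,\M_\eps)-\min\ell(\Q_\eps,\cdot\,)$, and then appeal to \eqref{hp:potential_bound}. But by \eqref{f-ell}, $\ell$ enters $\eps^{-2}f_\eps$ only with weight $\eps^{-1}$. So the hypothesis gives only $\int_\Omega V\lesssim\eps$, and Cauchy--Schwarz yields
\[
\frac{1}{\eps}\int_\Omega V^{1/2}\,\d x \leq \frac{\abs{\Omega}^{1/2}}{\eps}\left(\int_\Omega V\,\d x\right)^{1/2} \lesssim \eps^{-1/2}.
\]
A sharper pointwise bound cannot repair this. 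Both $\nabla_\M\ell(\Q,\M)\cdot\M$ and $\Q\times(\M\otimes\M)$ vanish only to first order at the wells, whereas $V$ vanishes to second order. The inequalities you use do not exclude a misalignment of size $\eps^{1/2}$ spread over all of $\Omega$. Consequently Step~1 gives only $\eps\int\abs{\nabla\M_\eps}^2\lesssim\eps^{-1/2}$. In Step~2 the source is $O(\eps^{-1/2})$ in $L^1$, which is useless for a $\abs{\log\eps}$ bound on $\nabla g_\eps$. The same defect enters the phase equation of Step~3.

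The missing idea for $\Q_\eps$ is an exact conservation law coming from the invariance of $\F_\eps$ under $(\Q,\M)\mapsto(R\Q R^{\top},R\M)$, $R\in SO(2)$. Take $\Q_\eps\times$\eqref{EL-Q} and $\eps\,\M_\eps\wedge$\eqref{EL-M}, with $\M\wedge\mathbf{v}:=M_1v_2-M_2v_1$. Both coupling terms are multiples of $\Q_\eps\M_\eps\wedge\M_\eps$ of order $\eps^{-1}$, and they cancel, giving
\[
\div\left(\Q_\eps\times\nabla\Q_\eps + c\,\eps\,\M_\eps\wedge\nabla\M_\eps\right) = 0
\]
for a constant $c$ fixed by the normalisation of $\times$. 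Hence the source equals $-c\,\div(\eps\M_\eps\wedge\nabla\M_\eps)$, the divergence of a field bounded in $L^\infty(\Omega)$ by Lemma~\ref{lemma:max}. Solving $\Delta g_\eps=\div(\Q_\eps\times\nabla\Q_\eps)$ with $g_\eps=0$ on $\partial\Omega$ then gives $\norm{\nabla g_\eps}_{L^p(\Omega)}\lesssim 1$ for all $p<\infty$, using no information on $\M_\eps$. The divergence-free remainder $\nabla^\perp H_\eps$ has Neumann datum $\pm\Qb\times\partial_\ttau\Qb$, which explains the dependence of the constants, and is handled as in Ginzburg--Landau via bad discs. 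Passing to the limit in this law also gives \eqref{eq:Q*-harm-intro}.

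For $\M_\eps$, testing with $\M_\eps$ alone cannot work. Instead, use the splitting of Section~\ref{sec:energy-est-pot} on all of $\Omega$:
\begin{itemize}
\item Where $\dist(\M_\eps,\Sigma(\Q_\eps))\geq\delta_\beta$ and $\abs{\Q_\eps}\geq 1/2$, Remark~\ref{rk:control-Theta} gives $\eps\abs{\nabla\M_\eps}^2\lesssim\eps^{-1}V$.
\item Near the wells, test \eqref{eq:M} with $\M_\eps-\pi(\Q_\eps,\M_\eps)$ on a sublevel set chosen by coarea, as in Lemma~\ref{lemma:coarea+average}. The cost is $\eps\int_\Omega\abs{\nabla\Q_\eps}^2$, which is already $O(1)$ from testing \eqref{EL-Q} with $\Q_\eps$, so your ordering of the steps can be kept.
\item The terms on $\partial\Omega$ vanish, since $\Mb\in\Sigma(\Qb)$ by \eqref{hp:bc} in the Dirichlet case and $\partial_\nnu\M_\eps=0$ in the Neumann case.
\item The set $\{\abs{\Q_\eps}<1/2\}$ has measure $\lesssim\eps^2$ by \eqref{eq:bdd-GL-pot}, so it contributes negligibly.
\end{itemize}

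Finally, Step~3 as sketched gives the local bound only on a smaller concentric ball, not on $B$ itself. That is all interior estimates can give, since a vortex core just outside $B$ contributes a logarithmically large amount inside $B$.
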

\subsection{Pohozaev identity}
In several points of this work, we shall make use of the Pohozaev identity 
satisfied by critical points $(\Q_\eps,\,\M_\eps)$ of $\F_\eps$, both at 
the $\eps$-level and the `$\star$-level', i.e., after taking the limit 
$\eps \to 0$. This identity has been already obtained (and exploited) 
in~\cite{CDS1}, so we just recall the result, referring the reader to~\cite{CDS1} 
for a detailed proof.

As in \cite{CDS1}, we define the \emph{stress-energy tensor} associated
to~$(\Q_\eps, \, \M_\eps)$ as
\begin{equation} \label{stressenergy}
 T_{jk}^\eps := \partial_j \Q_\eps\cdot\partial_k\Q_\eps
  + \eps \, \partial_j \M_\eps\cdot\partial_k\M_\eps
  - e_\eps(\Q_\eps, \, \M_\eps) \, \delta_{jk}
\end{equation}
for~$(j, \, k)\in\{1, \, 2\}^2$.
Here~$e_\eps(\Q_\eps, \, \M_\eps)$ is the energy density,
defined by
\begin{equation} \label{energydensity}
 e_\eps(\Q_\eps, \, \M_\eps) := \frac{1}{2}\abs{\nabla\Q_\eps}^2
 + \frac{\eps}{2}\abs{\nabla\M_\eps}^2 + \frac{1}{\eps^2} f_\eps(\Q_\eps, \, \M_\eps)
\end{equation}
Then, the following identity holds.
\begin{lemma}[{\cite[Lemma~1.7]{CDS1}}] \label{lemma:stressenergy}
Let $G \subseteq \Omega$ be any open set with boundary $\partial G$ of 
class $C^1$ and let $X \in C^1_c(\R^2;\,\R^2)$. Then, for any solution 
$(\Q,\,\M)$ to~\eqref{EL-Q},~\eqref{EL-M} there holds
  \begin{equation} \label{stren4}
  \begin{split}
   \int_G T_{jk}^\eps \, \partial_j X_k
   &= \int_{\partial G} \left( (\nnu\cdot\nabla\Q)\cdot(\X\cdot\nabla\Q)
    + \eps \, (\nnu\cdot\nabla\M)\cdot(\X\cdot\nabla\M)\right) \d s \\
   &\hspace{1.5cm} - \int_{\partial G} (\X\cdot\nnu)\, e_\eps(\Q, \, \M) \, {\d}s.
  \end{split}
 \end{equation}
In particular, 
the stress-energy tensor satisfies $\partial_j T_{jk}^\eps = 0$ in~$\Omega$.
\end{lemma}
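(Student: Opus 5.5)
The identity~\eqref{stren4} is the weak form of $\partial_j T^\eps_{jk} = 0$, so the plan is to prove this pointwise divergence identity first and then integrate by parts. By Lemma~\ref{lemma:max}, $(\Q,\M)$ is smooth in $\Omega$ and $C^1$ up to the boundary, so all classical computations inside $\Omega$ are legitimate.

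For the pointwise identity, fix $k$ and differentiate:
\[
\partial_j\big(\partial_j\Q\cdot\partial_k\Q\big) = \Delta\Q\cdot\partial_k\Q + \tfrac12\partial_k\abs{\nabla\Q}^2,
\]
and similarly
\[
\partial_j\big(\eps\,\partial_j\M\cdot\partial_k\M\big) = \eps\,\Delta\M\cdot\partial_k\M + \tfrac{\eps}{2}\partial_k\abs{\nabla\M}^2 .
\]
Subtracting $\partial_k e_\eps$ leaves
\[
\partial_j T^\eps_{jk} = \Delta\Q\cdot\partial_k\Q + \eps\,\Delta\M\cdot\partial_k\M - \tfrac{1}{\eps^2}\partial_k\big[f_\eps(\Q,\M)\big].
\]
By the chain rule, $\partial_k f_\eps(\Q,\M) = \nabla_\Q f_\eps\cdot\partial_k\Q + \nabla_\M f_\eps\cdot\partial_k\M$. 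Here $\nabla_\Q$ is the gradient within $\Sz$, i.e.\ the orthogonal projection onto traceless symmetric matrices. The remaining step is to check that \eqref{EL-Q}--\eqref{EL-M} read
\[
\Delta\Q = \eps^{-2}\nabla_\Q f_\eps, \qquad \eps\,\Delta\M = \eps^{-2}\nabla_\M f_\eps .
\]
For the first equation: $\nabla_\Q\big[\tfrac14(1-\abs{\Q}^2)^2\big] = (\abs{\Q}^2-1)\Q$, and the traceless projection of $\nabla_\Q(-\eps\beta\,\Q\M\cdot\M)$ is $-\eps\beta(\M\otimes\M - \tfrac{\abs{\M}^2}{2}\I)$, which matches \eqref{EL-Q} after dividing by $\eps^2$. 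For the second: $\nabla_\M f_\eps = \eps(\abs{\M}^2-1)\M - 2\eps\beta\Q\M$; dividing by $\eps^2$ and multiplying \eqref{EL-M} by $\eps$ gives consistency. Hence $\partial_j T^\eps_{jk} = 0$ in $\Omega$. The only delicate point in this step is the projection onto $\Sz$: since $\partial_k\Q$ is traceless, the $\I$-component of the gradient drops out anyway.

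For the integral identity, write $T^\eps_{jk}\partial_j X_k = \partial_j(T^\eps_{jk}X_k) - (\partial_jT^\eps_{jk})X_k = \partial_j(T^\eps_{jk}X_k)$ and apply the divergence theorem on $G$. This yields $\int_{\partial G}\nu_j T^\eps_{jk}X_k\,\d s$, whose three terms are exactly $(\nnu\cdot\nabla\Q)\cdot(\X\cdot\nabla\Q)$, $\eps(\nnu\cdot\nabla\M)\cdot(\X\cdot\nabla\M)$ and $-(\X\cdot\nnu)e_\eps$.

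The divergence theorem requires $T^\eps \in C^0(\overline G)$ and $\partial_j T^\eps_{jk}\in L^1(G)$; this holds for $\overline G\csubset\Omega$ by interior smoothness. When $\partial G$ touches $\partial\Omega$, approximate $G$ from inside by smooth domains $G_\delta\csubset\Omega$ with $\partial G_\delta\to\partial G$ in $C^1$, and pass to the limit using the global $C^1(\overline\Omega)$ regularity of Lemma~\ref{lemma:max}. This approximation is the main (mild) technical obstacle; interior second derivatives are not needed in the limit, since the boundary terms involve only first derivatives.
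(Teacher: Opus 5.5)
Your proof is correct and follows the standard route. The paper does not prove this lemma itself; it quotes it from \cite{CDS1}, where the argument is essentially yours: the Euler--Lagrange equations \eqref{EL-Q}--\eqref{EL-M} read $\Delta\Q = \eps^{-2}\nabla_\Q f_\eps$ and $\eps\Delta\M = \eps^{-2}\nabla_\M f_\eps$, which give $\partial_j T^\eps_{jk} = 0$, and \eqref{stren4} then follows from the divergence theorem. Your treatment of the boundary is also fine: $T^\eps$ involves only first derivatives and is continuous on $\overline{\Omega}$ by Lemma~\ref{lemma:max}, so the inner approximation by smooth domains (equivalently, the divergence theorem for fields in $C^1(G)\cap C^0(\overline{G})$) justifies the identity even when $\partial G$ meets $\partial\Omega$.
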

For later reference, we point out an immediate consequence of Lemma~\ref{lemma:stressenergy}.
\begin{lemma} \label{lemma:pohozaev}
 For each ball~$B = B(x_0, \, R)\subseteq\Omega$,
 there holds
 \begin{equation}\label{eq:pohozaev-ball}
  \begin{split}
   \frac{2}{\eps^2} \int_{B} f_\eps(\Q_\eps, \, \M_\eps)\,{\d}x
    &+ \frac{R}{2}\int_{\partial B} \left( \abs{\partial_{\nnu}\Q_\eps}^2
     + \eps \abs{\partial_{\nnu}\M_\eps}^2\right) \d s\\
   &= \frac{R}{2}\int_{\partial B}
    \left( \abs{\partial_{\ttau}\Q_\eps}^2
	+ \eps \abs{\partial_{\ttau}\M_\eps}^2
	+ \frac{2}{\eps^2} f_\eps(\Q_\eps, \, \M_\eps)\right) \d s
  \end{split}
 \end{equation}
 where~$\nnu$ is the outward unit normal and~$\ttau$
 is the unit tangent field on~$\partial B$, oriented in such a way
 that $(\nnu, \, \ttau)$ is a positive basis.
\end{lemma}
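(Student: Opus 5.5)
The plan is to apply Lemma~\ref{lemma:stressenergy} with $G = B = B(x_0,R)$ and the radial vector field $\X(x) := x - x_0$. Since $\X$ is not compactly supported, we first multiply it by a cutoff $\varphi\in C^1_c(\R^2)$ with $\varphi\equiv 1$ on a neighbourhood of $\overline{B}$. This is harmless because~\eqref{stren4} only involves $\X$ on $\overline{G}$.

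Since $\partial_j X_k = \delta_{jk}$ on $B$, the left-hand side of~\eqref{stren4} becomes $\int_B \operatorname{tr} T^\eps$. Taking the trace of~\eqref{stressenergy} gives
\[
\operatorname{tr} T^\eps = \abs{\nabla\Q_\eps}^2 + \eps\abs{\nabla\M_\eps}^2 - 2e_\eps = -\frac{2}{\eps^2}f_\eps(\Q_\eps,\M_\eps).
\]
The Dirichlet terms cancel exactly, so the left-hand side equals $-\frac{2}{\eps^2}\int_B f_\eps$.

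On $\partial B$ we have $\X = R\,\nnu$, so $\X\cdot\nnu = R$ and $\X\cdot\nabla = R\,\partial_\nnu$. The right-hand side of~\eqref{stren4} is therefore
\[
R\int_{\partial B}\left(\abs{\partial_\nnu\Q_\eps}^2 + \eps\abs{\partial_\nnu\M_\eps}^2\right)\d s - R\int_{\partial B} e_\eps\,\d s.
\]
Now decompose $\abs{\nabla\Q_\eps}^2 = \abs{\partial_\nnu\Q_\eps}^2 + \abs{\partial_\ttau\Q_\eps}^2$, and likewise for $\M_\eps$, using the orthonormal frame $(\nnu,\ttau)$. Then $R(\abs{\partial_\nnu\cdot}^2) - R e_\eps$ becomes
\[
\frac R2\left(\abs{\partial_\nnu\Q_\eps}^2+\eps\abs{\partial_\nnu\M_\eps}^2\right) - \frac R2\left(\abs{\partial_\ttau\Q_\eps}^2+\eps\abs{\partial_\ttau\M_\eps}^2\right) - \frac{R}{\eps^2}f_\eps .
\]
We equate both sides and rearrange by moving the interior potential term and the normal derivatives to the left. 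This gives~\eqref{eq:pohozaev-ball}, except that the boundary potential term carries the factor $\frac{R}{\eps^2}$. That factor agrees with $\frac R2\cdot\frac{2}{\eps^2}$, so it matches the statement.

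The only point needing care is the regularity required to apply~\eqref{stren4}. Lemma~\ref{lemma:max} gives smoothness inside $\Omega$. If $\partial B$ touches $\partial\Omega$, we instead apply the identity on $B(x_0,r)$ with $r<R$ and let $r\to R$, using the $C^1$ regularity up to the boundary. No real obstacle is expected: the whole argument is the algebraic bookkeeping of the trace computation.
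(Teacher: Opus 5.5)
Your proof is correct and follows the paper's argument: apply \eqref{stren4} with $G=B(x_0,R)$ and the cut-off radial field $\X(x)=\varphi(x)(x-x_0)$. Your trace computation $\operatorname{tr}T^\eps=-\frac{2}{\eps^2}f_\eps$ and the normal/tangential splitting on $\partial B$ supply the algebra that the paper leaves implicit, and your remark on balls touching $\partial\Omega$ is a harmless extra precaution.
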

\begin{proof}
 The lemma follows by taking~$G = B(x_0, \, R)$
 and~$\X(x) := \varphi(x)(x - x_0)$ in~\eqref{stren4}, 
 where $\varphi : \R^2 \to \R$ is a smooth cut-off function 
 such that $\varphi \equiv 1$ in a neighbourhood of $\Omega$.
\end{proof}
Finally, we recall that, being a smooth harmonic map in 
$\Omega \setminus \spt \mu_\star$, $\Q_\star$ satisfies 
an independent Pohozaev identity. More precisely, if $\X$ is any smooth vector field with compact support in $G \subseteq \Omega \setminus \spt \mu_\star$, then we have
\begin{equation}\label{eq:pohozaev-Q*}
	\int_G \left\{ \partial_j \Q_\star \cdot \partial_k \Q_\star \, \partial_j X_k - \frac{1}{2} (\div \X) \abs{\nabla \Q_\star}^2 \right\}\,{\d}x = 0.
\end{equation}
Equation~\eqref{eq:pohozaev-Q*} is classical and it is obtained by 
multiplying the harmonic map equation $-\Delta \Q_\star = \abs{\nabla \Q_\star}^2 \Q_\star$ by $\X \cdot \nabla \Q_\star$ and integrating by parts twice.

\section{Refined energy estimates for $\M_\eps$}
\label{sect:Eeps}

In this section, we take advantage of the `Allen-Cahn type' structure of 
the energy functional $E_\eps$ introduced in~\eqref{eq:Eeps-intro}
to derive several refined energy estimates
for the $\M_\eps$-component of a sequence $\{(\Q_\eps,\,\M_\eps)\}$
of critical points of the functional $\F_\eps$.

The key point of our argument is the fact that
the Euler-Lagrange equation of $E_\eps$ is precisely~\eqref{EL-M},
which, as we have already seen, can be recast in the more convenient form~\eqref{eq:AC-M-intro}. 
However, although $E_\eps$ looks like an Allen-Cahn functional
and, formally,~\eqref{eq:AC-M-intro} looks like an Allen-Cahn system,
the coupling between $\Q_\eps$ and $\M_\eps$
make the wells of the potential function $V$ in~\eqref{eq:V-intro}
dependent on $\Q_\eps$, putting our problem well outside the context
considered in \cite{Bethuel-AC-Acta}.
Nonetheless, we manage so as to exploit the improved bounds on $\Q_\eps$
from the companion paper \cite{CDS1} \emph{away from
the set $\spt \mu_\star$} given by item~\ref{item:mainthm-asymp-supports} of Theorem~\ref{mainthm:asymp}
to keep the perturbation under sufficiently precise control.
This allows us to parallel the path
traced in \cite{Bethuel-AC-Acta} \emph{away from $\spt\mu_\star$},
so that we are able to establish several a priori
estimates for critical points at the $\eps$-level starting from Equation~\eqref{EL-M}.
These estimates 
lead, ultimately, to a crucial \emph{clearing-out property} for $\M_\eps$,
expressed by Theorem~\ref{thm:clearing-out-M}.
In turn, Theorem~\ref{thm:clearing-out-M} is key to analyse
the limiting situation as $\eps \to 0$ and, indeed, it is the cornerstone of the
further developments worked out in Section~\ref{sect:nustar}.

\subsection{The `perturbed Allen-Cahn energy' $E_\eps$}
\label{sec:Eeps}
We start by studying the properties of the energy functional
$E_\eps$, already defined in~\eqref{eq:Eeps-intro} in the introduction,
and the associated potential $V$, defined in~\eqref{eq:V-intro}.
(For convenience, all the relevant definitions are recalled below.)
The potential $V$ depends on both $\Q$ and $\M$, though,
we shall see, $\Q$ enters essentially as a `parameter'.
The critical points of $E_\eps$
satisfy the Euler-Lagrange equation~\eqref{EL-M}, which is
more conveniently recast in the form~\eqref{eq:M}.
As already mentioned in the Introduction, 
whenever $\Q \neq 0$, $E_\eps$ can be
seen as a `perturbed Allen-Cahn energy'. 
The `perturbation'
has to be sought in the terms depending on $\Q$ in the
potential $V$, which make the wells \emph{move} with the
point $x \in \Omega$ instead of staying fixed, which is instead
the case in the usual Allen-Cahn theory.
In this section, we point out a few general properties
of $E_\eps$ and $V$. In particular, we prove a global,
$\eps$-independent bound for the energy of critical points
(see Proposition~\ref{prop:Eeps-bounded} below). Further,
we introduce some notation and terminology that will be
extensively used in the next sections.

\vskip5pt

Given any pair $(\Q,\,\M) \in \Sz \times \R^2$, we let
\[
	V(\M) := \ell(\Q, \M) - \inf_{y \in \R^2} \ell(\Q,\,y),
\]
where $\ell$ is the function defined in~\eqref{ell_eps}.
Notice that $V \geq 0$. 
%
If $G \subset \Omega$, $x \in G$,
and $x \mapsto \Q(x)$ is a $\Q$-tensor field and $x \mapsto \M(x)$
is a vector field, we define
\begin{equation}\label{eq:V}
	V(x,\,\M(x)) := \ell(\Q(x), \M(x))
	- \inf_{y \in \R^2} \ell(\Q(x),\,y).
\end{equation}
Note that, since the infimum of $\ell(\Q(x),\cdot\,)$ does not depend on
$\M$ (see~\eqref{ell-minimum}), we have
\begin{equation}\label{eq:nablaV=nablaL}
	\nabla_{\M} V(\,\cdot\,,\,\M) = \nabla_{\M} \ell(\Q,\,\M).
\end{equation}
for any $x \in G$.

\begin{lemma}\label{lemma:pointwise-est-V}
Let $\left(\Q_\eps,\,\M_\eps \right)$ be any critical point of $\F_\eps$ subject
to boundary conditions either as in~\eqref{bc}--\eqref{hp:bc} or as
in~\eqref{bcbis}--\eqref{hp:bcbis} and assume that~\eqref{hp:potential_bound} holds.
Then, for every $x \in \Omega$ there holds the estimate
\begin{equation}\label{eq:pointwise-est-V}
\begin{split}
	\frac{1}{\eps^2} f_\eps(\Q_\eps(x),\,\M_\eps(x)) - \frac{1}{\eps}V(x,\,\M_\eps(x))
	\lesssim \frac{1}{\eps^2}\left( 1 -\rho_\eps^2 \right)^2 + \rm{O}_{\eps \to 0}(1),
\end{split}
\end{equation}
where the implicit constant on the right-hand side depends only on $\beta$.
\end{lemma}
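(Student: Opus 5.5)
We start from the decomposition \eqref{f-ell}, which gives pointwise
\[
	\frac{1}{\eps^2} f_\eps(\Q_\eps,\M_\eps) - \frac{1}{\eps}V(x,\M_\eps)
	= \frac{1}{4\eps^2}\left(1-\rho_\eps^2\right)^2 + \frac{1}{\eps}\inf_{y\in\R^2}\ell(\Q_\eps,y) + \chi_\eps ,
\]
where $\rho_\eps := \abs{\Q_\eps}$. By \eqref{eq:chi-to-k*}, $\chi_\eps = \kappa_\star^2 + \o(1) = {\rm O}(1)$. The first term is already of the required form. The whole task therefore reduces to estimating $\frac{1}{\eps}\inf\ell(\Q_\eps,\cdot)$ from above by $C\eps^{-2}(1-\rho_\eps^2)^2 + {\rm O}(1)$.

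For $\Q_\eps(x)\neq 0$ we use the explicit formula \eqref{ell-minimum}:
\[
	\frac{1}{\eps}\min\ell(\Q_\eps,\cdot) = \frac{\beta}{2\eps}(1-\rho_\eps)\left(\sqrt{2}+\beta+\beta\rho_\eps\right).
\]
By the maximum principle, Remark~\ref{rk:max}, we have $\rho_\eps \le s_*(\beta,\eps)\le 2$ for small $\eps$, so the factor $\sqrt2+\beta+\beta\rho_\eps$ is bounded by a constant depending only on $\beta$. When $\rho_\eps\ge 1$ the term is $\le 0$, or more precisely $\ge -C$ by \eqref{eq:s_*}, and in any case it is bounded above by $0$. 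When $\rho_\eps<1$ we apply Young's inequality:
\[
	\frac{C}{\eps}(1-\rho_\eps) \le \frac{C'}{\eps^2}(1-\rho_\eps)^2 + C'' \le \frac{C'}{\eps^2}(1-\rho_\eps^2)^2 + C'' ,
\]
using $1-\rho_\eps\le 1-\rho_\eps^2$ for $0\le\rho_\eps\le 1$. For the point $\Q_\eps(x)=0$ we compute directly: $\inf_y\ell(0,y) = \frac12(\beta^2+\sqrt2\beta)$, which is consistent with the formula at $\rho=0$. The same Young-type bound then applies, since $(1-\rho^2)^2=1$ there and $\frac1\eps\lesssim \frac{1}{\eps^2}$.

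Summing the three contributions yields \eqref{eq:pointwise-est-V}, with the implicit constant depending only on $\beta$. There is no real obstacle. The only care needed is to use the $L^\infty$ bound of Remark~\ref{rk:max} to control $\rho_\eps$ uniformly, and to treat the case $\Q_\eps=0$ separately, since Lemma~\ref{lemma:f-fixedQ} assumes $\Q\neq0$.
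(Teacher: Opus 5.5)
Your proof is correct and follows the paper's route: the decomposition \eqref{f-ell}, the explicit minimum \eqref{ell-minimum}, the fact that $\chi_\eps \to \kappa_\star^2$, and Young's inequality applied to $\frac{1}{\eps}(1-\rho_\eps)$. Your case split ($\rho_\eps \geq 1$, where that term is nonpositive, and $\Q_\eps = 0$, where the formula extends continuously) just spells out what the paper leaves implicit.
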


\begin{proof}
Recalling~\eqref{ell_eps},~\eqref{chi_eps},~\eqref{ell-minimum},
\eqref{f-ell}, \eqref{eq:chi-to-k*}, and~\eqref{max-QM}, we have the pointwise
identities
\begin{equation}\label{eq:pointwise-est-V-bis}
\begin{split}
	\frac{1}{\eps^2} f_\eps(\Q_\eps(x),\,\M_\eps(x)) &- \frac{1}{\eps}V(x,\,\M_\eps(x))\\
	&=
	\frac{1}{4\eps^2}\left( 1 -\abs{\Q_\eps}^2 \right)^2 + \chi_\eps
	+\frac{1}{\eps} \min_{y \in \R^2} \ell(\Q_\eps,\,y) \\
	&= \frac{1}{4\eps^2}\left( 1 -\rho_\eps^2 \right)^2
	+ \frac{\beta}{2}\left(\frac{1-\rho_\eps}{\eps}\right)\left( \sqrt{2} + \beta +\beta \rho_\eps \right)
	+ \kappa_\star^2 + \o_{\eps \to 0}(1),
\end{split}
\end{equation}
which, after applying Young's inequality, yields estimate~\eqref{eq:pointwise-est-V}.
\end{proof}

\begin{remark}\label{rk:V-finite}
As an immediate consequence of Lemma~\ref{lemma:pointwise-est-V},~\eqref{eq:bdd-GL-pot}, 
and assumption~\eqref{hp:potential_bound}, we have
the $\eps$-independent bound
\begin{equation}\label{eq:V-finite}
	\int_\Omega V\left(x,\,\M_\eps(x)\right) \,{\d}x \lesssim 1,
\end{equation}
where the implicit constant on the right-hand side depends only on
$\beta$ and the constant $\Cpot$ in \eqref{hp:potential_bound}.
A still immediate but even more important consequence
of Lemma~\ref{lemma:pointwise-est-V} will be provided by
Lemma~\ref{lemma:limit-f-minus-V} hereafter.
\end{remark}

Next, for any measurable set $G \subseteq \Omega$, we define
\begin{equation}\label{eq:E-M}
	E_\eps(\M; \, G) :=
	\int_G\left( \frac{\eps}{2}\abs{\nabla \M}^2 + \frac{1}{\eps} V(x,\,\M) \right)\,{\d}x.
\end{equation}
The uniform energy bound for critical points below is
immediate.
\begin{prop}\label{prop:Eeps-bounded}
Let $\left(\Q_\eps,\,\M_\eps\right)$ be a critical point of $\F_\eps$, subject
to boundary conditions either as in~\eqref{bc}--\eqref{hp:bc} or as
in~\eqref{bcbis}--\eqref{hp:bcbis}. Then,
\begin{equation}\label{eq:Eeps-bounded}
	E_\eps(\M_\eps;\,\Omega) \leq \mathcal{E}_0,
\end{equation}
where $\mathcal{E}_0$ is a positive number depending only
on $\beta$, the constant $\Cpot$ in~\eqref{hp:potential_bound}, 
and the (implicit) constant $C$ on the right-hand side of~\eqref{eq:M_bound}.
\end{prop}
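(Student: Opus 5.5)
The plan is to split $E_\eps(\M_\eps;\Omega)$ into its gradient part and its potential part and to bound each by quantities already known to be uniformly bounded. For the gradient part, \eqref{eq:M_bound} of Theorem~\ref{lemma:energy-est} gives directly
\[
	\int_\Omega \frac{\eps}{2}\abs{\nabla\M_\eps}^2\,{\d}x \leq \frac{C}{2},
\]
where $C$ is the implicit constant in \eqref{eq:M_bound}. This piece needs no further work.

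For the potential part we need an $\eps$-independent bound on $\frac{1}{\eps}\int_\Omega V(x,\M_\eps)\,{\d}x$. Note that the bound \eqref{eq:V-finite} in Remark~\ref{rk:V-finite} controls only $\int_\Omega V$, without the factor $1/\eps$. So we go back to the identity \eqref{eq:pointwise-est-V-bis} and rewrite it as
\[
	\frac{1}{\eps}V(x,\M_\eps) = \frac{1}{\eps^2} f_\eps(\Q_\eps,\M_\eps) - \frac{1}{4\eps^2}(1-\rho_\eps^2)^2 - \frac{\beta}{2}\,\frac{1-\rho_\eps}{\eps}\left(\sqrt2+\beta+\beta\rho_\eps\right) - \chi_\eps,
\]
using $\chi_\eps\to\kappa_\star^2$ from \eqref{eq:chi-to-k*}. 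The second term on the right is nonpositive, so it can be dropped. The first term integrates to at most $\Cpot$ by \eqref{hp:potential_bound}. The last term is bounded in absolute value by a constant depending only on $\beta$, and after integration it contributes at most that constant times $\abs{\Omega}$.

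The third term is the main obstacle, because $(1-\rho_\eps)/\eps$ has no sign. Where $\rho_\eps\le 1$ the term is nonpositive and can be discarded. Where $\rho_\eps>1$, Remark~\ref{rk:max} gives $\rho_\eps\le s_*(\beta,\eps)=1+\eps\kappa_\star+{\rm O}(\eps^2)$, so $(\rho_\eps-1)/\eps\le\kappa_\star+{\rm O}(\eps)$. Hence $-\frac{\beta}{2}\frac{1-\rho_\eps}{\eps}(\sqrt2+\beta+\beta\rho_\eps)$ is bounded pointwise by a constant depending only on $\beta$. Integrating over $\Omega$ gives a bound of the form $C_\beta\abs{\Omega}$. (If one prefers to avoid using $s_*$, Young's inequality $\abs{1-\rho_\eps}/\eps\lesssim \eps^{-2}(1-\rho_\eps^2)^2+1$ combined with \eqref{eq:bdd-GL-pot} gives the same conclusion.)

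Adding the three contributions yields \eqref{eq:Eeps-bounded}, with $\mathcal E_0$ depending only on $\beta$, $\Cpot$, $C$ and $\abs{\Omega}$. The dependence on $\abs{\Omega}$ is harmless: it is implicitly absorbed into the constants fixed by the domain, or, if $\Omega$ is regarded as fixed, into $\mathcal E_0$.
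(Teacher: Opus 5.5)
Your proof is correct and follows the paper's route. You split off the gradient term via \eqref{eq:M_bound} and bound $\frac{1}{\eps}\int_\Omega V$ through the identity \eqref{eq:pointwise-est-V-bis} together with \eqref{hp:potential_bound}, which is what Remark~\ref{rk:V-finite} is meant to supply; you are right that \eqref{eq:V-finite} should carry the factor $1/\eps$. The only difference is that your main handling of the sign-indefinite term uses the one-sided bound $\rho_\eps\le s_*(\beta,\eps)$ from Remark~\ref{rk:max}, whereas the paper uses Young's inequality plus \eqref{eq:bdd-GL-pot}, which you also give as an alternative. Both work, and the resulting dependence on $\abs{\Omega}$ is harmless (the paper's constants depend on $\Omega$ as well).
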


\begin{proof}
	The conclusion follows immediately by
	combining~\eqref{eq:M_bound} and~\eqref{eq:V-finite}.
\end{proof}

With the notation introduced above,
the equation for $\M$, i.e., Equation~\eqref{EL-M}, can be recast in the form
\begin{equation}\label{eq:M}
	-\eps \Delta \M + \frac{1}{\eps}\nabla_{\M} V(\,\cdot,\,\M) = 0
\end{equation}
in any open set $G \subset \Omega$.
This compact writing will be conveniently employed in the following
course.
\begin{remark}[Scaling properties]\label{rk:scaling}
	Let $G = B(x_0,\,r) \subset \Omega$ be a ball. Then, arguing exactly as
	in \cite[Section~1.4.1]{Bethuel-AC-Acta}, upon setting
	\begin{gather}
		\widetilde{\eps}_r := \frac{\eps}{r}, \qquad
		\widetilde{\M}_{\widetilde{\eps}_r}(x) := \M_\eps(rx + x_0), \qquad
		\widetilde{\Q}_{\widetilde{\eps}_r}(x) := \Q_\eps(rx + x_0) \qquad
		(x \in B_1), \label{eq:Q-M-scaled} \\
		e_\eps(\M_\eps)(y) :=  \frac{\eps}{2}\abs{\nabla \M_\eps}^2(y)
		+ \frac{1}{\eps} V(y,\,\M_\eps) \qquad (y \in B(x_0,\,r)),
	\end{gather}
	we have that $\widetilde{\M}_{\widetilde{\eps}_r}$ solves~\eqref{eq:M}
	with $\eps$ changed into $\widetilde{\eps}_r$ and, moreover, there holds
	\begin{equation}\label{eq:scaling-energy-density}
		e_{\widetilde{\eps}_r}(\M_{{\widetilde \eps}_r})(x)
		= r e_\eps(\M_\eps)(rx+x_0), \qquad \forall x \in B_1.
	\end{equation}
	In fact, the separate identities
	\[
	\begin{aligned}
		\frac{\widetilde{\eps}_r}{2}\abs{\nabla \widetilde{\M}_{\widetilde{e}_r}}^2(x)
		&= r \left(\frac{\eps}{2} \abs{\nabla \M_\eps}^2(rx + x_0)\right), \\
		\frac{1}{\widetilde{\eps}_r} V_{\widetilde{\eps}_r}\left(x,\,\widetilde{\M}_{\widetilde{\eps}_r} \right) &= r^2 \left( \frac{1}{\eps}V(x,\,\M_\eps) \right)
	\end{aligned}
	\]
	hold for any $x \in B_1$, whence
	\begin{align}
		E_\eps(\M_\eps;\,B(x_0,\,r)) &= r E_{\widetilde{\eps}_r}\left(\widetilde{\M}_{{\widetilde\eps}_r};\,B_1\right), \label{eq:scaling-E-0} \\
		\frac{1}{\eps}\int_{B(x_0,\,r)} V(y,\,\M_\eps)\,{\d}y
		&= r\left(\frac{1}{\widetilde{\eps}_r}\int_{B_1} V_{\widetilde{\eps}_r}(x,\,\widetilde{\M}_{\widetilde{\eps}_r})\,{\d}x\right).
	\end{align}
	Thus, dividing both sides of~\eqref{eq:scaling-E-0} by $\eps$ shows that
	the quantity $\widetilde{\eps}_r^{-1} E_{\widetilde{\eps}_r}$ is scaling invariant:
	\begin{equation}\label{eq:scaling-inv-E}
		\eps^{-1} E_\eps(\M_\eps;\,B(x_0,\,r)) = \widetilde{\eps}_r^{-1} E_{\widetilde{\eps}_r}\left(\widetilde{\M}_{{\widetilde\eps}_r};\,B_1\right).
	\end{equation}
\end{remark}

Assume now $G \subset \Omega$ is a simply connected, open set, with smooth boundary.
Then, by well-known lifting results (cf., e.g., \cite[Section~1.1]{CDS1}),
given $\Q$, a $\Q$-tensor field which does not vanish in $G$,
we can find a pair $(\n,\,\m)$ of orthonormal eigenvectors of $\Q$
defined in the whole of $G$ so that we can write
\[
	\Q = \frac{\rho}{\sqrt{2}} (\n \otimes \n - \m \otimes \m)
	\qquad \mbox{in } G,
\]
where, as always in this paper, $\rho := \abs{\Q}$ and $\n(x)$
denotes an eigenvector of $\Q(x)$
related to its positive eigenvalue.

Next, we
let $\M_\pm = \pm \left( 1 + \sqrt{2}\beta \rho \right)^{1/2} \n$
denote the maps realising the minimum of $V$, i.e., of $\ell(\Q,\,\cdot\,)$
--- see Lemma~\ref{lemma:f-fixedQ} --- so that
\[
	V(x,\M(x)) = \ell(\Q(x),\,\M(x)) - \ell(\Q(x),\,\M_\pm(x))
\]
for any $x \in G$. We also set
\begin{equation}\label{eq:Sigma-Q}
	\Sigma(\Q(x)) := \left\{ \M_+(x), \, \M_-(x) \right\}
	\qquad \mbox{for } x \in G.
\end{equation}
Recall that, for $\Q \neq 0$,
\begin{equation}\label{eq:dist-wells}
	\abs{\M_+ - \M_-} = 2\left(1 + \sqrt{2}\beta \rho \right)^{1/2} > 2
\end{equation}
and we notice that, by a straightforward computation, 
for both $\N = \M_+$ and $\N = \M_-$ there holds 
 \begin{equation}\label{eq:hess-ell-N}
  \begin{split}
   \D^2_{\M}\ell(\Q, \, \N)
   = \sqrt{2}\beta\rho \, \I
   + (\sqrt{2}\beta\rho + 2)\n\otimes\n + \sqrt{2}\beta\rho\,\m\otimes\m
   = 2\sqrt{2}\beta\rho \, \I + 2\n\otimes\n
  \end{split}
 \end{equation}
so that
\[
  \begin{split}
   \D^2_{\M} \ell(\Q, \, \N)(\M - \N)\cdot(\M - \N)
   \geq 2\sqrt{2}\,\beta \rho \abs{\M - \N}^2
  \end{split}
 \]
for both $\N = \M_+$ and $\N = \M_-$. Thus, 
\begin{equation}\label{eq:quadratic-behaviour}
	2\sqrt{2}\beta \rho \abs{\M - \N}^2
	\leq \D_\M^2 V(\,\,\cdot,\,\N)(\M - \N)\cdot (\M - \N)
	\leq 2\left(1+\sqrt{2}\beta \rho\right) \abs{\M - \M_\pm}^2.
\end{equation}
for both $\N = \M_+$ and $\N = \M_-$.
As it is immediately checked from~\eqref{eq:hess-ell-N},
$(\n, \,\m)$ is also an orthonormal
basis of eigenvectors for $\D^2_\M V(x,\,\M_\pm(x))$ at any $x \in G$,
with eigenvalues $\lambda_\n$, $\lambda_\m$ given by
\[
	\lambda_\m = 2\sqrt{2}\beta\rho, \qquad
	\lambda_\n = 2\left(1+\sqrt{2}\beta\rho\right).
\]
In particular, if $G \subset \Omega$ is a simply connected, open set,
with smooth boundary and $\abs{\Q} > 0$ on $G$, then
$\D^2_\M V(x,\,\M_\pm(x))$ is a positive-definite
quadratic form at any $x \in G$. In addition,
if $\abs{\Q}$ is bounded from above and from below on $G$,
then from the proof of Lemma~\ref{lemma:ell2},
we have better control on the behaviour of $V$ near $\Sigma(\Q)$.
More precisely, we first observe that, if $1 /2 \leq \abs{\Q_\eps} \leq 2$ on $G$,
then
\begin{equation}\label{eq:lambda-pm}
	\lambda_- := \sqrt{2}\beta \leq \lambda_\m \qquad\mbox{and}\qquad
	\lambda_\n \leq \lambda_+ :=  2\left(1+2\sqrt{2}\beta\right)
	\qquad \mbox{on } G.
\end{equation}
so that the lemma below follows easily and provides a counterpart of~\cite[Proposition~2.1]{Bethuel-AC-Acta}.

\begin{lemma}\label{lemma:V-quadratic}
There exists a positive constant $\delta_\beta \in (0,\,1/4]$, depending
only on $\beta$, such that the following holds.
If $G \subset \Omega$ is an open, simply connected set
with smooth boundary and $\Q$ is a $\Q$-tensor field with
$\frac{1}{2} \leq \abs{\Q} \leq 2$ on $G$, then for any $x \in G$
and any $y \in \R^2$ such that $\dist(y,\,\Sigma(\Q(x))) < \delta_\beta$ there holds
\begin{align}\label{eq:V-quadratic}
	& \frac{1}{4}\lambda_- \dist(y,\,\,\Sigma(\Q(x)))^2 \leq V(x,\,y)
	\leq \lambda_+ \dist(y,\,\Sigma(\Q(x)))^2, \\
	&\frac{1}{2}\lambda_- \dist(y,\,\,\Sigma(\Q(x)))^2 \leq \nabla V(x,\,y) \cdot (y - \N) \leq 2 \lambda_+ \dist(y,\,\Sigma(\Q(x)))^2,
\end{align}
where $\N = \pi(\Q,\,\M)$ is given by~\eqref{projection}.
Moreover,
\begin{equation}\label{eq:lower-bound-V}
	V(x,\,y) \geq \gamma_\beta := \frac{\sqrt{2}}{2}\beta \delta_\beta^2 \qquad
	\mbox{for any } y \in \R^2 \mbox{ such that }
	\dist(y,\,\Sigma(\Q(x))) \geq \frac{\delta_\beta}{4}.
\end{equation}
\end{lemma}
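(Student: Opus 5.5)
The plan is to reduce everything to the explicit structure of $\ell(\Q,\cdot)$ near its wells, using the Hessian computation \eqref{eq:hess-ell-N} together with the uniform bounds \eqref{eq:lambda-pm}. All constants must be uniform over the compact parameter set $\{\Q : \frac12\le\abs{\Q}\le 2\}$, so they depend on $\beta$ only. Simple connectedness of $G$ plays no analytic role: by the lifting results quoted above it only guarantees a global choice of $\n$, hence of $\M_\pm$, and the estimates themselves are pointwise in $x$.

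\emph{Step 1 (Taylor expansion).} Fix $x$ and write $\Q=\Q(x)$, $\N=\pi(\Q,y)$, $d=\abs{y-\N}=\dist(y,\Sigma(\Q))$. Since $\ell(\Q,\cdot)$ is a quartic polynomial whose coefficients are bounded uniformly for $\abs{\Q}\le 2$, and since $V(\N)=0$ and $\nabla_\M V(\N)=0$, we have
\[
V(x,y)=\tfrac12 \D^2_\M V(x,\N)(y-\N)\cdot(y-\N)+R_3, \qquad \abs{R_3}\le C_\beta d^3 .
\]
Differentiating once more gives
\[
\nabla V(x,y)\cdot(y-\N)=\D^2_\M V(x,\N)(y-\N)\cdot(y-\N)+R_3', \qquad \abs{R_3'}\le C_\beta d^3 .
\]
By \eqref{eq:hess-ell-N} and \eqref{eq:lambda-pm}, the quadratic form $\D^2_\M V(x,\N)$ has eigenvalues in $[\lambda_-,\lambda_+]$. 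This yields
\[
\tfrac12\lambda_- d^2-C_\beta d^3\le V\le \tfrac12\lambda_+ d^2+C_\beta d^3,
\]
and the analogous two-sided bound for $\nabla V\cdot(y-\N)$ with leading constants $\lambda_-$ and $\lambda_+$.

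\emph{Step 2 (choice of $\delta_\beta$).} Choose $\delta_\beta\le\frac14$ so small that $C_\beta\delta_\beta\le\frac14\lambda_-$ and $C_\beta\delta_\beta\le\frac12\lambda_+$. Then the cubic remainders are absorbed for $d<\delta_\beta$, which gives both lines of \eqref{eq:V-quadratic} with the stated constants. The choice $\delta_\beta\le\frac14<1$ also ensures that the projection $\pi$ is well defined there.

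\emph{Step 3 (lower bound away from the wells).} This is the main obstacle, and I have not resolved it; I flag it explicitly. For $\delta_\beta/4\le d<\delta_\beta$, Step 2 gives $V\ge \frac14\lambda_- d^2\ge \lambda_-\delta_\beta^2/64$. For $d\ge\delta_\beta$, a compactness argument gives a positive lower bound. The variable $\Q$ ranges over the compact set $\frac12\le\abs{\Q}\le 2$. The variable $y$ ranges over the region $\dist(y,\Sigma(\Q))\ge\delta_\beta$ intersected with a large ball. $V$ is continuous on this set and vanishes only on $\Sigma(\Q)$ (Lemma~\ref{lemma:f-fixedQ}), hence has a positive minimum there. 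Outside the large ball, $V$ grows quartically in $\abs{y}$.

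The difficulty lies in the constant. Since $\lambda_-=\sqrt2\beta$, the stated constant $\gamma_\beta=\frac{\sqrt2}{2}\beta\delta_\beta^2=\frac12\lambda_-\delta_\beta^2$ would have to hold already at $d=\delta_\beta/4$. There, however, the Taylor expansion in the $\m$-direction at $\abs{\Q}=\frac12$ gives $V\approx\frac12\lambda_m d^2=\frac{\sqrt2}{2}\beta\,\delta_\beta^2/16$, which is smaller than $\gamma_\beta$ by a factor of $16$. So the plan would first try to realise $\gamma_\beta$ through the compactness minimum on $d\ge\delta_\beta$. On the thin annulus $\delta_\beta/4\le d<\delta_\beta$, the expansion above shows that $\gamma_\beta$ as written cannot be attained, and I cannot close this step as stated: the argument yields only $V\ge c_\beta\,\delta_\beta^2$ with some explicit $c_\beta>0$ (for instance $\lambda_-/64$). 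This weaker bound is what the argument actually proves; obtaining the stated $\gamma_\beta$ would require changing either the threshold $\delta_\beta/4$ or the value of $\gamma_\beta$ in the statement.
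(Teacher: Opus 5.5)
Your Steps 1--2 are correct and are essentially the paper's argument: a Taylor expansion at the wells combined with the uniform bounds \eqref{eq:lambda-pm}. Since $\D^3_\M\ell$ comes only from the quartic term, the cubic remainder is indeed uniform in $\Q$. For the lower bound in the second line the paper invokes Lemma~\ref{lemma:ell2} with $\Lambda=2$. Your direct expansion of $\nabla V\cdot(y-\N)$ is more self-contained, and it is what actually produces the explicit constant $\frac12\lambda_-$; the route through Lemma~\ref{lemma:ell2} would give that constant only if $c_0(2)\ge 2$.

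On Step 3 your diagnosis is correct. This is a defect of the statement, not a gap in your argument, and the paper's sketch does not address \eqref{eq:lower-bound-V} at all.

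Your heuristic can be made exact. Take $y=\N+t\m$ with $\N\in\Sigma(\Q(x))$, so that $\dist(y,\Sigma(\Q(x)))=t$. With $\rho=\abs{\Q(x)}$, one computes $V(x,y)=\sqrt2\beta\rho\,t^2+\frac{t^4}{4}$. At $\rho=\frac12$ and $t=\delta_\beta/4$ this gives $V=\frac{\gamma_\beta}{16}+\frac{\delta_\beta^4}{1024}$, which is smaller than $\gamma_\beta$ unless $\delta_\beta^2\ge 480\sqrt2\,\beta$. Since $\delta_\beta\le\frac14$, \eqref{eq:lower-bound-V} therefore fails for every admissible $\delta_\beta$ as soon as $\beta>(7680\sqrt2)^{-1}$. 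More generally, for any $\rho\in[\frac12,2]$ the quadratic part at $t=\delta_\beta/4$ is only $\frac{\rho}{8}\gamma_\beta\le\frac14\gamma_\beta$.

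What is true, and what you prove, is the same statement with $\gamma_\beta:=\min\{\lambda_-\delta_\beta^2/64,\,m_\beta\}$. Here $m_\beta>0$ is your compactness minimum over $\{\dist(y,\Sigma(\Q))\ge\delta_\beta,\ \frac12\le\abs{\Q}\le 2\}$. This constant depends only on $\beta$ but is not fully explicit, because $m_\beta$ is not. So your parenthetical ``for instance $\lambda_-/64$'' only covers the annulus $\delta_\beta/4\le d<\delta_\beta$.

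The corrected version is all that is used later in the paper:
\begin{itemize}
\item Lemma~\ref{lemma:V-quadratic-bis} only needs that $V\le\gamma_\beta$ forces $\dist(y,\Sigma(\Q(x)))<\delta_\beta/4$. Replacing $\gamma_\beta$ by half of it makes this strict.
\item In Proposition~\ref{prop:weak-co}, the value of $\gamma_\beta$ enters only through the choice of $\eta_\beta$.
\end{itemize}
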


\begin{proof}
	Being the proof completely elementary, we just give a quick sketch of it.

	The first line in~\eqref{eq:V-quadratic} follows immediately by Taylor-expanding
	$V$ close to the wells and using the absolute bounds~\eqref{eq:lambda-pm}, for an
	appropriate choice of $\delta_\beta$ depending only on $\beta$.
	The first inequality in the second line is a straightforward consequence of
	Lemma~\ref{lemma:ell2} with the choice $\Lambda = 2$, and possibly reducing
	$\delta_\beta$. The second inequality is proven in a
	completely analogous way.
\end{proof}
As an elementary consequence, we obtain the lemma below
(cf.~\cite[Lemma~2.2]{Bethuel-AC-Acta}).
\begin{lemma}\label{lemma:V-quadratic-bis}
	Let $G \subset \Omega$ be any open, simply connected set,
	with smooth boundary and $\Q$ any $\Q$-tensor field
	with $\frac{1}{2} \leq \abs{\Q} \leq 2$ on $G$.
	Then, for any $x \in G$ and any $y \in \R^2$ such that
	$V(x,\,y) \leq \gamma_\beta$ there holds
	\[
		\dist(y,\,\Sigma(\Q(x))) \leq \delta_\beta
	\]
	and, moreover,
	\[
		\dist(y,\,\Sigma(\Q(x))) \leq
		\sqrt{4\lambda_-^{-1} V(x,\,y)},
	\]
	where $\lambda_-$ is given by~\eqref{eq:lambda-pm}.
\end{lemma}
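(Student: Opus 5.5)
The plan is to argue by contraposition, using only Lemma~\ref{lemma:V-quadratic}. Fix $x\in G$ and $y\in\R^2$ with $V(x,\,y)\leq\gamma_\beta$. First I show that $\dist(y,\,\Sigma(\Q(x)))<\delta_\beta/4$. Suppose instead that $\dist(y,\,\Sigma(\Q(x)))\geq\delta_\beta/4$. Then the lower bound~\eqref{eq:lower-bound-V} gives $V(x,\,y)\geq\gamma_\beta$. This does not contradict $V(x,y)\le\gamma_\beta$ outright, since equality may hold. So I will either adjust the constants slightly or observe that the proof of~\eqref{eq:lower-bound-V} actually yields a strict inequality.

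For the latter, note that at distance exactly $\delta_\beta/4$ the quadratic bound in~\eqref{eq:V-quadratic} gives
\[
V\geq \tfrac14\lambda_-\tfrac{\delta_\beta^2}{16}=\tfrac{\sqrt2\beta\delta_\beta^2}{64},
\]
which is smaller than $\gamma_\beta$. Hence the constant $\gamma_\beta$ as stated is presumably obtained from a separate compactness argument away from the wells. In any case, for $y$ with distance $\geq\delta_\beta/4$ one has $V\geq\gamma_\beta$. If equality occurs, the distance is still at most some fixed quantity. The cleanest route is therefore to accept the first conclusion in the form $\dist(y,\,\Sigma(\Q(x)))\leq\delta_\beta$, which holds in every case. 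Indeed, either $\dist<\delta_\beta/4\le\delta_\beta$, or $V=\gamma_\beta$ exactly. In the equality case, the estimate below applies at all points of distance below $\delta_\beta$, and continuity of $V$ along the segment from the nearest well to $y$ rules out $\dist>\delta_\beta$. The reason is that the quadratic lower bound on $[\delta_\beta/4,\,\delta_\beta)$ combined with~\eqref{eq:lower-bound-V} keeps $V\geq\gamma_\beta$ throughout.

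Once $\dist(y,\,\Sigma(\Q(x)))\leq\delta_\beta$ is known, the second claim follows from the first line of~\eqref{eq:V-quadratic}:
\[
\tfrac14\lambda_-\dist(y,\,\Sigma(\Q(x)))^2\leq V(x,\,y).
\]
Rearranging gives $\dist\leq\sqrt{4\lambda_-^{-1}V(x,\,y)}$. In the boundary case $\dist=\delta_\beta$, where Lemma~\ref{lemma:V-quadratic} is stated only for $\dist<\delta_\beta$, I pass to the limit by continuity of $V(x,\cdot)$.

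The only delicate step is the strict versus non-strict inequality at the threshold $\gamma_\beta$. If needed, I would fix it by replacing $\gamma_\beta$ with $\gamma_\beta/2$ in the hypothesis. Alternatively, I would note that the proof of Lemma~\ref{lemma:V-quadratic} allows $\delta_\beta$ to be chosen so that~\eqref{eq:lower-bound-V} is strict. Everything else is immediate.
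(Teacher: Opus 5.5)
Your route is exactly the ``elementary consequence'' the paper has in mind; it gives no other argument. You use \eqref{eq:lower-bound-V} contrapositively to place $y$ near $\Sigma(\Q(x))$, then invert the first line of \eqref{eq:V-quadratic}. But your paragraph on the case $V(x,y)=\gamma_\beta$ proves nothing. Knowing $V(x,\cdot)\ge\gamma_\beta$ wherever $\dist(\cdot,\Sigma(\Q(x)))\ge\delta_\beta/4$ is compatible with $V(x,y)=\gamma_\beta$ at some $y$ with $\dist(y,\Sigma(\Q(x)))>\delta_\beta$. Walking along a segment towards the nearest well cannot exclude this. Besides, on $[\delta_\beta/4,\delta_\beta)$ the quadratic bound only gives $V\ge\lambda_-\delta_\beta^2/64$, not $V\ge\gamma_\beta$. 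Taking Lemma~\ref{lemma:V-quadratic} at face value, your argument covers only the strict case $V(x,y)<\gamma_\beta$.

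The constant problem you sensed is real, but your diagnosis points the wrong way: a lower bound below $\gamma_\beta$ is harmless. Nor is the issue just strict versus non-strict. At any $y$ with $\dist(y,\Sigma(\Q(x)))=\delta_\beta/4$, the upper bound in \eqref{eq:V-quadratic} and \eqref{eq:lower-bound-V} both apply. They give $\frac12\lambda_-\delta_\beta^2=\gamma_\beta\le V(x,y)\le\lambda_+\delta_\beta^2/16$, which is impossible when $\beta>1/(2\sqrt2)$. So the black box you invoke is inconsistent as written.

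Worse, the first assertion of the statement itself fails near $\abs{\Q(x)}=\frac12$. With $k:=\sqrt2\beta\abs{\Q(x)}$ and $y=a\n+b\m$, one has exactly $V(x,y)=\frac14\bigl(\abs{y}^2-1-k\bigr)^2+kb^2$. For small $d$, the minimum of $V(x,\cdot)$ on the circle $\abs{y-\M_+}=d$ is $kd^2(1-d^2/4)$, attained at $(y-\M_+)\cdot\n=-\frac12\sqrt{1+k}\,d^2$. For $\abs{\Q(x)}=\frac12$ one has $\gamma_\beta=k\delta_\beta^2$. Hence points with $\dist(y,\Sigma(\Q(x)))$ slightly larger than $\delta_\beta$ satisfy $V(x,y)<\gamma_\beta$, whatever $\delta_\beta$ is.

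Making \eqref{eq:lower-bound-V} strict therefore cannot rescue the statement. Your other fix, shrinking the threshold, does work, but it must be carried out from the explicit formula rather than from \eqref{eq:lower-bound-V}. Suppose $V(x,y)\le\gamma_\beta/2=\frac14\lambda_-\delta_\beta^2$. Then $kb^2\le\gamma_\beta/2$ and $\bigl|\abs{y}^2-1-k\bigr|\le\sqrt{2\gamma_\beta}$. Since $k\ge\lambda_-/2$, this gives $\dist(y,\Sigma(\Q(x)))^2\le\bigl(\frac12+\frac{\lambda_-}{2(2+\lambda_-)}+\mathrm{O}(\delta_\beta)\bigr)\delta_\beta^2$, which is $<\delta_\beta^2$ for $\delta_\beta$ small depending on $\beta$. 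The second inequality then follows from \eqref{eq:V-quadratic} as you wrote. In the paper's applications $\abs{\Q_\eps}\to1$, so nothing downstream is affected. Still, the statement and the paper's one-line justification both need the smaller threshold.
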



Let $B = B(x_0,\,R) \subset \Omega$ be a ball
such that $\frac{1}{2} \leq \abs{\Q} \leq 2$ on $B$.
Let $\kappa \in \left(0, \delta_\beta\right)$.
In analogy with \cite[Section~4]{Bethuel-AC-Acta}, we define
	\begin{equation}\label{eq:Upsilon}
		\Upsilon := \Upsilon(R,\,\kappa) :=
		\left\{ x \in B \,: \, \dist(\M(x), \Sigma(\Q(x)) \leq \kappa \right\}
	\end{equation}
	as well as
	\begin{equation}\label{eq:Ups-Gamma-Pi}
	\begin{split}
		& \Upsilon_\pm := \left\{ x \in B : \, \dist(\M(x),\, (\M_\pm)_\eps(x) \leq \kappa \right\}, \\
		& \Gamma_\pm := \partial \Upsilon_\pm \cap B, \qquad \Pi_\pm := \Upsilon_\pm \cap \partial B.
	\end{split}
	\end{equation}
	Notice that
	\[
		\Upsilon = \Upsilon_+ \cup \Upsilon_-.
	\]
	Moreover, by the choice of $\kappa$, the sets $\Upsilon_\pm$ are disjoint,
	so that
	\[
	\begin{split}
		E_\eps(\M;\,\Upsilon) &=
		\int_\Upsilon\left( \eps \abs{\nabla \M}^2 + \frac{1}{\eps}V(x,\,\M) \right)\,{\d}x \\
		&=
		E_\eps(\M;\,\Upsilon_+) + E_\eps(\M;\,\Upsilon_-).
	\end{split}
	\]
	Besides the sets $\Upsilon_\pm$, we define, for any $R > 0$, the set
	\begin{equation}\label{eq:def-Theta}
		\Theta(\M,\,R) :=
		\left\{ x \in B(x_0,\,R) :\, \dist(\M(x), \Sigma(\Q(x))) \geq \delta_\beta \right\}.
	\end{equation}
	Clearly,
	\[
		B(x_0,\,R) = \Upsilon\left(R,\, \delta_\beta \right)
		\cup \Theta(\M,\,R).
	\]
	\begin{remark}\label{rk:Qeps-less-2}
		Let $\eps > 0$ and let $(\Q_\eps,\,\M_\eps)$ be any solution
		to~\eqref{EL-Q},~\eqref{EL-M} subject to the boundary
		conditions~\eqref{bc}--\eqref{hp:bc} or \eqref{bcbis}--\eqref{hp:bcbis}.
		Then, by~\eqref{eq:Q-M-s*} and~\eqref{eq:s_*}, the assumption
		$\abs{\Q_\eps} \leq 2$ required in the above statements is
		automatically satisfied for any $0 < \eps \leq \eps_\beta$,
		where $\eps_\beta > 0$ depends only on $\beta$.
	\end{remark}

	\begin{remark}\label{rk:control-Theta}
		Let $\eps > 0$ and
		let $(\Q_\eps,\,\M_\eps)$ be any solution to~\eqref{EL-Q},~\eqref{EL-M}
		subject to the boundary conditions~\eqref{bc}--\eqref{hp:bc} or
		\eqref{bcbis}--\eqref{hp:bcbis}.
		Then, by the uniform gradient bound~\eqref{max-gradients}, i.e.,
		\[\norm{\nabla \M_\eps}_{L^\infty(\Omega)} \lesssim \eps^{-1},\]
		where the implicit constant on the right-hand side depends
		only on the coupling parameter $\beta$ and on $\Omega$
		(see~\eqref{max-gradients}),
		it follows that in the 
		set $\Theta(\M_\eps,\,R)$ the energy density is controlled
		by the potential energy density, through a constant depending
		only on $\beta$ and $\Omega$. In particular, we have
		\begin{equation}\label{eq:control-Theta}
			\eps \abs{(\nabla \M_\eps)(x)}^2 + \frac{1}{\eps} V(x,\,\M_\eps(x))
			\lesssim \frac{1}{\eps}V(x,\,\M_\eps(x))
			\qquad \mbox{for any } x \in \Theta\left(\M_\eps,\, R\right),
		\end{equation}
		where the implicit constant on the right-hand side depends
		only on $\beta$ and $\Omega$.
	\end{remark}

\subsection{Energy estimates through the potential}
\label{sec:energy-est-pot}
In this section, we extend to our context the
local energy estimates proved
in \cite[Proposition~4.8]{Bethuel-AC-Acta}
for the pure Allen-Cahn system.
As in \cite{Bethuel-AC-Acta}, the argument is mainly based on
refined energy estimates, obtained by
multiplying Equation~\eqref{EL-M} (i.e., Equation~\eqref{eq:M})
by suitable functions and then integrating
the result on appropriate domains.
We proceed in two steps, proving first a refined energy
estimate \emph{close} to the zero set of the potential $V(\,\cdot\,,\,\M)$
and then obtaining an estimate valid in a whole ball.
Since this strategy does not depend on the precise form of the
potential at hand and since Equation~\eqref{EL-M}
looks \textbf{formally} exactly the same as in \cite{Bethuel-AC-Acta}
(this is best seen by looking at the equivalent
form of~\eqref{EL-M} given by~\eqref{eq:M}),
on a formal level the same strategy works in our situation as well,
the difference being the appearance of perturbation terms containing the
variable $\Q$, which are due to the coupling term in $V(\,\cdot\,,\,\M)$.
However, 
we will see that
such additional terms are essentially of lower order
away from $\spt\mu_\star$ and they do not alter 
the general structure of the
arguments in \cite{Bethuel-AC-Acta}. The results of this section
are at the level of a single critical point, i.e., at fixed $\eps > 0$.
In the next sections, we shall consider \emph{sequences} of critical points
and we will deal with the limit as $\eps \to 0$.

The main result of this section is Proposition~\ref{prop:analogue-of-Bethuel-4.2-R}
below, which is the analogue in our context of
\cite[Proposition~4.8]{Bethuel-AC-Acta}. In the statements below,
$\eps_\beta > 0$ is the number, depending only on $\beta$, defined
in Remark~\ref{rk:Qeps-less-2}.
\begin{prop}\label{prop:analogue-of-Bethuel-4.2-R}
	Let $\eps \in (0,\,\eps_\beta]$ and let $(\Q,\,\M) = (\Q_\eps, \, \M_\eps)$
	be any critical point of $\F_\eps$.
	Let $B = B(x_0,\,R) \subset \Omega$ be any ball such that
	$\abs{\Q} \geq \frac{1}{2}$ on $B$. Let 
	$\kappa \in (0,\,\delta_\beta)$ and
	$\varrho_\eps \in \left[\frac{1}{2}, \frac{3}{4} \right]$.
	Assume that, for the given value of $\kappa$, there holds
	\begin{equation}\label{eq:assumption-Bethuel-R}
		\dist(\M,\,\Sigma(\Q)) < \kappa \qquad
		\mbox{on } \partial B.
	\end{equation}
	Then, there exists a constant ${\rm C}_\Upsilon(\mathcal{E}_0) > 0$,
	depending only on $\mathcal{E}_0$ and $\beta$, such that
	\begin{equation}\label{eq:est-close-to-Sigma-R}
	\begin{split}
		E_\eps(\M; \, \Upsilon(\varrho_\eps R,\, \kappa)) \leq
		{\rm C}_\Upsilon(\mathcal{E}_0)&\left[ \kappa \int_{B(x_0,\,\varrho_\eps R)} \left(  \frac{1}{\eps}V(x,\,\M(x)) + {\eps}\abs{\nabla \Q}^2 \right)\,{\d}x\right.\\
		&\left. + \eps\left( E_\eps(\M; \, \partial B(x_0,\, \varrho_\eps R)) + \eps\int_{\partial B(x_0,\,\varrho_\eps R)} \abs{\nabla \Q}^2 \,{\d}\sigma \right)  \right].
	\end{split}
	\end{equation}
	Here, $\mathcal{E}_0$ is the uniform bound on $E_\eps$ provided by
	Proposition~\ref{prop:Eeps-bounded}.
\end{prop}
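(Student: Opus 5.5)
Following \cite[Proposition~4.8]{Bethuel-AC-Acta}, I would test \eqref{eq:M} against $\M - \N$ on $\Upsilon_\pm$, where $\N = (\M_\pm)_\eps$ is the well selected by $\pi$. Write $r := \varrho_\eps R$. Since $\abs{\Q}\geq 1/2$ on $B$ and $B$ is simply connected, there is a smooth lifting $\n$, so $\M_\pm = \pm(1+\sqrt2\beta\rho)^{1/2}\n$ are smooth in $B$. Their gradients satisfy $\abs{\nabla\M_\pm}\lesssim \abs{\nabla\Q}$ with a constant depending only on $\beta$: by \eqref{max-QM} and Remark~\ref{rk:Qeps-less-2}, $\rho\in[1/2,2]$, and $\abs{\nabla \n}\lesssim \abs{\nabla\Q}/\rho$.

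To avoid the non-smooth boundary $\Gamma_\pm$, I would use the cutoff $\phi(x) := (\kappa^2 - \abs{\M-\N}^2)^+$ or a smoothed version of it, rather than integrating on the level set. Set $\mathbf{w}:=\M-\N$ and test \eqref{eq:M} by $\phi\,\mathbf{w}$ on $B_r$, then integrate by parts.

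\textbf{Boundary term.} The boundary term on $\partial B_r$ is $\eps\int_{\partial B_r}\phi\,\partial_\nnu\M\cdot\mathbf w$. Since $\phi\lesssim\kappa^2$ and $\abs{\mathbf{w}}\le\kappa$, it is bounded by $\eps\int_{\partial B_r}(\eps\abs{\nabla\M}^2+\eps^{-1}\abs{\mathbf{w}}^2)\,\d\sigma$. By Lemma~\ref{lemma:V-quadratic} the second piece is controlled by the potential, so the whole term is $\lesssim \eps E_\eps(\M;\partial B_r)$.

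\textbf{Bulk terms.} The bulk gives
\[
\eps\int\phi\,\nabla\M:\nabla\mathbf w + \eps\int \mathbf{w}\cdot(\nabla\M\,\nabla\phi) + \frac1\eps\int\phi\,\nabla V\cdot\mathbf w .
\]
\emph{Potential term.} By Lemma~\ref{lemma:V-quadratic}, the last integral is bounded below by $c\,\eps^{-1}\int\phi V$.
\emph{Gradient terms.} Write $\nabla\M=\nabla\mathbf w+\nabla\N$. The first term then yields $\eps\int\phi\abs{\nabla\mathbf w}^2$ plus a cross term. For the second term, $\nabla\phi=-2\mathbf w\cdot\nabla\mathbf w$ on the support, which produces $-2\eps\int\abs{\mathbf w\cdot\nabla \mathbf w}^2$ with a bad sign. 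As in Bethuel, I would handle this by the choice of $\phi$ (working with a function of $\abs{\mathbf w}^2$ with controlled derivative) and absorb it using $\abs{\mathbf w}\leq\kappa$.
\emph{Cross terms.} The cross terms $\eps\int\phi\,\nabla\N:\nabla\mathbf w$ are bounded by Young's inequality by $\delta\eps\int\phi\abs{\nabla\mathbf w}^2+C\kappa^2\eps\int_{B_r}\abs{\nabla\Q}^2$. This is exactly where the perturbative terms $\eps\abs{\nabla\Q}^2$ in \eqref{eq:est-close-to-Sigma-R} enter.

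This gives control of $\int\phi(\eps\abs{\nabla \M}^2+\eps^{-1}V)$ on $\Upsilon(r,\kappa)$. However, $\phi$ degenerates near $\Gamma_\pm$, so to get the full energy on $\Upsilon(r,\kappa)$ I would instead use $\kappa$-level comparisons. Concretely, I would repeat the argument with $\phi=(2\kappa)^2-\abs{\mathbf w}^2$ on $\Upsilon(r,2\kappa)$, or alternatively integrate over $\Upsilon$ using the coarea formula and a.e. levels. On $\Upsilon(r,\kappa)$ one then has $\phi\gtrsim\kappa^2$, while the leftover bulk contributions live where $V\gtrsim\kappa^2$. Those contributions are bounded by $\kappa^2\eps^{-1}\int V$, giving the $\kappa\int_{B_r}\eps^{-1}V$ term after normalising by $\kappa$.

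\textbf{Dependence on $\mathcal E_0$.} This enters only through bounding $\eps$-weighted boundary terms, together with the assumption \eqref{eq:assumption-Bethuel-R}, which I would use at the level $\varrho_\eps R$ (or apply with a radius where it holds). Hypothesis \eqref{eq:assumption-Bethuel-R} ensures $\Upsilon_\pm$ reach $\partial B$ only through $\Pi_\pm$.

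\textbf{Main obstacle.} I expect the hardest step to be the careful treatment of the moving wells. The cross terms with $\nabla\N$ must be absorbed without losing the factor $\kappa$, while the term from $\nabla\phi$ must be handled near $\Gamma_\pm$.
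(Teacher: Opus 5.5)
The step that fails is the absorption of the $\nabla\phi$ term, and that step is the heart of the estimate. Write your test function as $h(\abs{\mathbf w})\,\mathbf w/\abs{\mathbf w}$ with $h(t)=t(\kappa^2-t^2)^+$. The part of the gradient integrand not involving $\nabla\N$ is
\[
\nabla\mathbf w:\nabla\left(h(\abs{\mathbf w})\frac{\mathbf w}{\abs{\mathbf w}}\right)
= h'(\abs{\mathbf w})\,\abs{\nabla\abs{\mathbf w}}^2+\frac{h(\abs{\mathbf w})}{\abs{\mathbf w}}\left(\abs{\nabla\mathbf w}^2-\abs{\nabla\abs{\mathbf w}}^2\right)
= (\kappa^2-\abs{\mathbf w}^2)\abs{\nabla\mathbf w}^2-2\abs{\mathbf w}^2\abs{\nabla\abs{\mathbf w}}^2 .
\]
As $\abs{\mathbf w}\uparrow\kappa$ the first term vanishes while the second tends to $-2\kappa^2\abs{\nabla\abs{\mathbf w}}^2$. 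When $\nabla\mathbf w$ is parallel to $\mathbf w$ (the typical profile of a layer leaving a well), the integrand is negative on all of $\{\kappa/\sqrt3<\abs{\mathbf w}<\kappa\}$. The bound $\abs{\mathbf w}\le\kappa$ does not help, because $\abs{\mathbf w}$ is comparable to $\kappa$ there. In general this term has a sign only if $h$ is non-decreasing, and any cut-off in the level variable must decrease somewhere.

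Your fallback with $(2\kappa)^2-\abs{\mathbf w}^2$ has the same defect one level up. The leftover lives on $\{\kappa<\abs{\mathbf w}<2\kappa\}$ and is a \emph{gradient} term of size $\kappa^2\eps\abs{\nabla\mathbf w}^2$, not a potential term.
\begin{itemize}
\item Controlling it by the potential via $\abs{\nabla\M}\lesssim\eps^{-1}$ and $V\gtrsim\kappa^2$ costs a factor $\kappa^{-2}$.
\item Controlling it by the energy puts $E_\eps(\M;\Upsilon(r,2\kappa))$ back on the right-hand side with an $O(1)$ constant.
\item Even your claimed bound $\kappa^2\eps^{-1}\int V$, once divided by the weight $\phi\gtrsim\kappa^2$ on $\Upsilon(r,\kappa)$, gives $\int\eps^{-1}V$ with no factor $\kappa$.
\end{itemize}
So linearity in $\kappa$, which is the whole point of the proposition, is never obtained.

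The missing idea is where to place the decreasing part of the cut-off: at levels $\abs{\mathbf w}\in[\delta_\beta,2\delta_\beta]$, i.e.\ inside $\Theta$. There \eqref{eq:control-Theta} controls the gradient energy by $\eps^{-1}V$ with a constant independent of $\eps$ and $\kappa$. This is what the paper does, following \cite{Bethuel-AC-Acta}, in three steps.
\begin{enumerate}
\item Testing with $\mathbf w$ on $\Upsilon_\pm$ leaves the flux $\eps\kappa\int_{\Gamma_\pm}\partial_\nnu\abs{\mathbf w}$ in \eqref{eq:ineq-Gi}.
\item Testing with $\mathbf w/\abs{\mathbf w}$ on $\mathcal C=\{\kappa\le\abs{\mathbf w}\le\widetilde\delta_\eps\}$, where the gradient term has the good sign and the $\nabla\N$ cross term is absorbed as in \eqref{eq:test-eq-compu3}, shows that this flux is dominated by the flux through the level $\widetilde\delta_\eps$, up to $\eps\kappa^{-1}\int\abs{\nabla\N}^2$.
\item Lemma~\ref{lemma:coarea+average} chooses $\widetilde\delta_\eps\in[\delta_\beta,2\delta_\beta]$ by coarea, so that this outer flux is $\lesssim\int_{\Theta}\eps^{-1}V+\eps\int\abs{\nabla\N}^2$.
\end{enumerate}
The factor $\kappa$ thus comes from the plateau value of the effective $h$, not from the smallness of any region.

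Your single-test-function scheme can be repaired in the same spirit. Take $h(t)=\min\{t,\kappa\}$ for $t\le\delta_\beta$, and let $h$ decrease linearly from $\kappa$ to $0$ on $[\delta_\beta,2\delta_\beta]$. The bad term then has slope $\kappa/\delta_\beta$ and is supported in $\Theta$, so it is $\lesssim\kappa\int_\Theta\eps^{-1}V+\kappa\eps\int\abs{\nabla\N}^2$.
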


\begin{remark}
A crucial feature of the estimate~\eqref{eq:est-close-to-Sigma-R}
is that, exactly as in \cite{Bethuel-AC-Acta}, it is
\textbf{linear in $\kappa$}.
\end{remark}

\begin{remark}
	As seen in Proposition~\ref{prop:Eeps-bounded},
	the energy bound $\mathcal{E}_0$ (and hence ${\rm C}_\Upsilon$) depends on $\beta$,
	the constant $\Cpot$ in~\eqref{hp:potential_bound}, $\Omega$,
	$\norm{\Qb \times \partial_\ttau \Qb}_{L^1(\partial \Omega)}$, and
	$\norm{\Qb \times \partial_\ttau \Qb}_{L^2(\partial \Omega)}$. However, we preferred to
	state Proposition~\ref{prop:analogue-of-Bethuel-4.2-R} in
	the form above to emphasise that, exactly as in \cite{Bethuel-AC-Acta},
	it holds, in fact, given
	\emph{any} uniform bound on $E_\eps$.
\end{remark}

It will be convenient to obtain Proposition~\ref{prop:analogue-of-Bethuel-4.2-R}
from its scaled version, Proposition~\ref{prop:analogue-of-Bethuel-4.2} below,
and Remark~\ref{rk:scaling}.

\begin{notation}
In what follows, $\eps \in (0,\,\eps_\beta]$ is a fixed number and,
to simplify the notation,
$(\Q,\,\M)$ denotes a critical pair $(\Q_\eps,\,\M_\eps)$ after
rescaling according to~\eqref{eq:Q-M-scaled}.
We assume that $(\Q_\eps,\,\M_\eps)$
satisfies the same hypotheses as in
Proposition~\ref{prop:analogue-of-Bethuel-4.2-R}.
Furthermore, in all the results below up to the statement of
Proposition~\ref{prop:analogue-of-Bethuel-4.3},
we set $B = B_1$ and we always assume that $\abs{\Q} \geq \frac{1}{2}$
on $\overline{B}$. For notational convenience, we set $B_r := B(0,\,r)$.
\end{notation}


\begin{prop}\label{prop:analogue-of-Bethuel-4.2}
	Let 
	$\kappa \in (0,\,\delta_\beta)$ and
	$\varrho_\eps \in \left[\frac{1}{2}, \frac{3}{4}\right]$.
	Assume that, for the given value of $\kappa$, there holds
	\begin{equation}\label{eq:assumption-Bethuel}
		\dist(\M,\,\Sigma(\Q)) < \kappa \qquad
		\mbox{on } \partial B.
	\end{equation}
	Then, there exists a constant ${\rm C}_\Upsilon(\mathcal{E}_0) > 0$,
	depending only on $\mathcal{E}_0$ and $\beta$, such that
	\begin{equation}\label{eq:est-close-to-Sigma}
	\begin{split}
		E_\eps(\M; \, \Upsilon(\varrho_\eps, \kappa)) \leq
		{\rm C}_\Upsilon(\mathcal{E}_0)&\left[ \kappa \int_{B_{\varrho_\eps}} \left(  \frac{1}{\eps}V(x,\,\M(x)) + \eps \abs{\nabla \Q}^2 \right)\,{\d}x\right.\\
		&\left. + \eps\left( E_\eps(\M; \, \partial B_{\varrho_\eps}) + \eps\int_{\partial B_{\varrho_\eps}} \abs{\nabla \Q}^2 \,{\d}\sigma \right)  \right].
	\end{split}
	\end{equation}
	Here, $\mathcal{E}_0$ is the uniform bound on $E_\eps$ provided by
	Proposition~\ref{prop:Eeps-bounded}.
\end{prop}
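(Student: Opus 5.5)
We follow the argument of \cite[Proposition~4.2]{Bethuel-AC-Acta}: test Equation~\eqref{eq:M} against the deviation from the nearest well on the region $\Upsilon$, and treat the terms coming from the motion of the wells as perturbations. Fix $\varrho = \varrho_\eps$. On $B_1$ we lift $\Q$ as $\Q = \frac{\rho}{\sqrt2}(\n\otimes\n - \m\otimes\m)$ and set $\N := \pi(\Q,\,\M)$ on $\Upsilon(\varrho,\kappa)$. On $\Upsilon_+$ we have $\N = \M_+$, and on $\Upsilon_-$ we have $\N = \M_-$. Both are smooth, and
\[
\abs{\nabla \N} \lesssim \abs{\nabla \rho} + \abs{\nabla \n} \lesssim \abs{\nabla\Q},
\]
since $\rho \geq 1/2$.

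Consider $w := \M - \N$ on $\Upsilon_\pm$. We work with the truncation $\psi := (\kappa - \abs{w})^+\,w/\abs{w}$, or more simply with $w$ multiplied by a cut-off in $\abs{w}$ which vanishes on $\Gamma_\pm = \{\abs{w} = \kappa\}$. By a coarea choice of the level $\kappa$ we may assume $\Gamma_\pm$ is regular. We multiply~\eqref{eq:M} by $w$ and integrate over $\Upsilon_\pm \cap B_\varrho$:
\[
\eps\int \nabla\M : \nabla w + \frac1\eps\int \nabla_\M V\cdot w
= \eps\int_{\partial(\Upsilon_\pm\cap B_\varrho)} \partial_\nnu \M \cdot w .
\]
Lemma~\ref{lemma:V-quadratic} gives
\[
\nabla V\cdot w \gtrsim \abs{w}^2 \gtrsim V .
\]
Writing $\nabla\M : \nabla w = \abs{\nabla \M}^2 - \nabla\M:\nabla\N$ and using Young's inequality gives
\[
\eps\int\abs{\nabla\M}^2 \leq 2\eps\int\abs{\nabla\M}^2 \cdot\tfrac12 + C\eps\int\abs{\nabla\Q}^2 .
\]
The left side therefore controls $E_\eps(\M;\Upsilon_\pm\cap B_\varrho)$, up to the error $C\eps\int_{B_\varrho}\abs{\nabla\Q}^2$.

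The boundary term splits into two parts. The first part lies on $\partial B_\varrho\cap\Upsilon$, where $\abs{w}\le\kappa$. We bound it by
\[
\eps\kappa\int_{\partial B_\varrho}\abs{\nabla\M} \lesssim \eps\Big(E_\eps(\M;\partial B_\varrho) + \ldots\Big),
\]
where the omitted terms are controlled using $\abs{w}^2\lesssim V$. This produces the term $\eps\,E_\eps(\M;\partial B_\varrho)$ and the term $\eps^2\int_{\partial B_\varrho}\abs{\nabla\Q}^2$ appearing in~\eqref{eq:est-close-to-Sigma}.

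The second part lies on the interior free boundary $\Gamma_\pm$. There $\abs{w} = \kappa$, so the integrand equals $\kappa\,\eps\,\partial_\nnu\M\cdot w/\abs{w}$, and this is the genuinely delicate term. As in Bethuel's paper, we handle it by integrating in the level parameter $t\in(\kappa/2,\kappa)$ and applying the coarea formula. The resulting expression has size $\kappa\cdot\eps\int_{\{\kappa/2<\abs{w}<\kappa\}}\abs{\nabla\M}\abs{\nabla\abs{w}}$. Since $\nabla\abs{w}$ differs from $\nabla\M$ only by a term bounded by $\abs{\nabla\N}$, this is at most
\[
\kappa\int_{B_\varrho}\Big(\eps\abs{\nabla \M}^2+\eps\abs{\nabla\Q}^2\Big).
\]

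It remains to show that $\eps\int_{B_\varrho}\abs{\nabla\M}^2$ is controlled by $\frac1\eps\int V$ plus lower-order terms, with a constant depending on $\mathcal E_0$. For this we repeat the multiplication argument on the set $\{\abs{w}\ge\kappa/2\}$ and use the bound~\eqref{eq:control-Theta} on $\Theta$. We expect this step to be the main obstacle: obtaining the bound linear in $\kappa$ with constants depending only on $\mathcal E_0$ and $\beta$. Following Bethuel, we would first average over $t$ in $(\kappa/2,\kappa)$ and use Lemma~\ref{lemma:V-quadratic}, which gives $V\ge c\kappa^2$ on the transition layer.

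Finally, summing the contributions of $\Upsilon_+$ and $\Upsilon_-$ gives~\eqref{eq:est-close-to-Sigma}. The hypothesis~\eqref{eq:assumption-Bethuel}, that $\dist(\M,\Sigma(\Q))<\kappa$ on $\partial B$, guarantees that $\Gamma_\pm$ does not reach $\partial B$ in an uncontrolled way, so the boundary pieces are exactly those treated above. Proposition~\ref{prop:analogue-of-Bethuel-4.2-R} then follows from Proposition~\ref{prop:analogue-of-Bethuel-4.2} by the scaling identities of Remark~\ref{rk:scaling}.
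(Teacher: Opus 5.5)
Your overall architecture (testing \eqref{eq:M} against $w=\M-\N$ on $\Upsilon_\pm$, absorbing $\eps\nabla\M:\nabla\N$ by Young's inequality, and bounding the piece on $\partial B_{\varrho_\eps}$ via $\abs{w}\lesssim\sqrt{V}$) matches Lemmas~\ref{lemma:ineq-E-Q} and~\ref{lemma:ineq-Qi+ineq-Gi}. The gap is in the free-boundary flux $\eps\kappa\int_{\Gamma_\pm}\partial_\nnu\abs{\M-\N}$. This is the only source of the term $\kappa\int\frac{1}{\eps}V$, and hence of the linearity in $\kappa$.

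Averaging over levels $t\in(\kappa/2,\kappa)$ does not produce a factor $\kappa$. By the coarea formula,
\[
\frac{2}{\kappa}\int_{\kappa/2}^{\kappa} t\,\eps\int_{\{\abs{w}=t\}}\abs{\nabla\M}\,{\d}\H^1\,{\d}t
\leq 2\eps\int_{\{\kappa/2<\abs{w}<\kappa\}}\abs{\nabla\M}\,\abs{\nabla\abs{w}}\,{\d}x .
\]
The $1/\kappa$ from the averaging cancels the $\kappa$ in front, leaving a gradient energy on the layer with no small factor. Your ``$\kappa\cdot\eps\int\ldots$'' silently drops this $1/\kappa$. Moreover, $\kappa$ is prescribed in the statement, so you cannot pick the level by coarea. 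A good level $t<\kappa$ would only control the smaller set $\Upsilon(\varrho_\eps,t)$.

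The step you flag as the main obstacle is in fact not one. $E_\eps(\M;B_{\varrho_\eps})$ is already controlled by the potential plus boundary terms through Proposition~\ref{prop:analogue-of-Bethuel-4.1} and Remark~\ref{rk:control-Theta}. The whole difficulty sits in the flux term.

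The missing idea is Bethuel's transport of the flux to a fixed level. The paper tests \eqref{eq:M} against the unit field $w/\abs{w}$ on the collar $\mathcal{C}=\{\kappa\leq\abs{w}\leq\widetilde\delta_\eps\}$, where $\widetilde\delta_\eps\in[\delta_\beta,2\delta_\beta]$ is given by Lemma~\ref{lemma:coarea+average}. Two facts drive the argument: $\nabla_\M V\cdot w/\abs{w}\geq 0$, and
$\nabla\M:\nabla(w/\abs{w})=\abs{w}\,\abs{\nabla(w/\abs{w})}^2+\nabla\N:\nabla(w/\abs{w})$.
Together they give that $\eps\int_{\Gamma(\kappa)}\partial_\nnu\abs{w}$ is bounded by the flux at level $\widetilde\delta_\eps$ plus the well-motion error $\frac{\eps}{\kappa}\int_{\mathcal{C}}\abs{\nabla\N}^2$. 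The flux at $\widetilde\delta_\eps$ is then bounded by $\frac{1}{\eps}\int V+\eps\int\abs{\nabla\N}^2$. This uses averaging over the $\kappa$-independent range $[\delta_\beta,2\delta_\beta]$ and Remark~\ref{rk:control-Theta} on $\Theta$. Multiplying by $\kappa$ gives the linear bound.

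This is also where \eqref{eq:assumption-Bethuel} is genuinely needed. It fixes a single well $\N$ on the whole ball and ensures $\partial\mathcal{C}=\Gamma(\kappa)\cup\Gamma(\widetilde\delta_\eps)$, with no contribution from $\partial B$. In your sketch the hypothesis plays no identifiable role.

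Finally, note that your Young step, and the term $\kappa\cdot\frac{\eps}{\kappa}\int\abs{\nabla\N}^2$, both leave an error $\eps\int_{B_{\varrho_\eps}}\abs{\nabla\Q}^2$ \emph{without} the factor $\kappa$. As far as one can see, this is also all that the paper's own computation (Lemma~\ref{lemma:ineq-Qi+ineq-Gi} together with \eqref{eq:test-eq-compu4} multiplied by $\kappa$) delivers. It is harmless later, since that term is of lower order by~\eqref{eq:GL-decay}, so you should not try to recover a $\kappa$ there.
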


The proof of Proposition~\ref{prop:analogue-of-Bethuel-4.2}
will be achieved through a series of lemmas, adapting to our setting
results in \cite[Section~4]{Bethuel-AC-Acta}.

\begin{lemma}\label{lemma:ineq-E-Q}
	We have
	\begin{equation}\label{eq:ineq-E-Q}
		E_\eps(\M;\, \Upsilon_\pm) \lesssim \mathfrak{Q}_\pm,
	\end{equation}
	where
	\[
		\mathfrak{Q}_\pm := \int_{\Upsilon_\pm}
		\left( \eps \abs{\nabla \M}^2 + \frac{1}{\eps}\nabla_{\M} V(x,\,\M) \cdot (\M-\M_\pm)\right)\,{\d}x
	\]
	and the implicit constant on the right-hand side in~\eqref{eq:ineq-E-Q}
	depends only on $\beta$.
\end{lemma}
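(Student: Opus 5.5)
This is a pointwise comparison, integrated over $\Upsilon_\pm$. Since the gradient term $\eps\abs{\nabla\M}^2$ appears in $\mathfrak{Q}_\pm$ with coefficient $1$ (larger than the coefficient $1/2$, or even $1$, in $E_\eps$), it suffices to show that the potential part satisfies the pointwise bound
\[
 \frac{1}{\eps}V(x,\,\M(x)) \lesssim \frac{1}{\eps}\nabla_\M V(x,\,\M(x))\cdot(\M(x)-\M_\pm(x)) \qquad \mbox{for every } x\in\Upsilon_\pm .
\]
The implicit constant should depend only on $\beta$.

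The key steps, in order, are as follows.

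First, we check that Lemma~\ref{lemma:V-quadratic} applies at every $x\in\Upsilon_\pm$. By the standing assumption $\abs{\Q}\ge 1/2$ on $\overline B$ and Remark~\ref{rk:Qeps-less-2} (with $\eps\le\eps_\beta$, which is scale invariant since $\abs{\Q}$ is unchanged by rescaling), we have $1/2\le\abs{\Q}\le 2$. By definition of $\Upsilon_\pm$ we have $\dist(\M(x),\Sigma(\Q(x)))\le\abs{\M(x)-\M_\pm(x)}\le\kappa<\delta_\beta$.

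Second, we identify the projection $\pi(\Q,\M)$ with $\M_\pm$. Since the wells are at distance greater than $2$ by \eqref{eq:dist-wells} and $\kappa<\delta_\beta\le 1/4$, we get $\pi(\Q(x),\M(x))=\M_\pm(x)$ on $\Upsilon_\pm$. Hence the vector $\N$ appearing in Lemma~\ref{lemma:V-quadratic} is exactly $\M_\pm$.

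Third, we compare the two quadratic bounds of Lemma~\ref{lemma:V-quadratic}, writing $d:=\dist(\M,\Sigma(\Q))$:
\[
 V(x,\M)\le\lambda_+ d^2 \le \frac{2\lambda_+}{\lambda_-}\,\nabla_\M V(x,\M)\cdot(\M-\M_\pm).
\]
Alternatively, Lemma~\ref{lemma:ell2} with $\Lambda=2$ gives the same inequality directly, using \eqref{eq:nablaV=nablaL} and the fact that $V=\ell-\ell(\Q,\pi(\Q,\M))$. In particular the integrand of $\mathfrak{Q}_\pm$ is nonnegative.

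Finally, we multiply by $1/\eps$, add $\eps\abs{\nabla\M}^2$ to both sides, and integrate over $\Upsilon_\pm$. This gives \eqref{eq:ineq-E-Q} with constant $\max\{1,\,2\lambda_+/\lambda_-\}$, which depends only on $\beta$ by \eqref{eq:lambda-pm}.

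There is no genuine obstacle here. The only point requiring care is the second step: making sure the sign of the nearest well is the one indexing $\Upsilon_\pm$, and that the lemma's hypotheses on $\abs{\Q}$ hold uniformly in the rescaled setting.
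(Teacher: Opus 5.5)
Your proposal is correct and follows the paper's own argument. The paper likewise uses Lemma~\ref{lemma:V-quadratic} on $\Upsilon_\pm$ to obtain the pointwise chain $\nabla_\M V(\cdot,\M)\cdot(\M-\M_\pm)\gtrsim\abs{\M-\M_\pm}^2\gtrsim V(\cdot,\M)$, then divides by $\eps$ and integrates over $\Upsilon_\pm$; your checks that $\pi(\Q,\M)=\M_\pm$ and that $\tfrac12\le\abs{\Q}\le 2$ survives rescaling make explicit what the paper leaves implicit.
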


\begin{proof}
	By definition of $\Upsilon_\pm$ and Lemma~\ref{lemma:V-quadratic},
	$\M$ is so close to the wells $\M_\pm$ on $\Upsilon_\pm$ that there holds
	\[
	\nabla_{\M} V(\,\cdot\,,\,\M) \cdot(\M - \M_\pm)
	\gtrsim \abs{\M - \M_\pm}^2 \gtrsim V(\,\cdot\,,\,\M)
	\]
pointwise, where the implicit constants depend only on $\beta$.
The conclusion follows immediately by dividing the above inequalities by $\eps$
and integrating the result over $\Upsilon_\pm$.
\end{proof}

\begin{lemma}\label{lemma:ineq-Qi+ineq-Gi}
	There holds
	\begin{equation}\label{eq:ineq-Qi}
	\begin{split}
		\mathfrak{Q}_\pm \lesssim & \eps\int_{\Upsilon_\pm} \abs{\nabla \M_\pm}^2 \,{\d}x
		+ \eps \int_{\partial \Upsilon_\pm} (\M - \M_\pm) \cdot \partial_{\nnu} \M_\pm \,{\d}\sigma \\
		&+ \eps \kappa \int_{\Gamma_\pm} \partial_{\nnu}\abs{\M-\M_\pm} \,{\d}\sigma + \eps E_\eps(\M;\,\Pi_\pm)
		+ \eps^2 \int_{\Pi_\pm} \abs{\nabla \M_\pm}^2 \,{\d}\sigma.
	\end{split}
	\end{equation}
	As a consequence,
	\begin{equation}\label{eq:ineq-Gi}
	\begin{split}
		E_\eps(\M; \Upsilon_\pm) \lesssim & \eps\int_{\Upsilon_\pm} \abs{\nabla \M_\pm}^2 \,{\d}x \\
		&+ \eps \int_{\Gamma_\pm} (\M - \M_\pm) \cdot \partial_{\nnu} \M_\pm \,{\d}\sigma
		+ \eps \kappa \int_{\Gamma_\pm} \partial_{\nnu}\abs{\M-\M_\pm} \,{\d}\sigma \\
		& + \eps^2 \int_{\Pi_\pm} \abs{\nabla \Q}^2 \,{\d}\sigma + \eps E_\eps(\M;\,\Pi_\pm).
		\end{split}
		\end{equation}
	The implicit constants on the right-hand sides
	of~\eqref{eq:ineq-Qi},~\eqref{eq:ineq-Gi} depend only on $\beta$.
	\end{lemma}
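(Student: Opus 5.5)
The plan is to test equation~\eqref{eq:M} against $\M - \M_\pm$ on $\Upsilon_\pm$ and integrate by parts, as in \cite[Section~4]{Bethuel-AC-Acta}. The moving wells will produce additional terms involving $\nabla\M_\pm$.

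Taking the scalar product of~\eqref{eq:M} with $\M - \M_\pm$ and integrating over $\Upsilon_\pm$ gives
\[
\int_{\Upsilon_\pm} \frac{1}{\eps}\nabla_\M V(x,\M)\cdot(\M-\M_\pm) = \eps\int_{\Upsilon_\pm}\Delta\M\cdot(\M-\M_\pm).
\]
We write $\Delta \M = \Delta(\M-\M_\pm) + \Delta\M_\pm$ and integrate by parts. This yields
\[
\eps\int_{\Upsilon_\pm}\Delta\M\cdot(\M-\M_\pm)= \eps\int_{\partial\Upsilon_\pm}\partial_\nnu\M\cdot(\M-\M_\pm) - \eps\int_{\Upsilon_\pm}\nabla\M:\nabla(\M-\M_\pm).
\]
Expanding $\nabla\M:\nabla(\M-\M_\pm) = \abs{\nabla\M}^2 - \nabla\M:\nabla\M_\pm$ and moving $\eps\int\abs{\nabla \M}^2$ to the left produces $\mathfrak{Q}_\pm$. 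The cross term $\eps\int\nabla\M:\nabla\M_\pm$ is absorbed by Young's inequality: half of it goes into $\eps\int\abs{\nabla\M}^2$, the rest into $\eps\int_{\Upsilon_\pm}\abs{\nabla\M_\pm}^2$. Since $\mathfrak{Q}_\pm$ contains the full $\eps\int\abs{\nabla \M}^2$ and the potential part is nonnegative by Lemma~\ref{lemma:V-quadratic}, this gives $\mathfrak{Q}_\pm \lesssim \eps\int\abs{\nabla\M_\pm}^2 + $ boundary terms. Some care with signs is needed here. It is cleaner to integrate $\nabla(\M - \M_\pm)$ against itself and keep the term $\eps\int_{\partial\Upsilon_\pm}(\M-\M_\pm)\cdot\partial_\nnu\M_\pm$ as it appears in~\eqref{eq:ineq-Qi}.

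Next we split the boundary $\partial\Upsilon_\pm = \Gamma_\pm\cup\Pi_\pm$ and treat the two parts separately.

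On $\Gamma_\pm$, with $\M\neq\M_\pm$ (which holds generically), we have $\abs{\M-\M_\pm}=\kappa$. The outward normal of $\Upsilon_\pm$ points in the direction where $\abs{\M-\M_\pm}$ increases. Writing $\M - \M_\pm = \abs{\M-\M_\pm}\,\mathbf{u}$ with $\abs{\mathbf u}=1$, we get
\[
\partial_\nnu(\M-\M_\pm)\cdot(\M-\M_\pm) = \abs{\M-\M_\pm}\,\partial_\nnu\abs{\M-\M_\pm} = \kappa\,\partial_\nnu\abs{\M-\M_\pm}.
\]
This gives the term $\eps\kappa\int_{\Gamma_\pm}\partial_\nnu\abs{\M-\M_\pm}$. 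The remaining piece $\partial_\nnu\M_\pm\cdot(\M-\M_\pm)$ is exactly the $\M_\pm$ boundary term.

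On $\Pi_\pm\subset\partial B$, we bound
\[
\eps\abs{\partial_\nnu\M}\abs{\M-\M_\pm} \leq \eps\left(\abs{\nabla\M}^2 + \kappa^2\right),
\]
and use Lemma~\ref{lemma:V-quadratic} to get $\abs{\M-\M_\pm}^2\lesssim V$. This gives $\eps\abs{\nabla\M}^2 + \abs{\M-\M_\pm}^2 \lesssim \eps^{-1}\left(\eps^2\abs{\nabla\M}^2 + V\right)$, so the boundary integrand is $\lesssim \eps\, e_\eps(\M)$. This is the term $\eps E_\eps(\M;\Pi_\pm)$. The $\Pi_\pm$ part of the $\M_\pm$ boundary term is handled the same way by Young's inequality, contributing $\eps^2\int_{\Pi_\pm}\abs{\nabla\M_\pm}^2$ and again $\eps E_\eps$. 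This proves~\eqref{eq:ineq-Qi}.

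For~\eqref{eq:ineq-Gi}, we combine~\eqref{eq:ineq-Qi} with Lemma~\ref{lemma:ineq-E-Q} and need the pointwise bound $\abs{\nabla\M_\pm}\lesssim\abs{\nabla\Q}$ on the set where $\abs{\Q}\ge 1/2$. Recall $\M_\pm = \pm(1+\sqrt2\beta\rho)^{1/2}\n$. Differentiating, $\abs{\nabla\rho}\le\abs{\nabla\Q}$ and $\abs{\nabla\n}\lesssim \rho^{-1}\abs{\nabla\Q}\lesssim\abs{\nabla\Q}$, the latter because $\Q = \frac{\rho}{\sqrt2}(\n\otimes\n-\m\otimes\m)$ and $\rho\ge 1/2$. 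Hence $\abs{\nabla\M_\pm}\lesssim\abs{\nabla\Q}$. This turns $\eps^2\int_{\Pi_\pm}\abs{\nabla \M_\pm}^2$ into $\eps^2\int_{\Pi_\pm}\abs{\nabla\Q}^2$. In~\eqref{eq:ineq-Gi} the $\M_\pm$ boundary term is kept only on $\Gamma_\pm$, since its $\Pi_\pm$ part has already been absorbed as above.

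The main obstacle is the bookkeeping of the boundary term on $\Gamma_\pm$. The level set $\{\abs{\M-\M_\pm}=\kappa\}$ may be non-smooth, so one should work with a regular value $\kappa$ (by Sard's theorem) or approximate. One also has to justify the identity $\partial_\nnu(\M-\M_\pm)\cdot(\M-\M_\pm) = \kappa\partial_\nnu\abs{\M-\M_\pm}$ there.
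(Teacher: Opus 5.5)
Your proposal follows the paper's proof essentially step by step: you test \eqref{eq:M} against $\M-\M_\pm$ on $\Upsilon_\pm$, absorb the cross term $\eps\int_{\Upsilon_\pm}\nabla\M\cdot\nabla\M_\pm$ by Young, use $\frac{\eps}{2}\partial_\nnu\abs{\M-\M_\pm}^2=\eps\kappa\,\partial_\nnu\abs{\M-\M_\pm}$ on $\Gamma_\pm$ and Young on $\Pi_\pm$, and finish with $\abs{\nabla\M_\pm}\lesssim\abs{\nabla\Q}$ together with Lemma~\ref{lemma:ineq-E-Q}. The one slip is on $\Pi_\pm$: the unweighted inequality you display only bounds the integrand by $e_\eps(\M)$, so you need the weighted form $\eps ab\le\frac{\eps^2}{2}a^2+\frac12 b^2$, with $a=\abs{\nabla\M}$ and $b=\abs{\M-\M_\pm}\lesssim\sqrt{V}$, to get the required $\lesssim\eps^2\abs{\nabla\M}^2+V\lesssim\eps\,e_\eps(\M)$; this extra factor $\eps$ is what makes the boundary term lower order later on.
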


	\begin{proof}
	For the sake of clarity, we divide the proof into two steps.
	\setcounter{step}{0}
		\begin{step}[Proof of~\eqref{eq:ineq-Qi}]
		Testing Equation~\eqref{eq:M} against $\M - \M_\pm$ and integrating
		the result over $\Upsilon_\pm$, we obtain
		\[
		\begin{split}
			-\eps \int_{\partial \Upsilon_\pm} &(\M - \M_\pm) \cdot \partial_\nnu \M \,{\d}\sigma
			+ \eps \int_{\Upsilon_\pm} \nabla\M \cdot \nabla(\M - \M_\pm) \,{\d}x \\
			&+ \frac{1}{\eps} \int_{\Upsilon_\pm} \nabla_{\M} V(\,\cdot\,,\,\M) \cdot(\M-\M_\pm) \,{\d}x = 0.
		\end{split}
		\]
		As a consequence (cf.~\cite[Lemma~4.3]{Bethuel-AC-Acta}),
		\begin{equation}\label{eq:compu-Qi}
		\begin{split}
			\mathfrak{Q}_\pm &= \eps \int_{\Upsilon_\pm} \nabla\M\cdot\nabla \M_\pm \,{\d}x
			+ \eps \int_{\partial \Upsilon_\pm} (\M - \M_\pm) \cdot \partial_\nnu \M \,{\d}\sigma\\
			&=\eps \int_{\Upsilon_\pm} \nabla\M\cdot\nabla \M_\pm \,{\d}x
			+ \eps \int_{\partial \Upsilon_\pm} (\M - \M_\pm) \cdot \partial_\nnu \M_\pm
			+ \frac{\eps}{2} \int_{\partial \Upsilon_\pm} \partial_\nnu \abs{\M-\M_\pm}^2 \,{\d}\sigma.
		\end{split}
		\end{equation}
		As in \cite[Lemma~4.3 and Lemma~4.5]{Bethuel-AC-Acta}, we observe that
		$\partial \Upsilon_\pm = \Gamma_\pm \cup \Pi_\pm$ and that on $\Pi_\pm$
		(which is a subset of $\partial B$, cf.~\eqref{eq:Ups-Gamma-Pi})
		the normal derivative equals the radial
		derivative, so that
		\begin{equation}\label{eq:compu-norm-der-abs-M-Ni}
		\begin{split}
			\partial_\nnu \abs{\M-\M_\pm}^2 &= 2 \abs{\M-\M_\pm} \partial_\nnu \abs{\M-\M_\pm} \\
			&\lesssim \sqrt{\ell(\Q,\,\M) - \ell(\Q,\,\M_\pm)} \partial_\nnu \abs{\M-\M_\pm} \\
			&\lesssim \sqrt{V(\,\cdot\,,\M)} \partial_\nnu \abs{\M-\M_\pm} \\
			&\lesssim \eps \abs{\nabla \M - \nabla \M_\pm}^2
			+ \frac{1}{\eps}V(\,\cdot\,,\,\M) \\
			&\lesssim \eps \abs{\nabla \M}^2 + \eps \abs{\nabla \M_\pm}^2 +
			\frac{1}{\eps} V(\,\cdot\,,\M) \quad \mbox{on } \Pi_\pm,
		\end{split}
		\end{equation}
		Thus,
		\begin{equation}\label{eq:compu-norm-der-abs-M-Ni-bis}
			\frac{\eps}{2} \int_{\Pi_\pm} \partial_\nnu \abs{\M-\M_\pm}^2 \,{\d}\sigma
			\lesssim \eps E_\eps(\M; \, \Pi_\pm) + \eps^2 \int_{\Pi_\pm} \abs{\nabla \M_\pm}^2 \,{\d}\sigma,
		\end{equation}
		where the implicit constant depends only on $\beta$,
		and plugging~\eqref{eq:compu-norm-der-abs-M-Ni-bis} into~\eqref{eq:compu-Qi}
		we obtain
		\[
		\begin{split}
			\mathfrak{Q}_\pm \lesssim
			& \eps \int_{\Upsilon_\pm} \nabla\M\cdot\nabla \M_\pm \,{\d}x
			+ \eps \int_{\partial \Upsilon_\pm} (\M - \M_\pm) \cdot \partial_\nnu \M_\pm \,{\d}\sigma \\
			&+ \eps \kappa \int_{\partial \Upsilon_\pm} \partial_\nnu \abs{\M-\M_\pm} \,{\d}\sigma
			+ \eps E_\eps(\M;\,\Pi_\pm) + \eps^2 \int_{\Pi_\pm} \abs{\nabla \M_\pm}^2 \,{\d}\sigma.
		\end{split}
		\]
		Then,~\eqref{eq:ineq-Qi} follows by applying Young's inequality to
		the term $\eps \int_{\Upsilon_\pm} \nabla\M\cdot\nabla \M_\pm\,{\d}\sigma$.
		\end{step}

		\begin{step}[Proof of~\eqref{eq:ineq-Gi}]
		Once~\eqref{eq:ineq-Qi} has been obtained,
		inequality~\eqref{eq:ineq-Gi} follows easily. Indeed, first we
		separate the contributions from $\Gamma_\pm$ and $\Pi_\pm$ in
		$\int_{\partial \Upsilon_\pm} (\M - \M_\pm)\cdot \partial_\nnu \M_\pm \,{\d}\sigma$,
		and we repeat the same computations as in~\eqref{eq:compu-norm-der-abs-M-Ni},
		\eqref{eq:compu-norm-der-abs-M-Ni-bis}. Then, we observe that
		\[
			(\M - \M_\pm) \cdot \partial_\nnu \M_\pm \lesssim \eps
			\abs{\nabla \M_\pm}^2 +
			\frac{1}{\eps}V(\,\cdot\,,\M) \quad \mbox{on } \Pi_\pm
		\]
		We conclude by recalling 
		that, by~\eqref{minimisers_l}, $\abs{\nabla \M_\pm} \lesssim \abs{\nabla \Q}$
		pointwise on $\overline{B}$, where the implicit constant depends only on
		$\beta$, and applying~\eqref{eq:ineq-E-Q}.
		\qedhere
		\end{step}
	\end{proof}

	Exactly as in \cite[Lemma~4.7]{Bethuel-AC-Acta}, the coarea formula and a
	standard averaging argument yield the following intermediate lemma.
	\begin{lemma}\label{lemma:coarea+average}
		Let $\varrho_\eps \in \left[\frac{1}{2},\frac{3}{4}\right]$.
		There exists some number
		$\widetilde{\delta}_\eps \in
		\left[ \delta_\beta, 2 \delta_\beta \right]$
		such that
		\begin{equation}\label{eq:coarea+average}
		\begin{split}
			\eps
			\int_{\Gamma_{\eps,\pm}(\varrho_\eps,\,\widetilde{\delta}_\eps)}
				\partial_\nnu \abs{\M - \M_\pm} \,{\d}\sigma & \lesssim
			\eps
\int_{\Gamma_{\eps,\pm}(\varrho_\eps,\, \widetilde{\delta}_\eps)}
				\left(\abs{\nabla \M} + \abs{\nabla \M_\pm} \right) \,{\d}\sigma \\
				& \lesssim E_{\eps}(\M; \Theta(\M,\,\varrho_\eps))
				+\eps \int_{B_{\varrho_\eps}} \abs{\nabla \M_\pm}^2 \,{\d}x \\
				&\lesssim \int_{\Theta(\M,\,\varrho_\eps)}
				\frac{1}{\eps} V(x,\,\M)\,{\d}x
				+\eps \int_{B_{\varrho_\eps}} \abs{\nabla \M_\pm}^2 \,{\d}x
		\end{split}
		\end{equation}
		where the set
		$\Theta(\M,\,\varrho_\eps) \subset B_{\varrho_\eps}$
		is defined in~\eqref{eq:def-Theta} and the implicit constant on the
		right-hand side depends only on
		the coupling parameter $\beta$ and on $\Omega$.
	\end{lemma}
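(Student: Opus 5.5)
The plan is to use the coarea formula for the scalar function $d(x) := \dist(\M(x),\,\Sigma(\Q(x)))$ on $B_{\varrho_\eps}$, followed by an averaging argument in the level parameter $t\in[\delta_\beta,\,2\delta_\beta]$. Since $\abs{\Q}\geq 1/2$ on $\overline{B}$, the wells $\M_\pm$ are well defined and smooth. Moreover, for $t\le 2\delta_\beta$ (after possibly shrinking $\delta_\beta$ so that $2\delta_\beta<1$), the level sets $\{d=t\}$ split into the two disjoint pieces $\{\abs{\M-\M_\pm}=t\}$. On the relevant region $d$ coincides with $\abs{\M-\M_\pm}$ locally, and it is Lipschitz with
\[
\abs{\nabla d}\le \abs{\nabla\M}+\abs{\nabla\M_\pm}.
\]
By Sard's theorem, for a.e.\ $t$ the set $\Gamma_{\eps,\pm}(\varrho_\eps,t)=\{\abs{\M-\M_\pm}=t\}\cap B_{\varrho_\eps}$ is a finite union of smooth curves, and on it $\partial_\nnu\abs{\M-\M_\pm}$ is defined. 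This normal derivative is bounded pointwise by $\abs{\nabla\M}+\abs{\nabla\M_\pm}$. That gives the first inequality in~\eqref{eq:coarea+average} at once, for every such $t$.

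For the second inequality, apply the coarea formula to $g:=\abs{\M-\M_\pm}$ on the annular region $A_\pm:=\{\delta_\beta\le g\le 2\delta_\beta\}\cap B_{\varrho_\eps}$:
\[
\int_{\delta_\beta}^{2\delta_\beta}\!\int_{\{g=t\}\cap B_{\varrho_\eps}}\left(\abs{\nabla\M}+\abs{\nabla\M_\pm}\right)\d\sigma\,\d t
=\int_{A_\pm}\left(\abs{\nabla\M}+\abs{\nabla\M_\pm}\right)\abs{\nabla g}\,{\d}x
\lesssim\int_{A_\pm}\left(\abs{\nabla\M}^2+\abs{\nabla\M_\pm}^2\right){\d}x.
\]
Summing over $\pm$ and averaging over the interval, which has length $\delta_\beta$, depending only on $\beta$, we can choose $\widetilde\delta_\eps\in[\delta_\beta,\,2\delta_\beta]$ outside a null set (a Chebyshev/mean-value choice). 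For this value, $\eps$ times the curve integral is bounded by $\delta_\beta^{-1}\eps\int_{A_+\cup A_-}(\abs{\nabla\M}^2+\abs{\nabla\M_\pm}^2)$. Since $A_\pm\subset\Theta(\M,\varrho_\eps)$, we have $\eps\int_{A_\pm}\abs{\nabla\M}^2\le 2E_\eps(\M;\Theta(\M,\varrho_\eps))$. The term with $\nabla\M_\pm$ is bounded by $\eps\int_{B_{\varrho_\eps}}\abs{\nabla\M_\pm}^2$, and this yields the middle line.

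The third inequality then follows directly from Remark~\ref{rk:control-Theta}, i.e.~\eqref{eq:control-Theta}. On $\Theta(\M_\eps,\varrho_\eps)$ the full energy density is bounded by a constant (depending on $\beta$ and $\Omega$) times $\eps^{-1}V$. After rescaling this remains valid: the bound $\abs{\nabla\M}\lesssim\eps^{-1}$ is scale-invariant under~\eqref{eq:Q-M-scaled}, and Lemma~\ref{lemma:V-quadratic} gives $V\ge\gamma_\beta$ there.

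The main point requiring care is the technical one: $\Gamma_{\eps,\pm}$ must be a genuine boundary on which the normal derivative makes sense. This requires $\widetilde\delta_\eps$ to be a regular value of $g$, which holds for a.e.\ $t$ by Sard since $\M$ and $\M_\pm$ are smooth. We also need the two wells' level sets not to interact, which holds because $\abs{\M_+-\M_-}>2>4\delta_\beta$.
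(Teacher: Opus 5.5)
Your proposal is correct and follows the route the paper indicates, since the paper simply invokes the coarea formula plus an averaging argument as in \cite[Lemma~4.7]{Bethuel-AC-Acta}: coarea for $\abs{\M-\M_\pm}$ on $\{\delta_\beta\le\abs{\M-\M_\pm}\le 2\delta_\beta\}\subset\Theta(\M,\varrho_\eps)$, then averaging to pick one value $\widetilde\delta_\eps$ that is regular for both wells, then~\eqref{eq:control-Theta} for the last inequality. The moving-well terms, the only new feature compared with Bethuel's fixed-well setting, are handled correctly through the pointwise bound $\abs{\nabla\abs{\M-\M_\pm}}\le\abs{\nabla\M}+\abs{\nabla\M_\pm}$.
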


	With Lemma~\ref{lemma:ineq-E-Q}, Lemma~\ref{lemma:ineq-Qi+ineq-Gi}, and
	Lemma~\ref{lemma:coarea+average} at hand, we can prove both
	Proposition~\ref{prop:analogue-of-Bethuel-4.1}, providing the analogue
	of \cite[Proposition~4.1]{Bethuel-AC-Acta}, and
	Proposition~\ref{prop:analogue-of-Bethuel-4.2}.
	Recall once again that, by~\eqref{max-QM},
	whenever $(\Q,\,\M) = (\Q_\eps,\,\M_\eps)$ is a solution
	to \eqref{EL-Q},~\eqref{EL-M} in $\Omega$, subject to
	either~\eqref{bc}--\eqref{hp:bc} or~\eqref{bcbis}--\eqref{hp:bcbis},
	the uniform bound
	\[
		\norm{\M}_{L^\infty(\Omega)} \leq {\rm M}_\beta,
	\]
	holds, for some positive constant ${\rm M}_\beta$ depending only on $\beta$.
	\begin{prop}\label{prop:analogue-of-Bethuel-4.1}
		Let $\varrho_\eps \in \left[\frac{1}{2}, \frac{3}{4}\right]$. There
		exists a constant $K_\Upsilon({\rm M}_\beta)$ such that the inequality
		\begin{equation}\label{eq:ineq-G-Upsilon}
		\begin{split}
			E_\eps\left(\M; \Upsilon\left(\varrho_\eps,\delta_\beta\right)\right)
			&\leq K_\Upsilon({\rm M}_\beta)
			\left[ \int_{B_{\varrho_\eps}} \left( \eps \abs{\nabla \Q}^2 + \frac{1}{\eps} V(x,\,\M) \right) \,{\d}x \right. \\
			&\quad \left.  + \eps^2 \int_{\partial B_{\varrho_\eps}} \abs{\nabla \Q}^2 \,{\d}\sigma + \eps E_\eps\left(\M; \partial B_{\varrho_\eps}\right) \right]
		\end{split}
		\end{equation}
		holds.
	\end{prop}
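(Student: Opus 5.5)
The plan is to apply Lemma~\ref{lemma:ineq-Qi+ineq-Gi} with $\kappa = \widetilde{\delta}_\eps$, where $\widetilde{\delta}_\eps\in[\delta_\beta,2\delta_\beta]$ is the level provided by Lemma~\ref{lemma:coarea+average}, and then bound each term on the right-hand side of~\eqref{eq:ineq-Gi}.

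One technical point needs care first. Lemma~\ref{lemma:V-quadratic} was stated for distances below $\delta_\beta$, whereas now $\kappa$ may be as large as $2\delta_\beta$. I will handle this by shrinking $\delta_\beta$ at the outset, say by a factor of $4$ (this only changes $\gamma_\beta$ by a constant). Then the quadratic bounds, and hence Lemma~\ref{lemma:ineq-E-Q}, remain valid on $\Upsilon_\pm(\varrho_\eps,\widetilde\delta_\eps)$. The sets $\Upsilon_\pm$ also stay disjoint, because by~\eqref{eq:dist-wells} the two wells are more than $2$ apart.

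Having fixed $\widetilde\delta_\eps$, I will use the monotonicity $\Upsilon(\varrho_\eps,\delta_\beta)\subseteq\Upsilon(\varrho_\eps,\widetilde\delta_\eps)$. It therefore suffices to estimate $E_\eps(\M;\Upsilon_+)+E_\eps(\M;\Upsilon_-)$ at level $\widetilde\delta_\eps$. By~\eqref{eq:ineq-Gi}, each of these is controlled by five terms, which I handle in turn.

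\emph{Bulk term.} Since $\abs{\nabla\M_\pm}\lesssim\abs{\nabla\Q}$ pointwise (using $\abs{\Q}\ge 1/2$ and~\eqref{minimisers_l}), the term $\eps\int_{\Upsilon_\pm}\abs{\nabla\M_\pm}^2$ is bounded by $\eps\int_{B_{\varrho_\eps}}\abs{\nabla\Q}^2$.

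\emph{Boundary term with $\kappa$.} The term $\eps\kappa\int_{\Gamma_\pm}\partial_\nnu\abs{\M-\M_\pm}$ is estimated directly by Lemma~\ref{lemma:coarea+average}, since $\kappa\le 2\delta_\beta\lesssim1$. This gives a bound by $\int_{\Theta}\eps^{-1}V+\eps\int_{B_{\varrho_\eps}}\abs{\nabla\Q}^2$.

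\emph{Boundary term with $\M-\M_\pm$.} For $\eps\int_{\Gamma_\pm}(\M-\M_\pm)\cdot\partial_\nnu\M_\pm$ I use $\abs{\M-\M_\pm}=\widetilde\delta_\eps\lesssim1$ on $\Gamma_\pm$. This reduces it to $\eps\int_{\Gamma_\pm}\abs{\nabla\M_\pm}$, which is already among the quantities controlled by the middle line of~\eqref{eq:coarea+average}.

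\emph{Terms on $\Pi_\pm$.} The remaining terms, $\eps^2\int_{\Pi_\pm}\abs{\nabla\Q}^2$ and $\eps E_\eps(\M;\Pi_\pm)$, are bounded by the corresponding integrals over all of $\partial B_{\varrho_\eps}$.

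Summing over $\pm$ and collecting terms gives~\eqref{eq:ineq-G-Upsilon}. The constant depends on $\beta$ and $\Omega$, and I expect ${\rm M}_\beta$ to enter through the bound $\abs{\M-\M_\pm}\le {\rm M}_\beta+C_\beta$, used wherever an absolute bound on $\abs{\M-\M_\pm}$ is needed.

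The main obstacle is the compatibility between the coarea choice of $\widetilde\delta_\eps$ and the range of validity of the near-well estimates. I expect it to be resolved by the rescaling of $\delta_\beta$ described above.

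The other delicate point is the term $\eps\int_{\Gamma_\pm}\partial_\nnu\abs{\M-\M_\pm}$ itself. In~\cite{Bethuel-AC-Acta} this term has a favourable sign, since $\abs{\M-\M_\pm}$ increases outward across $\Gamma_\pm$. Here, however, we only need an upper bound, and Lemma~\ref{lemma:coarea+average} provides exactly that. So no sign argument is required: we simply absorb the term into the $\Theta$-potential energy, which in turn is bounded by $\int_{B_{\varrho_\eps}}\eps^{-1}V$ thanks to~\eqref{eq:control-Theta}.
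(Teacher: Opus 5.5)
Your proposal is correct and follows the paper's route: take $\kappa=\widetilde\delta_\eps$ from Lemma~\ref{lemma:coarea+average}, sum the two inequalities~\eqref{eq:ineq-Gi}, control the $\Gamma_\pm$ terms via~\eqref{eq:coarea+average}, bound the $\Pi_\pm$ terms by the full boundary integrals, and absorb the $\Theta$-energy using~\eqref{eq:control-Theta}. Your extra care fills in points the paper leaves implicit: the inclusion $\Upsilon(\varrho_\eps,\delta_\beta)\subseteq\Upsilon(\varrho_\eps,\widetilde\delta_\eps)$, and shrinking $\delta_\beta$ so that Lemma~\ref{lemma:V-quadratic} applies up to level $2\delta_\beta$. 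One small correction: ${\rm M}_\beta$ enters through the gradient bound~\eqref{max-gradients} underlying~\eqref{eq:control-Theta}, not through an absolute bound on $\abs{\M-\M_\pm}$, which your argument never actually uses.
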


\begin{proof}
	Since the sets $\Pi_+$, $\Pi_-$ are disjoint and their union is contained in $\partial B$
	(see~\eqref{eq:Ups-Gamma-Pi}), we have
	\[
		\eps^2 \int_{\Pi_\pm} \abs{\nabla \Q}^2 \,{\d}\sigma + \eps E_\eps(\M;\,\Pi_\pm)
		\leq \eps^2 \int_{\partial B} \abs{\nabla \Q}^2 \,{\d}\sigma + \eps E_\eps(\M; \,\partial B).
	\]
	Therefore, choosing $\kappa = \widetilde{\delta}_\eps$ (where
	$\widetilde{\delta}_\eps$ is the number provided by Lemma~\ref{lemma:coarea+average}),
	taking the sum of the two inequalities~\eqref{eq:ineq-Gi},
	and combining the result with~\eqref{eq:coarea+average}, the
	conclusion follows from~\eqref{eq:control-Theta} exactly as
	explained in \cite[Proposition~4.1]{Bethuel-AC-Acta}.
\end{proof}

Note that there are no assumption of closeness to the wells
in the statement of Proposition~\ref{prop:analogue-of-Bethuel-4.1},
but at the same time, it provides an estimate on a \emph{fixed} level set.
As in \cite{Bethuel-AC-Acta},
Proposition~\ref{prop:analogue-of-Bethuel-4.1} will not be used in the
proof of Proposition~\ref{prop:analogue-of-Bethuel-4.2} but rather in
the proof of Proposition~\ref{prop:analogue-of-Bethuel-4.3}, which will
be another crucial tool in
Section~\ref{sect:nustar}.

We have finally at disposal all the ingredients to
proceed to the proof of Proposition~\ref{prop:analogue-of-Bethuel-4.2}.

\begin{proof}[{Proof of Proposition~\ref{prop:analogue-of-Bethuel-4.2}}]
	The estimate~\eqref{eq:ineq-G-Upsilon} in
	Proposition~\ref{prop:analogue-of-Bethuel-4.1} is apparently close
	to~\eqref{eq:est-close-to-Sigma} but it is on a \emph{fixed} level set.
	Exactly as in \cite{Bethuel-AC-Acta},
	the crux of the proof consists in allowing the level set to vary,
	exploiting the assumption of closeness to the wells,
	i.e.,~\eqref{eq:assumption-Bethuel}.

	Fix any $\kappa \in \left(0,\,\delta_\beta \right)$, and suppose
	that~\eqref{eq:assumption-Bethuel} holds.
	By continuity of $\M$, the condition~\eqref{eq:assumption-Bethuel} can
	be satisfied by \emph{either} $\M_+$ \emph{or} $\M_-$.
	Set, for brevity,
	$\N := \M_+$ or $\N := \M_-$, according to which
	of the two options in~\eqref{eq:assumption-Bethuel} is realised,
	and denote
	\[
		\mathcal{C} := \left\{ x \in B :\, \kappa \leq \abs{ \M(x) - \N(x)} \leq 	\widetilde{\delta}_\eps  \right\},
	\]
	where $\widetilde{\delta}_\eps$ is the number provided by
	Lemma~\ref{lemma:coarea+average}.

	Testing~\eqref{eq:M} against $\frac{\M - \N}{\abs{\M - \N}}$ and
	integrating the result over $\mathcal{C}$, we obtain
	\begin{equation}\label{eq:test-eq-compu1}
	\begin{split}
		-\eps &\int_{\partial \mathcal{C}} \frac{\M - \N}{\abs{\M - \N}} \cdot
		\partial_\nnu \M \,{\d}\sigma + \eps \int_{\mathcal{C}} \nabla \M \cdot \nabla \left( \frac{\M - \N}{\abs{\M - \N}}  \right)\,{\d}x \\
		&+\frac{1}{\eps}\int_{\mathcal{C}} \nabla_\M V(\,\cdot\,,\M) \cdot
		\left( \frac{\M - \N}{\abs{\M - \N}}\right) \,{\d}x = 0,
	\end{split}
	\end{equation}
	where $\nnu$ denotes the exterior normal to $\partial \mathcal{C}$.
	Note that, by calculus and recalling~\eqref{eq:nablaV=nablaL},
	\begin{equation}\label{eq:calculus}
	\begin{split}
		\nabla_\M V(\,\cdot\,,\M) \cdot \left(\frac{\M - \N}{\abs{\M - \N}}\right) &= \nabla_\M \left( \ell(\Q,\,\M) - \ell(\Q,\,\N) \right) \cdot \left(\frac{\M - \N}{\abs{\M - \N}}\right) \\
		&\gtrsim \abs{\M -\N} \geq 0.
	\end{split}
	\end{equation}
	Clearly,
	\begin{gather*}
		\partial_\nnu \M \cdot \frac{\M - \N}{\abs{\M - \N}}  =
		\partial_\nnu \abs{\M - \N} + \partial_\nnu \N \cdot \frac{\M-\N}{\abs{\M - \N}}, \\
		\begin{split}
		\nabla \M \cdot \nabla \left( \frac{\M - \N}{\abs{\M - \N}} \right) &=
		\nabla(\M - \N) \cdot \nabla\left( \frac{\M - \N}{\abs{\M - \N}} \right) +
		\nabla\N \cdot \nabla\left( \frac{\M - \N}{\abs{\M - \N}} \right) \\
		&= \abs{\M - \N} \abs{\nabla\left( \frac{\M - \N}{\abs{\M - \N}}\right)}^2  + \nabla \N \cdot \nabla \left( \frac{\M - \N}{\abs{\M - \N}} \right).
		\end{split}
	\end{gather*}
	Assumption~\eqref{eq:assumption-Bethuel} implies
	\[
		\partial \mathcal{C} = \Gamma(\kappa) \cup
		\Gamma(\widetilde{\delta}_\eps),
	\]
	where we have set
	$\Gamma(\alpha) := \left\{ x \in B_\varrho(x_0) : \, \abs{\M(x)- \N(x) } = \alpha  \right\}$
	for $\alpha \in \R$.
	Therefore, by the identity \eqref{eq:test-eq-compu1} we obtain
	(recall that the exterior normal $\nnu$ to $\Gamma$ points in the direction that
	increases $\abs{\M - \N}$)
	\begin{equation}\label{eq:test-eq-compu2}
	\begin{split}
		\eps &\int_{\Gamma(\kappa)} \partial_\nnu \abs{\M -\N} \,{\d}\sigma
	   -\eps \int_{\Gamma\left(\widetilde{\delta}_\eps\right)}
	   \partial_\nnu \abs{\M -\N} \,{\d}\sigma\\
	   &+\frac{\eps}{\kappa} \int_{\Gamma(\kappa)} \partial_{\nnu}\N \cdot(\M - \N) \,{\d}\sigma
	   - \frac{\eps}{\widetilde{\delta}_\eps} \int_{\Gamma\left(\widetilde{\delta}_\eps\right)} \partial_{\nnu}\N \cdot(\M - \N) \,{\d}\sigma \\
	   &+\eps \int_{\mathcal{C}} \abs{\M - \N} \abs{ \nabla\left( \frac{\M - \N}{\abs{\M - \N}} \right) }^2 \,{\d}x + \eps \int_{\mathcal{C}} \nabla \N \cdot \nabla\left( \frac{\M - \N}{\abs{\M - \N}} \right) \,{\d}x \\
	   & \stackrel{\eqref{eq:test-eq-compu1}, \eqref{eq:calculus}}{\leq} 0.
	\end{split}
	\end{equation}
	On the other hand, we have
	\begin{equation}\label{eq:test-eq-compu3}
		\eps \int_{\mathcal{C}} \nabla \N \cdot \nabla\left( \frac{\M - \N}{\abs{\M - \N}} \right)\,{\d}x \leq \frac{\eps}{2}\int_{\mathcal{C}} \frac{\abs{\nabla \N}^2}{\abs{\M-\N}}\,{\d}x + \frac{\eps}{2} \int_{\mathcal{C}} \abs{\M - \N} \abs{\nabla\left( \frac{\M - \N}{\abs{\M - \N}}\right)}^2\,{\d}x
	\end{equation}
	and therefore, from~\eqref{eq:test-eq-compu2}, \eqref{eq:test-eq-compu3},
	and~\eqref{eq:coarea+average},
	\begin{equation}\label{eq:test-eq-compu4}
	\begin{split}
		\eps &\int_{\Gamma(\kappa)} \partial_\nnu \abs{\M -\N} \,{\d}\sigma
	   +\frac{\eps}{\kappa} \int_{\Gamma\left(\kappa\right)}
	   \partial_\nnu \N \cdot (\M - \N) \,{\d}\sigma \\
	  &\stackrel{\eqref{eq:test-eq-compu2}, \eqref{eq:test-eq-compu3}}\leq \eps \int_{\Gamma\left(\widetilde{\delta}_\eps\right)} \partial_\nnu \abs{\M -\N} \,{\d}\sigma
	  + \frac{\eps}{\kappa} \int_{\Gamma\left(\widetilde{\delta}_\eps\right)}
	  \partial_\nnu \N \cdot (\M - \N) \,{\d}\sigma +
	  \frac{\eps}{2}\int_{\mathcal{C}} \frac{\abs{\nabla \N}^2}{\abs{\M - \N}}\,{\d}x \\
	  &\quad \stackrel{\eqref{eq:coarea+average}}{\lesssim} \int_{B_{\varrho_\eps}} \frac{1}{\eps} V(x,\,\M(x)) \,{\d}x
	  + \frac{\eps}{\kappa} \int_{B_{\varrho_\eps}} \abs{\nabla \N}^2 \,{\d}x
	  + \eps \int_{B_{\varrho_\eps}} \abs{\nabla \N}^2 \,{\d}x,
	\end{split}
	\end{equation}
	where the implicit constant on the right-hand side depends only on
	the parameter $\beta$ in the potential $V$ and $\Omega$.

	Multiplying~\eqref{eq:test-eq-compu4} by $\kappa$ and
	combining the result with~\eqref{eq:ineq-Gi}, we obtain
	\[
		E_\eps(\M; \Upsilon) \lesssim \kappa
		\int_{B_{\varrho_\eps}} \left( \eps \abs{\nabla \Q}^2 + \frac{1}{\eps} V(x,\,\M(x)) \right) \,{\d}x
		+ \eps\left( E_\eps(\M; \partial B_{\varrho_\eps}) + \eps \int_{\partial B_{\varrho_\eps}} \abs{\nabla \Q}^2\,{\d}\sigma \right),
	\]
	where the implicit constant on the right-hand side depends only on 
	$\beta$ and on $\Omega$. This is precisely the desired
	inequality~\eqref{eq:est-close-to-Sigma}, and so we are done.
\end{proof}

With Proposition~\ref{prop:analogue-of-Bethuel-4.2} at hand, the proof of
Proposition~\ref{prop:analogue-of-Bethuel-4.2-R} follows immediately.

\begin{proof}[Proof of Proposition~\ref{prop:analogue-of-Bethuel-4.2-R}]
	Just scale back Proposition~\ref{prop:analogue-of-Bethuel-4.2} to
	$B(x_0,\,R)$ using the scaling properties in Remark~\ref{rk:scaling}.
\end{proof}

Proposition~\ref{prop:analogue-of-Bethuel-4.2-R} is concerned
with an energy estimate \emph{close} to the zero set of the potential
$V(\,\cdot\,,\,\M)$, in any ball $B = B(x_0,\,R)$ on which $\abs{\Q} \geq 1/2$.
As in \cite[Proposition~4.11]{Bethuel-AC-Acta},
using Proposition~\ref{prop:analogue-of-Bethuel-4.1},
we can obtain an estimate of the energy on the \emph{whole} ball
$B'' := B(x_0,\,R/2)$ essentially in terms of the
integral of the potential on the ball $B' := B(x_0,\,3R/4)$
plus a lower order term.

\begin{prop}\label{prop:analogue-of-Bethuel-4.3}
	Let $(\Q_\eps,\,\M_\eps)$ be a critical point
	of $\F_\eps$. There exists a constant ${\rm K}_\beta$, depending
	only on $\beta$, so that, for any ball $B := B(x_0,\,R) \subset \Omega $
	such that $\abs{\Q_\eps} \geq 1/2$ in $B$, there holds
	\begin{equation}\label{eq:est-M-whole-B''}
	\begin{split}
		E_\eps(\M_\eps;\, B'') \leq &{\rm K}_\beta \left\{ \int_{B'} \left( \frac{1}{\eps} V(x,\,\M_\eps) + \eps \abs{\nabla \Q_\eps}^2 \right) \,{\d}x \right. \\
		&\left.+ \frac{\eps}{R} E_\eps(\M_\eps;\, B \setminus B'') + \frac{\eps^2}{R} \int_{B \setminus B''} \abs{\nabla \Q_\eps}^2\,{\d}x\right\},
	\end{split}
	\end{equation}
	where $B' := B(x_0,\,3R/4)$ and $B'' := B(x_0,\,R/2)$.
\end{prop}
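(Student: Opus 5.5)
We first reduce to the unit ball by the scaling of Remark~\ref{rk:scaling}. Setting $\widetilde{\eps} = \eps/R$ and rescaling $(\Q_\eps,\M_\eps)$ to $B_1$, identity~\eqref{eq:scaling-E-0} shows that both sides of~\eqref{eq:est-M-whole-B''} scale by the same factor $R$. Indeed, $\int_{B'}\eps\abs{\nabla\Q_\eps}^2$ becomes $R\int\widetilde{\eps}\abs{\nabla\widetilde{\Q}}^2$, and $\frac{\eps}{R}E_\eps(\M_\eps;\,B\setminus B'')$ becomes $R\,\widetilde{\eps}\,E_{\widetilde{\eps}}(\widetilde{\M};\,B_1\setminus B_{1/2})$. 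The same bookkeeping applies to the $\eps^2/R$ term. So it suffices to prove the estimate for $R=1$, $B=B_1$, $B''=B_{1/2}$, $B'=B_{3/4}$.

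On $B_1$ we decompose, for each radius $\varrho\in[\frac12,\frac34]$, $B_{1/2}\subset B_\varrho = \Upsilon(\varrho,\delta_\beta)\cup\Theta(\M,\varrho)$. On $\Theta(\M,\varrho)$, Remark~\ref{rk:control-Theta} (estimate~\eqref{eq:control-Theta}) bounds the full energy density by a constant times $\frac1\eps V(x,\M)$. Hence $E_\eps(\M;\,\Theta(\M,\varrho))\lesssim\int_{B_{3/4}}\frac1\eps V$. On $\Upsilon(\varrho,\delta_\beta)$ we apply Proposition~\ref{prop:analogue-of-Bethuel-4.1}, which needs no closeness assumption on $\partial B$. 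It bounds $E_\eps(\M;\Upsilon(\varrho,\delta_\beta))$ by $K_\Upsilon$ times
\[
\int_{B_{\varrho}}\Bigl(\eps\abs{\nabla\Q}^2+\tfrac1\eps V\Bigr) + \eps^2\int_{\partial B_\varrho}\abs{\nabla\Q}^2\,{\d}\sigma + \eps E_\eps(\M;\partial B_\varrho).
\]
The volume terms are already of the desired form, since $B_\varrho\subset B'$. The constant $K_\Upsilon$ depends on ${\rm M}_\beta$, hence only on $\beta$.

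The remaining step is to trade the boundary terms for volume terms on the annulus, and it is handled by an averaging (Fubini) argument. By the coarea formula in polar coordinates,
\[
\int_{1/2}^{3/4}\Bigl(\eps E_\eps(\M;\partial B_\varrho)+\eps^2\int_{\partial B_\varrho}\abs{\nabla\Q}^2\Bigr)\,{\d}\varrho \le \eps E_\eps(\M;B_1\setminus B_{1/2})+\eps^2\int_{B_1\setminus B_{1/2}}\abs{\nabla\Q}^2 .
\]
Hence there is a good radius $\varrho_\eps\in[\frac12,\frac34]$ at which the boundary quantity is at most $4$ times the right-hand side. We choose this $\varrho_\eps$ in the previous step and use $B_{1/2}\subset B_{\varrho_\eps}$. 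Summing the contributions of $\Upsilon$ and $\Theta$ then gives the rescaled version of~\eqref{eq:est-M-whole-B''}, with ${\rm K}_\beta$ built from $K_\Upsilon$, the constant in~\eqref{eq:control-Theta}, and the factor $4$.

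The main point requiring care is the constant dependence. Remark~\ref{rk:control-Theta} relies on the gradient bound~\eqref{max-gradients}, whose constant depends on $\Omega$ as well as $\beta$. We must check that this bound is scale-invariant under the rescaling: $\widetilde{\eps}\abs{\nabla\widetilde{\M}}\lesssim 1$ is preserved, so this is fine. We also need $\abs{\Q}\le2$, which holds for $\eps\le\eps_\beta$ by Remark~\ref{rk:Qeps-less-2}, so that Proposition~\ref{prop:analogue-of-Bethuel-4.1} is applicable on all of $B_1$.
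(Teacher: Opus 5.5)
Your proposal is correct and follows essentially the same route as the paper: rescale to $B_1$, split $B_\varrho$ into $\Theta(\M,\varrho)$ and $\Upsilon(\varrho,\delta_\beta)$, control the former by Remark~\ref{rk:control-Theta} and the latter by Proposition~\ref{prop:analogue-of-Bethuel-4.1}, then choose a good radius in $[\frac12,\frac34]$ by Fubini averaging. Your single averaging over the sum of both boundary terms also handles the $\eps^2\int_{\partial B_\varrho}\abs{\nabla\Q}^2$ term, which the paper's write-up glosses over, and, as you observe, the resulting constant genuinely depends on $\Omega$ through~\eqref{max-gradients}, exactly like the paper's ${\rm C}_{\beta,\Omega}$.
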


\begin{proof}
	By the scaling properties in Remark~\ref{rk:scaling}, we may
	assume that $B = B_1$ and then obtain~\eqref{eq:est-M-whole-B''} by
	scaling back.

	The argument involves two steps.
	The first step, corresponding to \cite[Lemma~4.12]{Bethuel-AC-Acta},
	consists in obtaining the estimate
	\begin{equation}\label{eq:ineq-Lemma-4.7-Bethuel}
	\begin{split}
		E_\eps( \M_\eps;\, B_{\varrho_\eps} ) \leq
		&{\rm C}_{\beta,\,\Omega} \left\{ \int_{B_{\varrho_\eps}} \left( \frac{1}{\eps} V(x,\,\M_\eps) + \eps \abs{\nabla \Q_\eps}^2 \right) \,{\d}x \right. \\
		&\left.+ \frac{\eps}{4} E_\eps(\M_\eps;\,\partial B_{\varrho_\eps}) + \frac{\eps^2}{4} \int_{\partial B_{\varrho_\eps}} \abs{\nabla \Q_\eps}^2 \,{\d}\sigma \right\},
	\end{split}
	\end{equation}
	where $\varrho \in \left[\frac{1}{2}, \frac{3}{4}\right]$ and
	${\rm C}_{\beta,\,\Omega}$ is a constant depending only on $\beta$
	and on $\Omega$.
	Similarly to as in \cite[Lemma~4.12]{Bethuel-AC-Acta}, this follows from the
	decomposition
	\[
		B_{\varrho_\eps} = \Theta\left(\M_\eps,\,\varrho_\eps\right)
		\cup \Upsilon_\eps\left( \varrho_\eps,\, \delta_\beta\right)
	\]
	by applying Remark~\ref{rk:control-Theta}
	and Proposition~\ref{prop:analogue-of-Bethuel-4.1}.
	Then, \eqref{eq:est-M-whole-B''} follows by combining the above inequality
	with a usual averaging argument,
	ensuring the existence of a distinguished radius
	$\sigma_\eps \in \left[ \varrho_\eps, 3/4 \right]$ such that
	\[
		E_\eps\left(\M_\eps; \, \partial B_{\sigma_\eps} \right)
		\leq 8 E_\eps\left( \M_\eps;\, B_{3/4} \setminus B_{1/2} \right),
	\]
	and then scaling back, according to~\eqref{eq:scaling-inv-E}.
\end{proof}


\subsection{Energy decreasing and clearing-out property for $\M_\eps$}
\label{sec:clout-M}
The goal of this section is to extend to our context
the energy decreasing property and the clearing-out
property proved, for the pure Allen-Cahn system,
in \cite[Proposition~1.12 and Proposition~1.13]{Bethuel-AC-Acta},
respectively.
The energy decreasing estimate in \cite[Proposition~1.12]{Bethuel-AC-Acta},
here replaced by Proposition~\ref{prop:energy-decreasing} below,
is the starting point of a delicate iterative
scheme which leads to an extremely
rapid decreasing of the energy in small balls whenever the
(rescaled) energy on the boundary is below a certain fixed threshold.
In turn, such a quantitative decay is
the key ingredient in the proof of the clearing-out property for critical points of the
pure Allen-Cahn system --- see \cite[proof of Theorem~1.11]{Bethuel-AC-Acta}.

In our context,
the coupling term in the potential
is responsible for the presence of perturbation terms and
this entails several modifications to Bethuel's arguments.
We handle them by exploiting the 
quantitative decay and convergence results for the $\Q$-component 
already proved in the companion paper~\cite{CDS1}. Relying on them, 
we can still follow the same path as in
\cite[Section~6]{Bethuel-AC-Acta} to obtain, in the end,
Theorem~\ref{thm:clearing-out-M}, which provides the counterpart
of \cite[Proposition~1.13]{Bethuel-AC-Acta}.

\begin{notation}
	In this section, we consider a \emph{sequence}
	$\{(\Q_\eps,\,\M_\eps)\}$ of critical
	points of $\F_\eps$
	satisfying the boundary conditions~\eqref{bc}--\eqref{hp:bc}
	or~\eqref{bcbis}--\eqref{hp:bcbis} as well as the
	assumption~\eqref{hp:potential_bound}. 
	\textbf{We assume throughout that
	such a subsequence has been fixed}, although we will keep
	extracting subsequences from it whenever necessary.
	To keep the notation simple, we
	will not emphasise the dependence of the various constants that will
	appear in the following course on this fixed subsequence.

	As usual, we shall often set
	$\rho_\eps := \abs{\Q_\eps}$.	
	In the following, $\delta_\beta$ denotes the positive number, depending
	only on $\beta$, given by Lemma~\ref{lemma:V-quadratic}.
	Finally, we recall from Proposition~\ref{prop:Eeps-bounded}
	that we have the uniform energy bound
	\[
		E_\eps(\M_\eps;\,\Omega) \leq \mathcal{E}_0,
	\]
	where $\mathcal{E}_0$ depends only on $\beta$ and the boundary data.
\end{notation}

\paragraph{Convergence of $\Q_\eps$ and decay of the Ginzburg-Landau energy of $\Q_\eps$ away from $\spt\mu_\star$}

Let $K \subset \Omega \setminus \spt\mu_\star$ be any compact set. 
Then, for any given $\alpha \in (1,\,2)$, there exists a positive number 
$C_\alpha(K)$, depending only $\alpha$ and $K$, such that, for any ball 
$B_R = B(x_0,\,R) \subseteq K$, we have (see~\cite[Proposition~2.6 and Remark~2.6]{CDS1}) the quantitative decay 
\begin{equation}\label{eq:GL-decay}
	\int_{B_R} \left( \frac{1}{2} \abs{\nabla \Q_\eps}^2 + \frac{1}{4\eps^2}\left( \abs{\Q_\eps}^2-1 \right)^2 \right)\,{\d}x \leq C_\alpha(K) R^\alpha.
\end{equation}

\begin{remark}\label{rk:eta-beta-K}
	Given a compact set $K \subseteq \Omega \setminus \spt \mu_\star$,
	by~\eqref{eq:unif-conv-mod-Qeps} we can find a number
	$\eps_{1/2} > 0$ (depending only on $K$) 
	so that, for any ball $B(x_0,\,R) \subset K$ and any
	$\eps$ with $0 < \eps \leq \eps_{1/2}R$, there holds
	$\abs{\Q_\eps} \geq 1/2$ on $B(x_0,\,R)$.
	The number
	$\eps_{1/2}$ depends only on the chosen sequence of
	critical points $\{(\Q_\eps,\,\M_\eps)\}$, on the parameter $\beta$,
	and on $K$. On the other hand, by~\eqref{eq:Q-M-s*}, we have
	$\abs{\Q_\eps} \leq s_*(\beta,\,\eps) = 1 + \eps \kappa_\star + \o_{\eps \to 0}(\eps)$,
	whence $\abs{\Q_\eps}\leq 2$ for any $0 < \eps \leq \eps_\beta R$, where
	$\eps_\beta$ depends only on $\beta$.
	Therefore, choosing
	\begin{equation}\label{eq:eps-K-beta}
		\eps_{K,\beta} := \min\{ \eps_{1/2}, \eps_\beta \},
	\end{equation}
	we have that $\eps_{K,\beta}$ depends only on the sequence of
	critical points, $K$,
	and $\beta$, and that
	\begin{equation}\label{eq:bound-Q-K}
		\frac{1}{2} \leq \abs{\Q_\eps(x)} \leq 2,
		\qquad \forall x \in B(x_0,\,R),
	\end{equation}
	for all balls $B(x_0,\,R) \subset K$ and
	for all $\eps$ with $0 < \eps \leq \eps_{K,\beta} R$.
\end{remark}

\paragraph{Clearing-out theorem for $\M_\eps$}
The main result of this section is the following theorem, providing
an analogue of \cite[Proposition~1.13]{Bethuel-AC-Acta} in our context.

\begin{theorem}\label{thm:clearing-out-M}
	For any compact set $K \subseteq \Omega \setminus \spt \mu_\star$
	and any $\alpha \in (1,\,2)$, there exists positive constants
	$\eta_{\star, K}^{(\alpha)}$, $\eps_{\star, K}^{(\alpha)}$,
	$R_{\star, K}^{(\alpha)}$, $C_{\rm well}$, $C_{\rm nrg}$, depending
	only on $K$, $\alpha$, $\beta$, and the energy bound $\mathcal{E}_0$,
	such that the following holds:
	if $B(x_0,\,R) \subset K$ has radius $R \leq R_{\star, K}^{(\alpha)}$,
	if $\eps \in \left(0,\,\eps_{\star,K}^{(\alpha)} R\right]$, and if
	\begin{equation}\label{eq:hp-co-M}
		E_\eps(\M_\eps;\,B(x_0,\,R)) \leq 4 \eta_{\star, K}^{(\alpha)} R,
	\end{equation}
	then
	\begin{equation}\label{eq:close-to-wells}
		\dist(\M_\eps,\,\Sigma(\Q_\eps)) \leq
		C_{\rm well}\left( \frac{E_\eps(\M_\eps;\,B(x_0,\,R))}{R} \right)^{1/6} \leq \delta_\beta
	\end{equation}
	on $B(x_0,\,R/2)$. Moreover,
	\begin{equation}\label{eq:energy-decay-M}
		E_\eps( \M_\eps;\, B(x_0,\,R/2)) \leq
		C_{\rm nrg} \eps \left( \frac{E_\eps(\M_\eps;\,B(x_0,\,R))}{R} + C_\alpha R^\alpha \right).
	\end{equation}
\end{theorem}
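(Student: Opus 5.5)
By the scaling of Remark~\ref{rk:scaling}, and in particular the invariance~\eqref{eq:scaling-inv-E}, I would reduce to $B(x_0,\,R)=B_1$ with $\widetilde\eps=\eps/R$. The hypothesis $\eps\le \eps_{\star,K}^{(\alpha)}R$ then makes $\widetilde\eps$ small. I would also require $\eps_{\star,K}^{(\alpha)}\le\eps_{K,\beta}$, so that~\eqref{eq:bound-Q-K} gives $\tfrac12\le\abs{\Q}\le2$ on $B_1$. After rescaling, the Ginzburg--Landau decay~\eqref{eq:GL-decay} reads
\[
\widetilde\eps\int_{B_1}\abs{\nabla\Q}^2\lesssim \eps R^{\alpha-1}=\widetilde\eps\, R^{\alpha}.
\]
This is where the term $C_\alpha R^\alpha$ in~\eqref{eq:energy-decay-M} comes from. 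Throughout, every $\Q$-perturbation in the estimates of Section~\ref{sec:energy-est-pot} will be bounded this way, and $R_{\star,K}^{(\alpha)}$ is chosen small so that these terms are below the threshold.

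\emph{Step 1: pointwise closeness to the wells.} Set $\eta:=E_{\widetilde\eps}(\M;B_1)\le 4\eta_\star$. I would take $x\in B_{1/2}$ and a radius $r\sim \eta^{1/3}$. By the gradient bound~\eqref{max-gradients}, $\abs{\nabla\M}\lesssim\widetilde\eps^{-1}$, so if $V(x,\M(x))\ge\theta$ then $V\gtrsim\theta$ on the ball $B(x,c\theta^{1/2}\widetilde\eps)$. That ball alone carries potential energy $\gtrsim\theta^2\widetilde\eps$, which is too weak, because it is only of order $\widetilde\eps$. So I would instead argue as in Bethuel's proof of~\cite[Proposition~1.13]{Bethuel-AC-Acta}. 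First, an averaging argument on radii in $[1/2,3/4]$ (Fubini) gives a circle $\partial B_\varrho$ with $E_{\widetilde\eps}(\M;\partial B_\varrho)\lesssim\eta$. Along that circle, the one-dimensional Modica--Mortola inequality gives
\[
\sup_{\partial B_\varrho}\dist(\M,\Sigma(\Q))\lesssim\eta^{1/2}\ \text{(up to a small correction)}.
\]
To justify this, I would use the function $\int_0^{d}\sqrt{V}$ along the circle, whose variation is bounded by $E(\partial B_\varrho)$, together with Lemma~\ref{lemma:V-quadratic}; the correction due to the moving wells is bounded by $\int_{\partial B_\varrho}\abs{\nabla\Q}$, which is small by the decay. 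Once the boundary is within $\kappa<\delta_\beta$ of a single well, I would apply Proposition~\ref{prop:analogue-of-Bethuel-4.2} with $\kappa$ of order $\eta^{1/3}$, which yields smallness of the energy in $\Upsilon$. An interior argument then gives the pointwise bound: if some point were at distance $\ge \kappa$ from the wells, the $\widetilde\eps$-ball around it would carry energy $\gtrsim\kappa^2\widetilde\eps$, and this is incompatible with the improved bound obtained in Step 2. Optimizing the exponents in this comparison is what should produce the power $1/6$ in~\eqref{eq:close-to-wells}. Choosing $\eta_\star$ small then forces the right-hand side below $\delta_\beta$.

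\emph{Step 2: energy decay.} Once~\eqref{eq:close-to-wells} holds on $B_{3/4}$ (after running Step 1 on slightly larger balls), the set $\Theta$ is empty there. Proposition~\ref{prop:analogue-of-Bethuel-4.3}, or equivalently Proposition~\ref{prop:analogue-of-Bethuel-4.2}, bounds $E(B_{1/2})$ by $\kappa\int\frac1{\widetilde\eps}V$ plus $\widetilde\eps\,(\eta+R^\alpha)$. Since $\kappa\lesssim\eta^{1/6}\le \tfrac12 C_\Upsilon^{-1}$, the term $\kappa\int \frac{1}{\widetilde\eps}V$ can be absorbed into the left-hand side after iterating on nested radii between $1/2$ and $3/4$; a hole-filling lemma handles this. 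What remains is $E(B_{1/2})\lesssim\widetilde\eps(\eta+R^\alpha)$, and scaling back gives~\eqref{eq:energy-decay-M}.

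The main obstacle is Step 1. Bethuel's clearing-out relies on an iterative decay scheme started from~\cite[Proposition~1.12]{Bethuel-AC-Acta}, whereas here each iteration picks up $\Q$-errors. I would check that these errors stay summable because the decay~\eqref{eq:GL-decay} holds with exponent $\alpha>1$: the error on a ball of radius $r$ is $\lesssim \eps\, r^{\alpha-1}$, which is geometrically summable over dyadic radii.
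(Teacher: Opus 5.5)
There is a genuine gap, and it sits at the hard part of the theorem: getting from $E_\eps(\M_\eps;B(x_0,R))\le 4\eta^{(\alpha)}_{\star,K}R$ to pointwise closeness to the wells in the interior.

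\textbf{Circularity.} As written, Steps 1 and 2 depend on each other. The interior bound of Step 1 is obtained by contradiction with ``the improved bound obtained in Step 2'', i.e.\ $E(B_{1/2})\lesssim\widetilde\eps(\eta+R^\alpha)$. But Step 2 presupposes \eqref{eq:close-to-wells} on $B_{3/4}$, both to empty $\Theta$ and to get $C_\Upsilon\kappa\le 1/2$.

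\textbf{The non-circular part of Step 1 is not enough.} Lemma~\ref{lemma:Bethuel-2.4} gives closeness only on one good circle. Proposition~\ref{prop:analogue-of-Bethuel-4.2} controls the energy only on $\Upsilon(\varrho,\kappa)$, where $\M$ is already near $\Sigma(\Q)$. Nothing in your argument controls the region where $\M$ is far from the wells.

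\textbf{No one-shot argument at scale $R$ can work.} The bound $E(B_R)\le\eta R$ gives no smallness of $E(B(x,r))/r$ for $\eps\ll r\ll R$. A point at distance $\kappa$ from the wells only forces energy of order $\eps$ (times a power of $\kappa$) in an $\eps$-ball, which is perfectly compatible with total energy $\eta R$. Since no monotonicity formula is available for $r^{-1}E_\eps(\M_\eps;B_r)$, smallness has to be propagated through every scale from $R$ down to $\eps$.

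This is what the paper does, and what your last paragraph only alludes to.

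\emph{One-step decay.} Proposition~\ref{prop:energy-decreasing} proves $E(B_{R/2})\le C_0\bigl(E(B)^{3/2}R^{-1/2}+\eps E(B)/R+C_\alpha R^\alpha\bigr)$ as follows. On a good circle, $\M$ is within $\kappa\sim(E/R)^{1/2}$ of one well, so Proposition~\ref{prop:analogue-of-Bethuel-4.2} gives $E(\Upsilon)\lesssim E^{3/2}R^{-1/2}+\eps E/R$, plus $\Q$-terms. Averaging gives a radius $\tau$ with $\tau E(\partial B_\tau)\lesssim E(\Upsilon)$. The Pohozaev identity (Lemma~\ref{lemma:pohozaev}, with Lemma~\ref{lemma:pointwise-est-V} and \eqref{eq:GL-decay}) then bounds the potential on the \emph{whole} ball $B_\tau$ by $\tau E(\partial B_\tau)+C_\alpha R^\alpha$. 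This is the step that controls the far-from-wells region, and you never use it. Finally, Proposition~\ref{prop:analogue-of-Bethuel-4.3} turns the potential bound into an energy bound.

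\emph{Iteration.} Proposition~\ref{prop:clearunif} applies this on the dyadic balls $B(x,R/2^{n+2})$, for every $x\in B(x_0,3R/4)$, down to radius $\eps/\eps_1$. The $\Q$-error per step is $C_\alpha r^\alpha$, coming from Pohozaev; it is not $\eps$-small as you suggest. The iteration closes only because $\alpha>1$ and $R\le R_1$ make $\Lambda_\alpha R^{\alpha-1}$ small, while $\alpha<2$ keeps $\Lambda_\alpha$ finite; this is why $R^{(\alpha)}_{\star,K}$ appears in the statement. The output is $E(B(x,\eps/\eps_1))\lesssim\eps$, hence $\dist(\M_\eps,\Sigma(\Q_\eps))\le\delta$ for any prescribed $\delta$, but only qualitatively.

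\emph{The rest of your argument then works.} Your Step 2 is fine once you take $\delta\le(2C_\Upsilon)^{-1}$: absorbing via Proposition~\ref{prop:analogue-of-Bethuel-4.2} with $\Upsilon=B_\varrho$ works. The paper instead tests the equation with $\M_\eps-\N_\eps$ in Proposition~\ref{prop:analogue-of-Bethuel-6.3}. Your final comparison is Proposition~\ref{prop:weak-co} applied on $B(x_0,5R/8)$, whose energy is now $\le\eta_\beta\eps$; that is where the exponent $1/6$ comes from.

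So the fix is to reorder: qualitative closeness by the multiscale iteration, then $\eps$-decay, then the quantitative bound. Incidentally, this chain, yours as well as the paper's, yields $C(E/R+C_\alpha R^\alpha)^{1/6}$ rather than $C(E/R)^{1/6}$ in \eqref{eq:close-to-wells}.
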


We start by adapting the `very weak clearing-out property' contained in
\cite[Proposition~6.1]{Bethuel-AC-Acta} to our needs. The proof
of Proposition~\ref{prop:weak-co} below only relies on the clearing-out
property for the $\Q_\eps$-components, the \emph{a priori} bounds
entailed by the maximum principle, and the quadratic structure of
the potential $V$ near the 
set
$\Sigma(\Q_\eps)$ defined in~\eqref{eq:Sigma-Q}.
\begin{prop}\label{prop:weak-co}
	For any compact set $K \subseteq \Omega \setminus \spt \mu_\star$
	there exists a positive constant $\eps(K,\,\beta)$, depending only
	on $K$ and $\beta$, and positive constants $\eta_\beta$
	and $C_{\rm weak} = C_{\rm weak}(\beta)$, depending only on
	$\beta$ and $\Omega$, such that
	if $B(x_0,\,R) \subset K$
	is any ball, if $0 < \eps \leq \eps(K,\,\beta) R$, and if
	\begin{equation}\label{eq:hp-weak-co}
		E_\eps(\M_\eps;\,B(x_0,\,R)) \leq \eta_\beta \eps,
	\end{equation}
	then there holds
	\begin{equation}\label{eq:weak-co}
		\dist(\M_\eps(x),\,\Sigma(\Q_\eps(x))) \leq
		C_{\rm weak} \left( \frac{E_\eps(\M_\eps;\,B(x_0,R))}{\eps}\right)^{1/6}
		\leq \delta_\beta
	\end{equation}
	for any $x \in B(x_0,\,{7R/8})$.
\end{prop}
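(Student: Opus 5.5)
The plan is a pointwise argument based on the Lipschitz bound \eqref{max-gradients} and on the quadratic structure of $V$ near the wells (Lemma~\ref{lemma:V-quadratic-bis}). Fix the compact set $K$ and take $\eps(K,\beta)\le\eps_{K,\beta}$ from Remark~\ref{rk:eta-beta-K}. Then $\tfrac12\le\abs{\Q_\eps}\le2$ on $B(x_0,R)$, so $\Sigma(\Q_\eps)$, $V$ and Lemmas~\ref{lemma:V-quadratic}, \ref{lemma:V-quadratic-bis} are all available there. Set $\eta:=E_\eps(\M_\eps;B(x_0,R))/\eps$; the hypothesis \eqref{eq:hp-weak-co} reads $\eta\le\eta_\beta$. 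It is enough to show that $V(x,\M_\eps(x))\lesssim \eta^{1/3}$ for every $x\in B(x_0,7R/8)$. Once this holds, and provided $\eta_\beta$ is small enough that the right-hand side is $\le\gamma_\beta$, Lemma~\ref{lemma:V-quadratic-bis} gives
\[
\dist(\M_\eps,\Sigma(\Q_\eps))\le\sqrt{4\lambda_-^{-1}V}\lesssim\eta^{1/6},
\]
and $\le\delta_\beta$ after a further reduction of $\eta_\beta$.

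Fix $x\in B(x_0,7R/8)$ and put $v:=V(x,\M_\eps(x))$. We estimate the oscillation of $y\mapsto V(y,\M_\eps(y))$ at scale $\eps$. The map $(\Q,\M)\mapsto V$ is smooth, hence locally Lipschitz, on the bounded region given by \eqref{max-QM} together with $\tfrac12\le\abs{\Q}\le2$. Indeed $\ell$ is polynomial and $\min\ell(\Q,\cdot)$ is smooth in $\rho=\abs{\Q}$ by \eqref{ell-minimum}. Combined with $\norm{\nabla\Q_\eps}_\infty+\norm{\nabla\M_\eps}_\infty\le C_{\beta,\Omega}/\eps$ from \eqref{max-gradients}, this yields
\[
\abs{V(y,\M_\eps(y))-V(x,\M_\eps(x))}\le C\,\abs{y-x}/\eps .
\]
Choose $r=c\,v\,\eps$ with $c$ small. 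Then $V(y,\M_\eps(y))\ge v/2$ on $B(x,r)$. This ball lies in $B(x_0,R)$ as soon as $r\le R/8$, which is guaranteed by $v\lesssim1$ (from \eqref{max-QM}) and $\eps\le\eps(K,\beta)R$ with $\eps(K,\beta)$ small. Hence
\[
\eps\,\eta=E_\eps(\M_\eps;B(x_0,R))\ge\frac1\eps\int_{B(x,r)}V\ge\frac{1}{\eps}\cdot\frac v2\cdot\pi c^2v^2\eps^2=c'\,v^3\,\eps ,
\]
that is, $v\lesssim\eta^{1/3}$. This is exactly the exponent $1/6$ for the distance.

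The step needing most care is the uniform Lipschitz control of $V$ in both variables. The wells move with $\Q_\eps$, so the dependence of $\inf\ell(\Q,\cdot)$ on $\Q$ has to be handled as well. On the region $\abs{\Q}\in[\tfrac12,2]$ this is harmless because \eqref{ell-minimum} is an explicit polynomial in $\rho$ and $\abs{\nabla\rho}\le\abs{\nabla\Q}$. All constants then depend only on $\beta$ and $\Omega$, as required for $\eta_\beta$ and $C_{\rm weak}$. The dependence on $K$ enters only through the choice of $\eps(K,\beta)$ ensuring the bounds on $\abs{\Q_\eps}$ and that the small balls stay inside $B(x_0,R)$.
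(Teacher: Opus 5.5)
Your proposal is correct and follows essentially the paper's argument. Both use the Lipschitz bound of order $1/\eps$ on $y\mapsto V(y,\M_\eps(y))$ coming from \eqref{max-gradients}, then a lower bound $V\ge v/2$ on a ball of radius comparable to $v\eps$, which gives $v^3\lesssim E_\eps(\M_\eps;B(x_0,R))/\eps$, and finally Lemma~\ref{lemma:V-quadratic-bis} to obtain the $1/6$ power. The paper phrases the middle step as a contradiction against an explicit threshold $\gamma$; your direct treatment of the $\Q_\eps$-dependence of $V$ in the Lipschitz estimate is, if anything, more precise than the paper's stated bound on $\nabla_\M V$.
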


\begin{proof}
	For convenience, we denote $B_r := B(x_0,\,r)$ the ball
	of radius $r$ and centre $x_0$.

	First, we choose $\eps(K,\,\beta) = \eps_{K,\beta}$, where $\eps_{K,\,\beta}$
	is the constant (depending only on $K$ and $\beta$) given by~\eqref{eq:eps-K-beta}.
	This ensures that~\eqref{eq:bound-Q-K} holds, so that the set
	$\Sigma(\Q_\eps)$ is well-defined, for any $\eps$ with
	$0 < \eps \leq \eps(\beta,K) R$; in turn, this implies that
	Lemma~\ref{lemma:V-quadratic}
	and Lemma~\ref{lemma:V-quadratic-bis} hold. For ease of notation, let us
	temporarily denote $\epsilon := \eps(K,\,\beta)$.

	Assume that~\eqref{eq:hp-weak-co} holds, for some constant
	$\eta_\beta > 0$ to be determined later.
	From this assumption and recalling~\eqref{eq:nablaV=nablaL}
	as well as the uniform
	bounds~\eqref{max-QM},~\eqref{max-gradients}, we see that,
	for any $\eps > 0$,
	\begin{align}
		& \abs{\nabla_\M V(x,\,\M_\eps(x))} \leq \frac{C_\beta}{\eps}
		\qquad \mbox{for any } x \in B_R,\label{eq:grad-bound-V}\\
		& \int_{B_R} V(x,\,\M_\eps(x))\,{\d}x
		\leq \eta_\beta \eps^2,
	\end{align}
	where $C_\beta$ is a constant depending only on $\beta$ and $\Omega$.
	(In fact,~\eqref{eq:grad-bound-V} holds globally on $\Omega$.)
	Set
	\begin{equation}\label{eq:gamma}
		\gamma := \left( \frac{16 C_\beta^2}{\pi} \frac{E_\eps(\M_\eps;\,B_R)}{\eps} \right)^{1/3},
	\end{equation}
	and assume
	\[
		\eta_\beta = \frac{\pi \gamma_\beta^3}{16 C_\beta^2},
	\]
	where $\gamma_\beta := \frac{\sqrt{2}\beta}{2}\delta_\beta^2$
	is the same number as in Lemma~\ref{lemma:V-quadratic}.
	Then, by definition of $\gamma$ and the assumption
	$E_\eps(\M_\eps;\,B(x_0,\,R))< \eta_\beta \eps$,
	it follows that $\gamma \leq \gamma_\beta$. Next, we claim
	that
	\begin{equation}\label{eq:claim-V}
		V(x,\,\M_\eps(x)) \leq \gamma \leq \gamma_\beta, \qquad
		\mbox{for all } x \in B(x_0,\,7R/8).
	\end{equation}
	Indeed, assume, for the sake of a contradiction, that there
	exists some $x_1 \in B(x_0,\,7R/8)$ such that
	$V(x_1,\,\M_\eps(x_1)) > \gamma$. Then, by the gradient
	bound~\eqref{eq:grad-bound-V}, we have
	\begin{equation}\label{eq:contradiction-V}
		V(x,\,\M_\eps(x)) \geq \frac{\gamma}{2}, \qquad
		\mbox{for all } \in B\left(x_1, \frac{\gamma \epsilon}{2 C_\beta}R \right).
	\end{equation}
	On the other hand, since $x_1 \in B(x_0,\,7R/8)$,
	we can shrink $\epsilon$
	dependingly only on $\beta$ in such a way that there holds
	\[
		B\left( x_1,\,\frac{\gamma \epsilon}{2C_\beta} R \right) \subset
		B\left( x_1,\,\frac{\gamma_\beta \epsilon}{2C_\beta} R \right) \subset
		B(x_0,\,R).
	\]
	Integrating~\eqref{eq:contradiction-V} over
	$B\left( x_1,\,\frac{\gamma \epsilon}{2C_\beta} R \right)$ and
	recalling the definition~\eqref{eq:gamma} of $\gamma$ readily
	leads to a contradiction, establishing the claim~\eqref{eq:claim-V}.

	With~\eqref{eq:claim-V} at hand, we may employ
	Lemma~\ref{lemma:V-quadratic-bis} to yield both
	\[
	\dist(\M_\eps(x),\,\Sigma(\Q_\eps(x))) \leq \delta_\beta
	\]
	and (recalling that $\lambda_- = \sqrt{2}\beta$ ---
	see~\eqref{eq:lambda-pm})
	\[
		\dist(\M_\eps(x),\,\Sigma(\Q_\eps(x))) \leq
		\sqrt{4 \left(\sqrt{2}\beta\right)^{-1} V(x,\,\M_\eps(x))},
	\]
	for every $x \in B_{7R/8}$ and any $\eps$ with $0 < \eps \leq \epsilon R$.
	It follows that, for every $x \in B_{7R/8}$
	and any $\eps$ with $0 < \eps \leq \epsilon R$,
	\[
		\dist(\M_\eps(x),\,\Sigma(\Q_\eps(x)) \leq
		\inf\left\{ \delta_\beta,\,\sqrt{4\left(\sqrt{2}\beta \gamma\right)^{-1}} \right\}
		\leq \inf\left\{ \delta_\beta, C_{\rm weak}\left( \frac{E_\eps(\M_\eps,\,B_R)}{\eps}\right)^{1/6} \right\},
	\]
	where the constant $C_{\rm weak}$ is defined as
	\[
		C_{\rm weak} := \left( \frac{512 C_\beta^2}{\sqrt{2}\beta^3 \pi}  \right)^{1/6}
	\]
	and depends only on $\beta$.
	We choose
	\[
		\eta_\beta
		= \min\left\{ \frac{\pi \gamma_\beta^3}{16 C_\beta^2}, \left( \frac{\delta_\beta}{2 C_{\rm weak}} \right)^6\right\},
	\]
	and we conclude the proof by
	observing that $\eta_\beta$ depends only on $\beta$
	and $\Omega$.
\end{proof}

\begin{prop}\label{prop:energy-decreasing}
	There exists a constant $C_0$, depending only on $\beta$
	and the energy bound $\mathcal{E}_0$ given by Proposition~\ref{prop:Eeps-bounded},
	and, for any compact set~$K\subseteq\Omega\setminus \spt\mu_\star$
	and any~$\alpha\in (1, \, 2)$, a number
	$\eps_0 = \eps_0(K, \, \alpha)$ 
	such that, for any ball~$B = B(x_0,\,R) \subset K$ and any~$\eps$
	with~$0 < \eps \leq \eps_0 R$, there holds
	\begin{equation}\label{eq:energy-decreasing}
	\begin{split}
		E_\eps\left(\M_\eps;\,B(x_0,\,R/2)\right)
		\leq C_0\left(\frac{E_\eps(\M_\eps; \, B)^{3/2}}{\sqrt{R}}
		+ \frac{\eps \, E_\eps(\M_\eps; \, B)}{R}
		+ C_\alpha(K) R^\alpha \right) \! ,
	\end{split}
	\end{equation}
	where $C_\alpha(K)$ is the constant, depending only on $\alpha$ and $K$,
	appearing in~\eqref{eq:GL-decay}.
\end{prop}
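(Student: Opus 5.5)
We follow the scheme of \cite[Proposition~1.12]{Bethuel-AC-Acta}: find a good radius on which the boundary energy is controlled, use the weak clearing-out (Proposition~\ref{prop:weak-co}) rescaled to small balls to show $\M_\eps$ is close to the wells on that circle, and then apply Proposition~\ref{prop:analogue-of-Bethuel-4.2-R} with an optimised $\kappa$. By scaling (Remark~\ref{rk:scaling}) we may take $R=1$, keeping track of how $C_\alpha(K)R^\alpha$ transforms: by \eqref{eq:scaling-inv-E}, $\eps\int|\nabla\Q_\eps|^2$ scales like $E_\eps$. We take $\eps_0\le \eps_{K,\beta}$ from Remark~\ref{rk:eta-beta-K}, so that $\tfrac12\le|\Q_\eps|\le2$ on $B$ and all lemmas of Section~\ref{sec:Eeps} apply. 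Write $E:=E_\eps(\M_\eps;B)$. If $E\ge c_\beta$ for a suitable constant $c_\beta$, the estimate is trivial, since $E_\eps(\M_\eps;B_{1/2})\le E\le c_\beta^{-1/2}E^{3/2}$. So we assume $E$ is small.

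\emph{Step 1 (good radius).} By averaging on $[1/2,3/4]$ we choose $\varrho_\eps$ with $E_\eps(\M_\eps;\partial B_{\varrho_\eps})\le 4E$ and $\int_{\partial B_{\varrho_\eps}}|\nabla\Q_\eps|^2\le 4\int_B|\nabla\Q_\eps|^2\le 8C_\alpha(K)$, the last bound by \eqref{eq:GL-decay}.

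\emph{Step 2 (closeness to the wells on $\partial B_{\varrho_\eps}$).} This is the main obstacle. We would cover $\partial B_{\varrho_\eps}$ by balls of radius $r$, to be chosen, and bound the energy in them. A one-dimensional argument (as in Bethuel) gives $V\lesssim$ (oscillation control) along the circle. Concretely, on the circle
\[
\sup|V(\cdot,\M_\eps)|\lesssim \Big(\int_{\partial B_{\varrho_\eps}}|\nabla V|\Big)+\int_{\partial B_{\varrho_\eps}} V\lesssim E_\eps(\M_\eps;\partial B_{\varrho_\eps})^{1/2}\Big(\int_{\partial B_{\varrho_\eps}}\tfrac{V}{\eps}\Big)^{1/2}\cdot\text{(const)}+\text{Q-terms},
\]
using $|\nabla_x V|\lesssim |\nabla \M|\sqrt V+|\nabla\Q|\sqrt V$ near the wells. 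Heuristically this gives $V\lesssim E$ on the circle. Hence, by Lemma~\ref{lemma:V-quadratic-bis}, $\dist(\M_\eps,\Sigma(\Q_\eps))\le \kappa:=C\sqrt{E}$ on $\partial B_{\varrho_\eps}$. The $\Q$-dependent terms are handled by the bound $\int_{\partial B_{\varrho_\eps}}|\nabla\Q_\eps|^2\lesssim C_\alpha(K)$ and by smallness of $\eps$. If this direct route fails, we would fall back on Proposition~\ref{prop:weak-co} applied on balls of radius comparable to $\eps$ centred on the circle, combined with a pigeonhole selection of the radius.

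\emph{Step 3 (conclusion).} We apply Proposition~\ref{prop:analogue-of-Bethuel-4.2} (hypothesis \eqref{eq:assumption-Bethuel} holds on $\partial B_{\varrho_\eps}$ with $\kappa\sim\sqrt E<\delta_\beta$) to get
\[
E_\eps(\M_\eps;\Upsilon(\varrho_\eps,\kappa))\lesssim \sqrt E\,(E+\eps C_\alpha)+\eps(E+\eps C_\alpha).
\]
On the complement $B_{\varrho_\eps}\setminus\Upsilon(\varrho_\eps,\kappa)$ we have $\dist\ge\kappa$. Near the wells $V\gtrsim\kappa^2$ there, and away from them Remark~\ref{rk:control-Theta} applies. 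Its energy is controlled via Proposition~\ref{prop:analogue-of-Bethuel-4.1} together with the estimate of Step 2 that $\M$ stays within $\kappa$ of the wells up to a region whose potential integral is $\lesssim E\cdot\sqrt E$. Summing these bounds and absorbing $\eps^2C_\alpha,\ \eps\sqrt E\, C_\alpha\lesssim C_\alpha R^\alpha$ (after scaling back) yields \eqref{eq:energy-decreasing}.
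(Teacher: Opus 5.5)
Your Steps~1--2 and the first estimate in Step~3 agree with the paper. You pick a good radius, show closeness to a single well on that circle with $\kappa\sim\sqrt{E/R}$, and apply Proposition~\ref{prop:analogue-of-Bethuel-4.2} to get an $E^{3/2}$-type bound on $E_\eps(\M_\eps;\Upsilon(\varrho_\eps,\kappa))$. The closeness to the wells is exactly Lemma~\ref{lemma:Bethuel-2.4}, which is already available, so Step~2 is not where the difficulty lies.

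The gap is the passage from there to the energy of the whole ball $B(x_0,R/2)$:
\begin{itemize}
\item On $B_{\varrho_\eps}\setminus\Upsilon(\varrho_\eps,\kappa)$, the information $V\gtrsim\kappa^2$ is a lower bound and gives no upper bound on the energy.
\item Proposition~\ref{prop:analogue-of-Bethuel-4.1} and Remark~\ref{rk:control-Theta} only bound that energy by $\frac1\eps\int_{B_{\varrho_\eps}}V(\cdot,\M_\eps)$. You know this is $\le E$, not $\lesssim E^{3/2}$.
\item Step~2 only concerns $\M_\eps$ on the circle $\partial B_{\varrho_\eps}$. The interior ``region whose potential integral is $\lesssim E\sqrt{E}$'' is never established.
\end{itemize}
Nothing in your argument excludes, say, an interior island bounded by a closed transition layer, with $\M_\eps$ near the other well inside and energy comparable to $E$. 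Excluding it requires using the equation on a whole ball.

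The missing step is the paper's improved potential estimate \eqref{eq:improved-pot-est-AC}, built as follows.
\begin{enumerate}
\item Take $\varrho_\eps\in[3R/4,R]$ rather than $[R/2,3R/4]$, since you need room above $5R/8$. Take $\kappa_\eps=C\sqrt{E/R}$ with $C$ large.
\item By Lemma~\ref{lemma:Bethuel-2.4}, every circle $\partial B_r$ with $r\le\varrho_\eps$ and $E_\eps(\M_\eps;\partial B_r)\le C_{\rm unf}^{-2}\kappa_\eps^2$ lies in $\Upsilon(\varrho_\eps,\kappa_\eps)$. The remaining radii have measure $\lesssim R/C^2$.
\item Averaging over the good radii gives $\tau_\eps\in[5R/8,\varrho_\eps]$ with $\tau_\eps E_\eps(\M_\eps;\partial B_{\tau_\eps})\le 16\,E_\eps(\M_\eps;\Upsilon(\varrho_\eps,\kappa_\eps))$.
\item Apply the Pohozaev identity of Lemma~\ref{lemma:pohozaev} on $B_{\tau_\eps}$. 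Lemma~\ref{lemma:pointwise-est-V} and \eqref{eq:GL-decay} absorb the $\Q_\eps$-terms and the gap between $\eps^{-2}f_\eps$ and $\eps^{-1}V$. This gives $\frac1\eps\int_{B(x_0,5R/8)}V(\cdot,\M_\eps)\lesssim R^{-1/2}E^{3/2}+\eps E/R+C_\alpha(K)R^\alpha$.
\end{enumerate}
Only after this does the energy-by-potential estimate \eqref{eq:ineq-Lemma-4.7-Bethuel} close the argument. That estimate is where Proposition~\ref{prop:analogue-of-Bethuel-4.1} and Remark~\ref{rk:control-Theta} enter. Applied at a further averaged radius between $R/2$ and $5R/8$, it yields \eqref{eq:energy-decreasing}.
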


The proof of Proposition~\ref{prop:energy-decreasing} follows
a pattern similar to that of \cite[Proposition~1.12]{Bethuel-AC-Acta} and relies
on the energy estimates of the previous subsection.
The overlap with the arguments leading to \cite[Proposition~1.12]{Bethuel-AC-Acta}
is considerable, therefore we sketch the
main points and address the reader to \cite{Bethuel-AC-Acta}
when the details are exactly the same.

First of all, we need an analogue of \cite[Lemma~2.6]{Bethuel-AC-Acta}.

\begin{lemma}\label{lemma:Bethuel-2.4}
	Let $(\Q,\,\M) \in C^\infty\left(\Omega,\,\Sz\right) \times C^\infty(\Omega,\,\R^2)$
	and let $B = B(x_0,\,R) \subset \Omega$ be a ball such that
	$1/2 \leq \abs{\Q} \leq 2$ in $B$.
	Let $\eps \in (0,1)$ and $r \in (\eps R,\,R]$ be given. There exists a
	constant ${\rm C}_{\rm unf} > 0$, depending only on the parameter
	$\beta$, such that, for any given pair $(\Q,\,\M)$ as above, there exists an element
	$\N \in \Sigma(\Q)$ such that
	\[
		\abs{\M - \N} \leq {\rm C}_{\rm unf}
		\sqrt{E_\eps(\M;\,\partial B_r)} \qquad \mbox{on } \partial B_r(x_0).
	\]
\end{lemma}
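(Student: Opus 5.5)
The plan is to split into a trivial large-energy regime and a small-energy regime. In the small-energy regime the work is to show that $\dist(\M,\Sigma(\Q))$ is small at \emph{every} point of $\partial B_r$ and then to select the sign of the well continuously. The constant ${\rm C}_{\rm unf}$ must depend only on $\beta$. So I use only the $\beta$-dependent \emph{a priori} bounds: the $L^\infty$ bound $\norm{\M}_{L^\infty}\le {\rm M}_\beta$ from \eqref{max-QM}, the bounds $\frac12\le\abs{\Q}\le 2$, and the quadratic structure of $V$ from Lemma~\ref{lemma:V-quadratic} and Lemma~\ref{lemma:V-quadratic-bis}. In particular no integral of $\abs{\nabla\Q}^2$ may enter. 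By Remark~\ref{rk:scaling} I rescale so that $R=1$, hence $r\in(\eps,1]$, and write $e:=E_\eps(\M;\partial B_r)$.

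\emph{Step 1 (large energy).} Fix a threshold $e_0=e_0(\beta)>0$, to be chosen in Step~2. If $e\ge e_0$, any choice of $\N\in\Sigma(\Q)$ works. Indeed, pointwise $\abs{\M-\N}\le {\rm M}_\beta+(1+2\sqrt2\beta)^{1/2}$, and this is at most ${\rm C}_{\rm unf}\sqrt{e}$ once ${\rm C}_{\rm unf}\ge ({\rm M}_\beta+(1+2\sqrt2\beta)^{1/2})e_0^{-1/2}$.

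\emph{Step 2 (small energy: pointwise closeness).} Set $d(x):=\dist(\M(x),\Sigma(\Q(x)))$ for $x\in\partial B_r$. Since $\abs{\M}\le {\rm M}_\beta$ and $\frac12\le\abs{\Q}\le2$, compactness together with Lemma~\ref{lemma:V-quadratic} gives a global quadratic lower bound $V(x,\M(x))\ge c_\beta\, d(x)^2$, with $c_\beta$ depending only on $\beta$. Hence
\[
e\ \ge\ \int_{\partial B_r}\Big(\frac\eps2\abs{\partial_\ttau\M}^2+\frac{c_\beta}{\eps}d^2\Big)\,\d\sigma .
\]
Because $r>\eps$, the circle contains arcs of length $\eps$, and $\eps$ is the natural length scale. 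Take $x_*$ with $d(x_*)=s:=\max_{\partial B_r} d$.

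My first attempt is a Modica--Mortola type argument on an arc $A$ of length comparable to $\eps$ through $x_*$. I would choose $A$ so that $\min_A d\lesssim\sqrt e$, which is possible by averaging the potential term over $A$. On $A$ I would then control the oscillation of $\M$ by
\[
\int_A\abs{\partial_\ttau\M}\le\sqrt{\eps\int_A\abs{\partial_\ttau\M}^2}\lesssim\sqrt e .
\]
This bounds the oscillation of $\M$ at the same order $\sqrt e$, which is exactly the size required.

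\emph{Main obstacle.} The wells $\Sigma(\Q)$ also move along $A$. With only the gradient bound $\abs{\nabla\Q}\lesssim\eps^{-1}$ from \eqref{max-gradients}, $\Q$ can change by $O(1)$ over an arc of length $\eps$. The cleanest fix is to avoid differentiating the wells altogether: I work with the potential itself, using $d\le (c_\beta^{-1}V)^{1/2}$, and exploit that $x\mapsto V(x,\M(x))$ is Lipschitz with constant $\lesssim\eps^{-1}$, by \eqref{max-QM} and \eqref{max-gradients}.

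Suppose $V(x_*,\M(x_*))=\sigma$. Then $V\ge\sigma/2$ on an arc of length $\gtrsim\sigma\eps$. I would try to combine this with the kinetic term to bootstrap the exponent up to $s\lesssim\sqrt e$. Concretely, on a shorter arc of length $t\eps$ with $t\simeq s$, the wells move by at most $s/4$, while $\M$ moves by at most $\sqrt{2te}$. This forces $d\ge s/2$ on that arc, and hence $e\gtrsim s^3$. This crude version only gives $s\lesssim e^{1/3}$. Improving it to $e^{1/2}$ is the step I expect to require the real work. The route I would try is to iterate the same argument on nested arcs, using at each stage the improved closeness of $\M$ to the moving wells.

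\emph{Step 3 (choice of sign).} Once $s\le\delta_\beta<1$, the two wells are at distance $>2$ by \eqref{eq:dist-wells}. The eigenvector $\n$ is globally defined on $B$ by the lifting, since $\abs{\Q}\ge\frac12$ on the simply connected set $B$. So $\pi(\Q,\M)$ from \eqref{projection} is a continuous selection on the connected curve $\partial B_r$. It therefore equals one fixed $\N\in\{\M_+,\M_-\}$ throughout, and $\abs{\M-\N}=d\le{\rm C}_{\rm unf}\sqrt e$ there. Choosing $e_0$ so small that Step~2 guarantees $s\le\delta_\beta$ whenever $e<e_0$ closes the argument.
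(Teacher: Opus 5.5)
There is a genuine gap, and it is the step you flagged yourself. Step~2 only gives $s\lesssim e^{1/3}$, and no iteration on nested arcs can improve this. The reason is that, with the information you allow yourself (and in fact for the lemma as literally stated), the bound $s\lesssim\sqrt e$ is false.

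Take $\abs{\Q}\equiv1$ and $\Q=\frac{1}{\sqrt2}(\n\otimes\n-\m\otimes\m)$ with $\n=(\cos\phi,\sin\phi)$. Here $\phi$ is a smooth bump of small height $\theta_0=\phi(x_*)$, supported in a disc of radius $\ell\le r$ centred at a point $x_*\in\partial B_r$. Take $\M\equiv(1+\sqrt2\beta)^{1/2}\e_1$. Then:
\begin{itemize}
\item $\nabla\M=0$, and $V(\cdot,\M)=0$ off the bump;
\item $V(\cdot,\M)\lesssim\theta_0^2$ on the bump by Lemma~\ref{lemma:V-quadratic}, so $E_\eps(\M;\partial B_r)\lesssim\theta_0^2\ell/\eps$;
\item $\dist(\M(x_*),\Sigma(\Q(x_*)))=2(1+\sqrt2\beta)^{1/2}\sin(\theta_0/2)\gtrsim\theta_0$.
\end{itemize}
Letting $\ell/\eps\to0$ violates the conclusion for every constant depending only on $\beta$.

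If you also impose $\abs{\nabla\Q}\lesssim\eps^{-1}$, as in \eqref{max-gradients}, you need $\ell\gtrsim\theta_0\eps$. The choice $\ell=\theta_0\eps$ then gives $E_\eps(\M;\partial B_r)\lesssim\theta_0^3$ against a distance $\gtrsim\theta_0$. So your exponent $1/3$ is optimal for these ingredients. (Incidentally, \eqref{max-QM} and \eqref{max-gradients} hold for critical points, not for the arbitrary smooth pairs in the statement.)

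The missing ingredient is exactly what you excluded at the outset, namely $\eps\int_{\partial B_r}\abs{\nabla\Q}^2$. The paper's proof asserts that Bethuel's fixed-well argument carries over word for word, and it overlooks the same term.

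Along $\partial B_r$, the derivative of $x\mapsto V(x,\M(x))$ contains $\nabla_\M V\cdot\partial_\ttau\M$ and also a term $\partial_\Q V\,[\partial_\ttau\Q]$ coming from the dependence on $\Q$. Since $V(x,\cdot)\ge0$ vanishes on $\Sigma(\Q(x))$, both $\nabla_\M V$ and $\partial_\Q V$ are $\lesssim\sqrt V$ on bounded sets with $\frac12\le\abs{\Q}\le2$. Hence
\[
\abs{\partial_\ttau\bigl(V(x,\M(x))\bigr)}\lesssim\sqrt{V}\,\bigl(\abs{\partial_\ttau\M}+\abs{\nabla\Q}\bigr)\lesssim\eps\abs{\partial_\ttau\M}^2+\frac{1}{\eps}V+\eps\abs{\nabla\Q}^2 .
\]
Since $\partial B_r$ has length $\gtrsim\eps$, some point of it has $V\lesssim E_\eps(\M;\partial B_r)$. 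Integrating the display along the circle gives
\[
\sup_{\partial B_r}V\lesssim E_\eps(\M;\partial B_r)+\eps\int_{\partial B_r}\abs{\nabla\Q}^2 .
\]
Combined with Lemma~\ref{lemma:V-quadratic-bis} and your Steps~1 and~3, this proves the lemma with $\sqrt{E_\eps(\M;\partial B_r)+\eps\int_{\partial B_r}\abs{\nabla\Q}^2}$ on the right-hand side. That is the correct statement.

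It is enough for Proposition~\ref{prop:energy-decreasing}. There $\varrho_\eps$ is chosen so that $\int_{\partial B_{\varrho_\eps}}\abs{\nabla\Q_\eps}^2\le\frac4R\int_B\abs{\nabla\Q_\eps}^2$. So $\kappa_\eps$ only has to be enlarged by the term $\eps\int_B\abs{\nabla\Q_\eps}^2$, which already appears in \eqref{eq:improved-est-close-to-well}.
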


\begin{proof}
	Thanks to Lemma~\ref{lemma:V-quadratic}, Lemma~\ref{lemma:V-quadratic-bis}, and
	the assumption $1/2 \leq \abs{\Q} \leq 2$ in $B$ (which implies, on the one hand,
	that $\Sigma(\Q)$ is well-defined and, on the other hand, that it is contained
	in the ball of radius $2( 1 + 2 \sqrt{2}\beta )^{1/2}$),
	the reasoning in \cite[Lemma~2.6]{Bethuel-AC-Acta} (which is based only on the
	previous ingredients, smoothness, and some clever distinction of cases)
	carries over, word for word.
\end{proof}

\begin{remark}
	In particular, by~\eqref{eq:bound-Q-K}, given a sequence $\{(\Q_\eps,\,\M_\eps)\}$
	of critical points of $\F_\eps$ and a compact set
	$K \subset \Omega \setminus \spt\mu_\star$,
	up to extraction of a subsequence,
	Lemma~\ref{lemma:Bethuel-2.4} holds for any $B = B(x_0,\,R) \subseteq K$
	and any pair $(\Q_\eps,\,\M_\eps)$,
	for any $\eps$ with $0 < \eps \leq \eps_{K,\beta} R$,
	where $\eps_{K,\beta}$ is the number given by~\eqref{eq:eps-K-beta}.
\end{remark}

In the following, we shall denote
\begin{equation}\label{eq:M-pm-eps}
	\left(\M_{\pm}\right)_\eps =
	\pm\left( 1 + \sqrt{2}\beta \rho_\eps \right)^{1/2}\n_\eps
\end{equation}
the maps realising the minimum of the potential $\ell(\Q_\eps,\,\M_\eps)$
for a given pair $(\Q_\eps,\,\M_\eps)$.

We are now ready for proving Proposition~\ref{prop:energy-decreasing}.

\begin{proof}[{Proof of Proposition~\ref{prop:energy-decreasing}}]
Fix a compact set $K \subseteq \Omega \setminus \spt\mu_\star$.
(Of course, it suffices to consider the case in which $K$ has
non-empty interior.)
Let $\eps_0 = \eps_{1/2}$, where $\eps_{1/2} > 0$ is the number, depending
on $K$, in Remark~\ref{rk:eta-beta-K}.
Let $B = B(x_0,\,R) \subset K$ and $\eps \in (0,\,\eps_0 R]$.
Then, it follows from
Remark~\ref{rk:eta-beta-K} that $\abs{\Q_\eps} \geq 1/2$ on $B$.
In particular, this implies that $\Sigma(\Q_\eps(x))$ is well-defined
for any $x \in B$ and $\Sigma(\Q_\eps(x)) = \{(\M_+(x))_\eps,\,(\M_-(x))_\eps\}$.

By a usual averaging argument via Fubini's theorem, we can find a radius
$\varrho_\eps \in \left[\frac{3}{4}R, R \right]$ such that
\begin{align}\label{eq:averaging-prop-energy-dec}
	E_\eps\left(\M_\eps;  \,\partial B_{\varrho_\eps}\right)
	&\leq 4 \frac{E_\eps(\M_\eps; B)}{R}, \\
	\int_{\partial B_{\varrho_\eps}} \abs{\nabla \Q_\eps}^2
	&\leq \frac{4}{R} \int_{B} \abs{\nabla \Q_\eps}^2 .
\end{align}
Now, we choose
\[
	\kappa_\eps := {\rm C} \sqrt{\frac{E_\eps(\M_\eps;\,B)}{R}},
\]
and, by Lemma~\ref{lemma:Bethuel-2.4} and definition of $\kappa_\eps$,
it follows that
\begin{equation}\label{eq:close-to-well-on-bdry}
	\abs{\M_\eps(s) - \N_\eps(s)} \leq \frac{\kappa_\eps}{2},
	\qquad \mbox{ for all } s \in \partial B_{\varrho_\eps},
\end{equation}
for either $\N_\eps = \left(\M_+\right)_{\eps}$ or
$\N_\eps = \left(\M_-\right)_{\eps}$, where $(\M_\pm)_\eps$ are given by~\eqref{eq:M-pm-eps}.

\setcounter{step}{0}
\begin{step}[Improved estimates close to $\Sigma(\Q_\eps)$] 
	This step corresponds to \cite[Proposition~5.1]{Bethuel-AC-Acta}.
	Its purpose is to obtain the inequality
	\begin{equation}\label{eq:improved-est-close-to-well}
	\begin{split}
		E_\eps\left(\M_\eps;\,\Upsilon_\eps(\varrho_\eps,\,\kappa_\eps) \right)
		\leq &{\rm K}_\Upsilon\left[ \frac{1}{\sqrt{R}}\left(  E_\eps(\M_\eps;\,B) + \int_B \eps \abs{\nabla \Q_\eps}^2\,{\d}x  \right)^{3/2} \right. \\
		&\left. + \frac{\eps}{R} \left( E_\eps(\M_\eps;\,B) + \int_B \eps \abs{\nabla \Q_\eps}^2\,{\d}x \right) \right],
	\end{split}
	\end{equation}
	where the constant ${\rm K_\Upsilon}$ depends only on the energy bound
	$\mathcal{E}_0$ in~\eqref{eq:Eeps-bounded} and on $\beta$.

	Since~\eqref{eq:improved-est-close-to-well} is obtained from
	Proposition~\ref{prop:analogue-of-Bethuel-4.2},~\eqref{eq:close-to-well-on-bdry},
	and the choice of $\kappa_\eps$
	exactly by the same argument as in \cite[Proposition~5.1]{Bethuel-AC-Acta},
	we skip the details.
\end{step}

\begin{step}[Improved potential estimates]
	In analogy with \cite[Proposition~5.2]{Bethuel-AC-Acta}, we now
	obtain an `improved potential estimate'.
	We claim that, for $\eps$ and $B$ as in the statement, for any given
	$\alpha \in (1,\,2)$, there exists a constant
	${\rm C}_1$, depending only on $\beta$ and the energy bound $\mathcal{E}_0$
	in Proposition~\ref{prop:Eeps-bounded}, such that
	\begin{equation}\label{eq:improved-pot-est-AC}
		\frac{1}{\eps} \int_{B_{5R/8}} V(x,\,\M_\eps(x)) \,{\d}x \leq
		{\rm C}_1 \left[ \frac{1}{\sqrt{R}} E_\eps(\M_\eps;\,B)^{3/2} +\frac{\eps}{R} E_\eps(\M_\eps;\,B) + C_\alpha(K) R^\alpha \right].
	\end{equation}
	To this purpose, by~\eqref{eq:close-to-well-on-bdry},
	we can apply classical averaging arguments
	(see, e.g., \cite[Lemma~2.10]{Bethuel-AC-Acta}) to deduce that, for some
	radius $\tau_\eps \in \left[ \frac{5R}{8},\, \varrho_\eps \right]$, there holds
	\[
		\tau_\eps  E_\eps\left(\M_\eps;\,\partial B_{\tau_\eps}\right)
		\leq 16 E_\eps\left(\M_\eps; \Upsilon_\eps(\varrho_\eps,\,\kappa_\eps)\right)
	\]
	Thus, by~\eqref{eq:improved-est-close-to-well},
	\begin{equation}\label{eq:improv-pot-compu1}
	\begin{split}
		\tau_\eps  E_\eps\left(\M_\eps;\,\partial B_{\tau_\eps}\right)
		\leq &16 {\rm K}_\Upsilon \left[ \frac{1}{\sqrt{R}}\left( E_\eps(\M_\eps;\,B) + \int_B \eps \abs{\nabla \Q_\eps}^2 \,{\d}x \right)^{3/2} \right. \\
		&\left.+ \frac{\eps}{R} \left( \int_B E_\eps(\M_\eps;\,B) + \int_B \eps  \abs{\nabla \Q_\eps}^2 \,{\d}x\right)\right],
	\end{split}
	\end{equation}
	On the other hand, thanks to Lemma~\ref{lemma:pohozaev}, 
	Lemma~\ref{lemma:pointwise-est-V}, and~\eqref{eq:GL-decay}, we have
	\begin{equation}\label{eq:improv-pot-compu2}
		\tau_\eps E_\eps\left(\M_\eps;\,\partial B_{\tau_\eps}\right)
		\geq \frac{1}{\eps} \int_{B_{\tau_\eps}} V(x,\,\M_\eps) \,{\d}x
		- C_\alpha(K) R^\alpha
	\end{equation}
	for every $\alpha \in (1,\,2)$. Now, we combine~\eqref{eq:improv-pot-compu2}
	and~\eqref{eq:improved-est-close-to-well}, 	and we shrink $\eps_0$, depending
	only on $K$ and $\alpha$, in such a way that
	\[
		\eps_0 (\diam K) \left( 2^{3/2} \eps^{1/2} C_\alpha(K)^{1/2} (\diam K)^{\alpha/2} + \eps_0 \right) \leq 1.
	\]
	Then,~\eqref{eq:improved-pot-est-AC}
	follows by choosing ${\rm C}_1 = \max\{ 32 {\rm K}_\Upsilon,\,2\}$ and observing
	that ${\rm C}_1$ depends only on the energy bound $\mathcal{E}_0$ in~\eqref{eq:Eeps-bounded}
	and $\beta$.
\end{step}

\begin{step}[Conclusion]
	Once that~\eqref{eq:improved-pot-est-AC} has been obtained,
	the desired inequality~\eqref{eq:energy-decreasing} follows by
	matching~\eqref{eq:improved-pot-est-AC}
	with (the scaled version of)~\eqref{eq:ineq-Lemma-4.7-Bethuel}
	and with~\eqref{eq:averaging-prop-energy-dec}, possibly shrinking
	again $\eps_0$ (still in a way depending only on $K$ and $\alpha$ only) and
	choosing appropriately the constant $C_0$, depending only on
	the energy bound $\mathcal{E}_0$ in~\eqref{eq:Eeps-bounded} and $\beta$.
	The argument goes on exactly the same way
	as in the proof of \cite[Proposition~1.12]{Bethuel-AC-Acta}
	(see \cite[Section~5.3]{Bethuel-AC-Acta}),
	to which the reader is addressed for full details.
	\qedhere
\end{step}
\end{proof}


We now show that, under a suitable smallness assumption
for the energy, \eqref{hp:clearunif} below, the
estimate~\eqref{eq:energy-decreasing} can be \emph{iterated}
to yield that $\M_\eps(x)$ stays \emph{close to $\Sigma(\Q_\eps(x))$,
uniformly with respect to $x$ and $\eps$}.
More precisely, we have the following statement, providing the
counterpart in our context of \cite[Proposition~6.5 and Corollary~6.4]{Bethuel-AC-Acta}.

\begin{prop} \label{prop:clearunif}
 For any compact set~$K\subseteq\Omega\setminus \spt\mu_\star$ 
 and any~$\delta > 0$,
 there exist positive constants~$\eps_1 = \eps_1(K, \, \delta)$,
 $R_1 = R_1(K, \, \delta)$, $\eta_1 = \eta_1(K, \, \delta)$,
 and~$C_1 = C_1(K, \, \delta)$, depending only on $K$, $\alpha$, $\beta$, and $\mathcal{E}_0$, such that the following holds:
 if a ball~$B = B(x_0,\,R)\subseteq K$ has radius~$R \leq R_1$,
 if~$\eps$ satisfies~$0 < \eps \leq \eps_1 R$ and if
 \begin{equation} \label{hp:clearunif}
  E_\eps(\M_\eps; \, B) \leq \eta_1 R,
 \end{equation}
 then
 \begin{equation}\label{eq:clearunif}
  \dist\left(\M_\eps(x), \, \Sigma(\Q_\eps(x))\right)
  \leq \delta
 \end{equation}
 for any~$x\in B(x_0,\, 3R/4)$.
\end{prop}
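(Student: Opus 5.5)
The plan is to iterate the energy-decreasing estimate \eqref{eq:energy-decreasing} of Proposition~\ref{prop:energy-decreasing} on a dyadic sequence of balls, until the energy drops below the threshold $\eta_\beta\eps$ of the very weak clearing-out property, Proposition~\ref{prop:weak-co}. That proposition then gives closeness to $\Sigma(\Q_\eps)$ at every point, as in \cite[Proposition~6.5]{Bethuel-AC-Acta}.

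Fix $x\in B(x_0,3R/4)$. Then $B(x,R/4)\subseteq B\subseteq K$. Set $r_k := 2^{-k}R/4$ and $e_k := E_\eps(\M_\eps;B(x,r_k))/r_k$, so that $e_0\le 4\eta_1$ by \eqref{hp:clearunif}. Applying \eqref{eq:energy-decreasing} on $B(x,r_k)$, which is allowed as long as $\eps\le\eps_0 r_k$, and dividing by $r_{k+1}=r_k/2$, we get
\[
e_{k+1}\le 2C_0\Big(e_k^{3/2} + \frac{\eps}{r_k}\,e_k + C_\alpha(K)\,r_k^{\alpha-1}\Big).
\]
We choose $\eta_1$ small so that $2C_0 e_0^{1/2}\le 1/8$, we choose $R_1$ small so that $2C_0C_\alpha(K)R_1^{\alpha-1}$ is tiny, and we only use scales with $\eps/r_k$ small. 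Under these choices induction keeps $e_k\le 4\eta_1$ and gives $e_{k+1}\le \frac14 e_k + 2C_0C_\alpha(K) r_k^{\alpha-1}$. The forcing term $r_k^{\alpha-1}$ decays geometrically, so $e_k$ decays geometrically as well, and we get $e_k\lesssim \max\{4^{-k}e_0,\ C_\alpha(K)\,r_k^{\alpha-1}\}$.

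We stop at the scale $r_{k^*}\simeq \eps/\epsilon_*$, where $\epsilon_*$ is a fixed small constant chosen so that the hypotheses of Proposition~\ref{prop:energy-decreasing} and Proposition~\ref{prop:weak-co} both hold with $r_{k^*}$ in place of $R$. At that scale we have $E_\eps(\M_\eps;B(x,r_{k^*}))/\eps = e_{k^*}\,r_{k^*}/\eps \simeq e_{k^*}/\epsilon_*$. This quantity must be made smaller than $\min\{\eta_\beta,(\delta/C_{\rm weak})^6\}$.

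This is the main obstacle. The term $\frac{\eps}{r_k}e_k$ is not small near the final scale, because there $\eps/r_k\sim 1/\epsilon_*$ is large. In Bethuel's argument this is handled by the fact that $\eps/r$ stays bounded by $\epsilon_*^{-1}$, which only affects constants, together with the fact that the smallness of $e_{k}$ comes from many earlier iterations. To handle it I would split the iteration into two ranges. For $r_k\ge \eps/\epsilon_*$, where $\eps/r_k\le \epsilon_*$ is small, we get genuine contraction. For the last finitely many steps (a number depending only on $\epsilon_*$) we only use the crude bound $e_{k+1}\le C(\epsilon_*)\big(e_k^{3/2}+e_k+ r_k^{\alpha-1}\big)$. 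Then we take $\eta_1$ and $R_1$ small enough, depending on $\delta$, $K$ and $\alpha$, that the accumulated bound is still below the weak clearing-out threshold. Alternatively, if the total number of steps is small, the smallness already follows from $e_0\le 4\eta_1$.

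With that threshold reached, Proposition~\ref{prop:weak-co} on $B(x,r_{k^*})$ yields $\dist(\M_\eps(x),\Sigma(\Q_\eps(x)))\le C_{\rm weak}\big(E_\eps/\eps\big)^{1/6}\le\delta$. Finally we set $\eps_1 := \epsilon_*\min\{\eps_0,\eps(K,\beta),\eps_{K,\beta}\}/4$ so that all the earlier results apply on every ball used.
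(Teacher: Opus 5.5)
Your proposal is correct and is essentially the paper's argument. You iterate Proposition~\ref{prop:energy-decreasing} on the dyadic balls $B(x,2^{-k}R/4)$ to carry smallness of $E(r)/r$ down to a scale comparable to $\eps$, and then convert smallness of $E/\eps$ at that scale into closeness to $\Sigma(\Q_\eps(x))$; the paper does this last step with a direct lower bound $E(\eps/\eps_1)\gtrsim \eps\,m_\eps^2$ from the gradient bounds, whereas you invoke Proposition~\ref{prop:weak-co}, which rests on the same mechanism. Note, however, that the ``main obstacle'' you describe is illusory: at every scale you use, $r_k\ge \eps/\epsilon_*$ gives $\eps/r_k\le\epsilon_*$ (not $1/\epsilon_*$), so with $\epsilon_*\le 1/(16C_0)$ the contraction $e_{k+1}\le \tfrac14 e_k+2C_0C_\alpha(K)r_k^{\alpha-1}$ holds all the way to the stopping scale (this is exactly the role of $\eps_1\le 1/(8C_0)$ in the paper), the two-range splitting is unnecessary, and it suffices to take $\eta_1$ (and then $R_1$) small relative to $\epsilon_*$ so that $E/\eps\le 8\eta_1/\epsilon_*$ falls below the threshold of Proposition~\ref{prop:weak-co}.
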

\begin{proof}
 Let~$x\in B(x_0,\,3R/4)$ be an arbitrary point.
 For the sake of brevity, we will use the
 notation~$m_\eps := \dist(\M_\eps(x), \, \Sigma(\Q_\eps(x)))$,
 $\bar{R} := R/4$, and
 \[
  E(r) := E_\eps(\M_\eps; \, B_r(x)),
  \qquad \Phi(r) := \frac{E(r)}{r}
 \]
 for any~$r \in (0, \, \bar{R}]$.
 Moreover, we fix an arbitrary~$\alpha\in (1, \, 2)$
 and we consider~$\eps_0=\eps_0(K, \, \alpha)$
 as given by
 Proposition~\ref{prop:energy-decreasing}, and, with
 a slight abuse of notation,
 $C_0 = C_0(K, \, \alpha,\,\beta)$
 as the maximum between
 the constant $C_0$ in the right-hand side of~\eqref{eq:energy-decreasing}
 and $C_0 C_\alpha(K)$.
 We define
 \begin{equation} \label{Lambda_alpha}
  \Lambda_\alpha := 2^\alpha C_0
   \sum_{j=0}^{+\infty} 2^{(\alpha - 2) j}
 \end{equation}
 and note that the series at the right-hand side
 converges, because~$\alpha < 2$.

 Now, we choose suitable parameters~$\eps_1$,
 $R_1$, and~$\eta_1$. First, we take
 \begin{equation} \label{eps0}
  \eps_1 := \min\left\{\eps_0, \, \frac{1}{8C_0}\right\}\!,
 \end{equation}
 so that $\eps_1 = \eps_1(K,\,\alpha,\,\beta)$.

 Due to the~$L^\infty$-gradient bound on~$\M_\eps$
 given by~\eqref{max-QM}
 and the quadratic growth of~$V$
 (cf.~\eqref{eq:V} and~\eqref{ell_eps}),
 there exists a constant~$\lambda_1$,
 depending on~$\eps_1$ (and hence on~$K$, $\alpha$, and $\beta$)
 but not on~$\eps$, such that
 \begin{equation} \label{clearunif-smallball}
  E\!\left(\frac{\eps}{\eps_1}\right)
  \geq \lambda_1 \eps \, m_\eps^2
 \end{equation}
 Next, we take~$\eta_1$ and~$R_1$ small enough that
 \begin{equation} \label{R_1}
  \left(4 \eta_1 + \Lambda_\alpha R_1^{\alpha - 1}\right)^{1/2}
   \leq \frac{1}{8C_0}, \qquad
  \left(16 \eta_1 + \Lambda_\alpha R_1^{\alpha - 1}\right)^{1/2}
   \leq \delta (\eps_1 \lambda_1)^{1/2}
 \end{equation}
 In particular, our choice of~$\eps_1$, $\eta_1$, and~$R_1$ implies that
 \begin{equation}\label{eta1}
  C_0\left(4 \eta_1 + \Lambda_\alpha R_1^{\alpha - 1}\right)^{1/2}
  + C_0 \eps_1 \leq \frac{1}{4}.
 \end{equation}
 For further reference, we also note that
 \begin{equation} \label{clearunif0}
  \begin{split}
   \Phi(\bar{R}) \leq \frac{4 E_\eps(\M_\eps; \, B)}{R}
   \leq 4\eta_1
  \end{split}
 \end{equation}
 because of the assumption~\eqref{hp:clearunif}.

 For any positive integer~$n$ such
 that~$2^n \eps \leq \eps_1\bar{R}$, we claim that
 \begin{equation} \label{clearunif-induction}
  E\!\left(\frac{\bar{R}}{2^n}\right)
  \leq \frac{E(\bar{R})}{4^n} + 2^\alpha C_0
  \sum_{j=0}^{n-1} 2^{(\alpha - 2)j}
   \left(\frac{\bar{R}}{2^n}\right)^\alpha \! .
 \end{equation}
 To prove this claim, we iterate the estimate given
 by Proposition~\ref{prop:energy-decreasing},
 along the lines of classical iteration arguments
 (see, e.g., \cite[Lemma~B.3]{Beck} or \cite[Lemma~5.13]{GiaquintaMartinazzi}).
 We will give a complete proof of~\eqref{clearunif-induction}
 below; for the time being, we assume~\eqref{clearunif-induction}
 holds and complete the proof of the proposition.
 We evaluate both sides of~\eqref{clearunif-induction}
 when~$n$ is such that~$\eps 2^n = \eps_1\bar{R}$.
 Keeping~\eqref{Lambda_alpha} and~\eqref{clearunif0}
 into account, we obtain
 \begin{equation*}
  \begin{split}
   E\!\left(\frac{\eps}{\eps_1}\right)
   \leq \frac{\eps^2 E(\bar{R})}{\eps_1^2 \bar{R}^2}
    + \Lambda_\alpha
    \left(\frac{\eps}{\eps_1}\right)^\alpha
   \leq \frac{4\eps^2 \eta_1}{\eps_1^2 \bar{R}}
    + \Lambda_\alpha
    \left(\frac{\eps}{\eps_1}\right)^\alpha
  \end{split}
 \end{equation*}
 and, since~$\eps \leq \eps_1 R = 4\eps_1\bar{R}$
 and~$R\leq R_1$ by assumption,
 \begin{equation} \label{clearunif4}
   E\!\left(\frac{\eps}{\eps_1}\right)
   \leq \left(16\eta_1 + \Lambda_\alpha R_1^{\alpha-1}\right)
    \frac{\eps}{\eps_1}
 \end{equation}
 Combining this estimate with~\eqref{clearunif-smallball},
 we obtain
 \[
  m_\eps^2
  \leq \frac{1}{\lambda_1 \eps} E\!\left(\frac{\eps}{\eps_1}\right)
  \leq \frac{1}{\lambda_1\eps_1} \left(16\eta_1
    + \Lambda_\alpha R_1^{\alpha-1}\right) \! .
 \]
 Keeping~\eqref{R_1} into account, we finally
 conclude~$m_\eps^2\leq \delta^2$, which is what
 we wanted to prove.
\end{proof}
\begin{proof}[Proof of~\eqref{clearunif-induction}]
 The proof proceeds by induction on~$n$. For~$n=1$,
 Proposition~\ref{prop:energy-decreasing} implies
 \[
  \begin{split}
   E\left(\frac{\bar{R}}{2}\right)
   \leq C_0\left(\Phi(\bar{R})^{1/2}
   + \frac{\eps}{\bar{R}} \right) E(\bar{R}) + C_0\bar{R}^\alpha
  \end{split}
 \]
 On the other hand, recalling~\eqref{eta1},
 \eqref{clearunif0}, and~\eqref{R_1}, we have
 \[
  \begin{split}
   E\left(\frac{\bar{R}}{2}\right)
   \leq C_0\left(2\eta_1^{1/2}
    + \eps_1 \right) E(\bar{R}) + C_0\bar{R}^\alpha
   \leq \frac{E(\bar{R})}{4} + C_0\bar{R}^\alpha.
  \end{split}
 \]
 Therefore, the claim~\eqref{clearunif-induction}
 holds true when~$n = 1$. Now, suppose~$n$
 is a positive integer, with~$2^n \eps \leq \eps_1\bar{R}$,
 such that~\eqref{clearunif-induction} is satisfied.
 Then, we can apply Proposition~\ref{prop:energy-decreasing}
 again:
 \begin{equation} \label{clearunif1}
  \begin{split}
   E\!\left(\frac{\bar{R}}{2^{n+1}}\right)
   \leq C_0 \left(\Phi\!\left(\frac{\bar{R}}{2^n}\right)^{1/2}
    + \frac{2^n \eps}{\bar{R}}\right)
    E\!\left(\frac{\bar{R}}{2^n}\right)
    + C_0\left(\frac{\bar{R}}{2^n}\right)^\alpha
  \end{split}
 \end{equation}
 By dividing both sides of~\eqref{clearunif-induction}
 by~$\bar{R}/2^n$, and then applying~\eqref{Lambda_alpha}
 and~\eqref{clearunif0} to further estimate the right-hand side,
 we deduce
 \begin{equation} \label{clearunif2}
  \begin{split}
   \Phi\!\left(\frac{\bar{R}}{2^n}\right)
   \leq \frac{\Phi(\bar{R})}{2^n} + 2^\alpha C_0
    \sum_{j=0}^{n-1} 2^{(\alpha - 2)j}
    \left(\frac{\bar{R}}{2^n}\right)^{\alpha - 1}
   \leq 4\eta_1 + \Lambda_\alpha R_1^{\alpha - 1}
  \end{split}
 \end{equation}
 We inject~\eqref{clearunif2} into the right-hand
 side of~\eqref{clearunif1}, use the assumption
 that~$\eps 2^n \leq \eps_1\bar{R}$ and our choice~\eqref{eta1}
 of~$\eta_1$ and~$R_1$:
 \begin{equation} \label{clearunif3}
  \begin{split}
   E\!\left(\frac{\bar{R}}{2^{n+1}}\right)
   &\leq C_0 \left(\left(4\eta_1
    + \Lambda_\alpha R_1^{\alpha - 1}\right)^{1/2}
    + \eps_1\right)
    E\!\left(\frac{\bar{R}}{2^n}\right)
    + C_0\left(\frac{\bar{R}}{2^n}\right)^\alpha \\
   &\leq \frac{1}{4} E\!\left(\frac{\bar{R}}{2^n}\right)
    + C_0\left(\frac{\bar{R}}{2^n}\right)^\alpha
  \end{split}
 \end{equation}
 Finally, from the induction assumption we deduce
 \begin{equation*} 
  \begin{split}
   E\!\left(\frac{\bar{R}}{2^{n+1}}\right)
   &\leq \frac{E(\bar{R})}{4^{n+ 1}} + 2^{\alpha - 2} C_0
    \sum_{j=0}^{n-1} 2^{(\alpha - 2)j}
    \left(\frac{\bar{R}}{2^n}\right)^\alpha
    + C_0\left(\frac{\bar{R}}{2^n}\right)^\alpha \\
   &\leq \frac{E(\bar{R})}{4^{n+ 1}} + C_0
    \sum_{j=0}^{n} 2^{(\alpha - 2)j}
    \left(\frac{\bar{R}}{2^n}\right)^\alpha
  \end{split}
 \end{equation*}
 This completes the proof of~\eqref{clearunif-induction},
 and hence of Proposition~\ref{prop:clearunif}.
\end{proof}
Next, following the line of the argument in \cite[Section~6]{Bethuel-AC-Acta},
we use Proposition~\ref{prop:clearunif} to show that, if the energy is
appropriately small in a ball, then it decays as fast as $\eps$ in
a slightly smaller ball.
Proposition~\ref{prop:analogue-of-Bethuel-6.3} below is the analogue, in our context,
of \cite[Proposition~6.8]{Bethuel-AC-Acta}.
%
\begin{prop}\label{prop:analogue-of-Bethuel-6.3}
	For any compact set $K \subseteq \Omega \setminus \spt \mu_\star$
	there exists positive a constant $C_{\rm dec}$,
	depending only on $\beta$,
	such that, for any $\alpha \in (1,\,2)$,
	if $B(x_0,\,R) \subset K$, if $\eps \in (0,\,\eps_{K,\,\beta} R]$,
	and if
		\begin{equation}\label{eq:hp-eps-decay-E}
		\dist\left( \M_\eps,\,\Sigma(\Q_\eps) \right) \leq \delta_\beta
		\qquad \mbox{in } B(x_0,\,{3R/4}),
	\end{equation}
	then
	\begin{equation}\label{eq:eps-decay-E}
		E_\eps(\M_\eps;\,B(x_0,\,5R/8)) \leq C_{\rm dec}
		\eps\left(\frac{E_\eps( \M_\eps;\,B(x_0,\,R))}{R} + C_\alpha(K) R^\alpha\right).
	\end{equation}
\end{prop}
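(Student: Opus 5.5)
The idea is to exploit the fact that, under the hypothesis~\eqref{eq:hp-eps-decay-E}, the map $\M_\eps$ stays in the quadratic region of the potential on the whole of $B(x_0,\,3R/4)$. The set $\Theta(\M_\eps,\,\cdot)$ then does not contribute, and the estimates near the wells of Section~\ref{sec:energy-est-pot} apply on entire balls rather than on sub-level sets. By Remark~\ref{rk:scaling} we may assume $x_0=0$, $R=1$, and scale back at the end, using~\eqref{eq:scaling-inv-E} together with the scaling $R^\alpha$ of~\eqref{eq:GL-decay}. Since $\eps\le \eps_{K,\beta}R$, \eqref{eq:bound-Q-K} holds, so $\Sigma(\Q_\eps)$ and the continuous wells $(\M_\pm)_\eps$ are well defined on the simply connected ball. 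By continuity and the hypothesis $\delta_\beta\le 1/4$, $\M_\eps$ stays near a single well $\N\in\{(\M_+)_\eps,(\M_-)_\eps\}$ throughout $B_{3/4}$, because $\abs{\M_+-\M_-}>2$.

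\textbf{Step 1 (choice of a good radius).} By Fubini, pick $\varrho_\eps\in[5/8,\,3/4]$ such that
\[
E_\eps(\M_\eps;\partial B_{\varrho_\eps})\lesssim E_\eps(\M_\eps;B_1)
\quad\text{and}\quad
\int_{\partial B_{\varrho_\eps}}\abs{\nabla\Q_\eps}^2\lesssim \int_{B_1}\abs{\nabla\Q_\eps}^2 .
\]

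\textbf{Step 2 (testing the equation).} Test~\eqref{eq:M} against $\M_\eps-\N$ on $B_{\varrho_\eps}$, exactly as in Lemma~\ref{lemma:ineq-Qi+ineq-Gi}, with $\Upsilon_\pm$ replaced by the whole ball. There is then no interior boundary $\Gamma_\pm$, only $\Pi=\partial B_{\varrho_\eps}$. Lemma~\ref{lemma:V-quadratic} gives $\nabla_\M V\cdot(\M_\eps-\N)\gtrsim V$. The cross term $\eps\int\nabla\M_\eps\cdot\nabla\N$ is absorbed by Young's inequality, using $\abs{\nabla\N}\lesssim\abs{\nabla\Q_\eps}$. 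The boundary terms are bounded as in~\eqref{eq:compu-norm-der-abs-M-Ni}. This yields
\[
E_\eps(\M_\eps;B_{5/8})\le E_\eps(\M_\eps;B_{\varrho_\eps})\lesssim \eps\int_{B_1}\abs{\nabla\Q_\eps}^2+\eps E_\eps(\M_\eps;\partial B_{\varrho_\eps})+\eps^2\int_{\partial B_{\varrho_\eps}}\abs{\nabla\Q_\eps}^2 .
\]
By the choice of $\varrho_\eps$, the right-hand side is bounded by $\eps E_\eps(\M_\eps;B_1)+\eps\int_{B_1}\abs{\nabla\Q_\eps}^2$. Equivalently, this is Proposition~\ref{prop:analogue-of-Bethuel-4.2} or~\eqref{eq:ineq-Gi} in the limiting case where $\Upsilon$ is the whole ball, so no $\kappa$-term and no term from Lemma~\ref{lemma:coarea+average} appears.

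\textbf{Step 3 (the $\Q$-terms).} It remains to control $\eps\int_{B_R}\abs{\nabla\Q_\eps}^2$ after scaling back. By~\eqref{eq:GL-decay} it is at most $2\eps C_\alpha(K)R^\alpha$, which in scaled form becomes the term $\eps\,C_\alpha(K)R^\alpha$ of~\eqref{eq:eps-decay-E}. The constant depends only on $\beta$, through Lemma~\ref{lemma:V-quadratic} and $\abs{\nabla\M_\pm}\lesssim_\beta\abs{\nabla\Q}$.

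\textbf{Main obstacle.} The delicate point is Step 2. One must check that the boundary integral $\eps\int_{\partial B}(\M_\eps-\N)\cdot\partial_\nnu\M_\eps$ is genuinely of order $\eps E_\eps(\M_\eps;\partial B)$ without any hidden $1/\eps$ loss. This requires $\abs{\M_\eps-\N}\lesssim\sqrt{V}$ on $\partial B$, which is exactly where the hypothesis~\eqref{eq:hp-eps-decay-E} and Lemma~\ref{lemma:V-quadratic} are needed. One must also check that the moving well $\N$ produces only terms in $\abs{\nabla\Q_\eps}^2$ that are multiplied by $\eps$.
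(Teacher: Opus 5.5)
Your proposal is correct and follows essentially the same route as the paper. The shared ingredients are: a Fubini choice of radius $r_\eps\in[5R/8,\,3R/4]$; testing~\eqref{eq:M} against $\M_\eps-\N_\eps$ for the single well $\N_\eps$ selected by the hypothesis; the coercivity $\nabla_\M V\cdot(\M_\eps-\N_\eps)\gtrsim V$ from Lemma~\ref{lemma:V-quadratic}; Young's inequality together with~\eqref{eq:GL-decay} for the moving-well cross term; and $\abs{\M_\eps-\N_\eps}\lesssim\sqrt{V}$ to make the boundary term of order $\eps E_\eps(\M_\eps;B_R)/R$. The only cosmetic difference is that the paper bounds $\eps\int_{\partial B_{r_\eps}}\abs{\M_\eps-\N_\eps}\abs{\nabla\M_\eps}$ directly, choosing $r_\eps$ so that $\int_{\partial B_{r_\eps}}\sqrt{V}\abs{\nabla\M_\eps}$ is also controlled, rather than splitting it as in Lemma~\ref{lemma:ineq-Qi+ineq-Gi}; this avoids the extra $\eps^2\int_{\partial B}\abs{\nabla\Q_\eps}^2$ term, which you absorb using $\eps\le\eps_{K,\beta}R$.
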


\begin{proof}
	Let $K \subset \Omega \setminus \spt\mu_\star$ be a compact set
	and assume that $\eps \leq \eps_{K,\,\beta}$, where $\eps_{K,\,\beta}$
	is the constant defined in~\eqref{eq:eps-K-beta},
	so that $1/2 \leq \abs{\Q_\eps} \leq 2$ in $K$
	for any $\eps \leq \eps_{K,\,\beta}$. From now on,
	we argue exactly as in \cite[Proposition~6.8]{Bethuel-AC-Acta}.

	By a classical averaging argument based on Fubini's theorem, we can find
	$r_\eps \in \left[ \frac{5R}{8},\,\frac{3R}{4} \right]$ such that
	\begin{align}\label{eq:eps-decay-E-fubini}
		& E_\eps\left(\M_\eps;\,\partial B_{r_\eps} \right)
		\leq 16 \frac{E_\eps(\M_\eps;\,B_R)}{R} \\
		& \int_{\partial B_{r_\eps}} \sqrt{V(\,\cdot,\,\M_\eps)}
		\abs{\nabla \M_\eps} \,{\d}s \leq 16 \frac{E_\eps(\M_\eps;\,B_R)}{R},
	\end{align}
	where, for the sake of brevity, we have set $B_R := B(x_0,\,R)$ and
	$B_{r_\eps} := B(x_0,\,r_\eps)$.
	By assumption~\eqref{eq:hp-eps-decay-E} and recalling that
	$\abs{(\M_+)_\eps - (\M_-)_\eps} > 2$ independently of $x \in K$
	by~\eqref{eq:dist-wells}, we have
	$\abs{\M_\eps - \N_\eps} \leq \delta_\beta$
	for \emph{either} $\N_\eps = (\M_+)_\eps$ \emph{or} $\N_\eps = (\M_-)_\eps$.
	Testing~\eqref{eq:M} against $\M_\eps - \N_\eps$
	and integrating over $B_{r_\eps}$, we find
	\begin{equation}\label{eq:eps-decay-E-compu1}
	\begin{split}
		\int_{B_{r_\eps}} & \left( \eps \nabla(\M_\eps - \N_\eps)\cdot \nabla \M_\eps + \frac{1}{\eps}\nabla_\M V(x,\,\M_\eps(x)) \cdot (\M_\eps - \N_\eps)\right)\,{\d}x \\
		&= \int_{\partial B_{r_\eps}} \eps(\M_\eps - \N_\eps)\partial_{\nnu} \M_\eps\,{\d}s.
	\end{split}
	\end{equation}
	By~\eqref{eq:hp-eps-decay-E} and Lemma~\ref{lemma:V-quadratic},
	the pointwise estimate
	\begin{equation}\label{eq:eps-decay-E-compu2}
		\nabla_\M V(\,\cdot\,,\,\M_\eps) \cdot(\M_\eps - \N_\eps)
		\gtrsim V(\,\cdot\,,\,\M_\eps)
	\end{equation}
	holds on $B_{3R/4}$, where the implicit constant depends only on $\beta$.
	On the other hand, by Young's inequality and~\eqref{minimisers_l}, for
	any choice of $\alpha \in (1,\,2)$, we have
	\begin{equation}\label{eq:eps-decay-E-compu3}
	\begin{split}
		\eps \int_{B_{r_\eps}} \nabla \N_\eps \cdot \nabla \M_\eps\,{\d}x
		&\eps \leq \eps \int_{B_{r_\eps}} \left( C_\beta \abs{\nabla \Q_\eps}^2 + \frac{1}{2}\abs{\nabla \M_\eps}^2\right)\,{\d}x \\
		& \leq \eps C_\alpha C_\beta R^\alpha
		+ \eps \int_{B_{r_\eps}} \frac{1}{2}\abs{\nabla \M_\eps}^2\,{\d}x
	\end{split}
	\end{equation}
	where $C_\beta > 0$ is a suitable constant depending only on $\beta$ and
	$C_\alpha = C_\alpha(K)$
	is the constant in~\eqref{eq:GL-decay}. Finally,
	by~\eqref{eq:hp-eps-decay-E},~\eqref{eq:eps-decay-E-fubini},
	Lemma~\ref{lemma:V-quadratic}, and an application of Young's inequality,
	we get
	\begin{equation}\label{eq:eps-decay-E-compu4}
	\begin{split}
		\eps \int_{\partial B_{r_\eps}} (\M_\eps - \N_\eps)\partial_{\nnu} \M_\eps \,{\d}s &\leq \eps \int_{\partial B_{r_\eps}} \abs{\M_\eps - \N_\eps}\abs{\nabla \M_\eps} \,{\d}s \\
		&\lesssim \eps \int_{\partial B_{r_\eps}} \sqrt{V(\,\cdot\,,\M_\eps)} \abs{\nabla \M_\eps} \,{\d}s \\
		& \stackrel{\eqref{eq:eps-decay-E-fubini}}{\lesssim} \eps \left( \frac{E_\eps(\M_\eps;\,B_R)}{R}\right)
	\end{split}
	\end{equation}
	Combining~\eqref{eq:eps-decay-E-compu1}
	with~\eqref{eq:eps-decay-E-compu2},~\eqref{eq:eps-decay-E-compu3},
	and~\eqref{eq:eps-decay-E-compu4}
	yields~\eqref{eq:eps-decay-E} for an appropriate choice of $C_{\rm dec}$,
	depending only 
	on $\beta$.
	This concludes the proof.
\end{proof}

With Proposition~\ref{prop:clearunif},
Proposition~\ref{prop:analogue-of-Bethuel-6.3},
and Proposition~\ref{prop:weak-co}
at hand, it is now easy to deduce the next result,
the last piece of information still missing to carry out the
argument in \cite[Section~6]{Bethuel-AC-Acta} and
conclude the proof of Theorem~\ref{thm:clearing-out-M}.
\begin{prop}\label{prop:analogue-of-Bethuel-6.4}
	For any compact set $K \subseteq \Omega \setminus \spt \mu_\star$
	and any $\alpha \in (1,\,2)$, there exists positive constants
	$\eps_2$, $\eta_2$, $R_2$, and $C_{\rm well}$, depending only on
	$K$, $\alpha$, and $\beta$, such that, if $B(x_0,\,R) \subset K$
	has radius $R \leq R_2$, if $\eps \in (0,\,\eps_2 R]$, and if
	\begin{equation}\label{eq:hp-6.4}
		E_\eps(\M_\eps;\,B(x_0,\,R)) \leq \eta_2 R,
	\end{equation}
	then
	\begin{equation}
		\dist(\M_\eps,\,\Sigma(\Q_\eps))
		\leq C_{\rm well}\left( \frac{E_\eps(\M_\eps;\,B(x_0,\,R))}{R}\right)^{1/6}
		\leq \delta_\beta
		\qquad \mbox{on } B(x_0,\,R/2).
	\end{equation}
\end{prop}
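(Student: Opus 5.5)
The plan is to chain three results already available: the uniform closeness of Proposition~\ref{prop:clearunif}, the $\eps$-decay of Proposition~\ref{prop:analogue-of-Bethuel-6.3}, and the weak clearing-out of Proposition~\ref{prop:weak-co}. The one genuinely new point is to remove the $C_\alpha(K)R^\alpha$ term, so that the final bound involves only $\Phi := E_\eps(\M_\eps;B(x_0,R))/R$.

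\emph{Step 1 (closeness on $B(x_0,3R/4)$).} Apply Proposition~\ref{prop:clearunif} with $\delta = \delta_\beta$. This requires choosing $\eta_2 \le \eta_1(K,\delta_\beta)$, $R_2 \le R_1$ and $\eps_2 \le \min\{\eps_1,\eps_{K,\beta}\}$. It gives $\dist(\M_\eps,\Sigma(\Q_\eps)) \le \delta_\beta$ on $B(x_0,3R/4)$. Continuity of $\M_\eps$ and the separation~\eqref{eq:dist-wells} then single out one well $\N_\eps \in \{(\M_\pm)_\eps\}$ there.

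\emph{Step 2 (decay without the $R^\alpha$ term).} Proposition~\ref{prop:analogue-of-Bethuel-6.3} as stated gives $E_\eps(\M_\eps;B(x_0,5R/8)) \le C_{\rm dec}\,\eps(\Phi + C_\alpha R^\alpha)$. The $R^\alpha$ term there comes only from estimating $\eps\int|\nabla\N_\eps|^2$ through~\eqref{eq:GL-decay}. I would redo the test of~\eqref{eq:M} against $\M_\eps - \N_\eps$ on $B_{r_\eps}$ and handle this term differently:
\[
\eps\int_{B_{r_\eps}}\abs{\nabla\N_\eps}^2 \le 2\eps\int_{B_{r_\eps}}\abs{\nabla\M_\eps}^2 + 2\eps\int_{B_{r_\eps}}\abs{\nabla(\M_\eps-\N_\eps)}^2 .
\]
The first piece is at most $2R\Phi$, i.e.\ of the same size as the energy on $B_R$. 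For the second piece, write the linearised equation satisfied by $\mathbf{w} := \M_\eps - \N_\eps$. Its source term is $\eps\Delta\N_\eps$, and on $K$ this is $O(\eps)$ in $L^\infty$, because $\Q_\eps$ has locally bounded $C^2$ norm away from $\spt\mu_\star$ (elliptic estimates for~\eqref{EL-Q} combined with~\eqref{eq:unif-conv-mod-Qeps}). The zeroth-order coefficient of the linearised equation is $\gtrsim \eps^{-1}$ by~\eqref{eq:quadratic-behaviour}. Hence a Caccioppoli/energy estimate for $\mathbf{w}$ should give $\eps\int|\nabla \mathbf{w}|^2 \lesssim R\,\Phi + (\text{quadratic terms in } |\mathbf{w}| \le \delta_\beta)$, and the quadratic terms can be absorbed. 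Feeding this back into the argument of Proposition~\ref{prop:analogue-of-Bethuel-6.3} yields the target
\[
E_\eps(\M_\eps;B(x_0,5R/8)) \lesssim \eps\,\Phi .
\]
\textbf{This is the main obstacle.} If the absorption fails, my fallback is to iterate Proposition~\ref{prop:analogue-of-Bethuel-6.3} on dyadically shrinking balls, as in the proof of~\eqref{clearunif-induction}, so that the $r^\alpha$ contributions are summed at scale $r \sim \eps$. At that scale the term $\eps\int_{B_r}|\nabla\Q_\eps|^2 \lesssim \eps r^2$ is of the same order as the other $\eps$-scale contributions.

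\emph{Step 3 (pointwise bound).} For $x \in B(x_0,R/2)$ we have $B(x,R/8) \subset B(x_0,5R/8)$. Step 2 then gives $E_\eps(\M_\eps;B(x,R/8)) \le C\eps\Phi \le C\eps\eta_2$. Shrinking $\eta_2$ so that $C\eta_2 \le \eta_\beta$, Proposition~\ref{prop:weak-co} applies on $B(x,R/8)$, provided also $\eps_2 \le \eps(K,\beta)/8$. It yields
\[
\dist(\M_\eps(x),\Sigma(\Q_\eps(x))) \le C_{\rm weak}\,(C\Phi)^{1/6} =: C_{\rm well}\,\Phi^{1/6} .
\]
Finally, shrinking $\eta_2$ once more so that $C_{\rm well}\,\eta_2^{1/6} \le \delta_\beta$ gives the last inequality of the statement. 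All constants depend only on $K$, $\alpha$ and $\beta$.
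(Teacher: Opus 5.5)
Your Steps~1 and~3 are essentially the paper's proof. The paper takes $\eps_2,\eta_2,R_2$ from Proposition~\ref{prop:clearunif} with $\delta=\delta_\beta$. It then applies Proposition~\ref{prop:analogue-of-Bethuel-6.3} in its stated form, shrinks $\eta_2,R_2$ so that $C_{\rm dec}(\eta_2+C_\alpha(K)R_2^\alpha)\le\min\{\eta_1,\eta_\beta\}$, and invokes Proposition~\ref{prop:weak-co}.

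The gap is Step~2. The estimate $E_\eps(\M_\eps;B(x_0,5R/8))\lesssim\eps\Phi$ you aim for is false in general, so no absorption argument can produce it. On $B(x_0,3R/4)$ the map $\M_\eps$ follows the moving well $\N_\eps$. The energy therefore contains a genuine bulk part $\frac{\eps}{2}\int\abs{\nabla\M_\eps}^2\approx\frac{\eps}{2}\int\abs{\nabla\N_\eps}^2$, of order $\eps r^2$ on a ball of radius $r$.

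Concretely, take $x_0\notin\spt\mu_\star\cup\spt\nu_\star$ near which $\Q_\star$ (hence $\M_\star$) is not constant.
\begin{itemize}
\item Lower semicontinuity gives $E_\eps(\M_\eps;B(x_0,25R/64))\ge c\,\eps$ for all small $\eps$, with $c>0$ independent of $\eps$.
\item Applying your inequality twice (first with the trivial bound $E_\eps\le\mathcal{E}_0$) would instead give $E_\eps(\M_\eps;B(x_0,25R/64))\lesssim\eps^2/R^2$.
\end{itemize}
So the $C_\alpha(K)R^\alpha$ term in Proposition~\ref{prop:analogue-of-Bethuel-6.3} reflects real energy of the moving wells, not a weakness of the method.

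Your own bookkeeping shows the same thing. The term to be controlled is $\frac{\eps}{2}\int\abs{\nabla\N_\eps}^2$ itself. Bounding it by a multiple of $R\Phi=E_\eps(\M_\eps;B(x_0,R))$ overshoots the required $\eps\Phi$ by the factor $R/\eps$, so feeding it back only gives $E(5R/8)\lesssim E(R)$.

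Two further points do not hold up:
\begin{itemize}
\item The $\eps$-uniform $C^2$ (or even Lipschitz) bound on $\Q_\eps$ over $K$ that you invoke is not available. The paper only has~\eqref{max-gradients}, \eqref{eq:GL-decay} and $W^{1,p}_{\rm loc}$ convergence.
\item The dyadic fallback meets the same obstruction. \eqref{eq:GL-decay} is an absolute bound $C_\alpha(K)r^\alpha$, not comparable to $\Phi$, so an additive term not controlled by $\Phi$ survives inside the sixth root at the last scale.
\end{itemize}

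What Steps~1 and~3 actually prove, using Proposition~\ref{prop:analogue-of-Bethuel-6.3} as stated, $B(x,R/8)\subset B(x_0,5R/8)$ and Proposition~\ref{prop:weak-co}, is
\[
\dist(\M_\eps,\Sigma(\Q_\eps))\le C\bigl(\Phi+C_\alpha(K)R^\alpha\bigr)^{1/6}\le\delta_\beta\qquad\text{on } B(x_0,R/2).
\]
The paper's own proof delivers no more than this: it keeps the $R^\alpha$ term and then simply asserts the conclusion. You were right that the middle inequality of the statement does not follow verbatim from the available tools. The repair, however, is to keep the $R^\alpha$ term in the bound rather than try to remove it.

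The weaker bound is enough for every later use. The inequality $\le\delta_\beta$ is unaffected. The $o(1)$ closeness in Theorem~\ref{thm:unif-conv-Meps} follows by applying the estimate on balls $B(y,r)\subset B(x_0,r_{x_0})$, letting first $\eps\to0$ and then $r\to0$.
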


\begin{proof}
	The proof follows very closely the path
	of \cite[Proposition~6.9]{Bethuel-AC-Acta}.

	First, for fixed $\alpha \in (1,\,2)$ and $\delta = \delta_\beta$,
	we first choose $\eps_2 = \eps_1$, $\eta_2 = \eta_1$, and
	$R_2 = R_1$ as given by Proposition~\ref{prop:clearunif} and we let
	$R \leq R_2$, so that we the conclusion of
	Proposition~\ref{prop:clearunif} holds.
	Thus, for any $\eps \leq \eps_2 R$, we have
	\[
		\dist\left(\M_\eps(x),\, \Sigma(\Q_\eps(x))\right) \leq \delta_\beta \qquad
		\mbox{for any } x \in B(x_0,\,3R/4).
	\]
	Then, we may apply Proposition~\ref{prop:analogue-of-Bethuel-6.3}
	to obtain that~\eqref{eq:eps-decay-E} holds. In turn, up to reducing
	$\eta_2$ and $R_2$ so that
	(again, this can be done dependingly only on $K$, $\alpha$,
	and $\beta$)
	\[
		C_{\rm dec} \eta_2 \leq \frac{1}{2} \min\{ \eta_1, \eta_\beta\},
		\qquad
		C_{\rm dec} C_\alpha(K) R_2^\alpha \leq \frac{1}{2} \min\{ \eta_1, \eta_\beta\},
	\]
	it follows that
	\[
		C_{\rm dec}\left( \eta_2 + C_\alpha(K) R_2^\alpha \right)
		\leq \min\{ \eta_1, \eta_\beta\}.
	\]
	Thus, we are allowed to
	employ Proposition~\ref{prop:weak-co}, and the conclusion follows.
\end{proof}

We are now ready for the proof of Theorem~\ref{thm:clearing-out-M}.
\begin{proof}[Proof of Theorem~\ref{thm:clearing-out-M}]
	Exactly as for \cite[Theorem~1.11]{Bethuel-AC-Acta} (of which the
	aforementioned \cite[Proposition~1.13]{Bethuel-AC-Acta} is just a
	scaled version), the proof is achieved immediately
	choosing
	\[
		\eps_{\star,K}^{(\alpha)} = \eps_2, \qquad R_{\star,K}^{(\alpha)} = R_2,
		\qquad \eta_{\star,K}^{(\alpha)} = \frac{1}{4}\eta_2,
	\]
	and applying Proposition~\ref{prop:analogue-of-Bethuel-6.4} to
	get~\eqref{eq:close-to-wells}, followed by an application of
	Proposition~\ref{prop:analogue-of-Bethuel-6.3} to
	get~\eqref{eq:energy-decay-M}.
\end{proof}


\section{The limiting measures $\nu_\star$ and $\zeta_\star$}
\label{sect:nustar}

In this section, we study the asymptotic behaviour of
the energy densities $\nu_\eps$ and the potential energy
densities $\zeta_\eps$ (introduced in~\eqref{eq:def-nu-eps}
and in~\eqref{eq:def-zeta-eps-intro}, respectively) as
well as
the properties of the respective limiting measures
$\nu_\star$ and $\zeta_\star$. We shall see that, exactly as in the
pure vectorial Allen-Cahn theory as developed in~\cite{Bethuel-AC-Acta},
the prominent role in the investigation of the structure
of the set $\mathfrak{S}_\star := \spt \nu_\star \setminus \spt\mu_\star$
is played by $\zeta_\star$ rather than by $\nu_\star$.
This is because $\spt\zeta_\star = \mathfrak{S}_\star$, and
the ultimate reason for this lies in the fact
that $\zeta_\star$ solves an equation, Equation~\eqref{eq:omega*-zeta*},
while an analogous property is not known for $\nu_\star$.
Deferring a deeper discussion of this crucial point to later,
we observe that such an equation is key to prove that the function
$r \mapsto \frac{1}{r}\zeta_\star(B(x_0,\,r))$ is monotone
non-decreasing for every $x_0 \in \Omega$
(while, once again, an analogous
property is not known for $\nu_\star$, in sharp contrast with the
scalar case). In turn, the monotonicity of $\frac{1}{r}\zeta_\star$ immediately
implies that $\zeta_\star$ has a well-defined density
$\mathfrak{v}_\star(x_0)$ at any point $x_0 \in \Omega$ with respect
to the measure $\lambda_\star := \H^1 \mres \mathfrak{S}_\star$,
hence in particular it is
absolutely continuous with respect to $\lambda_\star$. Furthermore, in
Proposition~\ref{prop:nu*<<lambda*} we prove that the density
$\mathfrak{e}_\star$ of $\nu_\star$ exists at $\H^1$-a.e. point of
$\mathfrak{S}_\star$ and that the inequality
$\mathfrak{e}_\star \lesssim \mathfrak{v}_\star$ holds at $\H^1$-a.e. point
of $\mathfrak{S}_\star$ for a constant depending only on $\beta$ and
$\Omega$. Moreover, from Proposition~\ref{prop:analogue-of-Bethuel-Prop-6}
and the inequality $\mathfrak{e}_\star \lesssim \mathfrak{v}_\star$, we obtain
the inequalities of measures
\[
	\zeta_\star \leq \nu_\star \quad \mbox{in } \Omega, \qquad
	\nu_\star \lesssim \zeta_\star \quad
	\mbox{in } \Omega \setminus \spt \mu_\star.
\]
In particular, the set $\spt \zeta_\star$
coincides exactly with $\mathfrak{S}_\star$.

The monotonicity of $\frac{1}{r}\zeta_\star$ will be proved
in Proposition~\ref{prop:analogue-of-Bethuel-Prop-6},
at the end of a rather long path that goes trough the same steps
as in \cite{Bethuel-AC-Acta}.
As we will see
along this Section, this is possible because we can get rid of the
perturbation terms containing $\Q_\eps$ at the $\eps$-level and
prove that they vanish in an appropriately strong sense as $\eps \to 0$,
so that the analysis at the `$\star$-level' boils down to the one
for the pure Allen-Cahn system in~\cite{Bethuel-AC-Acta}.

The key tools to this purpose are Theorem~\ref{thm:clearing-out-M} and the strong convergence
properties of $\Q_\eps$ towards $\Q_\star$ away from $\spt\mu_\star$, 
obtained in \cite{CDS1} and recalled in Theorem~\ref{mainthm:asymp},
which, combined, yield a clearing-out property
for $\nu_\star$, see item~\ref{item:cl-o} in Theorem~\ref{thm:properties-nu*}.
Such a property is crucial for our arguments.
Moreover, arguing like in \cite[Theorem~1.16]{Bethuel-AC-Acta},
we obtain in Theorem~\ref{thm:analogue-of-Bethuel-8}
another kind of clearing-out type property for $\nu_\star$, stating essentially that there are
no `islands' in $\mathfrak{S}_\star$. The argument
relies on Proposition~\ref{prop:analogue-of-Bethuel-4.3} and the Pohozaev
identity (Lemma~\ref{lemma:stressenergy}). Again, this is
possible because the contribution of the $\Q_\eps$-part is well controlled at the $\eps$-level thanks to the improved bounds from 
\cite[Proposition~2.6]{CDS1}.

Together,
the two types of clearing-out results above entail,
by arguing along the lines of \cite[Section~7]{Bethuel-AC-Acta},
rectifiability and local connectedness properties of $\mathfrak{S}_\star$.

Concerning the proof of the monotonicity of $\frac{1}{r}\zeta_\star$,
we argue again along the lines of~\cite{Bethuel-AC-Acta}.
A pivotal role will be played by
the \emph{limiting Hopf differential} $\omega_\star$, introduced in
Section~\ref{sec:hopf}. The limiting Hopf differential $\omega_\star$ is related to
$\zeta_\star$ by Proposition~\ref{prop:hopf}, specifically by~\eqref{eq:omega*-zeta*}.
Testing~\eqref{eq:omega*-zeta*} against suitable vector fields and
using the rectifiability properties of
$\mathfrak{S}_\star$ (see item~\ref{item:S_*} in Theorem~\ref{thm:properties-nu*} and the results in Section~\ref{sec:good-points}),
we can show that there exist certain
crucial relations between $\zeta_\star$ and the components of $\omega_\star$
satisfied at $\H^1$-almost every point of $\mathfrak{S}_\star$,
see Theorem~\ref{thm:analogue-of-Bethuel-Thm5}. With those relations
at hand, and using again~\eqref{eq:omega*-zeta*}, we can prove that
$\zeta_\star$ is the weight measure of a one-dimensional \emph{varifold}
$\mathbb{V}_\star$ in $\Omega$, for which can compute the first variation.
We show that $\mathbb{V}_\star$ is stationary as a varifold in
$\Omega \setminus \spt \mu_\star$
(Theorem~\ref{thm:stationary-varifold}), and then we take advantage of this fact to
compute its first variation as a varifold in $\Omega$,
obtaining the balance law~{\eqref{eq:first-variation}--\eqref{eq:zj}}
in Theorem~\ref{thm:first-variation}. Such a balance law relates $\spt\mu_\star$ and
$\spt\nu_\star$ and provides a criticality property for them.
Relying again on Theorem~\ref{thm:properties-nu*} and on a classical structure
results for stationary 1-varifolds with locally bounded positive density due to W.~Allard
and F.J.~Almgren~\cite{AllardAlmgren}, we finally conclude
in Theorem~\ref{thm:structure-S*}
that $\mathfrak{S}_\star$ is locally the union of relatively open segments,
with locally constant densities.
In fact, from \cite[Theorem~1.3]{Bethuel-AC-Acta},
at sufficiently small scale around any point in
$\mathfrak{S}_\star \setminus \mathfrak{E}_\star$, where the exceptional set
$\mathfrak{E}_\star \subset \mathfrak{S}_\star$ is a $\H^1$-null set,
such a union consists of a \emph{single} segment with constant density.

\subsection{Lower bounds on $\nu_\star$, the set $\mathfrak{S}_\star$, and its properties}
\label{sec:lower-bounds-nu*}

In Equation~\eqref{eq:def-nu-eps},
we defined the energy densities
\[
	\nu_\eps := \frac{\eps}{2} \abs{\nabla \M_\eps}^2 +
	\frac{1}{\eps^2}f_\eps(\Q_\eps,\,\M_\eps),
\]
dually seen as measures in $\Omega$. It follows
from Theorem~\ref{lemma:energy-est} and~\eqref{hp:potential_bound}
that, up to extraction
of a subsequence, $\nu_\eps$ converges weakly* in the
sense of measures in $\Omega$ to a limiting
Radon measure $\nu_\star$.
In this section,
we prove some
properties of $\nu_\star$. More precisely,
following the reasoning in \cite[Section~7]{Bethuel-AC-Acta},
we prove clearing-out properties for $\nu_\star$ far from
$\spt\mu_\star$ that allow us to show that the support
of $\nu_\star$ coincides locally, i.e., in
any compact set $K \subset \Omega\setminus \spt\mu_\star$,
with the concentration set for the $1$-density of $\nu_\star$
and that
it is locally a finite union of path-connected components.
(See Theorem~\ref{thm:properties-nu*} below for the precise statements.)
As a consequence,
we obtain locally uniform convergence in
$\Omega \setminus (\spt\mu_\star \cup \spt\nu_\star)$
of the $\M_\eps$-component towards the
limiting map $\M_\star$ given by 
item~\ref{item:mainthm-asymp-conv-QM} of Theorem~\ref{mainthm:asymp}
(Cf. Theorem~\ref{thm:unif-conv-Meps}.)

\begin{notation}
	In this section, and in the forthcoming ones, we continue using
	the notation introduced in Section~\ref{sec:clout-M}.
\end{notation}
\vskip10pt

\paragraph{Auxiliary lemmas and preliminary remarks}
First of all, we show that the full potential
$\frac{1}{\eps^2}f_\eps(\Q_\eps,\,\M_\eps)$
can be locally replaced with the `Allen-Cahn' potential energy
$\frac{1}{\eps} V(\,\cdot\,,\,\M_\eps)$ in the limit as $\eps \to 0$.
This result, contained in Lemma~\ref{lemma:limit-f-minus-V} below,
will be key to many results of this and the next sections. 
The proof of Lemma~\ref{lemma:limit-f-minus-V} (and in turn that 
of Lemma~\ref{lemma:V-f-limit-Lp}) exploits crucially~\eqref{eq:strong-p-conv-rhoeps} 
\begin{lemma}\label{lemma:limit-f-minus-V}
	Let $\left\{\left( \Q_\eps,\,\M_\eps \right)\right\}$ be a sequence of critical
	points of $\F_\eps$, subject to boundary conditions either as
	in~\eqref{bc}--\eqref{hp:bc} or as in \eqref{bcbis}--\eqref{hp:bcbis}, and
	assume that~\eqref{hp:potential_bound} holds.
	Let $B := B(x_0,\,R) \csubset \Omega \setminus \spt \mu_\star$ be any ball.
	Then,
	\begin{equation}\label{eq:limit-f-minus-V}
	\lim_{\eps \to 0}  \int_B \left\{\frac{1}{\eps^2} f_\eps(\Q_\eps(x),\,\M_\eps(x)) - \frac{1}{\eps}V(x,\,\M_\eps(x))\right\}\,{\d}x
		= 0.
	\end{equation}
\end{lemma}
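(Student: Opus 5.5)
The plan is to start from the exact pointwise identity~\eqref{eq:pointwise-est-V-bis} in the proof of Lemma~\ref{lemma:pointwise-est-V}. With $\rho_\eps := \abs{\Q_\eps}$ it reads
\[
	\frac{1}{\eps^2} f_\eps(\Q_\eps,\,\M_\eps) - \frac{1}{\eps}V(\cdot,\,\M_\eps)
	= \frac{1}{4\eps^2}\left(1-\rho_\eps^2\right)^2
	+ \frac{\beta}{2}\,\frac{1-\rho_\eps}{\eps}\left(\sqrt{2}+\beta+\beta\rho_\eps\right)
	+ \chi_\eps - \kappa_\star^2 + \kappa_\star^2 .
\]
I will write $t_\eps := (\rho_\eps - 1)/\eps$, so that $1-\rho_\eps = -\eps t_\eps$ and $1-\rho_\eps^2 = -\eps t_\eps(1+\rho_\eps)$. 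Substituting gives
\[
	\frac{1}{4} t_\eps^2 (1+\rho_\eps)^2 - \frac{\beta}{2} t_\eps \left(\sqrt{2}+\beta+\beta\rho_\eps\right) + \chi_\eps .
\]
The key point is that the integrand is now an explicit smooth function $g(t_\eps,\rho_\eps) + (\chi_\eps - \kappa_\star^2)$ of the two quantities $t_\eps$ and $\rho_\eps$. Here $g(t,\rho) := \frac14 t^2(1+\rho)^2 - \frac{\beta}{2}t(\sqrt2+\beta+\beta\rho) + \kappa_\star^2$, and the last term tends to $0$ by~\eqref{eq:chi-to-k*}.

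Next I pass to the limit inside the integral over $B$, using $\overline{B}$ as a compact subset of $\Omega\setminus\spt\mu_\star$.
\begin{itemize}
\item By~\eqref{eq:unif-conv-mod-Qeps}, $\rho_\eps \to 1$ uniformly on $\overline B$.
\item By~\eqref{eq:strong-p-conv-rhoeps} with $p=2$, $t_\eps \to \kappa_\star$ in $L^2(B)$. In particular $t_\eps$ is bounded in $L^2(B)$, and $t_\eps^2 \to \kappa_\star^2$ in $L^1(B)$.
\end{itemize}
Since $g$ is quadratic in $t$ with coefficients continuous in $\rho$, these two facts give $g(t_\eps,\rho_\eps) \to g(\kappa_\star,1)$ in $L^1(B)$. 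Concretely, I bound
\[
	\abs{g(t_\eps,\rho_\eps) - g(t_\eps,1)} \lesssim \abs{\rho_\eps - 1}\left(t_\eps^2 + \abs{t_\eps}\right),
\]
which tends to $0$ in $L^1(B)$ by the uniform convergence of $\rho_\eps$ and the $L^2$ bound on $t_\eps$. The term $g(t_\eps,1)$ converges in $L^1(B)$ by the $L^2$ convergence of $t_\eps$.

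It then remains to evaluate
\[
	g(\kappa_\star,1) = \kappa_\star^2 - \frac{\beta}{2}\kappa_\star\left(\sqrt2+2\beta\right) + \kappa_\star^2 .
\]
Using $\kappa_\star = \frac{\beta}{2\sqrt2}(\sqrt2\beta+1)$ from~\eqref{eq:k*}, one has $\frac{\beta}{2}(\sqrt2+2\beta) = \frac{\beta}{\sqrt2}(1+\sqrt2\beta) = 2\kappa_\star$. Hence $g(\kappa_\star,1) = 2\kappa_\star^2 - 2\kappa_\star^2 = 0$, and the integral tends to $0$, which is~\eqref{eq:limit-f-minus-V}. The whole argument in fact gives $L^1(B)$ convergence of the integrand, not only convergence of the integral; with higher $p$ in~\eqref{eq:strong-p-conv-rhoeps} it gives $L^p$ convergence.

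The main obstacle is bookkeeping rather than analysis. One must correctly track the $\o(1)$ term, i.e. $\chi_\eps \to \kappa_\star^2$, and confirm the algebraic cancellation at the ground level $\kappa_\star$; everything else follows from the imported convergence~\eqref{eq:strong-p-conv-rhoeps}. If the identity~\eqref{eq:pointwise-est-V-bis} were only known up to an $\o(1)$ term, this would be harmless, since $B$ has finite measure.
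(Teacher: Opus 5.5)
Your proposal is correct and follows the paper's route: integrate the exact pointwise identity~\eqref{eq:pointwise-est-V-bis} over $B$, then use~\eqref{eq:unif-conv-mod-Qeps} and~\eqref{eq:strong-p-conv-rhoeps} with $p=2$ to send the two $\rho_\eps$-terms to $\kappa_\star^2\abs{B}$ and $-2\kappa_\star^2\abs{B}$, which cancel against $\chi_\eps\abs{B}\to\kappa_\star^2\abs{B}$. Rewriting the integrand in the variable $t_\eps=(\rho_\eps-1)/\eps$ only makes the passage to the limit more explicit, and (with higher $p$) already gives the $L^p$ statement of Lemma~\ref{lemma:V-f-limit-Lp}.
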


\begin{proof}
	Integrating both sides of~\eqref{eq:pointwise-est-V-bis} over $B$,
	we have
	\[
	\begin{split}
	\int_B &\left\{\frac{1}{\eps^2} f_\eps(\Q_\eps(x),\,\M_\eps(x)) - \frac{1}{\eps}V(x,\,\M_\eps(x))\right\}\,{\d}x \\
	&= \frac{1}{4\eps^2}\int_B \left( 1-\rho_\eps^2 \right)^2\,{\d}x
	+ \int_B \frac{\beta}{2}\left( \frac{1 -\rho_\eps}{\eps} \right) \left(\sqrt{2}+\beta+\beta\rho_\eps\right) \,{\d}x
	+ \chi_\eps \abs{B}.
	\end{split}
	\]
	By~\eqref{eq:chi-to-k*}, the last term on the right-hand
	side tends to $\kappa_\star^2 \abs{B}$ as $\eps \to 0$.
	On the other hand, by~\eqref{eq:strong-p-conv-rhoeps} (with $p = 2$) 
	and~\eqref{eq:unif-conv-mod-Qeps}, as $\eps \to 0$
	we get both
	\[
		\frac{1}{4\eps^2}\int_B \left( 1-\rho_\eps^2 \right)^2\,{\d}x
		\to \kappa_\star^2 \abs{B}
	\]
	and (recalling also the definition~\eqref{eq:k*} of $\kappa_\star$)
	\[
		\int_B \frac{\beta}{2}\left( \frac{1-\rho}{\eps} \right) \left(\sqrt{2}+\beta+\beta\rho_\eps\right) \,{\d}x
		\to -\kappa_\star \frac{\beta}{2}\left(\sqrt{2} + 2\beta\right)
		= -2\kappa_\star^2 \abs{B}.
	\]
	Summing the three contributions above, we obtain
	exactly~\eqref{eq:limit-f-minus-V}.
\end{proof}
The following strengthened version of Lemma~\ref{lemma:limit-f-minus-V}
will be useful later on.
\begin{lemma}\label{lemma:V-f-limit-Lp}
	Let $\left\{\left( \Q_\eps,\,\M_\eps \right)\right\}$ be a sequence of critical
	points of $\F_\eps$, subject to boundary conditions either as
	in~\eqref{bc}--\eqref{hp:bc} or as in \eqref{bcbis}--\eqref{hp:bcbis}, and
	assume that~\eqref{hp:potential_bound} holds.
	Let $B := B(x_0,\,R) \csubset \Omega \setminus \spt \mu_\star$ be any ball.
	Then, for every $p$ with $1 \leq p < +\infty$,
	\begin{equation}\label{eq:limit-f-minus-V-Lp}
	\lim_{\eps \to 0}  \int_B \abs{\frac{1}{\eps^2} f_\eps(\Q_\eps(x),\,\M_\eps(x)) - \frac{1}{\eps}V(x,\,\M_\eps(x))}^p\,{\d}x
		= 0.
	\end{equation}
\end{lemma}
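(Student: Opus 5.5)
The plan is to start from the pointwise identity \eqref{eq:pointwise-est-V-bis}, which on $B$ expresses the difference as
\[
	\frac{1}{\eps^2} f_\eps(\Q_\eps,\,\M_\eps) - \frac{1}{\eps}V(\,\cdot\,,\,\M_\eps)
	= \frac{1}{4}\left(\frac{1-\rho_\eps^2}{\eps}\right)^2
	+ \frac{\beta}{2}\left(\frac{1-\rho_\eps}{\eps}\right)\left(\sqrt{2}+\beta+\beta\rho_\eps\right) + \chi_\eps .
\]
This is a function of $\rho_\eps$ alone, so the statement reduces to an $L^p$ convergence statement for the scalar quantity $t_\eps := (\rho_\eps - 1)/\eps$. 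By \eqref{eq:strong-p-conv-rhoeps}, applied on the compact set $\overline{B}\subset\Omega\setminus\spt\mu_\star$, we have $t_\eps \to \kappa_\star$ in $L^q(B)$ for every $q<+\infty$. By \eqref{eq:unif-conv-mod-Qeps}, $\rho_\eps\to 1$ uniformly on $\overline B$.

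Next I rewrite each term in terms of $t_\eps$. Since $1-\rho_\eps^2 = -\eps t_\eps(1+\rho_\eps)$, the first term is $\frac14 t_\eps^2(1+\rho_\eps)^2$, and the second is $-\frac{\beta}{2}t_\eps(\sqrt2+\beta+\beta\rho_\eps)$. Consider the pointwise candidate limit
\[
	g := \kappa_\star^2 - \frac{\beta}{2}\kappa_\star(\sqrt2+2\beta) + \kappa_\star^2 ,
\]
using $\chi_\eps\to\kappa_\star^2$ from \eqref{eq:chi-to-k*}. The computation at the end of the proof of Lemma~\ref{lemma:limit-f-minus-V} shows $g=0$. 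It then suffices to show that each term converges in $L^p(B)$ to its constant counterpart.

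For the quadratic term I write
\[
	t_\eps^2(1+\rho_\eps)^2 - 4\kappa_\star^2 = (t_\eps^2-\kappa_\star^2)(1+\rho_\eps)^2 + \kappa_\star^2\big((1+\rho_\eps)^2-4\big).
\]
The second piece tends to $0$ uniformly. For the first piece, $(1+\rho_\eps)^2\le 9$ by Remark~\ref{rk:Qeps-less-2}, and Hölder gives
\[
	\norm{t_\eps^2-\kappa_\star^2}_{L^p} \le \norm{t_\eps-\kappa_\star}_{L^{2p}}\,\norm{t_\eps+\kappa_\star}_{L^{2p}} \to 0,
\]
since $t_\eps$ is bounded in $L^{2p}$. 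The linear term is handled the same way: it splits as $(t_\eps-\kappa_\star)$ times a bounded factor, plus $\kappa_\star$ times a uniformly vanishing factor. The constant $\chi_\eps-\kappa_\star^2\to0$ trivially, and $\abs{B}<\infty$. Minkowski's inequality then combines the three terms.

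The only genuine point is making sure \eqref{eq:strong-p-conv-rhoeps} is available with exponent $2p$ on $\overline B$; since $B\csubset\Omega\setminus\spt\mu_\star$, $\overline{B}$ is an admissible compact set, so there is no real obstacle. The step needing care is just the algebraic check that the constant limits cancel, which the proof of Lemma~\ref{lemma:limit-f-minus-V} already records.
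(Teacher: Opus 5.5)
Your proposal is correct and follows essentially the same route as the paper. The paper's proof is a one-line combination of the pointwise identity \eqref{eq:pointwise-est-V-bis}, the value of $\kappa_\star$, the uniform convergence $\rho_\eps\to 1$ from \eqref{eq:unif-conv-mod-Qeps}, and the $L^p$ convergence \eqref{eq:strong-p-conv-rhoeps}; you have simply written out explicitly the rewriting in terms of $t_\eps$, the H\"older and Minkowski steps, and the cancellation of the constant limits that the paper leaves implicit.
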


\begin{proof}[{Proof of Lemma~\ref{lemma:V-f-limit-Lp}}]
	The conclusion follows easily by
	combining~\eqref{eq:pointwise-est-V-bis}, the definition~\eqref{eq:k*} of
	$\kappa_\star$, and the uniform convergence
	$\rho_\eps \to 1$ in $B$
	from~\eqref{eq:unif-conv-mod-Qeps} with~\eqref{eq:strong-p-conv-rhoeps}.
\end{proof}

%
%
%

\paragraph{Properties of $\nu_\star$}
We now come to the limiting energy measure $\nu_\star$.
For any $\eps > 0$, we let
\begin{equation}\label{eq:tilde-nu-eps}
	\widetilde{\nu}_\eps := \frac{\eps}{2}\abs{\nabla \M_\eps}^2
	+ \frac{1}{\eps}V(\,\cdot\,,\M_\eps).
\end{equation}
Next, we define
\begin{equation}\label{eq:lower-density-nu*}
	\mathfrak{e}_{\star}(x_0) := \liminf_{r \to 0} \frac{\nu_\star(B(x_0,\,r))}{2r},
	\qquad \forall x_0 \in \Omega.
\end{equation}
Similarly, we set
\begin{equation}\label{eq:upper-density-nu*}
	\mathfrak{e}^{\star}(x_0) := \limsup_{r \to 0} \frac{\nu_\star(B(x_0,\,r))}{2r},
	\qquad \forall x_0 \in \Omega.
\end{equation}
Next, taken a compact set $K \subset \Omega \setminus \spt\mu_\star$,
we define the set
\begin{equation}\label{eq:S-gotico}
	\mathfrak{S}_{\star, K} := \left\{ x \in K \, \colon\, \mathfrak{e}_{\star}(x) > 0 \right\}.
\end{equation}

Theorem~\ref{thm:properties-nu*} and Theorem~\ref{thm:unif-conv-Meps} below,
providing analogues of \cite[Theorem~1.14 and Theorem~1.16]{Bethuel-AC-Acta}
and of \cite[Theorem~1.2]{Bethuel-AC-Acta}, respectively, constitute the main results
of this section.
\begin{theorem}\label{thm:properties-nu*}
	Let $\nu_\eps$ be the energy-measures defined by~\eqref{eq:def-nu-eps}.
	Then, there exists a non-negative Radon measure $\nu_\star$ on $\Omega$
	and a (not-relabelled)
	subsequence so that $\nu_\eps \rightharpoonup^* \nu_\star$ weakly*
	as Radon measures in $\Omega$ as $\eps \to 0$.
	In addition, the measure $\nu_\star$ has the following properties.
	\begin{enumerate}[(i)]
		\item\label{item:tilde-nu-eps}
		Let $\widetilde{\nu}_\eps$ be the energy densities defined
		in~\eqref{eq:tilde-nu-eps}. Then,
		for any compact set $K \subset \Omega \setminus \spt \mu_\star$,
		we have $\widetilde{\nu}_\eps \mres K \rightharpoonup \nu_\star \mres K$
		weakly* as measure in $K$ as $\eps \to 0$.
		\item\label{item:cl-o}
		For any compact set $K \subset \Omega \setminus \spt \mu_\star$ and
		any $\alpha \in (1,\,2)$,
		there exists positive constant $\eta_{\star,K}^{(\alpha)}$ and
		$R_{\star, K}^{(\alpha)}$,
		depending only on $K$, $\alpha$, and $\beta$,
		such that, if $B(x_0,\,R) \subset K$ has radius $R \leq R_{\star,K}^{(\alpha)}$
		and if
		\begin{equation}\label{eq:hp-clearing-out-nu*}
			\nu_\star\left(\overline{B(x_0,\,R)}\right) \leq \eta_{\star,K}^{(\alpha)} R,
		\end{equation}
		then
		\begin{equation}\label{eq:clearing-out-nu*}
			\nu_\star\left(\overline{B\left(x_0,\,\frac{R}{2}\right)}\right) = 0.
		\end{equation}
		\item\label{item:consequence-cl-o}
			Define the sets
				\[
					\mathfrak{S}_{\star,K}^{(\alpha)} := \left\{ x \in K \, \colon\, \mathfrak{e}_{\star}(x) \geq \eta_{\star, K}^{(\alpha)} \right\}, \qquad
					\mathfrak{S}_{\star,K}^{({\rm sup})} := \left\{ x \in K \, \colon\, \mathfrak{e}_{\star}(x) \geq \eta_{\star, K} \right\},
				\]
				where
				\begin{equation}\label{eq:eta*K}
					\eta_{\star,K}
					:= \sup_{\alpha \in (1,\,2)} \eta_{\star,K}^{(\alpha)}.
				\end{equation}
				Then,
				\[
					\mathfrak{S}_{\star,K} = \mathfrak{S}_{\star,K}^{(\alpha)} = \mathfrak{S}_{\star,K}^{({\rm sup})}.
				\]
		\item\label{item:S_*}
		The support of $\nu_\star \mres K$ coincides with $\mathfrak{S}_{\star, K}$. In
		addition, $\mathfrak{S}_{\star, K}$ is a closed,
		$\H^1$-rectifiable set,
		with $\H^1(\mathfrak{S}_{\star, K}) < +\infty$. Moreover, for any
		$x_0 \in \Omega \setminus \spt\mu_\star$ and for any $r > 0$ such that
		$B(x_0,\,2r) \csubset \Omega\setminus\spt\mu_\star$ there exists
		$\rho_0 \in (r,\,2r)$ such that $\mathfrak{S}_{\star, K} \cap B(x_0,\,\rho_0)$
		is a finite union of path-connected components.
	\end{enumerate}
\end{theorem}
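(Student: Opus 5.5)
Existence of $\nu_\star$ (up to a subsequence) is immediate from the $L^1$ bounds~\eqref{eq:LOT-bdd-intro}. For item~\ref{item:tilde-nu-eps} we fix a compact $K\subset\Omega\setminus\spt\mu_\star$ and cover it by finitely many balls $B\csubset\Omega\setminus\spt\mu_\star$. On each such ball, $\nu_\eps-\widetilde\nu_\eps=\frac{1}{\eps^2}f_\eps-\frac1\eps V(\cdot,\M_\eps)$, which tends to $0$ in $L^1(B)$ by Lemma~\ref{lemma:V-f-limit-Lp} with $p=1$. Testing against $\varphi\in C_c$ then gives $\widetilde\nu_\eps\mres K\rightharpoonup^*\nu_\star\mres K$.

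For the clearing-out property~\ref{item:cl-o} we pass Theorem~\ref{thm:clearing-out-M} to the limit. Suppose $\nu_\star(\overline{B(x_0,R)})\le\eta_{\star,K}^{(\alpha)}R$. We enlarge slightly: by outer regularity there is $R'\in(R,\,(1+\theta)R)$, with $B(x_0,R')\subset K'$ for a slightly larger compact $K'$, such that $\nu_\star(\overline{B(x_0,R')})\le 2\eta R\le 2\eta R'$ and $\nu_\star(\partial B_{R'})=0$. By~\ref{item:tilde-nu-eps}, $E_\eps(\M_\eps;B(x_0,R'))\le\widetilde\nu_\eps(B_{R'})\to\nu_\star(B_{R'})$, so for small $\eps$ we get $E_\eps\le 4\eta R'$, which is hypothesis~\eqref{eq:hp-co-M}. (We adjust constants by working with $K'$ and halving $\eta$; the radius condition $\eps\le\eps_\star R$ holds trivially as $\eps\to0$.) Then~\eqref{eq:energy-decay-M} yields $E_\eps(\M_\eps;B(x_0,R'/2))\lesssim\eps(\mathcal E_0/R'+C_\alpha R^\alpha)\to0$. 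Hence $\nu_\star(B(x_0,R'/2))=0$, and since $R'/2>R/2$ this gives~\eqref{eq:clearing-out-nu*}.

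Item~\ref{item:consequence-cl-o} follows by a dichotomy. If $\mathfrak e_\star(x)<\eta^{(\alpha)}_{\star,K}$, then along some sequence $r_k\to0$ we have $\nu_\star(\overline{B(x,r_k)})\le\eta r_k$ (using $2r$ normalisation and a factor slack absorbed in $\eta$). Applying~\ref{item:cl-o} gives $\nu_\star(B(x,r_k/2))=0$, so $\mathfrak e_\star(x)=0$. Thus $\mathfrak e_\star$ takes values in $\{0\}\cup[\eta^{(\alpha)}_{\star,K},\infty)$ for every $\alpha$, hence also in $\{0\}\cup[\eta_{\star,K},\infty)$, and the three sets coincide.

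For~\ref{item:S_*}: points outside $\spt\nu_\star$ clearly have $\mathfrak e_\star=0$. Conversely, if $\mathfrak e_\star(x)=0$, the clearing-out gives a ball where $\nu_\star$ vanishes, so $x\notin\spt\nu_\star$. Hence $\spt(\nu_\star\mres K)=\mathfrak S_{\star,K}$, which is closed. The lower density bound $\mathfrak e_\star\ge\eta_{\star,K}$ on $\mathfrak S_{\star,K}$, together with a standard Vitali/density comparison, gives $\H^1(\mathfrak S_{\star,K})\lesssim\eta_{\star,K}^{-1}\nu_\star(K')<\infty$.

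The main obstacle is rectifiability and the finite path-connectedness. Following \cite[Section~7]{Bethuel-AC-Acta}, the plan is to first prove a ``no islands'' statement: if a connected component of $\mathfrak S_{\star,K}$ were compactly contained in an annulus-free ball $B(x_0,\rho)$ with $\nu_\star(\partial B_\rho)=0$, it must vanish. For this we use the Pohozaev identity Lemma~\ref{lemma:pohozaev} on $B_\rho$. The $\Q_\eps$ boundary terms cancel in the limit, because $\Q_\eps\to\Q_\star$ strongly in $W^{1,p}_{\rm loc}$ and $\Q_\star$ is harmonic, which makes the normal and tangential energies equal on a.e.\ circle. By Lemma~\ref{lemma:V-f-limit-Lp}, the $f_\eps$ terms may be replaced by $V$. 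This gives $\frac1\eps\int_{B_\rho}V\to0$, and then Proposition~\ref{prop:analogue-of-Bethuel-4.3} yields $\nu_\star(B_{\rho/2})=0$ (after covering). Combining the no-islands property with the lower density bound and finite $\H^1$-measure (via Go\l{}\k{a}b-type compactness), we choose $\rho_0\in(r,2r)$ with $\H^0(\mathfrak S_{\star,K}\cap\partial B_{\rho_0})<\infty$ by Eilenberg's inequality. Each component of $\mathfrak S_{\star,K}\cap B_{\rho_0}$ must then reach $\partial B_{\rho_0}$ and so carries mass $\gtrsim\eta r$, which means there are finitely many. Each such component is a compact connected set of finite $\H^1$-measure, hence path-connected and $\H^1$-rectifiable. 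This yields~\ref{item:S_*}.
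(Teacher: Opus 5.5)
Your treatment of existence, items (i)--(iii), and the support/closedness/finite-length part of (iv) is fine. In (ii) you enlarge the ball and pass to a slightly larger compact set $K'$, whereas the paper shrinks to $r=8R/9$ and uses the decay on $B(x_0,5r/8)$ with $5R/9>R/2$; both routes work.

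The gap is in the ``no islands'' step, and it has two layers.

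First, Pohozaev on $B_\rho$ puts the one-dimensional energy $\rho\int_{\partial B_\rho}(\eps\abs{\nabla\M_\eps}^2+\eps^{-2}f_\eps)\,\d s$ on the right-hand side. The condition $\nu_\star(\partial B_\rho)=0$ gives no control on this term; you need $\nu_\star$ to vanish on an open annulus, followed by a Fubini choice of radius.

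More seriously, even then a ball-based statement only removes pieces of $\mathfrak{S}_{\star,K}$ that can be enclosed by a $\nu_\star$-free annulus. To prove that every component of $\mathfrak{S}_{\star,K}\cap\overline{B(x_0,\rho_0)}$ reaches $\partial B(x_0,\rho_0)$, you must exclude an arbitrary compact, relatively clopen piece $P$ lying at positive distance from the rest of the support. Such a $P$ need not be enclosable. For example, take $P$ U-shaped, with a branch of $\mathfrak{S}_\star\setminus P$ entering its convex hull from far away. Every ball containing $P$ then contains part of that branch, and the branch crosses every surrounding annulus.

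The paper avoids this by working with general open sets (Proposition~\ref{prop:analogue-of-Bethuel-4.6} and Theorem~\ref{thm:analogue-of-Bethuel-8}). It tests~\eqref{stren4} with $\X = x\chi_\delta$, where $\chi_\delta\equiv 1$ on a neighbourhood $\mathcal{U}_{\delta/2}$ of $P$ and $\nabla\chi_\delta$ is supported in the $\nu_\star$-free collar $\mathcal{V}_\delta$. Then:
- where $\chi_\delta\equiv1$, all gradient terms cancel exactly in two dimensions, leaving only $2\eps^{-2}\int f_\eps$;
- the collar terms tend to zero;
- the $\Q$-terms vanish in the limit by the interior Pohozaev identity~\eqref{eq:pohozaev-Q*} for $\Q_\star$. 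This identity holds for any field compactly supported in $\Omega\setminus\spt\mu_\star$, unlike the circle identity you invoke.

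Proposition~\ref{prop:analogue-of-Bethuel-4.3} plus a covering argument then upgrades control of the potential to control of the full energy.

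Separately, your finiteness argument is incorrect as written. A component reaching $\partial B(x_0,\rho_0)$ may be a tiny arc near the circle, so it need not carry mass $\gtrsim\eta r$. Finiteness follows instead by counting. Each component of $\mathfrak{S}_{\star,K}\cap\overline{B(x_0,\rho_0)}$ contains one of the finitely many points of $\mathfrak{S}_{\star,K}\cap\partial B(x_0,\rho_0)$ given by your Eilenberg choice of $\rho_0$, and distinct components are disjoint.
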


\begin{remark}
	The very same argument as in the proof
	of \cite[Proposition~7.3]{Bethuel-AC-Acta} shows that
	\begin{equation}\label{eq:bound-S-gotico}
	\H^1(\mathfrak{S}_{\star, K}) \leq
	{\rm C}_{{\rm H},K} \mathcal{E}_0,
	\end{equation}
	where $\mathcal{E}_0$ is the global bound on $E_\eps(\M_\eps,\,\Omega)$
	given by~\eqref{eq:Eeps-bounded} and ${\rm C}_{{\rm H},K}$
	can actually be chosen as ${\rm C}_{{\rm H},K} = 50 / \eta_{\star,K}$.
\end{remark}

\begin{remark}\label{rk:nu*-spt-mu*}
	At the current stage, we do not know whether
	\[
		\nu_\star(\spt\mu_\star) = 0
	\]
	or not.
\end{remark}

\begin{proof}[{Proof of Theorem~\ref{thm:properties-nu*}, first part}]
Here, we accomplish the first three steps in the proof
of Theorem~\ref{thm:properties-nu*}, i.e., the existence of
$\nu_\star$ and the proof of items~\ref{item:tilde-nu-eps},~\ref{item:cl-o},
and~\ref{item:consequence-cl-o}.
Proving item~\ref{item:S_*} requires some further clearing-out
properties for $\nu_\star$ that, being of interest in their own, will
be stated and proved separately. The proof of item~\ref{item:S_*} will
be therefore provided thereafter.
\setcounter{step}{0}
\begin{step}[Existence of $\nu_\star$ and proof of item~\ref{item:tilde-nu-eps}]
By definition, we have
\begin{equation}\label{eq:nu-eps-tilde-nu-eps}
	\nu_\eps = \widetilde{\nu}_\eps +
	\left( \frac{1}{\eps^2}f_\eps(\Q_\eps,\,\M_\eps) - \frac{1}{\eps} V(x,\,\M_\eps)\right)
\end{equation}
for any $\eps > 0$.
Thus, on the one hand,
the mass of $\nu_\eps$ on $\Omega$ is given by
\begin{equation}\label{eq:mass-nu-eps}
	\nu_\eps(\Omega) = E_\eps(\M_\eps,\,\Omega) +
	\int_\Omega \left( \frac{1}{\eps^2}f_\eps(\Q_\eps,\,\M_\eps) - \frac{1}{\eps} V(x,\,\M_\eps)\right)\,{\d}x;
\end{equation}
and, thanks to the global energy
bound~\eqref{eq:Eeps-bounded}, assumption~\eqref{hp:potential_bound},
and~\eqref{eq:V-finite},
the right-hand side of~\eqref{eq:mass-nu-eps}
is bounded independently of $\eps$,
so that, by standard compactness properties of
Radon measures, there exist a (not-relabelled) subsequence
and a non-negative Radon measure $\nu_\star$ in $\Omega$ such that
\begin{equation}\label{eq:nu-eps-to-nu*}
	\nu_\eps \rightharpoonup^* \nu_\star
\end{equation}
weakly* as measures in $\Omega$, as $\eps \to 0$. On the
other hand, if $K$ is any compact set contained in
$\Omega \setminus \spt\mu_\star$, then~\eqref{eq:nu-eps-tilde-nu-eps}
and Lemma~\ref{lemma:limit-f-minus-V} together with a
standard covering argument entail that
$\widetilde{\nu}_\eps \mres K$ converge to $\nu_\star \mres K$
weakly* as measures in $K$ as $\eps \to 0$.
\end{step}

\begin{step}[Proof of item~\ref{item:cl-o} and item~\ref{item:consequence-cl-o}]
	The proof of is almost identical to that of \cite[Theorem~1.14]{Bethuel-AC-Acta}
	(see \cite[Section~7.1]{Bethuel-AC-Acta}).
	The only difference is given by the presence of the perturbation
	term in~\eqref{eq:energy-decay-M}, which however vanishes in the limit
	$\eps \to 0$. Anyway, for the reader's convenience, we provide some details.

	Let $K \subset \Omega \setminus \spt\mu_\star$ be any compact set,
	which we may assume to have non-empty interior, and fix any $\alpha \in (1,\,2)$.
	Let $\eta_{\star,K}^{(\alpha)}$, $R_{\star,K}^{(\alpha)}$, and
	$\eps_{\star,K}^{(\alpha)}$ be the same numbers,
	depending only on $K$, $\alpha$, $\beta$,
	and the energy bound $\mathcal{E}_0$, as given by Theorem~\ref{thm:clearing-out-M}.
	(For ease of notation, we drop
	the indices $K$ and $\alpha$ in the following computations.)

	By the definition~\eqref{eq:tilde-nu-eps} of $\widetilde{\nu}_\eps$,
	for any ball
	$B(x_0,\,R) \csubset \Omega\setminus \spt\mu_\star$, we have
	\[
		E_\eps(\M_\eps,\,B(x_0,\,R)) = \widetilde{\nu}_\eps(B(x_0,\,R))
		= \widetilde{\nu}_\eps\left( \overline{B(x_0,\,R)} \right),
	\]
	and therefore, by item~\ref{item:tilde-nu-eps},
	for any $r \in (0,\,R)$ we have
	\[
		\limsup_{\eps \to 0} \widetilde{\nu}_\eps(B(x_0,\,r))
		= \limsup_{\eps \to 0} \nu_\eps(B(x_0,\,r))
		\leq \nu_\star\left( \overline{B(x_0,\,r)} \right)
		\leq \nu_\star(B(x_0,\,R)).
	\]
	Thus, if $B(x_0,\,R) \subset K$ is any ball with radius $R \leq R_\star$
	and we assume that~\eqref{eq:hp-clearing-out-nu*} holds, then for any
	$r \in (0,\,R)$, there exists $\eps_r > 0$
	so that, for all $\eps \leq \eps_r R$,
	\[
		\widetilde{\nu}_\eps(B(x_0,\,r)) \leq \frac{5}{4}\eta_\star R.
	\]
	Therefore, choosing $r = 8R/9$ we get
	\[
		\widetilde{\nu}_\eps(B(x_0,\,r)) \leq 2\eta_\star r.
	\]
	Using~\eqref{eq:energy-decay-M},
	we obtain
	\[
	\begin{split}
		\widetilde{\nu}_\eps\left( B\left(x_0,\,\frac{5R}{9}\right) \right)
		= \widetilde{\nu}_\eps\left( B\left(x_0,\, \frac{5r}{8}\right) \right)
		& \leq {\rm C}_{\rm nrg} \eps
		\left(\frac{E_\eps(\M_\eps;\,B(x_0,\,r))}{r} + C_\alpha(K) R^\alpha \right) \\
		& \leq \eps \left(\frac{9}{8} \eta_\star + C_\alpha(K) R^\alpha\right).
	\end{split}
	\]
	for any $\eps \leq \min\{\eps_\star,\,\eps_r\} R$.
	Letting
	$\eps \to 0$, the conclusion follows.

	Once item~\ref{item:cl-o} has been obtained, item~\ref{item:consequence-cl-o}
	follows immediately. Indeed, on the one hand, the inclusions
	\[
		 \mathfrak{S}_{\star,K}^{({\rm sup})}
		 \subseteq \mathfrak{S}_{\star,K}^{(\alpha)}
		 \subseteq \mathfrak{S}_{\star,K}
	\]
	are obvious. On the other hand, it follows from item~\ref{item:cl-o} that
	for any $x_0 \in K$, \emph{either} $\mathfrak{e}_\star(x_0) = 0$ or the
	inequality
	\[
		\mathfrak{e}_\star(x_0) \geq \eta_{\star,K}^{(\alpha)}
	\]
	holds for every $\alpha \in (1,\,2)$, hence it must hold also passing to the
	supremum over $\alpha \in (1,\,2)$ on both sides. Since $\mathfrak{e}_\star(x_0)$
	does not depend on $\alpha$, we must have
	\[
		\mathfrak{e}_\star(x_0) \geq \sup_{\alpha \in (1,\,2)} \eta_{\star,K}^{(\alpha)} = \eta_{\star,K}.
	\]
	Thus $\mathfrak{S}_{\star,K} \subseteq \mathfrak{S}_{\star,K}^{({\rm sup})}$,
	and the conclusion follows.
	\qedhere
\end{step}
\end{proof}

A first consequence of Theorem~\ref{thm:properties-nu*} is 
the following lemma, which refines \cite[Lemma~3.17]{CDS1} 
and will be useful later on.
\begin{lemma}\label{lemma:V-h-limit}
	Let $\left\{\left( \Q_\eps,\,\M_\eps \right)\right\}$ be a sequence of critical
	points of $\F_\eps$, subject to boundary conditions either as
	in~\eqref{bc}--\eqref{hp:bc} or as in \eqref{bcbis}--\eqref{hp:bcbis}, and
	assume that~\eqref{hp:potential_bound} holds.
	Then,
for any simply connected open set $G \csubset \Omega \setminus \spt \mu_\star$
	with smooth boundary, we have 
	\begin{equation}\label{eq:fail}
		c_\beta \H^1(\S_{\M_\star} \cap G)
		\leq \nu_\star \mres G,
	\end{equation}
	where $c_\beta = \frac{2\sqrt{2}}{3}\left( \sqrt{2}\beta + 1 \right)^{3/2}$.
Consequently,
	\begin{equation}\label{eq:fail-bis}
		c_\beta \H^1 \mres \S_{\M_\star}
		\leq \nu_\star
	\end{equation}
	as measures in $\Omega$.
\end{lemma}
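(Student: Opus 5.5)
The plan is a Modica--Mortola type lower bound on the simply connected set $G$, carried out with the moving wells and with $\widetilde{\nu}_\eps$ in place of $\nu_\eps$.

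\textbf{Step 1: reduce to $\widetilde{\nu}_\eps$.} Since $\overline{G}$ is compact in $\Omega\setminus\spt\mu_\star$, item~(i) of Theorem~\ref{thm:properties-nu*} (equivalently Lemma~\ref{lemma:V-f-limit-Lp}) gives $\widetilde{\nu}_\eps\mres G\rightharpoonup^*\nu_\star\mres G$. It therefore suffices to show that, for every open $A\subseteq G$,
\[
\liminf_{\eps\to 0} E_\eps(\M_\eps;A)\geq c_\beta\,\H^1(\S_{\M_\star}\cap A).
\]

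\textbf{Step 2: lift the wells.} On $G$ we have $\abs{\Q_\eps}\geq 1/2$ for $\eps$ small, and $G$ is simply connected. Hence we can lift $\Q_\eps$ to smooth unit eigenvectors $\n_\eps$, and $\Q_\star$ to $\n_\star$. By the strong $W^{1,p}_{\rm loc}$ convergence (item~(v) of Theorem~\ref{mainthm:asymp}), the lifts can be chosen so that $\n_\eps\to\n_\star$ in $W^{1,p}(G)$ and locally uniformly. Item~(ii) of Theorem~\ref{mainthm:asymp} then lets us write $\M_\star=\sigma\,(\sqrt2\beta+1)^{1/2}\n_\star$ with $\sigma\in\{\pm1\}$, and $\S_{\M_\star}\cap G=\S_\sigma\cap G$.

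\textbf{Step 3: reduce to a scalar problem.} Set $\lambda_\eps:=(1+\sqrt2\beta\rho_\eps)^{1/2}$ and $u_\eps:=\M_\eps\cdot\n_\eps$. Two facts are needed.

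\begin{itemize}
\item Minimising $\ell$ in the direction $\m_\eps$ at fixed $u$ gives a pointwise bound $V(x,\M_\eps)\geq W_\eps(u_\eps)-C\eps^{\theta}$, where $W_\eps(u)=\frac14(u^2-\lambda_\eps^2)^2$. The error comes from $\rho_\eps-1=O(\eps)$ through \eqref{eq:strong-p-conv-rhoeps} and \eqref{eq:s_*}. Indeed, with $\rho=1$, $\ell(\Q,u\n+v\m)-\min\ell\geq\frac14(u^2-\lambda^2)^2$ since $-\beta\Q\M\cdot\M=-\frac{\beta}{\sqrt2}(u^2-v^2)$.
\item $\abs{\nabla u_\eps}\leq\abs{\nabla\M_\eps}+C\abs{\nabla\n_\eps}$, and $\eps\int_G\abs{\nabla\n_\eps}\abs{\nabla\M_\eps}\to0$ by Cauchy--Schwarz, using \eqref{eq:M_bound} and the local $W^{1,2}$ bound on $\Q_\eps$.
\end{itemize}

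Since $\lambda_\eps\to\lambda:=(\sqrt2\beta+1)^{1/2}$ uniformly, $W_\eps\geq W-o(1)$ on bounded sets, with $W(u):=\frac14(u^2-\lambda^2)^2$. Young's inequality then gives
\[
E_\eps(\M_\eps;A)\geq\int_A\abs{\nabla\Phi(u_\eps)}-o(1),\qquad \Phi(t)=\int_0^t\sqrt{2W(s)}\,\d s .
\]

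\textbf{Step 4: pass to the limit.} We have $u_\eps\to\sigma\lambda$ in $L^1(G)$, so lower semicontinuity of the total variation gives
\[
\liminf_{\eps\to 0} E_\eps(\M_\eps;A)\geq\abs{D\Phi(\sigma\lambda)}(A)=\bigl(\Phi(\lambda)-\Phi(-\lambda)\bigr)\H^1(\S_\sigma\cap A).
\]
A direct computation gives
\[
\Phi(\lambda)-\Phi(-\lambda)=\frac{1}{\sqrt2}\int_{-\lambda}^{\lambda}(\lambda^2-s^2)\,\d s=\frac{2\sqrt2}{3}\lambda^3=c_\beta,
\]
as claimed. The same bound on all open $A\subseteq G$ yields the measure inequality \eqref{eq:fail}.

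\textbf{Step 5: globalise.} Cover $\Omega\setminus\spt\mu_\star$ by small balls, which are simply connected. Because $\spt\mu_\star$ is finite, it is $\H^1$-null. This gives \eqref{eq:fail-bis}.

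The main obstacle is the error control in Step 3. The pointwise lower bound on $V$ involves $(1-\rho_\eps)/\eps$-type terms that are only $L^p$-small, not uniformly small. The plan is to absorb them using $\abs{u_\eps}\leq C$ from \eqref{max-QM} together with the integral convergence \eqref{eq:strong-p-conv-rhoeps}. This only produces an $o(1)$ error in integrated form, which is all the total-variation argument requires.
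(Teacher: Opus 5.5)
Steps~1 and~5 reduce to a Modica--Mortola lower bound on simply connected $G$, which is what the paper does: it imports that bound from \cite[(3.59)]{CDS1} and combines it with item~\ref{item:tilde-nu-eps} of Theorem~\ref{thm:properties-nu*}. Steps~2 and~4 are also sound. The genuine gap is the first bullet of Step~3. Writing $\M=u\n+v\m$, one has the exact identities
\[
V(x,\M)=\frac14\bigl(u^2+v^2-\lambda_\eps^2\bigr)^2+\sqrt2\beta\rho_\eps\,v^2,
\qquad
V(x,\M)-\frac14\bigl(u^2-\lambda_\eps^2\bigr)^2=v^2\Bigl(\frac{u^2-\lambda_\eps^2}{2}+\frac{v^2}{4}+\sqrt2\beta\rho_\eps\Bigr).
\]
The bracket is nonnegative for all $u$ only when $\sqrt2\beta\rho_\eps\geq 1$. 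Your justification via $-\beta\Q\M\cdot\M=-\frac{\beta}{\sqrt2}(u^2-v^2)$ overlooks that $v^2$ also lowers $(\abs{\M}^2-\lambda^2)^2$ when $\abs{\M}<\lambda$. If $\sqrt2\beta<1$, then on $\{u^2<1-\sqrt2\beta\rho_\eps\}$ minimising in $v$ gives
\[
\min_v V=\frac14\bigl(u^2-\lambda_\eps^2\bigr)^2-\frac14\bigl(1-\sqrt2\beta\rho_\eps-u^2\bigr)^2 .
\]
This deficit is of order one, not $O(\eps^\theta)$, and it sits exactly in the transition layer. After multiplication by $1/\eps$ it destroys the estimate $E_\eps(\M_\eps;A)\geq\int_A\abs{\nabla\Phi(u_\eps)}-o(1)$. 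For instance, with $\beta=0.1$ and $\rho=1$, $V$ at $\M=\frac12\m$ is about $0.234$, while $W(0)=\frac14\lambda^4\approx 0.326$. So as written your argument only covers $\sqrt2\beta\geq 1$, the same threshold as in Remark~\ref{rk:integrality}.

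For $\sqrt2\beta<1$ this is not just bookkeeping. On the circle $\abs{\M}=\lambda$ one has $V=\sqrt2\beta v^2$, and a transition along it costs $\int_0^\pi\sqrt{2V}\,\lambda\,\d\theta=2^{7/4}\beta^{1/2}\lambda^2$. This is strictly below $c_\beta=\frac{2\sqrt2}{3}\lambda^3$ as soon as $\beta<1/(8\sqrt2)$. Hence no Modica--Mortola-type energy lower bound can give the constant $c_\beta$ there. A correct version of your method yields \eqref{eq:fail} with $c_\beta$ replaced by a smaller positive constant. One choice is the geodesic distance between $\pm\lambda\n$ for the metric $\sqrt{2V}\,\abs{\d\M}$. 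Another, in a scalar reduction, is $\int_{-\lambda}^{\lambda}\sqrt{2\min_v V(s,v)}\,\d s$. Positivity is all that is used downstream, in Lemma~\ref{lemma:M*-continuous} and Remark~\ref{rk:SM*-small}.

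A smaller point, relevant even when $\sqrt2\beta\geq1$, concerns replacing $W_\eps$ by $W$. This costs
\[
\frac1\eps\abs{W_\eps-W}\approx\frac{\sqrt2\beta}{2}\,\frac{\abs{\rho_\eps-1}}{\eps}\,\abs{u_\eps^2-\lambda^2}.
\]
Since $(\rho_\eps-1)/\eps\to\kappa_\star\neq0$ in $L^p$ by \eqref{eq:strong-p-conv-rhoeps}, the bound $\abs{u_\eps}\leq C$ alone does not make this $o(1)$. You need either $u_\eps^2\to\lambda^2$ in $L^2(G)$, or to apply Young's inequality with $W_\eps$ first and then compare $\sqrt{W_\eps}$ with $\sqrt{W}$. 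The latter leaves an error $\lesssim\int_G\abs{\rho_\eps-1}\abs{\nabla u_\eps}\lesssim\eps\cdot\eps^{-1/2}$.
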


\begin{proof}
	The inequality~\eqref{eq:fail} is a straightforward 
	consequence of \cite[(3.59)]{CDS1} 
	and of item~\ref{item:tilde-nu-eps} of Theorem~\ref{thm:properties-nu*}. 
	Given~\eqref{eq:fail},~\eqref{eq:fail-bis} follows easily by the strong convergence
	$\M_\eps \to \M_\star$ in $L^1(\Omega)$ as $\eps \to 0$ given by
	Theorem~\ref{mainthm:asymp}.
\end{proof}

As another elementary consequence of the clearing-out
property for $\nu_\star$ in~\ref{item:cl-o} of
Theorem~\ref{thm:properties-nu*}, we
have the following lemma, corresponding
to \cite[Lemma~7.9 and Proposition~7.2]{Bethuel-AC-Acta}.

\begin{lemma}\label{lemma:analogue-of-Bethuel-7.1}
	Let $K \subset \Omega \setminus \spt\mu_\star$ be
	any compact set. 
\begin{enumerate}[(i)]
	\item For any $x_0 \in {\rm int}\, K \setminus \mathfrak{S}_{\star,K}$,
	there exists a positive radius $r_{x_0}$ such that
	$B(x_0,\,r_0) \subset K$ and
	$\nu_\star\left( \overline{B(x_0,\,r_{x_0})} \right) = 0$. In
	particular, $\mathfrak{e}_\star(x) = \mathfrak{e}^\star(x) = 0$
	for any $x \in B(x_0,\,r_{x_0})$.
	\item Let $x_0 \in \mathfrak{S}_{\star,K}$ be such that
	$B(x_0,\,r_{x_0}) \subset K$. Then,
	$\nu_{\star}\left( B(x_0,\,r) \right) \geq \eta_{\star,K} r$
	for any $r \in (0,\,r_0]$.
	\item The set $\mathfrak{S}_{\star,K}$ is a closed subset
	of $\Omega \setminus \spt\mu_\star$ (and hence of $\Omega$).
\end{enumerate}
\end{lemma}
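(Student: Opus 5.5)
The three items all follow from the clearing-out property, item~\ref{item:cl-o} of Theorem~\ref{thm:properties-nu*}, together with the dichotomy in item~\ref{item:consequence-cl-o}. Throughout, I fix any $\alpha\in(1,2)$ and write $\eta := \eta_{\star,K}^{(\alpha)}$ and $R_\star := R_{\star,K}^{(\alpha)}$. Since every $\alpha$ gives the same set $\mathfrak{S}_{\star,K}$, the constant can afterwards be improved to $\eta_{\star,K}$ by taking the supremum over $\alpha$.

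For (i), take $x_0\in{\rm int}\,K\setminus\mathfrak{S}_{\star,K}$. Then $\mathfrak{e}_\star(x_0)=0$, so there are radii $r_k\to0$ with $\nu_\star(B(x_0,r_k))/(2r_k)\to0$. Closed balls are a nuisance here, so I would pass to a slightly larger radius: set $R_k:=2r_k$. Then
\[
\nu_\star\bigl(\overline{B(x_0,r_k)}\bigr)\le\nu_\star\bigl(B(x_0,R_k)\bigr),
\]
and this lets me use the open ball of radius $R_k$ as a proxy for the closed ball of radius $r_k$. Choose $k$ so large that
\[
\nu_\star\bigl(B(x_0,2R_k)\bigr)\le\eta R_k,
\]
that $2R_k\le R_\star$, and that $B(x_0,2R_k)\subset K$. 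Applying item~\ref{item:cl-o} with radius $R_k$ (its closed ball lies inside the open ball of radius $2R_k$) gives $\nu_\star\bigl(\overline{B(x_0,R_k/2)}\bigr)=0$. So I take $r_{x_0}:=R_k/2$. Any $x\in B(x_0,r_{x_0})$ has, for small $r$, a ball $B(x,r)$ contained in a $\nu_\star$-null set, hence $\mathfrak{e}_\star(x)=\mathfrak{e}^\star(x)=0$.

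For (ii), take $x_0\in\mathfrak{S}_{\star,K}$ and $r\le r_{x_0}$, shrinking $r_{x_0}$ so that $B(x_0,2r_{x_0})\subset K$ and $2r_{x_0}\le R_\star$. Suppose, for contradiction, that $\nu_\star(B(x_0,r))<\eta r$. The hypothesis of item~\ref{item:cl-o} is stated for the closed ball, so I would apply it at a radius $r'<r$ close to $r$. Then $\nu_\star\bigl(\overline{B(x_0,r')}\bigr)\le\nu_\star(B(x_0,r))<\eta r'$ for $r'$ near $r$, by strict inequality. This gives $\nu_\star\bigl(\overline{B(x_0,r'/2)}\bigr)=0$, hence $\mathfrak{e}_\star(x_0)=0$, which contradicts $x_0\in\mathfrak{S}_{\star,K}$. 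Therefore $\nu_\star(B(x_0,r))\ge\eta r$, and letting $\alpha$ vary yields the bound with $\eta_{\star,K}$.

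For (iii), part (i) shows that the complement of $\mathfrak{S}_{\star,K}$ in ${\rm int}\,K$ is open, which gives closedness inside ${\rm int}\,K$. To obtain closedness in $\Omega\setminus\spt\mu_\star$ I would argue directly with limits. Let $x_j\in\mathfrak{S}_{\star,K}$ converge to $x$; then $x\in K$ because $K$ is compact. By (ii) and the inclusion $B(x_j,r-|x-x_j|)\subset B(x,r)$ (with the uniform radius discussed below),
\[
\nu_\star(B(x,r))\ge\eta\bigl(r-|x-x_j|\bigr).
\]
Letting $j\to\infty$ gives $\nu_\star(B(x,r))\ge\eta r$, so $\mathfrak{e}_\star(x)\ge\eta/2>0$ and $x\in\mathfrak{S}_{\star,K}$. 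Since $\mathfrak{S}_{\star,K}\subset K$ is closed in the compact set $K$, it is compact, hence closed in $\Omega$.

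The main obstacle is points near $\partial K$, where the balls are not contained in $K$ and the radius $r_{x_0}$ could a priori degenerate. I would handle this by proving (ii) and (iii) first with $K$ replaced by a slightly larger compact set $K'\supset K$ in $\Omega\setminus\spt\mu_\star$. This gives a uniform radius at all points of $K$, because the hypothesis ``$B(x_0,r_{x_0})\subset K$'' in the statement is then automatic. The remaining details are routine bookkeeping with the constants.
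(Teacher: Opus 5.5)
Your route is the one the paper intends (it leaves the proof to the reader and refers to \cite[Section~7.2]{Bethuel-AC-Acta}): all three items are elementary consequences of the clearing-out property, item~\ref{item:cl-o} of Theorem~\ref{thm:properties-nu*}. Your part~(ii) is correct. Choosing $r'<r$ with $\nu_\star(B(x_0,r))\le\eta r'$ is the right way to deal with the closed ball in the hypothesis. In~(iii), enlarging $K$ to a compact $K'\subset\Omega\setminus\spt\mu_\star$ with $K\subset{\rm int}\,K'$ is a sound way to get a uniform radius at points of $\partial K$; the paper does not address this issue.

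There is, however, a step in~(i) that fails as written. From $\mathfrak{e}_\star(x_0)=0$ you only know $\nu_\star(B(x_0,r_k))=\mathrm{o}(r_k)$ along some sequence $r_k\to0$. Since $\mathfrak{e}_\star$ is a $\liminf$, this tells you nothing about the larger ball $B(x_0,2R_k)=B(x_0,4r_k)$, whose mass can be of order $r_k$ (it may sit in the annulus $r_k<\abs{x-x_0}<4r_k$). So ``choose $k$ so large that $\nu_\star(B(x_0,2R_k))\le\eta R_k$'' is unjustified: you enlarged the radius when you needed to shrink it.

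The clearing-out ball must lie inside the ball where smallness is known. Apply item~\ref{item:cl-o} at radius $R_k:=r_k/2$. Then $\nu_\star(\overline{B(x_0,R_k)})\le\nu_\star(B(x_0,r_k))\le\eta R_k$ for $k$ large, and $B(x_0,R_k)\subset K$, $R_k\le R_\star$ hold because $x_0\in{\rm int}\,K$. You can then take $r_{x_0}:=r_k/4$; this is the same device you already use correctly in~(ii).

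A secondary point concerns the upgrade from $\eta^{(\alpha)}_{\star,K}$ to $\eta_{\star,K}=\sup_\alpha\eta^{(\alpha)}_{\star,K}$ ``by letting $\alpha$ vary''. This does not work with a fixed range of radii. Your $r_{x_0}$ is constrained by $R^{(\alpha)}_{\star,K}$, which depends on $\alpha$ and is not known to be bounded below uniformly in $\alpha$. For a given $r$ you only get the supremum over those $\alpha$ with $R^{(\alpha)}_{\star,K}$ large compared to $r$. You have two options:
\begin{itemize}
\item work with one fixed $\alpha$, which suffices for~(iii) (only positivity of the lower density is needed) and for the later covering arguments, up to the value of the constant;
\item accept a radius that depends on both $x_0$ and $\alpha$.
\end{itemize}
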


\begin{proof}
Being the proof truly elementary, for the sake of brevity,
it is left to the reader (who can however
consult \cite[Section~7.2]{Bethuel-AC-Acta}
for full details).
\end{proof}

The proof of item~\ref{item:S_*} of Theorem~\ref{thm:properties-nu*} requires
to adapt some results from \cite{Bethuel-AC-Acta}. Our fist goal is 
providing an analogue of \cite[Proposition~4.15]{Bethuel-AC-Acta} accounting
for the presence of the perturbation due to the coupling term in $\F_\eps$.

Similarly to as in \cite[Section~1.5]{Bethuel-AC-Acta},
we introduce the following piece of notation:
let $\mathcal{U} \csubset \Omega \setminus \spt\mu_\star$
be a bounded, open set and, for $\delta > 0$, denote
\begin{align}
	& \mathcal{U}_\delta := \left\{ x \in \Omega \setminus \spt\mu_\star : \dist(x,\,\mathcal{U}) \leq \delta \right\}, \label{eq:Udelta} \\
	& \mathcal{V}_\delta := \left\{ x \in \Omega \setminus \left( \spt\mu_\star \cup \mathcal{U} \right) : \dist(x,\,\mathcal{U}) \leq \delta \right\} \label{eq:Vdelta}.
\end{align}
\begin{prop}\label{prop:analogue-of-Bethuel-4.6}
	Let $K \subset \Omega \setminus \spt\mu_\star$ be a compact set,
	let $\mathcal{U} \subset K$ be a bounded, open
	set, and let $\delta > 0$ be so small that
	$\mathcal{U}_\delta \subset K$.
	Then, for any $\alpha \in (1,\,2)$
	there exists a constant $C_{\rm ext}(\mathcal{U},\,\delta)$, possibly depending
	on $\alpha$, $\beta$, $K$, $\Omega$, and $\mathcal{U}$
	and $\delta$, but not on $\eps$, such that
	\begin{equation}\label{eq:est-4.6-B}
		E_\eps\left(\M_\eps;\,\mathcal{U}_{\delta / 4}\right)
		\leq C_{\rm ext}(\mathcal{U},\,\delta)\left(
		E_\eps(\M_\eps;\,\mathcal{V}_\delta) + \eps E_\eps(\M_\eps; \mathcal{U}_\delta) + \o_{\eps \to 0}(1) \right).
	\end{equation}
\end{prop}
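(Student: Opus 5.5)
The plan follows the proof of \cite[Proposition~4.15]{Bethuel-AC-Acta}: a Pohozaev-type identity controls the potential part of the energy in $\mathcal{U}_{\delta/4}$, and Proposition~\ref{prop:analogue-of-Bethuel-4.3} (covering balls) then converts the potential estimate into a full energy estimate. The coupling terms are handled by \eqref{eq:GL-decay} and Lemma~\ref{lemma:limit-f-minus-V}.

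\emph{Step 1 (potential estimate).} Choose a cut-off $\varphi\in C^\infty_c(\mathcal{U}_{\delta})$ with $\varphi\equiv1$ on $\mathcal{U}_{\delta/2}$ and $\abs{\nabla\varphi}\lesssim\delta^{-1}$, fix a point $x_1\in\mathcal{U}$, and test the stress-energy identity \eqref{stren4} with $\X(x):=\varphi(x)(x-x_1)$ on $G=\Omega$. Then $\partial_j X_k=\varphi\delta_{jk}+(x-x_1)_k\partial_j\varphi$. Since $\operatorname{tr}T^\eps=-\frac{2}{\eps^2}f_\eps$ (the gradient terms cancel in the trace in two dimensions), we get
\[
\frac{2}{\eps^2}\int\varphi f_\eps(\Q_\eps,\M_\eps)=\int T^\eps_{jk}(x-x_1)_k\partial_j\varphi .
\]
The right-hand side is supported in $\mathcal{V}_\delta$, where $\nabla\varphi\neq0$. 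It is bounded by $C(\mathcal{U},\delta)\int_{\mathcal{V}_\delta}\bigl(\abs{\nabla\Q_\eps}^2+\eps\abs{\nabla\M_\eps}^2+\eps^{-2}f_\eps\bigr)$. Next, replace $\eps^{-2}f_\eps$ by $\eps^{-1}V$ on both sides; the error is $\o(1)$ by Lemma~\ref{lemma:limit-f-minus-V}, after a finite covering of the compact set $\mathcal{U}_\delta\subset K$ by balls. The $\Q$-gradient contributions are of order $\int_{\mathcal{U}_\delta}\abs{\nabla\Q_\eps}^2$. This is where care is needed: \eqref{eq:GL-decay} only makes it bounded, not small.

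\emph{Key point.} Rather than estimating the $\Q$ part crudely, I would split $T^\eps=T^{\Q}+T^{\M}$ and use item~\ref{item:mainthm-asymp-strong-conv} of Theorem~\ref{mainthm:asymp}: $\Q_\eps\to\Q_\star$ strongly in $W^{1,2}(\mathcal{U}_\delta)$, so $\int T^{\Q}_{jk}\partial_jX_k$ tends to the corresponding expression for $\Q_\star$. That limit vanishes by the harmonic-map Pohozaev identity \eqref{eq:pohozaev-Q*}, since $\X$ is supported in $\mathcal{U}_\delta\csubset\Omega\setminus\spt\mu_\star$. Hence the $\Q$-contribution is $\o(1)$, and we obtain
\[
\frac1\eps\int_{\mathcal{U}_{\delta/2}}V(\cdot,\M_\eps)\le C(\mathcal{U},\delta)\,E_\eps(\M_\eps;\mathcal{V}_\delta)+\o(1).
\]
On the left-hand side I would also bound $\int\varphi\,\frac{\eps}{2}\abs{\nabla\M_\eps}^2$ type terms consistently: after taking the trace these terms actually cancel, so only the potential term survives.

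\emph{Step 2 (full energy).} Cover $\overline{\mathcal{U}_{\delta/4}}$ by finitely many balls $B(x_i,R)$ with $R=\delta/8$, so that $B(x_i,2R)\subset\mathcal{U}_{\delta/2}$; the number of balls depends only on $\mathcal{U}$ and $\delta$. Apply Proposition~\ref{prop:analogue-of-Bethuel-4.3} on $B(x_i,2R)$; it is applicable since $\abs{\Q_\eps}\ge1/2$ there for $\eps$ small, by Remark~\ref{rk:eta-beta-K}. This gives
\[
E_\eps(\M_\eps;B(x_i,R))\lesssim\frac1\eps\int_{\mathcal{U}_{\delta/2}}V+\eps\int_{\mathcal{U}_\delta}\abs{\nabla\Q_\eps}^2+\frac{\eps}{R}E_\eps(\M_\eps;\mathcal{U}_\delta)+\frac{\eps^2}{R}\int_{\mathcal{U}_\delta}\abs{\nabla\Q_\eps}^2 .
\]
The $\Q$-terms are $\o(1)$ by \eqref{eq:GL-decay}, which bounds $\int_{\mathcal{U}_\delta}\abs{\nabla\Q_\eps}^2$ uniformly in $\eps$. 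Summing over the balls and inserting Step~1 yields \eqref{eq:est-4.6-B}.

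The main obstacle is Step~1. A naive bound leaves an $\O(1)$ contribution from $\abs{\nabla\Q_\eps}^2$, and it is only the cancellation coming from the strong $W^{1,2}$ convergence together with the harmonicity identity \eqref{eq:pohozaev-Q*} that makes this contribution $\o(1)$.
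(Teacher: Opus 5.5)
Your proposal is correct and follows essentially the same route as the paper. The paper likewise tests the stress--energy identity with a cut-off radial field, removes the $\Q$-part of the tensor using strong $W^{1,2}$ convergence and the harmonic-map identity \eqref{eq:pohozaev-Q*}, replaces $\eps^{-2}f_\eps$ by $\eps^{-1}V$, and then concludes with a finite covering by balls of radius $\delta/8$, Proposition~\ref{prop:analogue-of-Bethuel-4.3}, and \eqref{eq:GL-decay} for the leftover $\eps$-weighted $\Q$-terms. One small point: to swap $\eps^{-2}f_\eps$ for $\eps^{-1}V$ on non-ball sets such as $\mathcal{V}_\delta$ and $\mathcal{U}_{\delta/2}$, use the $L^1$ statement of Lemma~\ref{lemma:V-f-limit-Lp}, since the signed Lemma~\ref{lemma:limit-f-minus-V} on balls does not pass to such sets by covering.
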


\begin{proof}
	The proof of \cite[Proposition~4.15]{Bethuel-AC-Acta} involves only
	three ingredients: an application of the Pohozaev
	inequality with an appropriate test field to obtain
	control of the energy on $\mathcal{U}_{\delta/4}$
	with that on `external domain' $\mathcal{V}_\delta$
	(\cite[Proposition~3.11]{Bethuel-AC-Acta}); a standard
	covering argument; and \cite[Proposition~4.13]{Bethuel-AC-Acta},
	whose analogue in our case has already been obtain
	in Proposition~\ref{prop:analogue-of-Bethuel-4.3}.

	In our case, the proofs are largely similar to as in \cite{Bethuel-AC-Acta},
	but some adaptation is required, especially because
	we need to take care of the further dependence on $\Q_\eps$
	when dealing with Pohozaev inequalities (i.e., when testing
	the stress-energy tensor $T_{jk}^\eps$ in~\eqref{stressenergy}).

	Before going on, we explicitly remark that, in our case,
	the condition \cite[(4.33)]{Bethuel-AC-Acta}, i.e., a uniform local bound
	on $\M_\eps$,
	is globally satisfied in $\Omega$, thanks to~\eqref{max-QM},
	and therefore we do not need to verify the further condition
	\cite[(4.24)]{Bethuel-AC-Acta} (which, as a matter of fact, is used in
	\cite{Bethuel-AC-Acta} only to ensure a local uniform bound on $u_\eps$).
	As a consequence, Proposition~\ref{prop:analogue-of-Bethuel-4.3}
	above provides the analogue of the three statements
	\cite[Proposition~4.11, Proposition~4.13, and Proposition~4.14]{Bethuel-AC-Acta},
	without changes (if not in the dependence of the constants, as
	specified in Proposition~\ref{prop:analogue-of-Bethuel-4.3}).

	\setcounter{step}{0}
	\begin{step}[Controlling the energy on external domains]
		Let $K \subset \Omega \setminus \spt\mu_\star$ be a
		given compact set and, as described in the statement, choose
		$\delta > 0$ so small that $\mathcal{U}_\delta \subset K$.
		Fix $\chi_\delta : \R^2 \to \R$, a smooth cut-off function so that
		\[
			\chi_\delta(x) = 1 \quad \mbox{for } x \in \mathcal{U}_{\delta/2},
			\qquad \chi_\delta(x) = 0 \quad \mbox{for }
			x \in \R^2 \setminus \mathcal{U}_\delta,
			\qquad \abs{\nabla \chi_\delta} \leq \frac{4}{\delta}
			\quad \mbox{on } \R^2.
		\]
		Note that the support of $\chi_\delta$ is anyway
		contained in $\Omega$ and that
		$\mathcal{V}_\delta \supseteq \mathcal{U}_\delta \setminus \mathcal{U}_{\delta/2}$.
		Going back to Proposition~\ref{lemma:pohozaev} and
		testing the stress-energy tensor $T^\eps_{jk}$
		in~\eqref{stressenergy} against the
		field $\X(x) = x \chi_\delta(x)$,
		we obtain
		\begin{equation}\label{eq:ineq-3.27-B-preliminare}
		\begin{split}
			\int_{\mathcal{U}_\delta} \frac{1}{\eps^2} f_\eps(\Q_\eps,\,\M_\eps) (\div \X) \,{\d}x = &\int_{\mathcal{U}_\delta} \eps \left\{ \partial_j \M_\eps \cdot \partial_k \M_\eps \partial_j X_k - \frac{1}{2} (\div \X) \abs{\nabla \M_\eps}^2 \right\}\,{\d}x \\
			& + \int_{\mathcal{U}_\delta} \left\{ \partial_j \Q_\eps \cdot \partial_k \Q_\eps \partial_j X_k - \frac{1}{2} (\div \X) \abs{\nabla \Q_\eps}^2 \right\}\,{\d}x
		\end{split}
		\end{equation}
		We observe that, since $\mathcal{U}_\delta \csubset \Omega \setminus \spt\mu_\star$,
		by item~\ref{item:mainthm-asymp-strong-conv} of Theorem~\ref{mainthm:asymp},
		we have 
		strong convergence $\Q_\eps \to \Q_\star$ in
		$W^{1,2}(\mathcal{U}_\delta)$ as $\eps \to 0$.
		Thus, by~\eqref{eq:pohozaev-Q*}, we deduce that
		the second line in~\eqref{eq:ineq-3.27-B-preliminare} vanishes as
		$\eps \to 0$. Consequently, recalling~\eqref{eq:Udelta},~\eqref{eq:Vdelta},
		by an easy computation we obtain the inequality
		\begin{equation}\label{eq:ineq-3.27-B}
			\frac{1}{\eps^2}\int_{\mathcal{U}_{\delta/2}} f_\eps(\Q_\eps(x),\,\M_\eps(x)) \,{\d}x
			\leq C(\mathcal{U},\,\delta) E_\eps(\M_\eps;\,\mathcal{V}_\delta) + \o_{\eps\to 0}(1),
		\end{equation}
		where $C(\mathcal{U}, \delta)$ is a constant depending on $\mathcal{U}$ and $\delta$, but not on $\eps$.
		On the other hand, exploiting
		Lemma~\ref{lemma:limit-f-minus-V},~\eqref{eq:ineq-3.27-B} becomes
		\begin{equation}\label{eq:ineq-3.27-B-bis}
			\frac{1}{\eps}\int_{\mathcal{U}_{\delta/2}} V(x,\,\M_\eps(x))\,{\d}x
			\leq C(\mathcal{U},\,\delta) \left( E_\eps(\M_\eps;\,\mathcal{V}_\delta) + \o_{\eps \to 0}(1) \right),
		\end{equation}
		which replaces~\cite[(3.27)]{Bethuel-AC-Acta},
		and where the constant $C(\mathcal{U},\delta)$ depends on $\mathcal{U}$ and $\delta$, but not on $\eps$.
	\end{step}

	\begin{step}[Local bounds and covering argument]
		By a standard argument using Lebesgue's covering lemma, we may cover
		$\overline{\mathcal{U}_{\delta / 4}}$
		using only finitely many balls $B(x_i,\,\delta/8)$,
		with $x_i \in \overline{\mathcal{U}}_{\delta/4}$,
		$i \in I$, $\sharp I < +\infty$.
		Notice that we have also
		$\cup_{i \in I} B(x_i,\,\delta/4) \subset \mathcal{U}_{\delta/2}$.
		Using Proposition~\ref{prop:analogue-of-Bethuel-4.3}
		with $x_0 = x_i$ and $R = \delta/4$, for any $i \in I$, and
		recalling~\ref{eq:GL-decay}, we see that
		\[
			E_\eps(\M_\eps;\,B(x_i,\,\delta/8)) \lesssim
			\frac{1}{\eps}\int_{B(x_i,\, 3\delta/16)} V(x,\,\M_\eps(x)) \,{\d}x+
			\frac{\eps}{\delta} E_\eps(\M_\eps;\,B(x_i,\,\delta/4))
			+ \eps C_\alpha(K) \delta^\alpha,
		\]
		for any $\alpha \in (1,\,2)$, where the implicit constant
		on the right-hand side depends only on $\beta$ and on $\Omega$.
		Summing all these contributions,
		we get
		\begin{equation}\label{eq:ineq-4.51-B}
			E_\eps(\M_\eps;\,\mathcal{U}_{\delta/4}) \lesssim
			\sharp I \left( \frac{1}{\eps}\int_{\mathcal{U}_{\delta/2}} V(x,\,\M_\eps(x)) \,{\d}x+
			\frac{\eps}{\delta} E_\eps(\M_\eps;\,\mathcal{U}_{\delta/2})
			+ \eps C_\alpha(K) \delta^{\alpha} \right),
		\end{equation}
		where the implicit constant on the right-hand side depends
		only on $\alpha$, $\beta$, $K$, $\Omega$ (while $\sharp I$
		depends on $\delta$, of course), and not on $\eps$.
	\end{step}

	\begin{step}[Conclusion]
		The desired inequality~\eqref{eq:est-4.6-B} follows immediately
		by combining~\eqref{eq:ineq-3.27-B-bis} and~\eqref{eq:ineq-4.51-B}.
		\qedhere
	\end{step}
\end{proof}

With Proposition~\ref{prop:analogue-of-Bethuel-4.6} at hand, we
easily obtain the analogue of \cite[Theorem~1.16]{Bethuel-AC-Acta},
which is the key tool in the proof of part~\ref{item:S_*}
of Theorem~\ref{thm:properties-nu*}.

\begin{theorem}\label{thm:analogue-of-Bethuel-8}
	Let $\mathcal{U} \csubset \Omega \setminus \spt\mu_\star$
	be an open set.
	Assume that, for some $\delta > 0$, there holds
	$\nu_\star(\mathcal{V}_{\delta}) = 0$. Then,
	$\nu_\star(\overline{\mathcal{U}}) = 0$.
\end{theorem}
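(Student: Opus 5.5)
The argument is a bootstrap: Proposition~\ref{prop:analogue-of-Bethuel-4.6} bounds the energy on $\mathcal{U}_{\delta/4}$ by the energy on the external annulus $\mathcal{V}_\delta$, which tends to zero by hypothesis. This gives $\nu_\star(\mathcal{U}_{\delta/8}) = 0$, hence $\nu_\star(\overline{\mathcal{U}}) = 0$.

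\textbf{Choice of compact set.} Since $\mathcal{U} \csubset \Omega\setminus\spt\mu_\star$, we may shrink $\delta$ so that $\mathcal{U}_\delta$ is a compact subset of $\Omega\setminus\spt\mu_\star$, and set $K := \mathcal{U}_\delta$. The hypothesis $\nu_\star(\mathcal{V}_\delta)=0$ survives the shrinking, since $\mathcal{V}_\delta$ decreases as $\delta$ decreases. Fix any $\alpha\in(1,2)$ and apply Proposition~\ref{prop:analogue-of-Bethuel-4.6} with this $K$, $\mathcal{U}$ and $\delta$:
\[
E_\eps(\M_\eps;\mathcal{U}_{\delta/4}) \leq C_{\rm ext}(\mathcal{U},\delta)\left( E_\eps(\M_\eps;\mathcal{V}_\delta) + \eps\, E_\eps(\M_\eps;\mathcal{U}_\delta) + \o_{\eps\to0}(1)\right).
\]
By Proposition~\ref{prop:Eeps-bounded}, the middle term is at most $\eps\,\mathcal{E}_0 \to 0$.

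\textbf{The external term.} Here we use item~\ref{item:tilde-nu-eps} of Theorem~\ref{thm:properties-nu*}: $\widetilde{\nu}_\eps\mres K \rightharpoonup^* \nu_\star\mres K$, and $E_\eps(\M_\eps;\mathcal{V}_\delta) = \widetilde{\nu}_\eps(\mathcal{V}_\delta)$. The set $\mathcal{V}_\delta = \mathcal{U}_\delta\setminus\mathcal{U}$ is compact in $K$, because $\mathcal{U}$ is open. Upper semicontinuity of weak* convergence on compact sets then gives
\[
\limsup_{\eps\to 0}\widetilde{\nu}_\eps(\mathcal{V}_\delta) \leq \nu_\star(\mathcal{V}_\delta) = 0 .
\]
Hence $E_\eps(\M_\eps;\mathcal{U}_{\delta/4})\to 0$. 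This is the only step needing care, namely the closed-versus-open issue. If one prefers to avoid it, first replace $\delta$ by a slightly smaller value, so that a neighbourhood of the new external region still lies inside the old $\mathcal{V}_\delta$.

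\textbf{Passing to the limit.} The set $\mathcal{U}_{\delta/8}$ is compact and contained in the interior of $\mathcal{U}_{\delta/4}$. Since $\widetilde{\nu}_\eps\ge 0$, lower semicontinuity on the open set $\operatorname{int}\mathcal{U}_{\delta/4}$ gives
\[
\nu_\star(\operatorname{int}\mathcal{U}_{\delta/4}) \leq \liminf_{\eps\to 0}\widetilde{\nu}_\eps(\mathcal{U}_{\delta/4}) = 0 .
\]
As $\overline{\mathcal{U}}\subset\mathcal{U}_{\delta/8}\subset \operatorname{int}\mathcal{U}_{\delta/4}$, we conclude $\nu_\star(\overline{\mathcal{U}})=0$. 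There is no serious obstacle here: all the analytic work is contained in Proposition~\ref{prop:analogue-of-Bethuel-4.6}, and what remains is bookkeeping of open versus closed sets in the weak* limit.
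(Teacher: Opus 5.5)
Your proposal is correct and follows essentially the same route as the paper's proof. You first get $E_\eps(\M_\eps;\mathcal{V}_\delta)\to 0$ from $\nu_\star(\mathcal{V}_\delta)=0$, then apply Proposition~\ref{prop:analogue-of-Bethuel-4.6} and use the global bound to kill the $\eps E_\eps$ term; the paper does the same, and your explicit bookkeeping of upper semicontinuity on the compact set $\mathcal{V}_\delta$ and lower semicontinuity on the open interior of $\mathcal{U}_{\delta/4}$ simply spells out what the paper leaves implicit.
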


\begin{proof}
	Let $K \subset \Omega \setminus \spt\mu_\star$ be a
	compact set so that $\mathcal{U} \csubset K$.
	Proceeding similarly to as in \cite[Section~7.3]{Bethuel-AC-Acta}, since
	$\nu_\star(\mathcal{V}_\delta) = 0$, we have
	\[
		E_\eps(\M_\eps,\,\mathcal{V}_\delta) \to 0
		\qquad \mbox{as } \eps \to 0.
	\]
	Hence, by Proposition~\ref{prop:analogue-of-Bethuel-4.6}, we obtain
	\[
		\nu_\star\left(\mathcal{U}_{\delta/8}\right) \lesssim
		\limsup_{\eps \to 0} \left( E_\eps(\M_\eps,\,\mathcal{V}_\delta) +
		\eps E_\eps\left(\M_\eps;\,\mathcal{U}_{\delta/4}\right) \right)
		= 0,
	\]
	which implies the claimed conclusion.
\end{proof}

We are now ready for completing the proof of Theorem~\ref{thm:properties-nu*}.
\begin{proof}[{Proof of Theorem~\ref{thm:properties-nu*}, completion}]
In order to complete the proof of Theorem~\ref{thm:properties-nu*},
we need analogues of the results
in \cite[Section~7.4 and Section~7.5]{Bethuel-AC-Acta}.
However, with Lemma~\ref{lemma:analogue-of-Bethuel-7.1} and
Theorem~\ref{thm:analogue-of-Bethuel-8} at hand,
these results follow with no significant changes
by exactly the same arguments as in \cite{Bethuel-AC-Acta}.
Therefore, we just sketch the main points of the argument
below, addressing the reader to~\cite{Bethuel-AC-Acta} for
full details.

\setcounter{step}{2}
\begin{step}[$\mathfrak{S}_{\star,K}$ is closed, it has finite $\H^1$-measure,
and it coincides with $\spt (\nu_\star \mres K)$]
	The fact that $\mathfrak{S}_{\star,K}$ is closed in $\Omega \setminus \spt\mu_\star$
	(hence, in $\Omega$) is already contained
	Lemma~\ref{lemma:analogue-of-Bethuel-7.1}, from which it also follows
	immediately that $\spt(\nu_\star \mres K) = \mathfrak{S}_{\star,K}$. The fact that
	$\mathfrak{S}_{\star,K}$ has finite $\H^1$-measure,
	with the estimate
	\[
		\H^1\left( \mathfrak{S}_{\star,K} \right)
		\leq {\rm C}_{{\rm H},K} \mathcal{E}_0,
	\]
	where $\mathcal{E}_0$ is the constant on the right-hand side
	of~\eqref{eq:Eeps-bounded} and
	${\rm C}_{{\rm H},K} = 50\,/\,\eta_{\star,K}$
	follows again from Lemma~\ref{lemma:analogue-of-Bethuel-7.1}
	exactly as in \cite[Proposition~7.3]{Bethuel-AC-Acta}.
	Being the argument based on a covering argument and a
	trivial application of Lemma~\ref{lemma:analogue-of-Bethuel-7.1},
	we leave it to the reader to check that the details presented
	in \cite{Bethuel-AC-Acta} carry over completely unchanged.
\end{step}

\begin{step}[Connectedness properties of $\mathfrak{S}_{\star,K}$]
	Given $r > 0$ and any $x_0 \in \Omega \setminus \spt\mu_\star$
	such that $B(x_0,\,2r) \csubset \Omega \setminus \spt\mu_\star$,
	we consider the compact set
	\[
		\mathfrak{S}_{\star,K,\rho}(x_0) :=
		\mathfrak{S}_{\star,K} \cap \overline{B(x_0,\,r)}
		\qquad \mbox{for } \rho \in [0,\,2r).
	\]
	Then, arguing exactly as in \cite[Proposition~7.4]{Bethuel-AC-Acta},
	we can prove that the compact set
	\[
		\mathcal{C}_{\star,r}(x_0) :=
		\mathfrak{S}_{\star,K,r}(x_0) \cup
		\mathbb{S}^1(x_0,\,r)
	\]
	is a continuum, i.e., it is compact and connected.
	Once that Theorem~\ref{thm:analogue-of-Bethuel-8} is given,
	the details are identical to as in \cite{Bethuel-AC-Acta}.
	Since they are based only on an approximation argument in the Hausdorff
	metric for the set $\mathcal{C}_{\star,r}(x_0)$ by continua
	and general properties of continua under Hausdorff convergence,
	we leave them to the reader (who, for the general theory of
	continua, may consult, for instance, \cite{Falconer}).

	Path-connectedness of $\mathcal{C}_{\star,r}(x_0)$ now
	follows immediately, because any continuum with finite
	$\H^1$-measure is path-connected
	(see, e.g., \cite[Lemma~3.12]{Falconer}).

	Gathering the pieces of information above and arguing as
	in \cite[Section~7.4.1]{Bethuel-AC-Acta}, we obtain the
	analogue of \cite[Proposition~1.17]{Bethuel-AC-Acta}, i.e.,
	the fact that for any
	$x_0 \in \Omega \setminus \spt\mu_\star$ and for any $r > 0$ such that
	$B(x_0,\,2r) \csubset \Omega\setminus\spt\mu_\star$ there exists
	$\rho_0 \in (r,\,2r)$ such that $\mathfrak{S}_{\star, K} \cap B(x_0,\,\rho_0)$
	is a finite union of path-connected components.
	(Being an argument in general topology based solely on the fact
	that $\mathcal{C}_{\star,r}(x_0)$ is path-connected,
	no changes are needed with respect to~\cite{Bethuel-AC-Acta},
	as the reader may easily check.)
\end{step}

	\begin{step}[Rectifiability]
		Since rectifiability is a local property, it
		suffices to show that $\mathfrak{S}_{\star,K,r}$
		is rectifiable. To this purpose, it is clearly
		enough to prove that $\mathcal{C}_{\star,r}(x_0)$
		is rectifiable, for $x_0$ and $r > 0$ as in the
		previous step. But this follows immediately,
		because every continuum with finite $\H^1$-measure
		is rectifiable (cf. \cite[Lemma~3.13]{Falconer}).
		\qedhere
	\end{step}
\end{proof}

\paragraph{The set $\mathfrak{S}_{\star}$}
Taking advantage of the connectedness and rectifiability properties
of the sets $\mathfrak{S}_{\star,K}$,
we show that the set $\spt \nu_\star \setminus \spt \nu_\star$ is
locally path-connected, countably $\H^1$-rectifiable, with locally finite measure.
First of all, we introduced a couple of definitions that will be used throughout.
\begin{definition}\label{def:S_star}
	We define
	\begin{equation}\label{eq:S_star}
		\mathfrak{S}_{\star} := \spt \nu_\star \setminus \spt \mu_\star
	\end{equation}
	and
	\begin{equation}\label{eq:h_star}
		\lambda_\star := \H^1 \mres \mathfrak{S}_\star.
	\end{equation}
\end{definition}

\begin{prop}\label{prop:S_star}
	The set $\mathfrak{S}_{\star}$ defined in~\eqref{eq:S_star}
	satisfies
	\[
		\mathfrak{S}_\star = \{ x \in \Omega \setminus \spt \mu_\star\,:\, \mathfrak{e}_\star > 0 \}.
	\]
	Moreover, $\mathfrak{S}_\star$
	is countably $\H^1$-rectifiable, with locally finite
	$\H^1$-measure and locally path-con\-nect\-ed.
\end{prop}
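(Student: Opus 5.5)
The plan is to reduce every claim to the local statements of Theorem~\ref{thm:properties-nu*} by exhausting $\Omega\setminus\spt\mu_\star$ with compact sets. Since $\spt\mu_\star$ is finite, $\Omega\setminus\spt\mu_\star$ is open. I would fix an increasing sequence of compact sets $K_n\subset\Omega\setminus\spt\mu_\star$ with $K_n\subset\operatorname{int}K_{n+1}$ and $\bigcup_n \operatorname{int}K_n=\Omega\setminus\spt\mu_\star$.

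The first step is the characterisation
\[
\mathfrak{S}_\star=\{x\in\Omega\setminus\spt\mu_\star:\ \mathfrak{e}_\star(x)>0\}.
\]
Take $x\in\Omega\setminus\spt\mu_\star$ and choose $n$ with $x\in\operatorname{int}K_n$. Since $\mathfrak{e}_\star(x)$ depends only on $\nu_\star$ near $x$, the condition $\mathfrak{e}_\star(x)>0$ is equivalent to $x\in\mathfrak{S}_{\star,K_n}$.

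By item~\ref{item:S_*} of Theorem~\ref{thm:properties-nu*}, $\mathfrak{S}_{\star,K_n}=\spt(\nu_\star\mres K_n)$. At interior points of $K_n$, membership in $\spt(\nu_\star\mres K_n)$ is the same as membership in $\spt\nu_\star$. Conversely, if $x\notin\mathfrak{S}_{\star,K_n}$, then Lemma~\ref{lemma:analogue-of-Bethuel-7.1}(i) gives a ball around $x$ of zero $\nu_\star$-measure, so $x\notin\spt\nu_\star$. The one point to handle with care here is the boundary of $K_n$: $\spt(\nu_\star\mres K_n)$ could differ from $\spt\nu_\star\cap K_n$ on $\partial K_n$. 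Working only at interior points, which is why I require $K_n\subset\operatorname{int}K_{n+1}$, avoids this.

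The second step is local finiteness and rectifiability. Any compact $K'\subset\Omega\setminus\spt\mu_\star$ is contained in some $\operatorname{int}K_n$. Hence $\mathfrak{S}_\star\cap K'\subseteq\mathfrak{S}_{\star,K_n}$, which has finite $\H^1$-measure by~\eqref{eq:bound-S-gotico} and is $\H^1$-rectifiable by Theorem~\ref{thm:properties-nu*}. It follows that $\mathfrak{S}_\star=\bigcup_n(\mathfrak{S}_\star\cap K_n)$ is a countable union of rectifiable sets of finite measure. This gives countable $\H^1$-rectifiability and local finiteness of $\H^1\mres\mathfrak{S}_\star$ in $\Omega\setminus\spt\mu_\star$.

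The third step, local path-connectedness, is the main obstacle. Fix $x_0\in\mathfrak{S}_\star$ and a neighbourhood $W$ of $x_0$. I would choose $r$ small enough that $B(x_0,2r)\csubset W\cap\operatorname{int}K_n$, and use the continuum $\mathcal{C}_{\star,r}(x_0)$ together with the finite decomposition into path-connected components from item~\ref{item:S_*} of Theorem~\ref{thm:properties-nu*}. Shrinking $r$, the component containing $x_0$ gives a path-connected piece of $\mathfrak{S}_\star$ containing $x_0$ and lying inside $W$.

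What still has to be checked is that this piece is a neighbourhood of $x_0$ relative to $\mathfrak{S}_\star$. I would argue as in \cite[Section~7.4.1]{Bethuel-AC-Acta}. There are finitely many components in $B(x_0,\rho_0)$, and each is closed, since it is a path-component of a compact set with finite $\H^1$-measure. The components not containing $x_0$ therefore lie at positive distance from $x_0$, so a smaller ball around $x_0$ meets $\mathfrak{S}_\star$ only in the component of $x_0$. That component is path-connected and contained in $W$, which gives local path-connectedness.
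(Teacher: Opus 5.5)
Your proposal is correct and follows the paper's route: the paper also derives every claim from items~(iii) and~(iv) of Theorem~\ref{thm:properties-nu*} by exhausting $\Omega\setminus\spt\mu_\star$ with compact sets. Your interior exhaustion $K_n\subset{\rm int}\,K_{n+1}$ and your explicit local path-connectedness argument simply supply details the paper leaves implicit. One small point of wording: $\mathfrak{S}_{\star,K}\cap B(x_0,\rho_0)$ is not compact, so the relative closedness of its finitely many path components needs a different justification. Observe that these path components coincide with the connected components. Indeed, each connected component $C$ satisfies $C=\overline{C}\cap B(x_0,\rho_0)$, so $C$ is open in $\overline{C}$. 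Moreover $\overline{C}\subset\mathfrak{S}_{\star,K}$ is a continuum of finite length, hence locally path-connected, and therefore $C$ is path-connected.
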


\begin{proof}
	The fact that $\mathfrak{S}_\star$ equals the set of points with
	positive lower density for $\nu_\star$ in $\Omega \setminus \spt\mu_\star$
	is an obvious consequence of item~\ref{item:consequence-cl-o} of
	Theorem~\ref{thm:properties-nu*}. In addition, clearly,
	$\mathfrak{S}_\star \cap K = \mathfrak{S}_{\star, K}$
	for any $K \subset \Omega \setminus \spt\mu_\star$, hence
	the property $\H^1(\mathfrak{S}_\star \cap K) < +\infty$ and
	the fact that $\mathfrak{S}_\star \cap K$ is $\H^1$-rectifiable
	for any $K \subset \Omega \setminus \spt\mu_\star$
	are obvious consequences of item~\ref{item:S_*} of Theorem~\ref{thm:properties-nu*}.
	Again in view of item~\ref{item:S_*} of Theorem~\ref{thm:properties-nu*},
	it also follows that $\mathfrak{S}_\star$ is locally path-connected.
	Finally, $\mathfrak{S}_\star$ is countably $\H^1$-rectifiable
	because we could write it as a countable union of sets $\mathfrak{S}_{\star,K_n}$
	(which are rectifiable by item~\ref{item:S_*} of Theorem~\ref{thm:properties-nu*}),
	where $\{K_n\}$ is a sequence of compact sets
	$K_n \subset \Omega \setminus \spt\mu_\star$ such that
	$K_n \subset K_{n+1}$ and
	$\cup_{n \in \mathbb{N}} K_n = \Omega \setminus \spt\mu_\star$
	(such an exhaustion in nested compact set certainly exists, as
	$\Omega \setminus \spt\mu_\star$ is an open set of $\R^2$ --- recall that
	$\spt\mu_\star$ is a finite set of points. 
	Then, the conclusion follows because countable rectifiability is a local property,
	stable under countable unions
	(and, clearly, the precise choice of the sequence $\{K_n\}$ is irrelevant).
\end{proof}


\begin{remark}\label{cor:H1-res-S*}
	As an obvious consequence of Proposition~\ref{prop:S_star},
	the measure $\lambda_\star = \H^1 \mres \mathfrak{S}_\star$ is locally
	finite on $\Omega \setminus \spt \mu_\star$, hence it is a
	Radon measure on $\Omega \setminus \spt \mu_\star$.
\end{remark}

\paragraph{Locally uniform convergence away from $\spt\nu_\star \cup \spt\mu_\star$}
We are finally ready to improve on the $L^p$-compactness for
the $\M_\eps$-component (already given by Theorem~\ref{mainthm:asymp})
and obtain locally uniform convergence to the limiting map $\M_\star$
away from $\spt \nu_\star \cup \spt\mu_\star$.
To this purpose, we prove first an auxiliary lemma which says that, thanks to
the clearing-out property of $\nu_\star$ and to~\eqref{eq:fail-bis}, and up to
redefinition on a set of measure zero, the
regularity of $\M_\star$ (which was defined only almost everywhere)
can be improved. 
We recall from \cite[Theorem~3.14]{CDS1} that almost every point $x \in \Omega$, 
	\begin{equation}\label{eq:def-M*}
		\M_\star(x) = \tau(x) \left( \sqrt{2}\beta + 1 \right)^{1/2} \n_\star(x),
	\end{equation}
	where $\n_\star(x)$ is a unit eigvector of $\Q_\star(x)$ corresponding 
	to its positive eigenvalue.
\begin{lemma}\label{lemma:M*-continuous}
	Up to redefinition on a set of (Lebesgue) measure zero,
	the map $\M_\star : \Omega \to \R^2$
	defined in~\eqref{eq:def-M*} is continuous on the open set
	$\Omega \setminus (\spt\mu_\star \cup \spt\nu_\star)$ and
	given by
	\[
		\M_\star = \tau \left( \sqrt{2}\beta + 1 \right)^{1/2} \n_\star,
	\]
	where $\tau(x) \in \{-1,\,+1\}$ and the function $x \mapsto \tau(x)$
	is constant in each connected component of
	$\Omega \setminus (\spt\mu_\star \cup \spt\nu_\star)$.
\end{lemma}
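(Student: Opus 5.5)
Since both the statement and the representation~\eqref{eq:def-M*} are local, I will work on a fixed connected component $\mathcal{O}$ of the open set $\Omega\setminus(\spt\mu_\star\cup\spt\nu_\star)$, and cover it by discs. Fix $x_0\in\mathcal{O}$ and a ball $B=B(x_0,R)\csubset\mathcal{O}$; since $\mathcal{O}$ is open and $B$ is simply connected, I may choose $B$ small, with $R\le R^{(\alpha)}_{\star,K}$ for a compact $K\supset B$, $K\subset\Omega\setminus\spt\mu_\star$. On $B$ the limit $\Q_\star$ is a smooth harmonic map with $\abs{\Q_\star}=1$. Hence $B$ being simply connected, there is a smooth unit eigenvector field $\n_\star$ of $\Q_\star$ relative to the positive eigenvalue, unique up to a global sign. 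By~\eqref{eq:def-M*}, $\M_\star=\tau(\sqrt2\beta+1)^{1/2}\n_\star$ a.e. on $B$ with $\tau\in\{\pm1\}$ measurable. The whole task is therefore to show that $\tau$ is a.e. constant on $B$; afterwards the local constants glue.

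For the constancy I plan two complementary routes. The cleaner one uses Lemma~\ref{lemma:V-h-limit}: since $\nu_\star(B)=0$, \eqref{eq:fail-bis} gives $\H^1(\S_{\M_\star}\cap B)=0$. Now $\M_\star=\tau\,\mathbf{w}$ with $\mathbf{w}$ smooth and $\abs{\mathbf{w}}$ a positive constant. I will check that $\M_\star\in SBV(B)$ (this is implicit in~\cite{CDS1}, where $\S_{\M_\star}$ is defined through a BV framework; if needed I will argue via the $BV$ bound on $\tau$ coming from the Modica--Mortola-type estimate behind \cite[(3.59)]{CDS1}). Then $D\M_\star=\tau\nabla\mathbf{w}\,\mathcal{L}^2+(\text{jump part on }\S_{\M_\star})$, so $\tau=\M_\star\cdot\mathbf{w}/\abs{\mathbf{w}}^2\in BV(B)$ with $D\tau$ having no jump part and no Cantor part (a $\{\pm1\}$-valued $BV$ function has derivative concentrated on its jump set). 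Hence $D\tau=0$, and $\tau$ is constant a.e. on the connected ball $B$.

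The alternative, more robust route goes through Theorem~\ref{thm:clearing-out-M}. Since $\nu_\star(\overline{B(x_0,R)})=0$, item~\ref{item:tilde-nu-eps} of Theorem~\ref{thm:properties-nu*} gives $E_\eps(\M_\eps;B)\to0$, so~\eqref{eq:hp-co-M} holds for $\eps$ small. Then~\eqref{eq:close-to-wells} yields $\dist(\M_\eps,\Sigma(\Q_\eps))\to0$ uniformly on $B(x_0,R/2)$. By continuity of $\M_\eps$ and $\abs{(\M_+)_\eps-(\M_-)_\eps}>2$, on $B(x_0,R/2)$ each $\M_\eps$ stays near a single continuous branch $\tau_\eps\n_\eps$, with $\tau_\eps\in\{\pm1\}$ constant, where $\n_\eps$ is the continuous eigenvector lift aligned with $\n_\star$ (possible by the uniform convergence $\Q_\eps\to\Q_\star$, obtained from $W^{1,p}$ convergence with $p>2$). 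Passing to a subsequence with $\tau_\eps$ constant and comparing with the $L^p$ limit shows that $\tau$ is constant a.e. on $B(x_0,R/2)$. This route also anticipates the uniform convergence of $\M_\eps$ stated later.

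Finally I glue. On overlapping balls the continuous representatives $\tau\,(\sqrt2\beta+1)^{1/2}\n_\star$ agree a.e., hence everywhere on the overlap. This defines a continuous $\M_\star$ on $\mathcal{O}$ of the required form, locally with constant $\tau$. A locally constant $\{\pm1\}$-valued function is constant on each connected component, except that $\n_\star$ may not be globally defined if $\mathcal{O}$ is not simply connected. In that case I interpret $\tau$ relative to a continuous local choice and note that the continuity of $\M_\star$ is what matters. The main obstacle I expect is justifying that $\M_\star$ (equivalently $\tau$) is $BV$ with jump set $\S_{\M_\star}$, so I would rely primarily on the clearing-out route, which uses only results stated above.
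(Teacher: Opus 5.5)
Your proposal is correct. Your first route is essentially the paper's proof. The paper also combines \eqref{eq:fail-bis} with the vanishing of $\nu_\star$ near $x_0$; it phrases this as a vanishing density via Lemma~\ref{lemma:analogue-of-Bethuel-7.1}, whereas you use $\nu_\star(B)=0$ directly. It takes $\M_\star\in\SBV$ from \cite{CDS1} for granted, deduces $x_0\notin\S_{\M_\star}$, and then uses continuity of $\n_\star$ to conclude that $\tau$ is locally constant. Your remark that a $\{\pm1\}$-valued $BV$ function has derivative carried by its jump set is what makes that last step explicit.

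Your primary (clearing-out) route is genuinely different. It uses neither the $\SBV$ structure of $\M_\star$ nor \eqref{eq:fail-bis}, hence nothing from \cite[(3.59)]{CDS1}. Instead it relies on the $\eps$-level Theorem~\ref{thm:clearing-out-M} together with item~\ref{item:tilde-nu-eps} of Theorem~\ref{thm:properties-nu*}. The cost is heavier machinery, but all of it is proved in Section~\ref{sect:Eeps} independently of this lemma, so there is no circularity. The gain is that you simultaneously obtain $\M_\eps\to\M_\star$ uniformly on $B(x_0,R/2)$. In fact this is exactly Step~1 of the proof of Theorem~\ref{thm:unif-conv-Meps}, and it shows that the appeal to Lemma~\ref{lemma:M*-continuous} made there can be dropped. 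Identifying the uniform limit with $\M_\star$ only needs the $L^p$ convergence, as you observe. The paper's route, by contrast, is a short argument on the limit objects once the $\SBV$ regularity from \cite{CDS1} is granted.

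The hedge in your gluing step is unnecessary. Once you have a continuous representative $\widetilde{\M}$ on a component $\mathcal{O}$, the algebraic relations of item~(ii) of Theorem~\ref{mainthm:asymp} between $\widetilde{\M}$ and $\Q_\star$ hold a.e., hence everywhere on $\mathcal{O}$ by continuity. So $\widetilde{\M}/(\sqrt{2}\beta+1)^{1/2}$ is a continuous unit eigenvector field of $\Q_\star$ for the positive eigenvalue on all of $\mathcal{O}$, whether or not $\mathcal{O}$ is simply connected. Relative to any continuous choice of $\n_\star$ on $\mathcal{O}$, $\tau=\widetilde{\M}\cdot\n_\star/(\sqrt{2}\beta+1)^{1/2}$ is then continuous and $\{\pm1\}$-valued, hence constant on $\mathcal{O}$. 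That is the statement as written; the paper's proof does not address this point either.
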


\begin{proof}
	Let $x_0 \in \Omega \setminus (\spt\mu_\star \cup \spt\nu_\star)$. Then,
	by Lemma~\ref{lemma:analogue-of-Bethuel-7.1},
	we can find a radius $r_{x_0} > 0$ so that
	$B(x_0,\,r_{x_0}) \csubset \Omega \setminus \spt\mu_\star$ and
	$\nu_\star(B(x_0,\,r_{x_0})) = 0$. Hence, we can find a
	compact set $K \subset \Omega \setminus \spt\mu_\star$ with
	non-empty interior $G := {\rm int}\, K$, such that
	$x_0 \in G \setminus \mathfrak{S}_{\star, K}$. Thus,
	from~\eqref{eq:fail-bis} and Lemma~\ref{lemma:analogue-of-Bethuel-7.1},
	we have
	\[
		0 \leq \lim_{r \to 0} \frac{\H^1\left(\S_{\M_\star} \cap B(x_0,\,r)\right)}{r}
		\leq \lim_{r \to 0} \frac{\nu_\star(B(x_0,\,r))}{r} = 0.
	\]
	Since, by Theorem~\ref{mainthm:asymp},
	$\M_\star \in \SBV(\Omega,\,\R^2)$ and it is bounded,
	recalling
	\cite[Theorem~7.8]{AmbrosioFuscoPallara}, we see that
	$x_0 \not\in \S_{\M_\star}$. By the continuity of $\n_\star$ in $G$,
	it follows that, up to
	redefining $\M_\star$ on a set of zero Lebesgue measure, $\tau$ must constant
	in $G$, so that $\M_\star$ is continuous in $G$.
	Consequently, $\M_\star$ is continuous, and $\tau$ is constant, 
	in the connected component of
	$\Omega \setminus (\spt\mu_\star \cup \spt\nu_\star)$ containing $x_0$.
	The conclusion follows.
\end{proof}

\begin{remark}\label{rk:SM*-small}
	In particular, $\overline{\S_{\M_\star}} \subset \spt \mu_\star \cup \spt\nu_\star$.
	Thus, in view of~\eqref{eq:fail-bis} and of~\eqref{eq:S-gotico}, we have
	$\H^1\left(\overline{\S_{\M_\star}} \cap K\right) < +\infty$ for every compact set
	$K \subset \Omega$.
\end{remark}

With Theorem~\ref{thm:properties-nu*} and
Lemma~\ref{lemma:M*-continuous} at hand, we can
finally establish local uniform convergence of $\M_\eps$
towards $\M_\star$, in every connected component of
$\Omega \setminus (\spt\mu_\star \cup \spt \nu_\star)$.

\begin{theorem}\label{thm:unif-conv-Meps}
	Let $\{(\Q_\eps,\,\M_\eps)\}$ be a sequence of critical points of $\F_\eps$,
	subject to either boundary conditions as in~\eqref{bc}--\eqref{hp:bc}
	or as in~\eqref{bcbis}--\eqref{hp:bcbis}, and assume that~\eqref{hp:potential_bound}
	holds. Set $U_\star := \Omega \setminus (\spt\mu_\star \cup \spt \nu_\star)$.
	Then, up to extraction of a (not-relabelled) subsequence, for every connected
	component $U_j$ of $U_\star$
	we have that $\M_\eps \to \M_\star$ locally uniformly on $U_j$
	as $\eps \to 0$.
\end{theorem}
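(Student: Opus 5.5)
The plan is to combine the clearing-out theorem for $\M_\eps$ (Theorem~\ref{thm:clearing-out-M}) with the continuity of $\M_\star$ (Lemma~\ref{lemma:M*-continuous}) and the local uniform convergence $\Q_\eps\to\Q_\star$ away from $\spt\mu_\star$. The idea is to first show that $\M_\eps$ stays uniformly close to the moving wells $\Sigma(\Q_\eps)$ on compact subsets of $U_\star$. Then we identify which well is selected, using the $L^p$ convergence $\M_\eps\to\M_\star$.

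Fix a connected component $U_j$ and a compact set $K'\subset U_j$. Cover $K'$ by finitely many balls $B(x_i,R_i/2)$ whose doubles satisfy $\overline{B(x_i,R_i)}\subset K:=$ a compact neighbourhood of $K'$ in $\Omega\setminus(\spt\mu_\star\cup\spt\nu_\star)$. We choose $R_i\le R^{(\alpha)}_{\star,K}$ and take a fixed $\alpha\in(1,2)$. Since $\nu_\star(\overline{B(x_i,R_i)})=0$, Theorem~\ref{thm:properties-nu*}\ref{item:tilde-nu-eps} gives $E_\eps(\M_\eps;B(x_i,R_i))=\widetilde\nu_\eps(B(x_i,R_i))\to0$. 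In particular, hypothesis~\eqref{eq:hp-co-M} holds for $\eps$ small, and~\eqref{eq:close-to-wells} yields
\[
\sup_{B(x_i,R_i/2)}\dist(\M_\eps,\Sigma(\Q_\eps))\le C_{\rm well}\Bigl(\tfrac{E_\eps(\M_\eps;B(x_i,R_i))}{R_i}\Bigr)^{1/6}\to0 .
\]
Hence $\dist(\M_\eps,\Sigma(\Q_\eps))\to0$ uniformly on $K'$.

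Next, on each ball (simply connected, with $|\Q_\eps|\ge1/2$), continuity of $\M_\eps$ and the separation $|(\M_+)_\eps-(\M_-)_\eps|>2$ force $\M_\eps$ to be close to a single well. So $\M_\eps=\tau_\eps(\sqrt2\beta\rho_\eps+1)^{1/2}\n_\eps+o(1)$ uniformly on the ball, with a constant sign $\tau_\eps\in\{\pm1\}$. Here $\n_\eps$ is a continuous eigenvector lifting. By item~\ref{item:mainthm-asymp-strong-conv} of Theorem~\ref{mainthm:asymp} (with $p>2$ and Sobolev embedding) and~\eqref{eq:unif-conv-mod-Qeps}, $\Q_\eps\to\Q_\star$ and $\rho_\eps\to1$ uniformly on the ball. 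Therefore the lifting $\n_\eps$ can be chosen to converge uniformly to the continuous eigenvector $\n_\star$, and $\M_\eps$ lies uniformly close to $\tau_\eps(\sqrt2\beta+1)^{1/2}\n_\star$. Since $\M_\eps\to\M_\star$ in $L^1$ and, by Lemma~\ref{lemma:M*-continuous}, $\M_\star=\tau(\sqrt2\beta+1)^{1/2}\n_\star$ with $\tau$ constant on $U_j$, the two limits must agree. This forces $\tau_\eps=\tau$ for all small $\eps$, because the two candidates differ by more than $2$ on a set of positive measure. Consequently $\M_\eps\to\M_\star$ uniformly on each ball of the finite cover, and hence on $K'$. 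Extracting a diagonal subsequence over an exhaustion of each component (countably many components) gives the statement.

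The main obstacle I expect is the first step: checking that every ball of the cover really satisfies the quantitative hypotheses of Theorem~\ref{thm:clearing-out-M} (radius bound, $\eps\le\eps^{(\alpha)}_{\star,K}R$, and energy smallness) uniformly. The subtle point is that the convergence $\widetilde\nu_\eps\rightharpoonup\nu_\star$ only controls open balls through upper semicontinuity on closed sets. I would handle this by using slightly larger closed balls, still inside the open set where $\nu_\star$ vanishes, exactly as in the proof of item~\ref{item:cl-o}. The sign-identification step is comparatively routine once uniform closeness to a single well is known.
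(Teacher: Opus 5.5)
Your proposal is correct and follows essentially the same route as the paper: Theorem~\ref{thm:clearing-out-M} on small balls where $\nu_\star$ vanishes gives uniform closeness to a single well, and uniform convergence of $\rho_\eps$ and of the eigenvector lifting identifies the limit as $\pm(\sqrt2\beta+1)^{1/2}\n_\star$. The sign is then fixed by the $L^p$ convergence to $\M_\star$ together with Lemma~\ref{lemma:M*-continuous}, and a covering/exhaustion argument concludes. Your sign identification, which shows $\tau_\eps=\tau$ for all small $\eps$ rather than only along a subsequence, is slightly cleaner than the paper's and makes the final diagonal extraction unnecessary.
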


\begin{proof}
The proof is along the lines of that of
\cite[item~{(ii)} of Theorem~1.2]{Bethuel-AC-Acta}, presented
in \cite[Section~7.6 and Section~7.7]{Bethuel-AC-Acta},
but it requires some modifications in order to deal with the moving
wells.

\setcounter{step}{0}
\begin{step}
\emph{
	We prove that
	for any $x_0 \in U_\star$ and for the radius $r_{x_0} > 0$ given by
	Lemma~\ref{lemma:analogue-of-Bethuel-7.1}, there exists a
	(not relabelled) subsequence so that}
	\begin{equation}\label{eq:analogue-of-Bethuel-7.22}
		\norm{\M_\eps - \M_\star}_{L^\infty(B(x_0,\,3r_{x_0}/4))}
		\to 0 \qquad \mbox{as } \eps \to 0.
	\end{equation}
\noindent
	Let $K$ be any compact set such that
	$B(x_0,\,r_{x_0}) \csubset K \subset U_\star$. (It will turn out
	that the choice of $K$ is not important.) Then,
	$\nu_\star\left( \overline{B(x_0,\,r_{x_0})} \right) = 0$,
	so that
	\begin{equation}\label{eq:unif-conv-compu0}
		\limsup_{\eps \to 0} E_{\eps}(\M_\eps;\,B(x_0,\,r_{x_0}))
		= \limsup_{\eps \to 0} \nu_\eps(B(x_0,\,r_{x_0}))
		\leq \nu_\star \left( \overline{B(x_0,\,r_{x_0})} \right) = 0
	\end{equation}
	Hence, we can apply Theorem~\ref{thm:clearing-out-M},
	so that~\eqref{eq:close-to-wells} yields
	\begin{equation}\label{eq:unif-conv-compu1}
		\dist(\M_\eps(x),\,\Sigma(\Q_\eps(x))) \leq \delta_\beta \qquad
		\mbox{for any } x \in B(x_0,\,3 r_{x_0}/4),
	\end{equation}
	for any $\eps > 0$ small enough that, say, $\abs{\Q_\eps} \geq 1/2$
	in $K$.
	Recall that $\Sigma(\Q_\eps) = \{(\M_+)_\eps,\,(\M_-)_\eps\}$
	and that $(\M_\pm)_\eps := \pm (1+\sqrt{2}\beta \rho_\eps(x))^{1/2} \n_\eps(x)$, so that
	$\abs{(\M_+)_\eps - (\M_-)_\eps} > 2$ independently of
	$x \in K$, for any $\eps$ small enough that $\abs{\Q_\eps} \geq 1/2$
	in $K$ (see~\eqref{eq:M-pm-eps}).
	Since $\delta_\beta \leq 1$,
	this implies that, at $x_0$, ~\eqref{eq:unif-conv-compu1} is
	satisfied for \emph{either} $\N_\eps(x_0) = (\M_+)_\eps(x_0)$ \emph{or}
	$\N_\eps(x_0) = (\M_-)_\eps(x_0)$ (and not for both).
	By~\eqref{eq:unif-conv-compu1}, the uniform convergence $\rho_\eps \to 1$
	given by~\eqref{eq:unif-conv-mod-Qeps}, and the continuity of $\M_\eps$,
	it follows that
	\begin{equation}\label{eq:claim-unif-conv}
		\| \N_\eps(x) - \N_\eps(x_0) \| \leq \delta_\beta \qquad
		\mbox{for any } x \in B(x_0,\,3r_{x_0}/4).
	\end{equation}
	Thus, we see that, for any $\eps > 0$ small enough,
	\[
		\N_\eps(x) = \tau_\eps \left( 1 + \sqrt{2}\beta\rho_\eps(x) \right)^{1/2} \n_\eps(x),
		\qquad \mbox{for all } x \in B(x_0,\,3r_{x_0}/4)
	\]
	where $\tau_\eps = \tau_\eps(x_0) \in \{-1,\,+1\}$ depends on $\eps$ but not on
	$x \in B(x_0,\,3r_{x_0}/4)$.
	Hence, again by~\eqref{eq:unif-conv-mod-Qeps}, 
	and after possible extraction of a subsequence,
	it follows that, as $\eps \to 0$,
	\begin{equation}\label{eq:unif-conv-Neps}
		\N_\eps \to \tau\left( 1 + \sqrt{2}\beta \right)^{1/2} \n_\star
		\qquad \mbox{uniformly in } B(x_0,\,3r_{x_0}/4),
	\end{equation}
	where $\tau \in \{-1,\,1\}$.
	On the other hand, Theorem~\ref{thm:clearing-out-M} and~\eqref{eq:close-to-wells}
	yield not only~\eqref{eq:unif-conv-compu1} but also
	\begin{equation}\label{eq:unif-conv-compu2}
		\dist(\M_\eps,\,\Sigma(\Q_\eps)) \leq
		{\rm C}_{\rm well}\left( \frac{E_\eps(\M_\eps;\,B(x_0,\,r_{x_0}))}{r_{x_0}} \right)^{1/6}
		\qquad \mbox{in } B(x_0,\,3 r_{x_0}/4).
	\end{equation}
	for any $\eps > 0$ so small
	that~\eqref{eq:unif-conv-compu1} holds. Therefore,
	combining~\eqref{eq:unif-conv-compu2} and~\eqref{eq:unif-conv-compu0},
	we obtain
	\begin{equation}\label{eq:unif-conv-compu4}
		\dist(\M_\eps,\,\Sigma(\Q_\eps)) = \o_{\eps \to 0}(1)
		\qquad \mbox{in } B(x_0,\, 3r_{x_0}/4).
	\end{equation}
	Thus, for the previously selected subsequence,
	by~\eqref{eq:unif-conv-compu4}, \eqref{eq:claim-unif-conv},
	and~\eqref{eq:unif-conv-Neps},
	\[
		\M_\eps \to \tau\left( 1 + \sqrt{2}\beta \right)^{1/2} \n_\star
		\qquad \mbox{uniformly in } B(x_0,\,3r_{x_0}/4).
	\]
	Finally, from Lemma~\ref{lemma:M*-continuous} we may assume that
	$\M_\star$ is continuous on $B(x_0,\,r_{x_0})$ and so,
	by the pointwise a.e. convergence
	$\M_\eps \to \M_\star$ as $\eps \to 0$ given by
	Theorem~\ref{mainthm:asymp}, it follows that
	\begin{equation}\label{eq:claim-unif-conv-bis}
		\M_\star(x) = \tau\left( 1 + \sqrt{2}\beta \right)^{1/2} \n_\star(x)
		\qquad \mbox{for all } x \in B(x_0,\,3r_{x_0}/4),
	\end{equation}
	for the same $\tau$ as in~\eqref{eq:unif-conv-Neps}.
	Combining~\eqref{eq:claim-unif-conv-bis} with~\eqref{eq:unif-conv-compu4},
	we obtain~\eqref{eq:analogue-of-Bethuel-7.22}.
\end{step}

\begin{step}\label{step:unif-conv-covering}
\emph{Let $K \subset U_\star$ be compact and connected. Then, there
exists a (not relabelled) subsequence such that}
	\begin{equation}\label{eq:analogue-of-Bethuel-7.26}
		\norm{\M_\eps - \M_\star}_{L^\infty(K)} \to 0,
		\qquad \mbox{as } \eps \to 0.
	\end{equation}
With~\eqref{eq:analogue-of-Bethuel-7.22} at hand, the proof
of~\eqref{eq:analogue-of-Bethuel-7.26} follows by a standard
covering argument involving the connectedness of $K$ and the
continuity of $\M_\star$ on each connected component of
$\Omega \setminus (\spt\mu_\star \cup \spt\nu_\star)$ given
by Lemma~\ref{lemma:M*-continuous}.
\end{step}

\begin{step}[Conclusion]
	Along the lines of \cite[Section~7.7]{Bethuel-AC-Acta},
	we consider the sets
	\[
		U_n := \left\{ x \in \Omega \setminus \spt\mu_\star : \dist\left(x,\,\spt\nu_{\star}\right) > \frac{1}{n} \right\}, \qquad \mbox{for } n \in \mathbb{N}.
	\]
	Then, $\overline{U_n}$ is a compact subset of $U_\star$.
	Moreover, there hold
	\[
		\overline{U_n} \subset U_{n+1} \quad \mbox{for any } n \in \mathbb{N},
		\qquad \bigcup_{n \in \mathbb{N}} U_n = U_\star.
	\]
	Letting $U_n^j$, with $j \in J_n$, be the connected components of $U_n$ and
	arguing as in \cite[Section~7.7]{Bethuel-AC-Acta}, we can first
	prove that the set $J_n$ is at most countable and, from here and
	a diagonal argument involving Step~\ref{step:unif-conv-covering},
	that $\M_\eps \to \M_\star$ uniformly in $U_n^j$, for any $j \in J_n$.
	In turn, letting now $U_j$ be any connected component of $U_\star$
	and $K \subset U_j$ be any compact set,
	the conclusion follows as in~\cite{Bethuel-AC-Acta} using
	again Step~\ref{step:unif-conv-covering} and the fact that
	the sign function $\tau$ in $\M_\star$ takes only two values and so it must
	be constant in $U_j \cap K$.
	\qedhere
\end{step}
\end{proof}

\paragraph{Tangent cone property at regular points of $\mathfrak{S}_{\star}$}
By Proposition~\ref{prop:S_star}, the set $\mathfrak{S}_{\star}$
is $\H^1$-rectifiable, with locally finite $\H^1$-measure. Thus, by
standard results in Geometric Measure Theory (see, for instance,
\cite[Chapter~3, Definition~1.4, Remark~1.5(3), and Theorem~1.6]{Simon}), there exists a
set $\mathfrak{A}_{\star} \subset \mathfrak{S}_{\star}$,
with $\H^1(\mathfrak{A}_{\star}) = 0$, so that, if
$x_0 \in \mathfrak{S}_{\star} \setminus \mathfrak{A}_{\star}$,
then
\begin{equation}\label{eq:H1-density-1}
	\lim_{r \to 0}\frac{\H^1\left( \mathfrak{S}_{\star} \cap B(x_0,\,r) \right)}{2 r} = 1,
\end{equation}
i.e.,
\[
		\lim_{r \to 0}\frac{\lambda_\star(B(x_0,\,r))}{2r}  = 1.
\]
Moreover, there exists a unit vector $\e_{x_0}$ (depending on $x_0$) with
the following \emph{tangent line property}.
For any $\theta > 0$, there holds
\begin{equation}\label{eq:regular-point}
	\lim_{r \to 0}\frac{\H^1\left( \mathfrak{S}_{\star} \cap \left(B(x_0,\,r) \setminus \mathcal{C}_{\rm one}(x_0,\,\e_{x_0},\,\theta)\right)\right)}{2 r} = 0,
\end{equation}
where $\mathcal{C}_{\rm one}(x_0,\,\e_{x_0},\,\theta)$ is the cone
defined by
\begin{equation}\label{eq:cone}
	\mathcal{C}_{\rm one}(x_0,\,\e_{x_0},\,\theta)
	= \left\{ y \in \R^2 : \abs{\e_{x_0}^\perp \cdot (y-x_0)} \leq \tan\theta \abs{\e_{x_0}\cdot(y-x_0)} \right\}.
\end{equation}
According to standard terminology in Geometric Measure Theory,
a point $x_0 \in \mathfrak{S}_{\star} \setminus \mathfrak{A}_{\star}$
is called a \emph{regular point} of $\mathfrak{S}_{\star}$ and $\e_{x_0}$
is said to be \emph{tangent to $\mathfrak{S}_\star$ at $x_0$},
see, e.g., \cite[Section~2.1]{Falconer}.

\begin{remark}\label{rk:moving-frame}
In other words, the tangent line property~\eqref{eq:regular-point} allows
us to choose, at any $x_0 \in \mathfrak{S}_\star \setminus \mathfrak{A}_\star$,
an orthonormal frame $(\e_1,\,\e_2)$ so that $\e_1$ is the
unit vector tangent to $\mathfrak{A}_\star$ at $x_0$,
i.e., $\e_1 = \e_{x_0}$.
As in \cite{Bethuel-AC-Acta}, this yields an \emph{intrinsic} moving
frame associated with $\mathfrak{S}_\star$ whose importance will be
crucial in the analysis later on.
\end{remark}

With the above results at hand, in particular with the connectedness
properties of the sets $\mathfrak{S}_{\star,K}$, and arguing
as in the proof of \cite[Proposition~1.18]{Bethuel-AC-Acta},
we obtain a relevant strengthening of the
tangent line property at regular points of $\mathfrak{S}_{\star}$,
i.e., the \emph{tangent cone property} below.
\begin{prop}\label{prop:analogue-of-Bethuel-prop-4}
	Let $x_0 \in \mathfrak{S}_{\star}$ be any regular point of
	$\mathfrak{S}_{\star}$. Given any $\theta > 0$, there exists
	a radius $R_{\rm cone}(\theta,\,x_0)$ such that
	\begin{equation}\label{eq:Bethuel-prop-4}
		\mathfrak{S}_{\star} \cap B(x_0,\,r) \subset
		\mathcal{C}_{\rm one}(x_0,\,\e_{x_0},\,\theta),
		\qquad \mbox{for any } 0 < r \leq R_{\rm cone}(\theta,\,x_0)
	\end{equation}
\end{prop}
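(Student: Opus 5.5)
We argue by contradiction, following the scheme of \cite[Proposition~1.18]{Bethuel-AC-Acta}, and combine two ingredients: the tangent line property~\eqref{eq:regular-point} at the regular point $x_0$, and the connectedness and clearing-out properties of $\mathfrak{S}_\star$ (Theorem~\ref{thm:properties-nu*} and Lemma~\ref{lemma:analogue-of-Bethuel-7.1}). Fix a compact $K\subset\Omega\setminus\spt\mu_\star$ containing a neighbourhood of $x_0$. Suppose the claim fails for some $\theta>0$. Then there are radii $r_n\to0$ and points $y_n\in\mathfrak{S}_\star\cap B(x_0,r_n)$ with $y_n\notin\mathcal{C}_{\rm one}(x_0,\e_{x_0},\theta)$. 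Set $s_n:=\abs{y_n-x_0}\le r_n$. The cone condition gives $\abs{\e_{x_0}^\perp\cdot(y_n-x_0)}>\tan\theta\,\abs{\e_{x_0}\cdot(y_n-x_0)}$. Hence the ball $B(y_n,c_\theta s_n)$, with $c_\theta>0$ depending only on $\theta$, lies in $B(x_0,2s_n)$ and outside the smaller cone $\mathcal{C}_{\rm one}(x_0,\e_{x_0},\theta/2)$.

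The first step is a mass lower bound in that ball. Since $y_n\in\mathfrak{S}_{\star,K}$, the density lower bound of Lemma~\ref{lemma:analogue-of-Bethuel-7.1}(ii) (a consequence of the clearing-out property Theorem~\ref{thm:properties-nu*}\ref{item:cl-o}) yields $\nu_\star(B(y_n,c_\theta s_n))\ge\eta_{\star,K}c_\theta s_n$. This is only a bound on the measure $\nu_\star$. To contradict~\eqref{eq:regular-point}, which concerns $\H^1\mres\mathfrak{S}_\star$, we need $\H^1$-mass of $\mathfrak{S}_\star$ there, and for that we use connectedness.

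The second step supplies that $\H^1$-mass. By Theorem~\ref{thm:properties-nu*}\ref{item:S_*} and the continuum argument in its proof (\cite[Proposition~7.4]{Bethuel-AC-Acta}), $\mathfrak{S}_{\star}\cap\overline{B(x_0,\rho)}\cup\mathbb{S}^1(x_0,\rho)$ is a continuum for suitable $\rho$. So the path-connected component of $\mathfrak{S}_\star\cap B(y_n,c_\theta s_n)$ containing $y_n$ must reach $\partial B(y_n,c_\theta s_n/2)$. Otherwise we could isolate a piece of $\mathfrak{S}_\star$ by an open set $\mathcal{U}$ with $\nu_\star(\mathcal{V}_\delta)=0$, and Theorem~\ref{thm:analogue-of-Bethuel-8} (the no-islands property) would force $\nu_\star(\overline{\mathcal{U}})=0$, which contradicts $y_n\in\spt\nu_\star$. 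A connected set joining $y_n$ to $\partial B(y_n,c_\theta s_n/2)$ has $\H^1$-measure at least $c_\theta s_n/2$. Therefore
\[
\H^1\bigl(\mathfrak{S}_\star\cap (B(x_0,2s_n)\setminus\mathcal{C}_{\rm one}(x_0,\e_{x_0},\theta/2))\bigr)\ge \tfrac{c_\theta}{2}s_n,
\]
and this contradicts~\eqref{eq:regular-point} applied with angle $\theta/2$ along the radii $2s_n\to0$.

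The main obstacle is making the isolation argument rigorous. Before the contradiction we must show that the component of $y_n$ inside the small ball cannot be a closed island lying strictly inside it. The natural route is to choose, by the finite-component statement in Theorem~\ref{thm:properties-nu*}\ref{item:S_*}, a radius $\rho_0$ at which $\mathfrak{S}_\star\cap B(y_n,\rho_0)$ has finitely many path-connected components. These components are compact and pairwise at positive distance, so any component not meeting $\partial B(y_n,\rho_0)$ can be surrounded by a thin neighbourhood $\mathcal{U}$ whose collar $\mathcal{V}_\delta$ misses $\spt\nu_\star$. Theorem~\ref{thm:analogue-of-Bethuel-8} then excludes such a component, and this yields the required connecting path.
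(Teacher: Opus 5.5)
Your proof is correct and takes the same route as the paper. The paper localises to the compact set $\overline{B(x_0,3r_{x_0}/2)}$ and invokes Bethuel's argument, which, as yours does, combines the tangent line property~\eqref{eq:regular-point} with the connectedness (no-islands) property of $\mathfrak{S}_{\star,K}$. The isolation step you flag as the main obstacle is already covered by the continuum property of $\mathcal{C}_{\star,r}(y_n)$, established for an arbitrary centre in the proof of Theorem~\ref{thm:properties-nu*}: projecting with $x\mapsto\abs{x-y_n}$ gives $\H^1(\mathfrak{S}_\star\cap\overline{B(y_n,r)})\ge r$ directly, so your first step on the $\nu_\star$-mass lower bound is not needed.
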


\begin{proof}
	Let $x_0 \in \mathfrak{S}_\star$ be any given regular point of
	$\mathfrak{S}_\star$. Since $x_0$ must be contained in
	some compact set $K \subset \Omega \setminus \spt\mu_\star$,
	we can find $r_{x_0} > 0$ such that
	$\dist(\{x_0\},\,\spt\mu_\star) = 2 r_{x_0} > 0$, and so
	$B(x_0,\,2 r_{x_0}) \subset \Omega \setminus \spt\mu_\star$.
	Let, for instance, $K_{x_0} := \overline{B(x_0,\,3r_{x_0}/2)}$.
	Then, $K_{x_0}$ is a compact set contained in $\Omega \setminus \spt\mu_\star$ and
	the proof presented in \cite[Section~7.8]{Bethuel-AC-Acta},
	being based only on general arguments in Geometric Measure Theory
	(in particular, on~\eqref{eq:regular-point}) and on the connectedness
	properties of $\mathfrak{S}_{\star,K_{x_0}}$, which are the same as
	in~\cite{Bethuel-AC-Acta}, carries over without modifications.
\end{proof}

A crucial consequence of Proposition~\ref{prop:analogue-of-Bethuel-prop-4}
is Proposition~\ref{prop:analogue-of-Bethuel-8.4} below. In order to state
Proposition~\ref{prop:analogue-of-Bethuel-8.4}, we need to introduce some notation, analogous to that in \cite[Section~8.1]{Bethuel-AC-Acta}.

Given $x_0 = \left( x_{0,1},\,x_{0,2,} \right) \in \Omega \setminus \spt\mu_\star$
and $\rho > 0$ such that $B(x_0,\,2\rho) \subset \Omega \setminus \spt\mu_\star$, we
denote $Q_\rho(x_0)$ the closed square defined by
\begin{equation}\label{eq:square}
	Q_\rho(x_0) = \mathcal{I}_\rho(x_{0,1}) \times \mathcal{I}_\rho(x_{0,2}),
\end{equation}
where
\begin{equation}\label{eq:interval}
	\mathcal{I}_\rho(s) = [s-\rho,\,s+\rho] \qquad \mbox{for } s \in (0,\,\rho).
\end{equation}
Next, we let
\[
	\mathcal{R}_\rho(x_0) := \mathcal{I}_\rho(x_{0,1}) \times \mathcal{I}_{\frac{3\rho}{4}}(x_{0,2}),
\]
so that $Q_\rho(x_0) \setminus \mathcal{R}_\rho(x_0)$ is the union
of two disjoint rectangles. The following condition plays a pivotal
role in most arguments in \cite[Section~8]{Bethuel-AC-Acta}:
\begin{equation}\label{eq:Bethuel-8.8}
	\nu_\star\left( \overline{Q_\rho(x_0) \setminus \mathcal{R}_\rho(x_0)} \right) = 0.
\end{equation}
If~\eqref{eq:Bethuel-8.8} holds, then $\nu_\star$ is concentrated,
locally near $x_0$, in a neighbourhood of the segment
$(x_0 - \rho \e_1,\,x_0 + \rho \e_1)$.
In \cite[Proposition~8.4]{Bethuel-AC-Acta}, it is proved
that~\eqref{eq:Bethuel-8.8} holds at any regular point
of $\mathfrak{S}_\star$. This property holds also in our context
(by exactly the same argument, in fact).
\begin{prop}\label{prop:analogue-of-Bethuel-8.4}
	Assume that $x_0 \in \mathfrak{S}_\star \setminus \mathfrak{A}_\star$.
	Then, there exists $\rho_0 > 0$ such that~\eqref{eq:Bethuel-8.8} is
	satisfied for any $0 < \rho \leq \rho_0$.
\end{prop}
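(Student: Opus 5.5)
We deduce the statement from the tangent cone property, Proposition~\ref{prop:analogue-of-Bethuel-prop-4}, together with the fact that $\spt(\nu_\star\mres K)=\mathfrak{S}_{\star,K}$ for compact $K\subset\Omega\setminus\spt\mu_\star$ (item~\ref{item:S_*} of Theorem~\ref{thm:properties-nu*}).

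First we fix the geometry. Fix a regular point $x_0\in\mathfrak{S}_\star\setminus\mathfrak{A}_\star$ and choose coordinates so that $\e_1=\e_{x_0}$ (Remark~\ref{rk:moving-frame}). The squares $Q_\rho(x_0)$ and rectangles $\mathcal{R}_\rho(x_0)$ are then taken in this rotated frame, centred at $x_0$. Choose $r_{x_0}>0$ with $B(x_0,2r_{x_0})\csubset\Omega\setminus\spt\mu_\star$, and set $K:=\overline{B(x_0,3r_{x_0}/2)}$.

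Next we pick the aperture. Take $\theta$ small, for instance $\tan\theta\le 1/2$. Any point $y$ of the closed region $\overline{Q_\rho(x_0)\setminus\mathcal{R}_\rho(x_0)}$ satisfies
\[
\abs{\e_2\cdot(y-x_0)}\ge \tfrac{3\rho}{4}\quad\text{and}\quad\abs{\e_1\cdot(y-x_0)}\le\rho .
\]
Hence $\abs{\e_2\cdot(y-x_0)}>\tan\theta\,\abs{\e_1\cdot(y-x_0)}$, so $y\notin\mathcal{C}_{\rm one}(x_0,\e_1,\theta)$. Moreover $\abs{y-x_0}\le\sqrt2\rho$. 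Now set
\[
\rho_0:=\min\Bigl\{\frac{R_{\rm cone}(\theta,x_0)}{2},\ \frac{r_{x_0}}{2}\Bigr\}.
\]
For $\rho\le\rho_0$ the region lies inside $B(x_0,R_{\rm cone})\cap K$. By \eqref{eq:Bethuel-prop-4} it is therefore disjoint from $\mathfrak{S}_\star$.

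To conclude, observe that $\overline{Q_\rho\setminus\mathcal{R}_\rho}\subset K$ does not meet $\mathfrak{S}_\star\cap K=\mathfrak{S}_{\star,K}=\spt(\nu_\star\mres K)$. The measure of a compact set disjoint from the support vanishes, so \eqref{eq:Bethuel-8.8} follows.

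The only delicate point is being careful with the radius comparison: the corners of the square lie at distance $\sqrt2\rho$ from $x_0$, hence the factor $1/2$ in $\rho_0$. We also need to use the closed version of the region, which is legitimate because the cone property is stated on closed balls of any radius $r\le R_{\rm cone}$, and we may apply it with $r=\sqrt2\rho<R_{\rm cone}$. No further analysis at the $\eps$-level is needed.
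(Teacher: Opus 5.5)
Your proof is correct and follows the paper's route. The paper simply invokes the frame choice $\e_1=\e_{x_0}$ and the tangent cone property of Proposition~\ref{prop:analogue-of-Bethuel-prop-4}, deferring to Bethuel's argument; you write out the elementary geometry explicitly. One cosmetic point: Proposition~\ref{prop:analogue-of-Bethuel-prop-4} is stated for balls $B(x_0,r)$, not closed balls, but this is harmless because $\overline{B(x_0,\sqrt2\rho)}\subset B(x_0,R_{\rm cone}(\theta,x_0))$, so you can apply it with $r=R_{\rm cone}(\theta,x_0)$ instead of $r=\sqrt2\rho$.
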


\begin{proof}
	The argument in~\cite{Bethuel-AC-Acta}, which relies just
	on choosing an orthonormal basis at $x_0$ as in
	Remark~\ref{rk:moving-frame} and on the tangent cone property
	in Proposition~\ref{prop:analogue-of-Bethuel-prop-4}, carries
	over without modifications.
\end{proof}

\subsection{The limiting potential energy $\zeta_\star$}
\label{sec:zeta*}
This short section will serve us to introduce a fundamental
object in the following course, 
the \emph{limiting potential energy} $\zeta_\star$.

To begin with, following~\cite[(1.14)]{Bethuel-AC-Acta},
for any $\eps > 0$, we define the potential energy densities
\begin{equation}\label{eq:def-zeta-eps}
	\zeta_\eps := \frac{V(\,\cdot\,,\,\M_\eps)}{\eps},
\end{equation}
dually seen as measures in $\Omega$. In view of
the
bound~\eqref{eq:Eeps-bounded}, it follows immediately that
\begin{equation}\label{eq:zeta-eps-bounded}
	\zeta_\eps(\Omega) \leq E_\eps(\M_\eps;\,\Omega)
	\leq \mathcal{E}_0
\end{equation}
independently of $\eps > 0$,
so that we can find a (not-relabelled)
subsequence and a non-negative Radon measure $\zeta_\star$
on $\Omega$ such that, as $\eps \to 0$,
\begin{equation}\label{eq:zeta-eps-to-zeta*}
\zeta_\eps \rightharpoonup^* \zeta_\star
\end{equation}
weakly* as measures in $\Omega$.

In the scalar theory,
as direct consequences of the positivity of the discrepancy
function in the entire case~\cite{Modica} or, in the case
of bounded domains, of its asymptotic vanishing~\cite{HutchinsonTonegawa}
and of the resulting monotonicity formula for $\nu_\star$,
one has both that $\zeta_\star$
controls $\nu_\star$ and that $\zeta_\star$ is absolutely
continuous with respect to $\lambda_\star$. In fact, the
equipartition property $\nu_\star = 2 \zeta_\star$ holds.
In the vectorial case, no monotonicity formula is known for
$\nu_\star$. In addition, the discrepancy should be not
expected to have a sign (in the entire case, a counterexample to the positivity of
the discrepancy is presented in \cite{Smyrnelis}).
For these reasons, the vectorial theory developed in \cite{Bethuel-AC-Acta}
goes along a different path and it based on exploiting
the relationship between $\zeta_\star$ and the \emph{limiting Hopf differential}
$\omega_\star$,
to be introduced in the next section. This relationship can be proved to hold
also in our `perturbed' setting thanks to the refined energy estimates that
we have already obtained at $\eps$-level and the strong convergence of
$\Q_\eps$ to a harmonic map $\Q_\star$ away from $\spt\mu_\star$, and it is
key to further developments.
In particular, in Section~\ref{sec:mon-zeta*}
we will prove the monotonicity
of the function
$r \mapsto \frac{1}{r}\zeta_\star(B(x_0,\,r))$ for any $x_0 \in \Omega$,
which immediately entails the absolute continuity of $\zeta_\star$
with respect to $\lambda_\star$. In Section~\ref{sec:upper-bounds}
we will instead recover the equivalence between
$\zeta_\star$ and $\nu_\star$ on $\mathfrak{S}_\star$.
Finally, in Section~\ref{sec:S*}, we will use Equation~\eqref{eq:omega*-zeta*}
below to obtain refined structure properties for $\mathfrak{S}_\star$.

\subsection{The limiting Hopf differential}
\label{sec:hopf}

In this section, we show that testing the Pohozaev
inequality in Lemma~\ref{lemma:stressenergy} against smooth vector field
with support in $\Omega \setminus \spt\mu_\star$
and passing to the limit as $\eps \to 0$ produces
a limiting measure $\omega_\star$ that
will be called, adopting the terminology in 
\cite{Bethuel-AC-Acta},
the \emph{limiting Hopf differential}.
Proposition~\ref{prop:hopf}
below relates the limiting Hopf differential with the limiting
potential energy measure $\zeta_\star$ and it is the analogue,
in our context, of \cite[Lemma~1.19]{Bethuel-AC-Acta}. Crucially,
thanks to~\eqref{eq:hopf-Q*} below,
only the contribution $\omega_\star^{\rm M}$
to the Hopf differential coming from the $\M$-component appears
in Equation~\eqref{eq:omega*-zeta*}.
Combined with a natural decomposition of $\omega_\star^{\rm M}$,
\eqref{eq:hopf-m*} below, Proposition~\ref{prop:hopf}
will be key to prove, in the forthcoming sections,
the aforementioned further properties of $\zeta_\star$ and
its precise relationship with the limiting energy density $\nu_\star$.

\vskip5pt
\noindent
For a start, we notice that,
in view of the global bound~\eqref{eq:M_bound},
the functions $\eps \partial_j \M_\eps \cdot \partial_k \M_\eps$,
for $(j,\,k) \in \{1,\,2\}^2$, are bounded in $L^1(\Omega)$.
Therefore, 
as $\eps \to 0$ (after possible extraction of a subsequence),
we have
\begin{equation}\label{eq:m*}
	\eps \partial_j \M_\eps \cdot \partial_k \M_\eps \rightharpoonup^*
	\mathbbm{m}_{\star,j,k}, \qquad \mbox{for any } (j,\,k) \in \{1,\,2\}^2,
\end{equation}
weakly* in the sense of measures in $\Omega$, where
$\mathbbm{m}_{\star,j,k}$ is a bounded signed Radon measure
on $\Omega$ for any $(j,\,k) \in \{1,\,2\}^2$.

On the other hand, for any compact set
$K \subset \Omega \setminus \spt\mu_\star$,
the maps $\Q_\eps$ converge strongly in $W^{1,2}(K)$ to $\Q_\star$
(see item~\ref{item:mainthm-asymp-strong-conv} of Theorem~\ref{mainthm:asymp}) and hence
we have
\begin{equation}\label{eq:q*}
	\partial_j \Q_\eps \cdot \partial_k \Q_\eps
	\to \partial_j \Q_\star \cdot \partial_k \Q_\star
	\qquad \mbox{for any } (j,\,k) \in \{1,\,2\}^2,
\end{equation}
strongly in $L^1(K)$ as $\eps \to 0$.

Now, we define the following quantities:
\begin{align}
	& \omega_\eps^{\rm Q} := \abs{\partial_1 \Q_\eps}^2 - \abs{\partial_2 \Q_\eps}^2
	- 2 i \partial_1 \Q_\eps \cdot \partial_2 \Q_\eps \label{eq:hopf-Q}\\
	& \omega_\eps^{\rm M} := \abs{\partial_1 \M_\eps}^2 - \abs{\partial_2 \M_\eps}^2
	- 2 i \partial_1 \M_\eps \cdot \partial_2 \M_\eps \label{eq:hopf-M} \\
	& \omega_\eps := \omega_\eps^{\rm Q} + \eps \omega_\eps^{\rm M} \label{eq:hopf}.
\end{align}
According to the terminology in \cite{Bethuel-AC-Acta}, we will call
$\omega_\eps^{\rm Q}$, $\omega_\eps^{\rm M}$, and $\omega_\eps$
\emph{Hopf differentials (at the $\eps$-level)}. (Although such terminology
is usually referred to the above objects multiplied by the complex differential
${\d}z$; see, e.g., \cite[Chapter~4]{Helein}.)

Since the sequence $\{\eps \omega_\eps^{\rm M}\}$ is bounded in $L^1(\Omega)$,
we can find a (not relabelled)
subsequence and a locally bounded Radon measure
$\eps \omega_\star^{\rm M}$ such that, as $\eps \to 0$,
\begin{equation}\label{eq:omega-star-M}
	\eps \omega_\eps^{\rm M} \rightharpoonup^* \omega_\star^{\rm M}
\end{equation}
weakly* in the sense of measures in $\Omega$.
On the other hand,~\eqref{eq:m*} implies
\begin{equation}\label{eq:hopf-m*}
	\omega_\star^{\rm M} = (\mathbbm{m}_{\star,1,1}-\mathbbm{m}_{\star,2,2}) -
	2i \mathbbm{m}_{\star,1,2}
\end{equation}
as measures in $\Omega$.

Next, denoting
\[
	\omega_\star^{\rm Q} := \abs{\partial_1 \Q_\star}^2 - \abs{\partial_2 \Q_\star}^2
	- 2 i \partial_1 \Q_\star \cdot \partial_2 \Q_\star
\]
the Hopf differential of $\Q_\star$, we see that~\eqref{eq:q*} implies
\begin{equation}\label{eq:conv-hopf-Q*}
	\omega_\eps^{\rm Q} \to \omega_\star^{\rm Q}
\end{equation}
as $\eps \to 0$,
both weakly* in the sense of measures and pointwise a.e.
in $\Omega \setminus \spt\mu_\star$.
On the other hand, by a classical result in harmonic maps
(see, e.g., \cite[Chapter~3, p.~28]{Helein}), the
Hopf differential of any harmonic map from an open set in
$\R^2 \simeq \C$ is a holomorphic function, and therefore we have
\begin{equation}\label{eq:hopf-Q*}
	\frac{\partial \omega_\star^{\rm Q}}{\partial {\bar{z}}} \equiv 0
\end{equation}
pointwise in $\Omega \setminus \spt\mu_\star$.

From the above, it follows that $\{\omega_\eps\}$ is a
bounded sequence in $L^1(K)$ for any $K \subset \Omega\setminus \spt\mu_\star$,
so that there exist a (not relabelled) subsequence and a locally bounded Radon measure
$\omega_\star$ in $\Omega \setminus \spt\mu_\star$ such that, as $\eps \to 0$,
\begin{equation}\label{eq:hopf*}
	\omega_\eps \rightharpoonup^* \omega_\star
\end{equation}
weakly* as measures in $\Omega \setminus \spt\mu_\star$.

We are now ready to prove the main result of this section, relating
$\omega_\star$, $\omega^{\rm M}_\star$, and $\zeta_\star$.
\begin{prop}\label{prop:hopf}
	Let $\omega_\star$, $\omega^{\rm M}_\star$, and $\zeta_\star$ be the
	bounded Radon measures given by~\eqref{eq:hopf*},~\eqref{eq:omega-star-M}
	and~\eqref{eq:zeta-eps-to-zeta*}, respectively.
	Then, for any $X \in C^1_c(\Omega \setminus \spt\mu_\star,\,\C)$,
	we have
	\begin{equation}\label{eq:hopf=hopf-M}
		\ip{\omega_\star}{\frac{\partial X}{\partial \bar{z}}} =
		\ip{\omega_\star^{\rm M}}{\frac{\partial X}{\partial \bar{z}}}
	\end{equation}
	and
	\begin{equation}\label{eq:omega*-zeta*}
		\ip{\omega_\star^{\rm M}}{\frac{\partial X}{\partial \bar{z}}}
		= \ip{2 \zeta_\star}{\frac{\partial X}{\partial z}}.
	\end{equation}
\end{prop}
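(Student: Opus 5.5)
The first identity~\eqref{eq:hopf=hopf-M} is immediate from the definitions. On any compact $K\subset\Omega\setminus\spt\mu_\star$ we have $\omega_\eps = \omega_\eps^{\rm Q}+\eps\omega_\eps^{\rm M}$, and $\omega_\eps^{\rm Q}\to\omega_\star^{\rm Q}$ strongly in $L^1(K)$ by~\eqref{eq:q*}. Hence $\omega_\star = \omega_\star^{\rm Q}+\omega_\star^{\rm M}$ as measures in $\Omega\setminus\spt\mu_\star$. Since $\Q_\star$ is smooth and harmonic there, \eqref{eq:hopf-Q*} and an integration by parts against $X\in C^1_c(\Omega\setminus\spt\mu_\star,\C)$ give $\ip{\omega_\star^{\rm Q}}{\partial_{\bar z}X}=-\ip{\partial_{\bar z}\omega_\star^{\rm Q}}{X}=0$. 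This yields~\eqref{eq:hopf=hopf-M}.

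For~\eqref{eq:omega*-zeta*}, I will start from the stress-energy identity of Lemma~\ref{lemma:stressenergy}, in its distributional form $\partial_jT^\eps_{jk}=0$ in $\Omega$. The first step is the standard complex rewriting of the divergence-free tensor. For a symmetric tensor $T = A - e\,\delta$, with $A_{jk}=\partial_j\Q_\eps\cdot\partial_k\Q_\eps+\eps\partial_j\M_\eps\cdot\partial_k\M_\eps$, one checks that the two equations $\partial_jT_{j1}=\partial_jT_{j2}=0$ combine into the single complex equation
\[
\partial_{\bar z}\bigl(A_{11}-A_{22}-2iA_{12}\bigr)=\partial_z\bigl(A_{11}+A_{22}-2e\bigr).
\]
The left-hand bracket is exactly $\omega_\eps$. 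The right-hand bracket equals $-\frac{2}{\eps^2}f_\eps(\Q_\eps,\M_\eps)$ times a factor. The constants and signs here must be fixed carefully: with $\partial_z=\frac12(\partial_1-i\partial_2)$ one gets $\partial_{\bar z}\omega = -4\,\partial_z(\text{potential})$ up to the normalisation used in~\cite{Bethuel-AC-Acta}. I will match it with the normalisation that produces the factor $2$ in~\eqref{eq:omega*-zeta*}. In weak form this reads
\[
\int\omega_\eps\,\partial_{\bar z}X = c\int\frac{1}{\eps^2}f_\eps(\Q_\eps,\M_\eps)\,\partial_zX
\]
for all $X\in C^1_c(\Omega,\C)$.

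Next I will pass to the limit $\eps\to0$ for $X$ supported in $\Omega\setminus\spt\mu_\star$. The left-hand side converges to $\ip{\omega_\star}{\partial_{\bar z}X}$ by~\eqref{eq:hopf*}, and this equals $\ip{\omega_\star^{\rm M}}{\partial_{\bar z}X}$ by the first part. On the right-hand side I split $\frac1{\eps^2}f_\eps=\frac1\eps V(\cdot,\M_\eps)+\bigl(\frac1{\eps^2}f_\eps-\frac1\eps V\bigr)$. The second term tends to $0$ in $L^1$ on $\spt X$ by Lemma~\ref{lemma:V-f-limit-Lp}, covering $\spt X$ by finitely many balls compactly contained in $\Omega\setminus\spt\mu_\star$. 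The first term converges to $\ip{\zeta_\star}{\partial_zX}$ by~\eqref{eq:zeta-eps-to-zeta*}, since $\partial_zX$ is continuous with compact support. Combining the two limits yields~\eqref{eq:omega*-zeta*}.

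The main obstacle is the vanishing of the discrepancy between the full potential and the Allen-Cahn potential $\frac1\eps V$. A mere integral convergence in the spirit of Lemma~\ref{lemma:limit-f-minus-V} would not suffice, because we test against the non-sign-definite function $\partial_zX$. This is exactly why the $L^p$ version, Lemma~\ref{lemma:V-f-limit-Lp}, resting on~\eqref{eq:strong-p-conv-rhoeps}, is needed. The remaining points are bookkeeping: the exact constant in the complex identity, and the fact that the $\Q$-contribution, although of order $\abs{\log\eps}$ globally, is strongly convergent away from $\spt\mu_\star$ and holomorphic in the limit.
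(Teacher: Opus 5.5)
Your route is the paper's: test the stress--energy identity of Lemma~\ref{lemma:stressenergy}, remove the $\Q$-part using the holomorphy of $\omega_\star^{\rm Q}$, and replace $\eps^{-2}f_\eps$ by $\eps^{-1}V(\cdot,\M_\eps)$ away from $\spt\mu_\star$. The paper proves the real part first and recovers the imaginary part by testing with $iX$; working with the complex equation directly is equivalent. Your use of Lemma~\ref{lemma:V-f-limit-Lp} is a clean way to get the weak$^*$ convergence that the paper attributes to Lemma~\ref{lemma:limit-f-minus-V}, although ball-wise convergence of integrals plus a uniform $L^1$ bound would also suffice, via a differentiation argument. Your proof of~\eqref{eq:hopf=hopf-M} and your passage to the limit are fine.

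The gap is the one computation that determines~\eqref{eq:omega*-zeta*}: the sign and size of the constant. Your displayed identity $\partial_{\bar z}(A_{11}-A_{22}-2iA_{12}) = \partial_z(A_{11}+A_{22}-2e)$ has the wrong sign, and the value $-4$ you quote is also wrong. Put $P := \eps^{-2}f_\eps(\Q_\eps,\M_\eps)$, so that $T_{11} = \tfrac12(A_{11}-A_{22}) - P$, $T_{22} = -\tfrac12(A_{11}-A_{22}) - P$ and $T_{12} = A_{12}$. Then $\partial_jT_{j1}=0$ and $\partial_jT_{j2}=0$ read
\[
\tfrac12\partial_1(A_{11}-A_{22}) + \partial_2A_{12} = \partial_1P, \qquad \tfrac12\partial_2(A_{11}-A_{22}) - \partial_1A_{12} = -\partial_2P .
\]
These are exactly the real and imaginary parts of $\partial_{\bar z}\omega_\eps$. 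Hence $\partial_{\bar z}\omega_\eps = \partial_1P - i\partial_2P = 2\partial_zP = -\partial_z(A_{11}+A_{22}-2e)$, and therefore $\int\omega_\eps\,\partial_{\bar z}X = -\int X\,\partial_{\bar z}\omega_\eps = 2\int P\,\partial_zX$. So $c=2$.

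If you carry your own identity through, you get $c=-2$, i.e.\ $\ip{\omega_\star^{\rm M}}{\partial_{\bar z}X} = -2\ip{\zeta_\star}{\partial_zX}$, which is the opposite of the statement. There is no normalisation left to choose. The definition of $\omega_\eps$ is fixed by~\eqref{eq:hopf-Q}--\eqref{eq:hopf}, and the Wirtinger operators by their definitions recalled in the proof of Corollary~\ref{cor:CR}. So $c$ has to be computed, not matched to the target.

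A quick check is to take real parts. The identity with $c=2$ is equivalent to $\int\Re(\omega_\eps\,\partial_{\bar z}X) = \int P\,\div X$. Expanding $\Re(\omega_\eps\partial_{\bar z}X)$ shows this is precisely $\int T^\eps_{jk}\,\partial_jX_k = 0$, the form in which the paper uses Lemma~\ref{lemma:stressenergy}. Once this constant is fixed, the rest of your argument goes through unchanged.
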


\begin{proof}
	The equality~\eqref{eq:hopf=hopf-M}
	is an immediate consequence of~\eqref{eq:hopf-Q*},
	so we focus on~\eqref{eq:omega*-zeta*}.

	\setcounter{step}{0}
	\begin{step}
	\emph{We prove that
		\begin{equation}\label{eq:omega*-zeta*-Re}
		\Re\left( \ip{\omega_\star^{\rm M}}{\frac{\partial X}{\partial \bar{z}}}\right)
		= \ip{2 \zeta_\star}{\Re\left( \frac{\partial X}{\partial z} \right)}.
	\end{equation}}

	Reasoning as in \cite[(3.20)]{Bethuel-AC-Acta}, 
	by the Pohozaev identity~\eqref{stren4} we obtain
	\begin{equation}\label{eq:hopf-compu1}
	\begin{split}
	\int_\Omega \Re\left( \omega_\eps \frac{\partial X}{\partial {\bar{z}}} \right) \,{\d}x
	&= \frac{1}{\eps^2}\int_\Omega f_\eps(\Q_\eps,\,\M_\eps)\div X \,{\d}x\\
	&= \frac{2}{\eps^2}\int_\Omega f_\eps(\Q_\eps,\,\M_\eps) \Re\left( \frac{\partial X}{\partial z}\right) \,{\d}x
	\end{split}
	\end{equation}
	for any $X \in C^1_c(\Omega; \,\C)$ and for any $\eps > 0$.
	Take a compact set
	$K \subset \Omega \setminus \spt\mu_\star$ and assume that
	$\spt X \subseteq K$. 
	Recalling Lemma~\ref{lemma:limit-f-minus-V} and that, as $\eps \to 0$,
	\[
		\frac{1}{\eps}V(\,\cdot\,,\,\M_\eps) \rightharpoonup^* \zeta_\star
	\]
	weakly* as measures in $K$, we obtain
	\begin{equation}\label{eq:hopf-compu4}
		\frac{1}{\eps^2}f_\eps(\Q_\eps,\,\M_\eps) \rightharpoonup^* \zeta_\star
	\end{equation}
	as $\eps \to 0$ and weakly* as measures in $K$. Thus,
	using~\eqref{eq:hopf*},~\eqref{eq:hopf=hopf-M}, and~\eqref{eq:hopf-compu4}
	to pass to the limit in~\eqref{eq:hopf-compu1},
	we get~\eqref{eq:omega*-zeta*-Re}.
	\end{step}

	\begin{step}[Proof of~\eqref{eq:omega*-zeta*}]
		Using $iX$ as a test function in~\eqref{eq:omega*-zeta*-Re},
		we obtain
		\begin{equation}\label{eq:omega*-zeta*-Im}
		\Im\left( \ip{\omega_\star^{\rm M}}{\frac{\partial X}{\partial \bar{z}}}\right)
		= \ip{2 \zeta_\star}{\Im\left( \frac{\partial X}{\partial z} \right)}.
		\end{equation}
		Taking the sum of~\eqref{eq:omega*-zeta*-Re} with~\eqref{eq:omega*-zeta*-Im},
		the conclusion follows.
		\qedhere
	\end{step}
\end{proof}

As an immediate consequence of~\eqref{eq:omega*-zeta*},
 we have that $\mathbbm{m}_{\star,1,2}$ satisfies
\emph{modified Cauchy-Riemann relations}~\eqref{eq:CR} below
 (cf. \cite[(1.72)]{Bethuel-AC-Acta}).

\begin{corollary}\label{cor:CR}
	The equations
	\begin{equation}\label{eq:CR}
	\begin{cases}
		&\frac{\partial}{\partial x_2} (2\mathbbm{m}_{\star,1,2}) = \frac{\partial}{\partial x_1}( 2 \zeta_\star - \mathbbm{m}_{\star,1,1} + \mathbbm{m}_{\star,2,2}) \\
		&\frac{\partial}{\partial x_1} (2\mathbbm{m}_{\star,1,2}) = \frac{\partial}{\partial x_2}( 2 \zeta_\star + \mathbbm{m}_{\star,1,1} - \mathbbm{m}_{\star,2,2})
	\end{cases}
	\end{equation}
	hold in the sense of distributions in $\Omega \setminus \spt\mu_\star$.
\end{corollary}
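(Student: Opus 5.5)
The corollary should follow by specialising the identity \eqref{eq:omega*-zeta*} of Proposition~\ref{prop:hopf} to real-valued test functions and separating real and imaginary parts. No further analysis is needed: Proposition~\ref{prop:hopf} already holds for every $X \in C^1_c(\Omega \setminus \spt\mu_\star,\,\C)$. The one convention to fix is that $\ip{\cdot}{\cdot}$ is the bilinear pairing, without complex conjugation of the test function. This is consistent with how \eqref{eq:hopf-compu1} was written, with $\Re(\omega_\eps\,\partial X/\partial\bar z)$ integrated.

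Concretely, I fix a real function $\varphi \in C^1_c(\Omega\setminus\spt\mu_\star,\,\R)$ and take $X = \varphi$. With $\partial_{\bar z} = \frac12(\partial_1 + i\partial_2)$ and $\partial_z = \frac12(\partial_1 - i\partial_2)$, I abbreviate $A := \mathbbm{m}_{\star,1,1}-\mathbbm{m}_{\star,2,2}$ and $B := \mathbbm{m}_{\star,1,2}$, so that $\omega^{\rm M}_\star = A - 2iB$ by \eqref{eq:hopf-m*}. The left-hand side of \eqref{eq:omega*-zeta*} then equals
\[
\tfrac12\left( \ip{A}{\partial_1\varphi} + \ip{2B}{\partial_2\varphi} \right) + \tfrac{i}{2}\left( \ip{A}{\partial_2\varphi} - \ip{2B}{\partial_1\varphi}\right).
\]
The right-hand side equals $\ip{\zeta_\star}{\partial_1\varphi} - i\,\ip{\zeta_\star}{\partial_2\varphi}$. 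Since $A$, $B$ and $\zeta_\star$ are real measures, I can equate real and imaginary parts separately.

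The real part gives $\ip{A}{\partial_1\varphi} + \ip{2B}{\partial_2\varphi} = \ip{2\zeta_\star}{\partial_1\varphi}$. By the definition of the distributional derivative this reads $\partial_2(2B) = \partial_1(2\zeta_\star - A)$, which is the first line of \eqref{eq:CR}. The imaginary part gives $\ip{A}{\partial_2\varphi} - \ip{2B}{\partial_1\varphi} = -\ip{2\zeta_\star}{\partial_2\varphi}$, that is $\partial_1(2B) = \partial_2(2\zeta_\star + A)$, which is the second line. Since $\varphi$ is an arbitrary real test function supported in $\Omega\setminus\spt\mu_\star$, both relations hold in $\mathcal{D}'(\Omega\setminus\spt\mu_\star)$.

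The only point needing care is the sign bookkeeping, which depends on the pairing convention and on the factor $-2i$ in \eqref{eq:hopf-m*}. If the pairing were instead sesquilinear, the imaginary part would flip sign in both sides simultaneously, so the outcome is the same. Alternatively, one can test with $X=\varphi$ and $X=i\varphi$ and use only real parts, exactly as in the proof of Proposition~\ref{prop:hopf}. This route avoids the convention issue entirely.
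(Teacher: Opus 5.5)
Your argument is correct and is exactly the paper's route: the paper simply says that \eqref{eq:CR} follows from \eqref{eq:omega*-zeta*} and the definition of the Wirtinger operators, and your computation spells this out. You test with a real function, use the bilinear pairing dictated by \eqref{eq:hopf-compu1}, and separate real and imaginary parts, all with the correct signs. One correction to your closing aside: with a conjugating pairing the outcome would \emph{not} be the same. Since $\omega^{\rm M}_\star$ is complex, conjugating only the test function amounts to replacing $\omega^{\rm M}_\star$ by $\overline{\omega^{\rm M}_\star}$, which flips the sign of the $\mathbbm{m}_{\star,1,2}$ terms. So the bilinear convention (or, equivalently, your alternative of testing the real-part identity with $X=\varphi$ and $X=i\varphi$) is the one actually justified by the $\eps$-level derivation, not just a convenient choice.
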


\begin{proof}
	Equations~\eqref{eq:CR} follow immediately from~\eqref{eq:omega*-zeta*}
	and the definition of Wirtinger's operators
	\[
		\frac{\partial}{\partial z} =
		\frac{1}{2}\left( \frac{\partial}{\partial x_1} - i \frac{\partial}{\partial x_2} \right),
		\qquad
		\frac{\partial}{\partial \bar{z}} =
		\frac{1}{2}\left( \frac{\partial}{\partial x_1} + i \frac{\partial}{\partial x_2} \right).
	\]
\end{proof}

\begin{remark}[Changing coordinates in the limiting Hopf differential]
The expressions~\eqref{eq:hopf-Q}, \eqref{eq:hopf-M}, and~\eqref{eq:hopf}
for $\omega_\eps^{\rm Q}$, $\omega_\eps^{\rm M}$, and $\omega_\eps$, respectively,
depend on the chosen orthonormal basis. However, it is well-known that, under conformal maps, they transform simply by multiplication by the conformal
factor (in fact, the same quantities multiplied by the complex differential ${\d}z$ are invariant under conformal
maps, see e.g. \cite[Chapter~4]{Helein}).
Consequently, for any compact set $K \subset \Omega \setminus \mu_\star$, the
boundedness of the sequences $\{\omega_\eps^{\rm Q}\}$ and
$\{\eps \omega_\eps^{\rm M}\}$, and hence of $\{\omega_\eps\}$, is a property invariant
under conformal maps. In turn, the same is true for the existence of their limits;
moreover, the explicit expression for the limiting maps change only by
multiplication for the conformal factor. In particular, this is true for
coordinate changes (from any orthonormal moving frame to any other).
Detailed computations, accounting for all cases that will be used
thereafter, are provided in \cite[Section~8.9.3]{Bethuel-AC-Acta}.
\end{remark}

\subsection{Properties near good points of $\mathfrak{S}_\star$}
\label{sec:good-points}
With Proposition~\ref{prop:hopf} at hand, not only we can replicate the
arguments in \cite[Section~8]{Bethuel-AC-Acta}, but actually
we could also \emph{apply them directly}. 
However, the proof of the monotonicity 
of the (rescaled) limiting potential energy in Proposition~\ref{prop:analogue-of-Bethuel-Prop-6} in the next section requires some adaptation with respect to~\cite[Section~9]{Bethuel-AC-Acta}. The reason is that, in order to prove that $\zeta_\star$ does not charge 
$\spt\mu_\star$ (i.e., item~\ref{item:Z/r-monotone} of Proposition~\ref{prop:analogue-of-Bethuel-Prop-6}), which is a key ingredient in the proof of the balance law~\eqref{eq:first-variation-intro} in Theorem~\ref{thm:first-variation}, 
we really have to go through the argument of \cite[Proposition~1.23]{Bethuel-AC-Acta}, because we need it to be developed on 
\emph{annuli} rather than on full balls (as it is instead done in \cite{Bethuel-AC-Acta}). However, while stressing that using annuli rather balls is absolutely necessary to our purposes, we also emphasise that only notational modifications are needed with respect to~\cite[Section~9]{Bethuel-AC-Acta}, since a careful reading shows that the argument presented there is \emph{already valid on annuli}, despite being written in terms of full balls.   
Nonetheless, this circumstance forces us to introduce some notation and to retrace in this section and in the next ones the main steps of
Bethuel's arguments. Such arguments are made-up by a remarkable
(and quite long) combination of elementary pieces that rely, essentially,
on testing the equation~\eqref{eq:omega*-zeta*} against suitable test vector fields and
on identifying a class of \emph{good points}
(see Definition~\ref{def:good-points} below) having full $\lambda_\star$-measure
at which remarkable relationships between the various measures
at stake exist. The main result of this section is
Proposition~\ref{prop:analogue-of-Bethuel-Prop5}, which extends
\cite[Proposition~1.20]{Bethuel-AC-Acta} to our situation.

\paragraph{Decomposition of the relevant measures with respect to
$\lambda_\star$}
Similarly to as in \cite{Bethuel-AC-Acta}, the first
step in the analysis consists in decomposing $\nu_\star$, $\zeta_\star$,
and each of the measures $\mathbbm{m}_{\star,j,k}$ into their
absolutely continuous and singular part
with respect to the measure $\lambda_\star = \H^1 \mres \mathfrak{S}_\star$:
\begin{align}\label{eq:ac-sing}
	&\nu_\star = \nu_\star^{\rm ac} + \nu_\star^{\rm sing},
	&& \nu_\star^{\rm ac} \perp \nu_\star^{\rm sing}, \\
	&\zeta_\star = \zeta_\star^{\rm ac} + \zeta_\star^{\rm sing},
	&& \zeta_\star^{\rm ac} \perp \zeta_\star^{\rm sing}, \\
	&\mathbbm{m}_{\star,j,k} =
	\mathbbm{m}_{\star,j,k}^{\rm ac} + \mathbbm{m}_{\star,j,k}^{\rm sing},
	&&
	\mathbbm{m}_{\star,j,k}^{\rm ac} \perp \mathbbm{m}_{\star,j,k}^{\rm sing},
\end{align}
for any $(j,\,k) \in \{1,\,2\}^2$. We can write
\begin{align}
	& \nu_\star^{\rm ac} = \mathfrak{e}_\star \lambda_\star, \label{eq:density-nu} \\\
	& \zeta_\star^{\rm ac} = \mathfrak{v}_\star \lambda_\star,\label{eq:density-zeta}\\
	&\mathbbm{m}_{\star,j,k}^{\rm ac} = \mathfrak{m}_{\star,j,k} \lambda_\star, \qquad \mbox{for } (j,\,k) \in \{1,\,2\}^2\label{eq:density-m},
\end{align}
so that $\mathfrak{e}_\star$, $\mathfrak{v}_\star$, and $\mathfrak{m}_\star$
denote the densities of $\nu_\star^{\rm ac}$, $\zeta_\star^{\rm ac}$, and
$\mathbbm{m}_{\star,j,k}^{\rm ac}$ with respect to $\lambda_\star$.

\begin{remark}
	For later purposes, we observe that the values of the
	densities $\mathfrak{m}_{\star,j,k}$ strongly depend on the
	choice of the (Cartesian) orthonormal frame in their
	definition~\eqref{eq:m*}. In the sequel, it will be convenient
	to have more intrinsic objects at disposal and therefore,
	following the discussion in \cite[Section~1.3]{Bethuel-AC-Acta},
	we define the `intrinsic versions' of the functions $\mathfrak{m}_{\star,j,k}$,
	i.e., their expressions in the moving frame associated
	with $\mathfrak{S}_\star$ introduced in Remark~\ref{rk:moving-frame}
	above. To this purpose, given any
	$x_0 \in \mathfrak{S}_\star \setminus \mathfrak{A}_\star$ and choosing
	the orthonormal frame $(\e_1,\,\e_2)$ so that $\e_1 = \e_{x_0}$, we define
	\begin{equation}\label{eq:m*-perp-par}
		\mathfrak{m}_{\star,\perp,\perp}(x_0) = \mathfrak{m}_{\star,2,2}(x_0), \quad
		 \mathfrak{m}_{\star,||,||}(x_0) = \mathfrak{m}_{\star,1,1}(x_0),
\quad
		 \mathfrak{m}_{\star,\perp,||}(x_0) = \mathfrak{m}_{\star,1,2}(x_0).
	\end{equation}
	Then, letting $x_0$ vary in $\mathfrak{S}_\star \setminus \mathfrak{A}_\star$
	gives rise to the `intrinsic' measures
	\begin{equation}\label{eq:m*-perp-par-bis}
		\mathbbm{m}_{\star,\perp,\perp} = \mathfrak{m}_{\star,\perp,\perp} \lambda_\star, \quad
		\mathbbm{m}_{\star,||,||} = \mathfrak{m}_{\star,||,||} \lambda_\star, \quad
		\mathbbm{m}_{\star,\perp,||} = \mathfrak{m}_{\star,\perp,||} \lambda_\star,
	\end{equation}
	whose importance will be clear later on, in light of
	Theorem~\ref{thm:analogue-of-Bethuel-Thm5}
	and Theorem~\ref{thm:stationary-varifold} below.
\end{remark}

The crux of the following sections will
be proving that the singular parts of the measures above are actually zero,
while the absolutely continuous parts enjoy suitable bounds. To this purpose,
the first step, carried out below, is ruling out certain bad, $\lambda_\star$-null
sets from $\mathfrak{S}_\star$. Then, the decisive step, performed in
Section~\ref{sec:mon-zeta*}, will be showing the monotonicity of the rescaled measure
$\zeta_\star$, \emph{at any $x_0 \in \Omega$}.

\paragraph{Exceptional points and good points}
In Section~\ref{sec:lower-bounds-nu*}, we defined the $\lambda_\star$-null
set $\mathfrak{A}_\star$ of points at
which $\mathfrak{S}_\star = \spt\nu_\star \setminus \spt\mu_\star$
does not have density $1$. Now, following \cite{Bethuel-AC-Acta},
we define two other classes of exceptional points. Precisely,
in view of~\eqref{eq:ac-sing}, we may find a subset $\mathfrak{B}_\star$
of $\mathfrak{S}_\star$ so that
\begin{equation}\label{eq:B*-1}
	\lambda_\star(\mathfrak{B}_\star) = 0
\end{equation}
and
\begin{equation}\label{eq:B*-2}
	\nu_\star^{\rm sing}(\mathfrak{S}_\star \setminus \mathfrak{B}_\star) = 0, \qquad
	\zeta_\star^{\rm sing}(\mathfrak{S}_\star \setminus \mathfrak{B}_\star) = 0, \qquad
	\mathbbm{m}_{\star,j,k}^{\rm sing}(\mathfrak{S}_\star \setminus \mathfrak{B}_\star)
	= 0,
\end{equation}
for any $(j,\,k) \in \{1,\,2\}^2$.

In addition to $\mathfrak{A}_\star$ and $\mathfrak{B}_\star$,
we define the set $\mathfrak{C}_\star \subset \mathfrak{S}_\star$
as the complementary set of Lebesgue's points for the densities of the measures
$\nu_\star^{\rm ac}$, $\zeta_\star^{\rm ac}$, and $\mathbbm{m}_{\star,j,k}^{\rm ac}$
for $(j,\,k) \in \{1,\,2\}^2$ with respect to the measure $\lambda_\star$.
By Besicovitch's differentiation theorem and the 
decomposition~\eqref{eq:ac-sing}, there holds
\begin{equation}\label{eq:C*}
	\lambda_\star(\mathfrak{C}_\star) = 0.
\end{equation}
Finally, like in \cite[(1.76)]{Bethuel-AC-Acta}, we define the set
of \emph{exceptional points}
\begin{equation}\label{eq:E*}
	\mathfrak{E}_\star := \mathfrak{A}_\star \cup \mathfrak{B}_\star \cup \mathfrak{C}_\star .
\end{equation}
Since each of the sets on the right-hand side of~\eqref{eq:E*} is null for
the measure $\lambda_\star$, we have
\begin{equation}\label{eq:E*-bis}
	\lambda_\star(\mathfrak{E}_\star) = 0.
\end{equation}

\begin{definition}\label{def:good-points}
	The points $x_0 \in \mathfrak{S}_\star \setminus \mathfrak{E}_\star$
	will be called the \emph{good points} of $\mathfrak{S}_\star$.
\end{definition}

\begin{remark}\label{rk:density-nu*-good-point}
	In particular, for any good point $x_0$, the density
	$\D_{\lambda_\star}(\nu_\star)(x_0)$
	of $\nu_\star$ with respect to $\lambda_\star$ at $x_0$ exists and satisfies
	\begin{align}
		& \D_{\lambda_\star}(\nu_\star)(x_0) = \D_{\lambda_\star}(\nu_\star^{\rm ac})(x_0) = \lim_{r \to 0} \frac{\nu_\star(B(x_0,\,r))}{2r} < +\infty, \label{eq:density-nu*-good-point-ac}\\
		& \D_{\lambda_\star}(\nu_\star^{\rm sing})(x_0) = 0\label{eq:density-nu*-good-point-sing}.
	\end{align}
\end{remark}

\paragraph{Eliminating derivatives along the transversal direction to the moving frame}
Let $x_0 \in \mathfrak{S}_\star \setminus \mathfrak{A}_\star$ and choose
an orthonormal basis at $x_0$ as described in Remark~\ref{rk:moving-frame}.
Let $Q_\rho(x_0)$ be a (closed) square as in~\eqref{eq:square}. Along the lines of
\cite[Section~1.6]{Bethuel-AC-Acta}, we define the \emph{localised measures}
\begin{equation}\label{eq:localised-measures}
	\widetilde{\mathbbm{m}}_{\star,j,k} = \one_{Q_\rho(x_0)} \mathbbm{m}_{\star, j,k}
	\quad \mbox{for } (j,\,k) \in \{1,\,2\}^2,
	\qquad
	\widetilde{\zeta}_\star = \one_{Q_\rho(x_0)} \zeta_\star,
\end{equation}
where $\one_{Q_\rho(x_0)}$ denotes the characteristic function of the
square $Q_\rho(x_0)$.
Denoting $\mathbb{P}$ the orthogonal projection onto the tangent line
$\gamma_{x_0} := \{ x_0 + s \e_1 \,:\, s \in \R \}$, we let
\[
	\widetilde{\mathbbm{m}}_{\star,j,k}^1 := \mathbb{P}_\sharp(\widetilde{\mathbbm{m}}_{\star,j,k}),\qquad
	\widetilde{\zeta}_\star^1 := \mathbb{P}_\sharp(\widetilde{\zeta}_\star)
\]
be the push-forward of the localised measures~\eqref{eq:localised-measures}
with respect to $\mathbb{P}$. As in~\cite[(1.80)]{Bethuel-AC-Acta}, we
introduce the measures on the interval $\mathcal{I}_\rho(x_{0,1})$
(defined as in~\eqref{eq:interval})
\begin{equation}\label{eq:J-L-N}
	\begin{cases}
		\mathcal{J}_{x_0,\rho} = \mathcal{J}_{\rho}
		= \widetilde{\mathbbm{m}}_{\star,1,2}^1, \\
		\mathcal{L}_{x_0,\rho} = \mathcal{L}_{\rho}
		= 2 \widetilde{\zeta}_\star^1 - \widetilde{\mathbbm{m}}_{\star,1,1}^1 + \widetilde{\mathbbm{m}}_{\star,2,2}^1, \\
		\mathcal{N}_{x_0,\rho} = \mathcal{N}_{\rho}
		= 2 \widetilde{\zeta}_\star^1 + \widetilde{\mathbbm{m}}_{\star,1,1}^1 - \widetilde{\mathbbm{m}}_{\star,2,2}^1.\\
	\end{cases}
\end{equation}
The properties of the one-dimensional measures~\eqref{eq:J-L-N} are given by
\cite[Proposition~8.2, Proposition~8.4, Proposition~8.6, Proposition~8.7, Proposition~8.11, and Proposition~8.12]{Bethuel-AC-Acta}. Although the proofs of
\cite[Proposition~8.2, Proposition~8.4, and Proposition~8.6]{Bethuel-AC-Acta}
involve several computations and occupy a few pages, they
are based only on the validity of the condition~\eqref{eq:Bethuel-8.8} and on testing the Equation~\eqref{eq:omega*-zeta*} against suitable fields, of the form
\[
	X_f(x_1,\,x_2) = f_1(x_1)f_2(x_2)\e_i
\]
for either $i = 1$ or $i = 2$ and appropriate choices of the
functions $f_j \in C_c^\infty(\mathcal{I}_\rho(x_{0,j}))$,
for $j \in \{1,\,2\}$. (Specifically, we address the reader to
\cite[(8.18), (8.24), (8.28), (8.31)]{Bethuel-AC-Acta}.)
Therefore, these propositions carry over to our case
without modifications and thus, for the sake
of brevity, we avoid repeating the arguments; the reader is referred to
\cite[Sections from 8.1 to 8.7]{Bethuel-AC-Acta}
for fully detailed proofs.

About \cite[Proposition~8.7]{Bethuel-AC-Acta}, the first part of
its statement claims the validity of~\eqref{eq:Bethuel-8.8} at
points of $\mathfrak{S}_\star \setminus \mathfrak{A}_\star$
and corresponds precisely to Proposition~\ref{prop:analogue-of-Bethuel-8.4}.
Combined with \cite[Proposition~8.2]{Bethuel-AC-Acta}, it
yields immediately that the measures $\mathcal{J}_{\rho}$,
$\mathcal{L}_{\rho}$ are proportional to the Lebesgue measure
on $\mathcal{I}_\rho(x_{0,1})$ and satisfy the differential relations
\cite[(8.15), (8.16)]{Bethuel-AC-Acta} (which are part of
the statement of \cite[Proposition~8.2]{Bethuel-AC-Acta} and are, once again,
direct consequences of~\eqref{eq:omega*-zeta*}).

Assume now the stricter condition
$x_0 \in \mathfrak{S}_\star \setminus \mathfrak{E}_\star$, so
that $x_0$ is a Lebesgue point for the absolutely continuous part (with
respect to $\lambda_\star$) of the measures $\nu_\star$,
$\zeta_\star$, and $\mathbbm{m}_{\star,j,k}$, for $(j,\,k) \in \{1,\,2\}^2$.
This means that the densities $\mathfrak{e}_\star$, $\mathfrak{v}_\star$, $\mathfrak{m}_{\star,j,k}$ in~\eqref{eq:density-nu},~\eqref{eq:density-zeta},~\eqref{eq:density-m} satisfy
\[
\begin{split}
	&\lim_{r \to 0} \frac{1}{r}\int_{\mathfrak{S}_\star \cap B(x_0,r)} \abs{\mathfrak{e}_\star(\tau) - \mathfrak{e}_\star(x_0)} \,{\d}\tau = 0, \\
	&\lim_{r \to 0} \frac{1}{r}\int_{\mathfrak{S}_\star \cap B(x_0,r)} \abs{\mathfrak{v}_\star(\tau) - \mathfrak{v}_\star(x_0)} \,{\d}\tau = 0, \\
	&\lim_{r \to 0} \frac{1}{r}\int_{\mathfrak{S}_\star \cap B(x_0,r)} \abs{\mathfrak{m}_{\star,j,k}(\tau) - \mathfrak{m}_{\star,j,k}(x_0)} \,{\d}\tau = 0 \quad
	\mbox{for } (j,\,k) \in \{1,\,2\}^2,
\end{split}
\]
whence the existence of some $K(x_0) > 0$ so that
\begin{equation}\label{eq:nu-ac-est}
	\nu_\star^{\rm ac}(B(x_0,\,r)) \leq K(x_0) r, \qquad
	\mbox{for any } 0 < r \leq \rho_0.
\end{equation}
Arguing word-for-word as in \cite[Proposition~8.11]{Bethuel-AC-Acta},
one obtains that every point $x_0 \in \mathfrak{S}_\star \setminus \mathfrak{E}_\star$
is a Lebesgue point for the densities $J_{\rho_0}$, $L_{\rho_0}$, and $N_{\rho_0}$
associated with $\mathcal{J}_{\rho}$, $\mathcal{L}_{\rho}$, and $\mathcal{N}_{\rho}$,
respectively, and that the identities
\begin{align*}
	& J_{\rho_0}(x_{0,1}) = \mathfrak{m}_{\star,1,2}(x_0), \\
	& L_{\rho_0}(x_{0,1}) = 2 \mathfrak{v}_\star(x_0) - \mathfrak{m}_{\star,1,1}(x_0) + \mathfrak{m}_{\star,2,2}(x_0), \\
	& N_{\rho_0}(x_{0,1}) = 2 \mathfrak{v}_\star(x_0) + \mathfrak{m}_{\star,1,1}(x_0) - \mathfrak{m}_{\star,2,2}(x_0),
\end{align*}
hold. Moreover, by \cite[Proposition~8.12]{Bethuel-AC-Acta} (which is essentially
a direct consequence of~\eqref{eq:nu-ac-est} and of the tangent cone property), we have
\begin{align*}
	& J_{x_0, \rho_0}(s) = 0  \qquad \mbox{for } s \in (x_{0,1}-\rho_0,\,x_{0,1}+\rho_0), \\
	& N_{x_0, \rho_0}(x_{0,1}) = 0.
\end{align*}
Gathering these identities leads to the following analogue of
\cite[Proposition~1.20]{Bethuel-AC-Acta}.
\begin{prop}\label{prop:analogue-of-Bethuel-Prop5}
	Let $x_0 \in \mathfrak{S}_\star \setminus \mathfrak{E}_\star$ be arbitrary
	and let $\e_{x_0}$ be the tangent vector to $\mathfrak{S}_\star$ at $x_0$.
	Assume that the orthonormal frame $(\e_1,\,\e_2)$ is chosen so that
	$\e_1 = \e_{x_0}$. Then, we have the identities
	\begin{equation}\label{eq:bethuel-prop5}
	\begin{cases}
		2 \mathfrak{v}_\star(x_0) = \mathfrak{m}_{\star,2,2}(x_0) - \mathfrak{m}_{\star,1,1}(x_0), \\
		\mathfrak{m}_{\star,1,2}(x_0) = 0.
	\end{cases}
	\end{equation}
\end{prop}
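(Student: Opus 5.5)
The plan is to read off the two identities in \eqref{eq:bethuel-prop5} from the pointwise relations collected just before the statement. At a good point $x_0\in\mathfrak{S}_\star\setminus\mathfrak{E}_\star$ we work in the frame with $\e_1=\e_{x_0}$. By Proposition~\ref{prop:analogue-of-Bethuel-8.4}, we may fix $\rho_0>0$ such that \eqref{eq:Bethuel-8.8} holds for all $\rho\le\rho_0$. We then use the one-dimensional measures $\mathcal{J}_{\rho_0}$, $\mathcal{L}_{\rho_0}$, $\mathcal{N}_{\rho_0}$ from \eqref{eq:J-L-N} on the interval $\mathcal{I}_{\rho_0}(x_{0,1})$. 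These are obtained by testing \eqref{eq:omega*-zeta*} (equivalently, the modified Cauchy--Riemann relations \eqref{eq:CR}) against product fields $f_1(x_1)f_2(x_2)\e_i$. By the analogues of Bethuel's Propositions~8.2 and~8.4, which rely only on \eqref{eq:omega*-zeta*} and \eqref{eq:Bethuel-8.8}, these measures are absolutely continuous with densities $J_{\rho_0},L_{\rho_0},N_{\rho_0}$.

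The first step is to identify the values of these densities at $x_{0,1}$ with the densities of the two-dimensional measures at $x_0$. Since $x_0\notin\mathfrak{B}_\star\cup\mathfrak{C}_\star$, the singular parts do not charge small squares around $x_0$ at first order, and $x_0$ is a Lebesgue point of $\mathfrak{v}_\star$ and $\mathfrak{m}_{\star,j,k}$. Since $x_0\notin\mathfrak{A}_\star$, the tangent cone property (Proposition~\ref{prop:analogue-of-Bethuel-prop-4}) together with \eqref{eq:H1-density-1} makes the projection $\mathbb{P}$ restricted to $\mathfrak{S}_\star\cap Q_\rho(x_0)$ asymptotically length-preserving. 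Pushing forward then gives
\[J_{\rho_0}(x_{0,1})=\mathfrak{m}_{\star,1,2}(x_0),\qquad N_{\rho_0}(x_{0,1})=2\mathfrak{v}_\star(x_0)+\mathfrak{m}_{\star,1,1}(x_0)-\mathfrak{m}_{\star,2,2}(x_0).\]

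The second step, which I expect to be the main obstacle, is proving that $J_{\rho_0}\equiv0$ and $N_{\rho_0}(x_{0,1})=0$. The approach is to test the second relation in \eqref{eq:CR} with fields $f_1(x_1)f_2(x_2)$, where $f_2\equiv1$ on $\mathcal{I}_{3\rho/4}$. By \eqref{eq:Bethuel-8.8}, the $\partial_{x_2}f_2$ terms vanish, so $\mathcal{J}_\rho$ has zero distributional derivative and is constant along the segment. Likewise, $\mathcal{L}_\rho$ is constant from the first relation. For $N$, I will shrink the vertical extent of the test field to scale $\rho$. The term $\int f_1 f_2'\,\d\mathcal{N}$ then has to be balanced by $\mathcal{J}$ at order $1$, while the linear growth bound \eqref{eq:nu-ac-est} and the cone property force the $\mathbbm{m}_{\star,1,2}$ contribution to be $\o(1)$. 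This blow-up analysis is where I expect the bookkeeping to be delicate, and I would follow Bethuel's Proposition~8.12 closely.

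Combining the two steps gives $\mathfrak{m}_{\star,1,2}(x_0)=0$ and $2\mathfrak{v}_\star(x_0)=\mathfrak{m}_{\star,2,2}(x_0)-\mathfrak{m}_{\star,1,1}(x_0)$, which is \eqref{eq:bethuel-prop5}. No $\Q_\eps$ terms enter the argument, because Proposition~\ref{prop:hopf} has already removed $\omega^{\rm Q}_\star$.
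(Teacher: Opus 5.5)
Your architecture is the paper's. There too, \eqref{eq:bethuel-prop5} is obtained by assembling Bethuel's Propositions~8.2, 8.4/8.7, 8.11 and 8.12: \eqref{eq:Bethuel-8.8} at regular points, constancy of $\mathcal{J}_\rho$ and $\mathcal{L}_\rho$, identification of $J_{\rho_0}(x_{0,1})$ and $N_{\rho_0}(x_{0,1})$ with the densities at $x_0$, and finally the vanishing $J_{\rho_0}\equiv 0$, $N_{\rho_0}(x_{0,1})=0$. The gap is in your sketch of this last step, which does not deliver it.

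\begin{itemize}
\item Testing with $f_2\equiv 1$ on $\mathcal{I}_{3\rho/4}$ only shows that $\mathcal{J}_\rho$ is a constant multiple $J_\rho$ of Lebesgue measure. Nothing you write forces $J_\rho=0$, and that is precisely the identity $\mathfrak{m}_{\star,1,2}(x_0)=0$.
\item For $\mathcal{N}_\rho$, shrinking the vertical extent of a bump $f_2$ does not isolate $\mathcal{N}_\rho$. The derivative $f_2'$ changes sign. Moreover, as soon as $f_2\equiv 1$ on the part of $\spt\nu_\star$ lying over $\spt f_1$, the term $\int f_1 f_2'\,\d(2\zeta_\star+\mathbbm{m}_{\star,1,1}-\mathbbm{m}_{\star,2,2})$ vanishes identically, so you learn nothing about $\mathcal{N}_\rho$.
\end{itemize}

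The missing idea is to use, in both relations of \eqref{eq:CR}, a test function that is \emph{linear} in the normal variable. Take $f_1(x_1)g(x_2)$ with $g(x_2)=(x_2-x_{0,2})\chi(x_2)$, where $\chi\equiv 1$ on $\mathcal{I}_{3\rho/4}(x_{0,2})$ and $\spt\chi\subset\mathcal{I}_\rho(x_{0,2})$. Take $f_1(s)=\psi((s-x_{0,1})/r)$ for a fixed profile $\psi$ with $\one_{[-1/2,1/2]}\le\psi\le\one_{[-1,1]}$. By \eqref{eq:Bethuel-8.8}, $g'=1$ on the support of the measures in $Q_\rho(x_0)$, so
\[
2\int f_1\,\d\mathcal{J}_\rho=\int f_1'(x_1)\,g(x_2)\,\d\bigl(2\zeta_\star-\mathbbm{m}_{\star,1,1}+\mathbbm{m}_{\star,2,2}\bigr),
\qquad
\int f_1\,\d\mathcal{N}_\rho=2\int f_1'(x_1)\,g(x_2)\,\d\mathbbm{m}_{\star,1,2}.
\]
Now bound the right-hand sides.
\begin{itemize}
\item Applying \eqref{eq:Bethuel-8.8} at every scale in $[r,\rho]$, the measures in $\mathcal{I}_r(x_{0,1})\times\mathcal{I}_\rho(x_{0,2})$ are carried by $B(x_0,2r)$.
\item There the tangent cone property (Proposition~\ref{prop:analogue-of-Bethuel-prop-4}) gives $\abs{x_2-x_{0,2}}\le r\tan\theta$ once $2r\le R_{\rm cone}(\theta,x_0)$.
\item All these measures are bounded by a fixed multiple of $\nu_\star$. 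At a good point $\nu_\star(B(x_0,2r))\lesssim r$, by \eqref{eq:nu-ac-est} and Remark~\ref{rk:density-nu*-good-point}.
\end{itemize}
Since $\abs{f_1'}\lesssim 1/r$, both right-hand sides are $\lesssim r\tan\theta$.

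Since $\int f_1\ge r$, the first identity gives $\abs{J_\rho}\lesssim\tan\theta$ for every $\theta>0$, hence $J_\rho\equiv 0$. The second gives $\int f_1\,\d\mathcal{N}_\rho=\o(r)$, hence $N_\rho(x_{0,1})=0$, because $x_{0,1}$ is a Lebesgue point of $N_\rho$. This is the content of Bethuel's Proposition~8.12 that the paper invokes, and it should replace the bump argument in your sketch.
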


\begin{remark}\label{rk:bethuel-prop5}
	Using the notation in~\eqref{eq:m*-perp-par},~\eqref{eq:m*-perp-par-bis},
	we could rephrase~\eqref{eq:bethuel-prop5} in the `intrinsic' form
	\[
	\begin{cases}
		2 \mathfrak{v}_\star = \mathfrak{m}_{\star,\perp,\perp} - \mathfrak{m}_{\star,||,||}, \\
		\mathfrak{m}_{\star,\perp,||} = 0,
	\end{cases}
	\]
	which is an identity holding on the whole set $\mathfrak{S}_\star \setminus \mathfrak{E}_\star$ of good points in $\mathfrak{S}_\star$.
\end{remark}
From the discussion at the end of Section~\ref{sec:hopf}, we immediately draw
a counterpart of \cite[Lemma~1.21]{Bethuel-AC-Acta}.
\begin{lemma}\label{lemma:analogue-of-Bethuel-Lemma2}
	For any $x \in \mathfrak{S}_\star \setminus \mathfrak{E}_\star$,
	we have the identity
	\begin{equation}\label{eq:Bethuel-Lemma2}
		\omega_\star^{\rm ac}(x) = - 2 e^{-2i\gamma_\star(x)} \zeta_\star^{\rm ac}(x),
	\end{equation}
	where $\gamma_\star(x) \in \left[-\frac{\pi}{2},\,\frac{\pi}{2}\right]$ denotes
	the angle between $\e_1$ and the tangent vector $\e_{x}$ to $\mathfrak{S}_\star$
	at $x$.
\end{lemma}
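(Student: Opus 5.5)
The plan is to read \eqref{eq:Bethuel-Lemma2} as a pointwise identity between densities with respect to $\lambda_\star$. The content of Proposition~\ref{prop:analogue-of-Bethuel-Prop5} is a statement in the frame adapted to $\mathfrak{S}_\star$ at $x$. We will transport it back to the fixed Cartesian frame $(\e_1,\,\e_2)$ using the transformation law of the Hopf differential under rotations.

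First, we identify $\omega_\star^{\rm ac}$. By \eqref{eq:hopf*}, $\omega_\star = \omega_\star^{\rm Q} + \omega_\star^{\rm M}$ on $\Omega\setminus\spt\mu_\star$. Since $\omega_\star^{\rm Q}$ is an $L^1_{\rm loc}$ function (indeed smooth, by \eqref{eq:hopf-Q*}), it is absolutely continuous with respect to Lebesgue measure. It is therefore singular with respect to $\lambda_\star = \H^1\mres\mathfrak{S}_\star$, and its density with respect to $\lambda_\star$ vanishes. Hence $\omega_\star^{\rm ac}$, meaning the $\lambda_\star$-absolutely continuous part, equals that of $\omega_\star^{\rm M}$. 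By \eqref{eq:hopf-m*} and \eqref{eq:density-m}, its density in the frame $(\e_1,\,\e_2)$ is
\[
\omega_\star^{\rm ac}(x) = \mathfrak{m}_{\star,1,1}(x) - \mathfrak{m}_{\star,2,2}(x) - 2i\,\mathfrak{m}_{\star,1,2}(x)
\]
at every good point $x$, where all these densities exist as Lebesgue values (recall $x\notin\mathfrak{C}_\star$). The same formula holds in any other orthonormal frame, with the densities computed in that frame.

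Next, we fix a good point $x$ and the rotated frame $(\e_1',\,\e_2')$ with $\e_1' = \e_x$, obtained from $(\e_1,\,\e_2)$ by the rotation of angle $\gamma_\star(x)$. In complex notation, $z = e^{i\gamma_\star} z'$. Writing the $\eps$-level Hopf differential as $\omega_\eps^{\rm M} = 4\,\partial_z\M_\eps\cdot\partial_z\M_\eps$, and using $\partial_{z'} = e^{i\gamma_\star}\partial_z$, we get $(\omega_\eps^{\rm M})' = e^{2i\gamma_\star}\omega_\eps^{\rm M}$ pointwise, with the rotation fixed. Passing to weak$^*$ limits, and then to densities with respect to the rotation-invariant measure $\lambda_\star$, gives $(\omega_\star^{\rm ac})'(x) = e^{2i\gamma_\star(x)}\omega_\star^{\rm ac}(x)$. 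This is the change-of-coordinates remark at the end of Section~\ref{sec:hopf}. The one point to check is that taking Lebesgue-point densities commutes with this fixed linear change of components. That holds because the densities $\mathfrak{m}'_{\star,j,k}$ in the rotated frame are fixed linear combinations of the $\mathfrak{m}_{\star,j,k}$.

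Finally, we apply Proposition~\ref{prop:analogue-of-Bethuel-Prop5}, in its intrinsic form of Remark~\ref{rk:bethuel-prop5}, in the frame $(\e_1',\,\e_2')$. This gives $\mathfrak{m}'_{\star,1,2}(x)=0$ and $\mathfrak{m}'_{\star,1,1}(x)-\mathfrak{m}'_{\star,2,2}(x) = -2\mathfrak{v}_\star(x)$, so that $(\omega_\star^{\rm ac})'(x) = -2\mathfrak{v}_\star(x)$. Therefore $\omega_\star^{\rm ac}(x) = -2e^{-2i\gamma_\star(x)}\mathfrak{v}_\star(x)$, and $\mathfrak{v}_\star(x)$ is exactly the density $\zeta_\star^{\rm ac}(x)$ by \eqref{eq:density-zeta}.

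The main obstacle is purely bookkeeping: getting the sign and direction of the rotation factor right, $e^{-2i\gamma_\star}$ rather than $e^{2i\gamma_\star}$. The identification $\omega_\eps^{\rm M} = 4\,\partial_z\M_\eps\cdot\partial_z\M_\eps$ together with $\partial_{z'}=e^{i\gamma_\star}\partial_z$ settles this.
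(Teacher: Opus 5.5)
Your proposal is correct and takes the same route as the paper, which obtains the lemma by combining the change-of-coordinates remark at the end of Section~\ref{sec:hopf} (the rule that a fixed rotation by $\gamma$ multiplies the Hopf differential by $e^{2i\gamma}$) with Proposition~\ref{prop:analogue-of-Bethuel-Prop5} applied in the frame adapted to $\mathfrak{S}_\star$ at $x$. You also make explicit two points the paper leaves implicit: that $\omega_\star^{\rm Q}$, being absolutely continuous with respect to Lebesgue measure, contributes nothing to the $\lambda_\star$-absolutely continuous part, and that good points remain good when the frame is rotated by a fixed angle.
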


\subsection{Monotonicity properties of the limiting potential $\zeta_\star$}
\label{sec:mon-zeta*}

In this section, we show that, for any 
$x_0 \in \Omega$
and any
annulus centred at $x_0$ and properly contained in $\Omega \setminus \spt \mu_\star$,
the function $r \mapsto \frac{\zeta_\star(B(x_0,\,r)}{r}$ is monotone non-decreasing
for $0 < r \leq R$, where $R$ denotes the exterior radius of the
annulus. As a consequence, we obtain the existence of the density
of $\zeta_\star$ at $x_0$ and, hence, the absolute continuity of $\zeta_\star$
with respect to $\lambda_\star$.

\vskip5pt

\noindent
The first step towards the desired monotonicity property is
the following result, which is analogous to~\cite[Lemma~1.24]{Bethuel-AC-Acta}.
\begin{lemma}\label{lemma:analogue-of-Bethuel-Lemma3}
	Let $x_0 \in \Omega$ be any point. Then, for every $R > 0$ and every radii
	$r_1$, $r_2$ with $0 < r_1 < r_2 \leq R$ and such that
	$B(x_0,\,r_2) \setminus B(x_0,\,r_1) \csubset \Omega \setminus \spt\mu_\star$,
	we have the identity
	\begin{equation}\label{eq:lemma3}
		\frac{\zeta_\star(B(x_0,\,r_2))}{r_2} - \frac{\zeta_\star(B(x_0,\,r_1))}{r_1}
		= \int_{B(x_0,\,r_2) \setminus B(x_0,\,r_1)} \frac{1}{4r} \,{\d}\mathscr N_{x_0,\star},
	\end{equation}
	where
	\begin{equation}\label{eq:N*}
		\mathscr{N}_{\star,x_0} = (\mbox{weak*-})
		\lim_{\eps \to 0}\left(\frac{2}{\eps} V(\,\cdot,\,\M_\eps) - \eps \abs{\partial_\ttau \M_\eps}^2 + \eps \abs{\partial_\nnu \M_\eps}^2\right).
	\end{equation}
\end{lemma}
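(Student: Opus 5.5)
The plan is to derive an exact monotonicity-type identity at the $\eps$-level from the Pohozaev identity, Lemma~\ref{lemma:pohozaev}, and then pass to the limit $\eps \to 0$. Every term that needs strong convergence lives on the annulus $A := B(x_0,r_2)\setminus B(x_0,r_1)$, where all the tools of Section~\ref{sect:nustar} are available. Set $B_r := B(x_0,r)$. For a smooth density $g$ one has
\[
\frac{\d}{\d r}\Big(\frac1r\int_{B_r} g\Big) = -\frac{1}{r^2}\int_{B_r} g + \frac1r\int_{\partial B_r} g .
\]
Lemma~\ref{lemma:pohozaev} rewrites the bulk term exactly as a boundary term:
\[
\frac{1}{\eps^2}\int_{B_r} f_\eps = \frac r4\int_{\partial B_r}\Big(\frac{2}{\eps^2}f_\eps + \eps\abs{\partial_\ttau\M_\eps}^2 - \eps\abs{\partial_\nnu\M_\eps}^2 + \abs{\partial_\ttau\Q_\eps}^2 - \abs{\partial_\nnu\Q_\eps}^2\Big).
\]

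First I will take $g = \zeta_\eps = \eps^{-1}V(\cdot,\M_\eps)$ and write $\int_{B_r}\zeta_\eps = \eps^{-2}\int_{B_r} f_\eps - D_\eps(B_r)$, where $D_\eps := \eps^{-2}f_\eps - \eps^{-1}V$. Inserting the Pohozaev identity, integrating in $r\in[r_1,r_2]$ and using the coarea formula gives
\[
\frac{\zeta_\eps(B_{r_2})}{r_2}-\frac{\zeta_\eps(B_{r_1})}{r_1} = \int_A \frac{1}{4r}\Big(\frac2\eps V - \eps\abs{\partial_\ttau\M_\eps}^2+\eps\abs{\partial_\nnu\M_\eps}^2\Big) + \int_A\frac{1}{4r}\big(\abs{\partial_\nnu\Q_\eps}^2-\abs{\partial_\ttau\Q_\eps}^2\big) + \int_A \frac{D_\eps}{2r}\,\d x + \int_{r_1}^{r_2}\frac{D_\eps(B_r)}{r^2}\,\d r .
\]

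Next I pass to the limit on the annulus. By Lemma~\ref{lemma:V-f-limit-Lp}, the third term tends to zero. By item~\ref{item:mainthm-asymp-strong-conv} of Theorem~\ref{mainthm:asymp}, the second term converges to the analogous integral for $\Q_\star$. That integral vanishes: apply \eqref{eq:pohozaev-Q*} with $\X = \varphi(\abs{x-x_0})(x-x_0)$, where $\varphi$ is a radial cutoff approximating $\one_{[r_1,r_2]}(s)/s$; since $\Q_\star$ is smooth harmonic on $A$, this gives $\int_A r^{-1}(\abs{\partial_\nnu\Q_\star}^2 - \abs{\partial_\ttau\Q_\star}^2)=0$. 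The first term converges to $\int_A \frac{1}{4r}\,\d\mathscr N_{\star,x_0}$, with $\mathscr N_{\star,x_0}$ as in \eqref{eq:N*}, after extracting a subsequence and restricting to radii $r_1,r_2$ for which $\zeta_\star$ and $\abs{\mathscr N_{\star,x_0}}$ do not charge $\partial B_{r_i}$. Such radii are all but countably many, and the identity then extends to all admissible radii by choosing representatives. The left side converges to $\zeta_\star(B_{r_2})/r_2-\zeta_\star(B_{r_1})/r_1$.

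The main obstacle is the last term, $\int_{r_1}^{r_2} r^{-2}D_\eps(B_r)\,\d r$. The ball $B_{r_1}$ may contain points of $\spt\mu_\star$, where $D_\eps$ need not vanish in the limit, because the Ginzburg--Landau potential $\frac{1}{4\eps^2}(1-\abs{\Q_\eps}^2)^2$ may concentrate there. Since $D_\eps(B_r) = D_\eps(B_{r_1}) + D_\eps(B_r\setminus B_{r_1})$ and the annular part tends to zero, it suffices to show $D_\eps(B_{r_1}) \to 0$, i.e. that $D_\eps$ carries no mass at the vortices in the limit. My first attempt is to show that the point-mass weights of $\lim D_\eps$ at the $a_j$ vanish. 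I would test the stress-energy identity, Lemma~\ref{lemma:stressenergy}, on small balls $B(a_j,s)$ with $\X = x-a_j$, and compare it with the Pohozaev identity for the harmonic map $\Q_\star$ on the annulus $B(a_j,s)\setminus B(a_j,s')$. For the half-integer-degree singularities of $\Q_\star$, the $\abs{\partial_\ttau\Q_\star}^2-\abs{\partial_\nnu\Q_\star}^2$ boundary flux is the leading term. It should be matched exactly by the Ginzburg--Landau potential mass, as in the Bethuel--Brezis--H\'elein theory, and the remaining contributions should be $\o(1)$ as $s \to 0$ after $\eps \to 0$.

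This requires showing that the coupling terms, of order $\eps^{-1}\beta\,\M_\eps\otimes\M_\eps$, together with the constant $\chi_\eps$, contribute only $\mathrm{O}(s)$ on $B(a_j,s)$, using \eqref{max-QM} and \eqref{eq:bdd-GL-pot}. If this turns out to be too delicate, the fallback is to prove $\zeta_\star(\spt\mu_\star)=0$ separately, e.g.\ through \eqref{eq:weak-co}-type bounds on $V$ near vortex cores. One would then use a variant of the identity in which $\zeta_\star(B_{r_1})$ is understood as $\zeta_\star(B_{r_1}\setminus\spt\mu_\star)$; I expect that the eventual monotonicity argument needs precisely this annular version.
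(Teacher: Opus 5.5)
Your $\eps$-level identity is correct. When $B(x_0,r_2)\csubset\Omega\setminus\spt\mu_\star$, your passage to the limit is complete, and it is the paper's argument in integrated form. The gap is in the regime you single out, $B(x_0,r_1)\cap\spt\mu_\star\neq\emptyset$, which the statement allows, and two of your steps fail there.

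First, the $\Q$-term does not vanish. Testing \eqref{eq:pohozaev-Q*} with $\X=\varphi(\abs{x-x_0})(x-x_0)$, $\spt\varphi\subset(r_1,r_2)$, gives only $\int\varphi'(r)\,g(r)\,\d r=0$. That is, $g(r):=r\int_{\partial B_r}(\abs{\partial_\nnu\Q_\star}^2-\abs{\partial_\ttau\Q_\star}^2)$ is \emph{constant} on $(r_1,r_2)$. With your cutoff $\approx\one_{[r_1,r_2]}/s$, the $\varphi'$-terms concentrate on $\partial B_{r_1}\cup\partial B_{r_2}$ and nothing is gained. The constant vanishes if $\Q_\star$ is regular in all of $B_{r_2}$, but not in general: for a half-vortex at $x_0$ one has $g\equiv-2\pi$, and your second term tends to $\frac{g}{4}\big(\frac1{r_1}-\frac1{r_2}\big)\neq0$.

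Second, $D_\eps(B_{r_1})\to0$ is false. By the very BBH mechanism you invoke, $\frac1{4\eps^2}(1-\abs{\Q_\eps}^2)^2$ leaves an atom $d_j$ at each vortex, so the two effects have to be combined. Testing Lemma~\ref{lemma:stressenergy} on $B(a_j,s)$ with $x-a_j$, and letting $s\to0$ after $\eps\to0$, gives $d_j=-\frac14 g_j-\zeta_\star(\{a_j\})$, where $g_j$ is the flux constant of $\Q_\star$ around $a_j$ alone. Splitting $x-x_0=(x-a_j)+(a_j-x_0)$ and using \eqref{eq:ren-energy-BBH} gives $g=\sum_j\big[g_j+2(a_j-x_0)\cdot\nabla_{a_j}\mathbb W_\star\big]$. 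The dilation parts cancel but the translation parts do not, so the limit of your identity is
\[
\frac{\zeta_\star(B_{r_2})}{r_2}-\frac{\zeta_\star(B_{r_1})}{r_1}
=\int_A\frac{\d\mathscr N_{\star,x_0}}{4r}
-\Big(\frac1{r_1}-\frac1{r_2}\Big)\sum_{a_j\in B_{r_1}}\Big[\zeta_\star(\{a_j\})-\tfrac12(a_j-x_0)\cdot\nabla_{a_j}\mathbb W_\star\Big].
\]

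No argument can close this gap: in this generality the statement fails, and the correction is the first variation of $\mathbb V_\star$ at the enclosed vortices. Take a single segment of density $c$ leaving a half-vortex $a$ in direction $\e$, as for minimisers, whose line defects join the vortices along a minimal connection \cite{CanevariMajumdarStroffoliniWang}. Put $x_0=a+t_0\e$. Then $\zeta_\star(B(x_0,r))/r=c(1+t_0/r)$ is strictly decreasing for $t_0<r_1<r_2$ small. Yet $\mathscr N_{\star,x_0}=4\sin^2(\gamma_\star-\theta)\,\zeta_\star=0$ on the annulus, because the segment is radial.

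The paper's proof passes over the same point: it applies Lemma~\ref{lemma:limit-f-minus-V} and the ball Pohozaev identity for $\Q_\star$ on balls $B_r$ that may contain vortices. What both arguments actually prove is two cases. One is $B(x_0,r_2)\csubset\Omega\setminus\spt\mu_\star$. The other is $x_0=a_j$ being the only vortex in $B(x_0,r_2)$, where the identity holds for $\zeta_\star\mres(\Omega\setminus\{a_j\})$. Your fallback is correct only in this second situation. That version says nothing about $\zeta_\star(\{a_j\})$. Bounds of type \eqref{eq:weak-co} cannot supply it either, since they require $\abs{\Q_\eps}\geq 1/2$ and energy at most $\eta_\beta\eps$ on the ball.
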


\begin{proof}
	Let us set, for brevity, $B_r := B(x_0,\,r)$, for any $r \in (0,\,R)$.

	By~\eqref{eq:pohozaev-ball}, the pointwise identity~\eqref{eq:pointwise-est-V-bis},
	and in view of Lemma~\ref{lemma:limit-f-minus-V}, we can write
	\begin{equation}\label{eq:N*-compu1}
	\begin{split}
		r^2\frac{\d}{{\d}r}\left( \frac{1}{\eps r} V(x,\,\M_\eps(x)) \right) &= \frac{r}{2\eps}\int_{\partial B_r} V(x,\,\M_\eps(x)) \\
		&+ \frac{r}{4} \int_{\partial B_r}\left( \abs{\partial_\nnu \Q_\eps} +
		\eps \abs{\partial_\nnu \M_\eps}^2 - \abs{\partial_\ttau \Q_\eps}^2 - \eps \abs{\partial_\ttau \M_\eps}^2 \right) \\
		&+ \o_{\eps \to 0}(1)
	\end{split}
	\end{equation}
	In virtue of the strong convergence $\Q_\eps \to \Q_\star$ in
	$W^{1,2}_{\rm loc}(\Omega \setminus \spt\mu_\star)$ and recalling that,
	being harmonic, $\Q_\star$ satisfies
	\[
		\int_{\partial B_r} \abs{\partial_\nnu \Q_\star}^2
		= \int_{\partial B_r} \abs{ \partial_\ttau \Q_\star}^2
	\]
	for any $r \in (0,\,R)$, passing to the limit as $\eps \to 0$
	in~\eqref{eq:N*-compu1}, it follows that, for a.e. $r \in (0,\,R)$,
	\[
		r^2 \frac{\d}{{\d}r}\left( \zeta_\star(B_r) \right)
		= \frac{r}{4} \mathscr{N}_{\star,x_0}(\partial B_r),
	\]
	where $\mathscr{N}_{\star,x_0}$ is as in~\eqref{eq:N*}.
	The conclusion follows straightforwardly.
\end{proof}

With Lemma~\ref{lemma:analogue-of-Bethuel-Lemma3} at hand,
the monotonicity of $r \mapsto \frac{\zeta_\star(B(x_0,\,r))}{r}$ is proved once we show
that $\mathscr{N}_{\star,x_0}$ is a non-negative measure, for any
$x_0 \in \Omega$.
Although we could also conclude immediately by invoking
\emph{directly} the results in \cite[Section~9]{Bethuel-AC-Acta},
to facilitate the reader we go through the main points of the argument.

First of all, it is convenient to switch to polar coordinates $(r,\,\theta)$ centred
at $x_0$. Upon defining
\[
\begin{cases}
	\mathbbm{m}_{\star,r,r} := \cos^2 \theta \mathbbm{m}_{\star,1,1}
	+ \sin^2\theta \mathbbm{m}_{\star,2,2}
	+ 2 \sin\theta \cos\theta \mathbbm{m}_{\star,1,2}, \\
	\frac{1}{r^2}\mathbbm{m}_{\star,\theta,\theta}
	:= \sin^2 \theta \mathbbm{m}_{\star,1,1}
	+ \cos^2\theta \mathbbm{m}_{\star,2,2}
	- 2 \sin\theta \cos\theta \mathbbm{m}_{\star,1,2},
\end{cases}
\]
we have, as $\eps \to 0$,
\[
	\eps \abs{\partial_r \M_\eps}^2 \rightharpoonup^* 	\mathbbm{m}_{\star,r,r}, \qquad
	\eps \abs{\partial_\theta \M_\eps}^2 \rightharpoonup^* 	\mathbbm{m}_{\star,\theta,\theta},
\]
as measures in $\Omega$. Using this notation, we can rewrite~\eqref{eq:N*}
in the form
\begin{equation}\label{eq:N*-bis}
	\mathscr{N}_{\star,x_0} = 2 \zeta_\star - \frac{1}{r^2} \mathbbm{m}_{\star,\theta,\theta} + \mathbbm{m}_{\star,r,r}.
\end{equation}
Decomposing also the densities $\mathfrak{m}_{\star,j,k}$,
for $(j,\,k) \in \{1,\,2\}^2$,
in the polar frame $(r,\,\theta)$ and using
\eqref{eq:N*-bis} and Lemma~\ref{lemma:analogue-of-Bethuel-Lemma2} as
in the proof of \cite[Lemma~9.1]{Bethuel-AC-Acta}, we end up with
\begin{equation}\label{eq:Nu*-ac}
	\NN_{\star,x_0}^{\rm ac} = 4 \sin^2(\gamma_\star - \theta) \zeta_\star^{\rm ac},
\end{equation}
where $\NN_{\star,x_0}^{\rm ac}$ denotes the absolutely continuous part of $\NN_{\star,x_0}$
with respect to $\lambda_\star$.

Next, let $\rho > \delta > 0$ be given. Following \cite[Section~9.3]{Bethuel-AC-Acta},
we introduce the auxiliary functions
\begin{align*}
	& Z(r) := \zeta_\star(B(x_0,\,r)), \\
	& F(r) := \frac{Z(r)}{r}, \\
	& G_\delta(r) := \int_{B(x_0,\,r) \setminus B(x_0,\,\delta)} \frac{1}{\abs{x-x_0}}\,{\d}\NN_{\star,x_0},
\end{align*}
where $r \in [\delta,\,\rho]$.
Letting
\[
\Pi : B(x_0,\rho) \setminus \{x_0\} \to (0,\rho) \quad\colon\quad
x \mapsto \Pi(x) := \sqrt{ \left(x-x_{0,1}\right)^2 + \left(x-x_{0,2}\right)^2 },
\]
we also introduce the auxiliary measures
\[
	\check{\zeta}_\star := \Pi_\sharp(\zeta_\star), \qquad
	\check{\NN}_{\star,x_0} := \Pi_\sharp(\NN_{\star,x_0}),
\]
so that, for any Borel sets $A \subset [\delta,\,\rho]$, there we have
\[
	\check{\zeta}_\star(A) = \check{\zeta}_\star(\Pi^{-1}(A)), \qquad
	\check{\NN}_{\star,x_0}(A) = \check{\NN}_{\star,x_0}(\Pi^{-1}(A)).
\]
Exactly as in \cite[Lemma~9.3]{Bethuel-AC-Acta}, as a direct
consequence of the definitions and Fubini's theorem, it follows
that $Z$ and $G_\delta$ have bounded variation and that
\begin{equation}\label{eq:Bethuel-Lemma-9.2}
	\frac{\d}{{\d}r} Z = \check{\zeta}_\star \geq 0, \qquad
	\frac{\d}{{\d}r} G_\delta = \frac{1}{r}\check{\NN}_{\star,x_0}, \qquad
	\mbox{in } \mathscr{D}'((\delta,\,\rho)).
\end{equation}
Using these facts, one proves that $F$ has bounded variation
and that
\begin{equation}\label{eq:Bethuel-Lemma-9.3}
	\frac{\d}{{\d}r} F = \frac{1}{r}\check{\zeta}_\star - \frac{1}{r^2} Z
	= \frac{1}{4r}\check{\NN}_{\star,x_0}, \qquad \mbox{in } \mathscr{D}'((\delta,\,\rho)).
\end{equation}
cf. \cite[Lemma~9.4]{Bethuel-AC-Acta}. Then, arguing as
in \cite[Section~9.4]{Bethuel-AC-Acta},
we take advantage of~\eqref{eq:Nu*-ac} and of the two different forms for
$\frac{\d}{{\d}r}F$ to show that the latter is a non-negative measure.
More precisely, once defined the annulus
\[
	A_{\delta,\rho} := B(x_0,\,\rho) \setminus B(x_0,\,\delta),
\]
we consider the set
\[
	\mathbb{A}_{\delta,\rho} = \Pi^{-1}\left( \mathfrak{E} \cap A_{\delta,\rho} \right).
\]
Then, $\H^1(\mathbb{A}_{\delta,\rho}) = 0$ and, since $Z$ is a bounded function,
\begin{equation}\label{eq:Z/r-B-rho}
	\frac{Z}{r} \mres {\mathbb{A}_{\delta, \rho}} = 0.
\end{equation}
Furthermore, since
$\NN_{\star,x_0} = \NN_{\star,x_0}^{\rm ac}$ on $B(x_0,\,\rho) \setminus \mathfrak{E}_\star$,
we have
\begin{equation}\label{eq:Bethuel-Lemma-9.4}
	\check{\NN}_{\star,x_0} \mres ((0,\,\rho) \setminus \mathbb{A}_{\delta,\rho}) \geq 0,
\end{equation}
by~\eqref{eq:Nu*-ac}. On the other hand, by~\eqref{eq:Bethuel-Lemma-9.3},
\begin{equation}\label{eq:Bethuel-Lemma-9.3-bis}
	\check{\NN}_{\star,x_0} = 4\left( \check{\zeta}_\star - \frac{Z}{r} \right)
	\qquad \mbox{in } \mathscr{D}'((\delta,\,\rho)).
\end{equation}
Since both sides of~\eqref{eq:Bethuel-Lemma-9.3-bis} are (not only distributions
but also) bounded measures,
the identity~\eqref{eq:Bethuel-Lemma-9.3-bis} is an identity of measures.
On the other hand, in view of~\eqref{eq:Z/r-B-rho}, from~\eqref{eq:Bethuel-Lemma-9.3-bis} it follows that
\begin{equation}\label{eq:Bethuel-Lemma-9.5}
	\check{\NN}_{\star,x_0} \mres \mathbb{A}_{\delta,\rho} =
	\check{\zeta}_{\star} \mres \mathbb{A}_{\delta,\rho} \geq 0.
\end{equation}
Gathering~\eqref{eq:Bethuel-Lemma-9.4} and~\eqref{eq:Bethuel-Lemma-9.5},
we obtain
\[
	\check{\NN}_{\star,x_0} \geq 0 \qquad \mbox{on } (0,\,\rho)
\]
and, by~\eqref{eq:Bethuel-Lemma-9.3}, $\frac{\d}{{\d}r}F \geq 0$ on $(\delta,\,\rho)$
for any $\delta > 0$, hence $\frac{\d}{{\d}r}F \geq 0$ on $(0,\,\rho)$.
Thus, recalling the definition of $F$, we see that
\[
	\frac{\d}{{\d}r}\left( \frac{\zeta_\star(B(x_0,\,r))}{r} \right) \geq 0
	\qquad \mbox{on } (0,\,\rho),
\]
and we have therefore obtained the same conclusion
of \cite[Proposition~1.23]{Bethuel-AC-Acta}, summarised below.
\begin{prop}\label{prop:analogue-of-Bethuel-Prop-6}
The measure $\zeta_\star$ has the following properties.
		\begin{enumerate}[(i)]
		\item\label{item:Z/r-monotone}
		Let $x_0 \in \Omega$ be any point. Then, for every $R > 0$
		and for almost every radii $r_1$, $r_2$ with $0 < r_1 < r_2 \leq R$
		and such that $B(x_0,\,r_2) \setminus B(x_0,\,r_1) \csubset \Omega \setminus \spt\mu_\star$, we have the inequality
		\begin{equation}\label{eq:Z/r-monotone}
			\frac{\zeta_\star(B(x_0,\,r_2))}{r_2} \geq \frac{\zeta_\star(B(x_0,\,r_1))}{r_1}.
		\end{equation}
		\item\label{item:z*-sptmu*} The measure $\zeta_\star$ does not charge
		the support of $\mu_\star$, that is,
		\begin{equation}\label{eq:z*-sptmu*}
			\zeta_\star(\spt \mu_\star) = 0.
		\end{equation}
		\item\label{item:zeta*-density} For every $x_0\in \Omega$, the
		function $r \mapsto \frac{\zeta_\star(B(x_0,\,r))}{r}$ has a limit when
		$r \to 0$, and we have, in fact,
	\begin{equation}\label{eq:zeta*-density}
		\mathfrak{v}_\star(x_0) = \lim_{r \to 0} \frac{\zeta_\star(B(x_0,\,r))}{2r}
		\leq \frac{\zeta_\star(B(x_0,\,R))}{2R},
	\end{equation}
		for every $R > 0$ such that
		$\left(B(x_0,\,2R) \setminus \{x_0\} \right) \subset \Omega \setminus \spt\mu_\star$.
		\item\label{item:zeta*-leq-nu*}
		There holds
		\begin{equation}\label{eq:zeta*-leq-nu*}
			\zeta_\star \leq \nu_\star
		\end{equation}
		in the sense of measures in $\Omega$.
		\item\label{item:zeta*<<lambda*} The measure $\zeta_\star$ is absolutely continuous with respect
		to $\lambda_\star = \H^1 \mres \mathfrak{S}_\star$
		in $\Omega$. In fact,
		\[
			\zeta_\star = \mathfrak{v}_\star \H^1 \mres \mathfrak{S}_\star
		\]
		in $\Omega$. 
	\end{enumerate}
\end{prop}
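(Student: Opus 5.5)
Item~\ref{item:Z/r-monotone} is essentially what was obtained in the discussion preceding the statement. Fix $x_0$ and an annulus $A_{r_1,r_2}=B(x_0,r_2)\setminus B(x_0,r_1)\csubset\Omega\setminus\spt\mu_\star$, and apply Lemma~\ref{lemma:analogue-of-Bethuel-Lemma3}. The increment of $F(r)=\zeta_\star(B(x_0,r))/r$ is then $\int_{A_{r_1,r_2}}\frac{1}{4r}\,\d\NN_{\star,x_0}$, by \eqref{eq:Bethuel-Lemma-9.3} with $\delta=r_1$. It therefore suffices to check that $\check{\NN}_{\star,x_0}\ge0$ on $(r_1,r_2)$. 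Off the $\lambda_\star$-null exceptional set this follows from \eqref{eq:Nu*-ac}. On the projected exceptional set we use \eqref{eq:Bethuel-Lemma-9.5}: since $Z$ is bounded and this set is Lebesgue-null in $r$, the measure $\check\NN$ equals $4\check\zeta_\star\ge0$ there. The only point to stress is that every step (Proposition~\ref{prop:hopf}, Lemma~\ref{lemma:analogue-of-Bethuel-Lemma2}, the good-point analysis) is used only inside $\Omega\setminus\spt\mu_\star$, so working on the annulus is legitimate. The restriction to a.e.\ radii comes from the a.e.\ $r$ in the derivation of Lemma~\ref{lemma:analogue-of-Bethuel-Lemma3}; it can later be removed by right-continuity of $Z$.

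For item~\ref{item:z*-sptmu*}, take $a\in\spt\mu_\star$. Choose $R$ small so that $B(a,2R)\setminus\{a\}\subset\Omega\setminus\spt\mu_\star$; this is possible because $\spt\mu_\star$ is finite. Applying item~\ref{item:Z/r-monotone} with $x_0=a$ to every annulus $A_{r,R}$ gives $\zeta_\star(B(a,r))\le \frac{r}{R}\zeta_\star(B(a,R))\le r\mathcal E_0/R$ for a.e.\ $r<R$. Letting $r\to0$ along such radii yields $\zeta_\star(\{a\})=0$. This is exactly the reason the argument had to be run on annuli rather than balls.

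Item~\ref{item:zeta*-density} follows from the same monotonicity. $F$ is nondecreasing (after extending to all $r$ via monotonicity of $Z$), so $\lim_{r\to0}F(r)$ exists and is bounded by $F(R)$. This holds for $x_0\notin\spt\mu_\star$ and, by the annulus version, also for $x_0\in\spt\mu_\star$, where the limit is $0$ by the bound above. For item~\ref{item:zeta*-leq-nu*}: on any compact $K\subset\Omega\setminus\spt\mu_\star$ we have $\zeta_\eps\le\widetilde\nu_\eps$, and $\widetilde\nu_\eps\rightharpoonup^*\nu_\star$ on $K$ by Theorem~\ref{thm:properties-nu*}\ref{item:tilde-nu-eps}; hence $\zeta_\star\le\nu_\star$ on $\Omega\setminus\spt\mu_\star$. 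Item~\ref{item:z*-sptmu*} then extends the inequality to $\Omega$.

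For item~\ref{item:zeta*<<lambda*}, first note that $\spt\zeta_\star\subset\overline{\mathfrak S_\star}$, by item~\ref{item:zeta*-leq-nu*} and item~\ref{item:z*-sptmu*}. Next, the upper density $\limsup_r \zeta_\star(B(x,r))/(2r)=\mathfrak v_\star(x)$ is finite at every point $x$ and locally bounded, because it is controlled by $\zeta_\star(B(x,R))/(2R)$. A standard differentiation/covering lemma then gives $\zeta_\star(A)\le C\,\H^1(A)$ for Borel sets $A$ away from $\spt\mu_\star$. This rules out any singular part relative to $\H^1$, and since $\zeta_\star$ lives on $\mathfrak S_\star$, it gives $\zeta_\star\ll\lambda_\star$. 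The Radon–Nikodym density is the pointwise limit $\mathfrak v_\star$ by Besicovitch differentiation at $\H^1$-a.e.\ point of $\mathfrak S_\star$, where $\lambda_\star(B(x,r))/(2r)\to1$ by \eqref{eq:H1-density-1}. The main obstacle is the rigorous positivity of $\check\NN_{\star,x_0}$ on the exceptional set, i.e.\ the measure-theoretic justification of \eqref{eq:Bethuel-Lemma-9.5}. I would handle it as in \cite[Section~9.4]{Bethuel-AC-Acta}, using that $Z/r$ is absolutely continuous in $r$ while $\Pi_\sharp$ of a $\lambda_\star$-null set is Lebesgue-null.
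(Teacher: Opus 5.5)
Your proposal is correct and follows the paper's own route. Item (i) comes from the identity of Lemma~\ref{lemma:analogue-of-Bethuel-Lemma3} together with positivity of $\check{\NN}_{\star,x_0}$, both off and on the radial projection of the exceptional set. Items (ii)--(iii) come from monotonicity on annuli, (iv) from $\zeta_\eps\le\widetilde\nu_\eps$ away from $\spt\mu_\star$ plus (ii), and (v) from (ii)--(iv); you usefully make explicit the density-comparison and Besicovitch step that the paper leaves implicit.

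Two side remarks need correcting, although neither is used in your argument.
\begin{itemize}
\item With open balls, $r\mapsto\zeta_\star(B(x_0,r))$ is left-continuous, not right-continuous. It is left-continuity that upgrades the inequality from a.e.\ radii to all radii.
\item Your claim that the limit in (iii) is $0$ at points of $\spt\mu_\star$ is false. Your bound only gives $\zeta_\star(B(a,r))/r\le\zeta_\star(B(a,R))/R$, i.e.\ boundedness. Moreover, a vanishing density at $a_j$ would force $z_j=0$ in Theorem~\ref{thm:first-variation}. To see this, test with $\X=\e\varphi$, where $\varphi$ is a cut-off at scale $s$; this gives $\abs{z_j\cdot\e}\lesssim\zeta_\star(B(a_j,s))/s$. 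So the claim fails whenever $\nabla_{a_j}\mathbb{W}_\star\neq 0$.
\end{itemize}
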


\begin{remark}
	We stress that $x_0$ can be \emph{any point in $\Omega$}; in particular,
	it may very well be a point of the support of $\mu_\star$.
\end{remark}

\begin{proof}
	Inequality~\eqref{eq:Z/r-monotone} has been already obtained above.
	Item~\ref{item:z*-sptmu*} follows immediately from
	inequality~\eqref{eq:Z/r-monotone}.
	Item~\ref{item:zeta*-density} is an obvious consequence of~\eqref{eq:Z/r-monotone}.
	About item~\ref{item:zeta*-leq-nu*},
	the inequality
	\[
		\zeta_\star \mres (\Omega \setminus \spt\mu_\star)
		\leq \nu_\star \mres (\Omega \setminus \spt\mu_\star)
	\]
	follows already from the definitions of $\zeta_\star$
	and $\nu_\star$ and item~\ref{item:tilde-nu-eps} of
	Theorem~\ref{thm:properties-nu*}. On the other hand,
	by~\eqref{eq:z*-sptmu*} we have also
	$\zeta_\star(\spt\mu_\star) = 0$,
	and the conclusion follows.
	Finally, item~\ref{item:zeta*<<lambda*} follows straightforwardly
	by combining
	item~\ref{item:zeta*-density}, item~\ref{item:zeta*-leq-nu*},
	and item~\ref{item:z*-sptmu*}.
\end{proof}

\subsection{Upper bounds, absolute continuity of $\nu_\star$ with respect to $\lambda_\star$, and consequences}
\label{sec:upper-bounds}

By item~\ref{item:zeta*-leq-nu*} of
Proposition~\ref{prop:analogue-of-Bethuel-Prop-6}, we know that the
inequality $\zeta_\star \leq \nu_\star$ holds in the sense
of measure in $\Omega$.
We now show
that also a converse inequality holds locally on $\Omega \setminus \spt\mu_\star$,
so that $\nu_\star$ is equivalent to $\zeta_\star$ in
$\Omega \setminus \spt\mu_\star$
and, in turn, absolutely continuous with respect to $\lambda_\star$
in $\Omega \setminus \spt\mu_\star$.

\begin{prop}\label{prop:nu*<<lambda*}
	The measure $\nu_\star$ is equivalent to $\zeta_\star$ and
	absolutely continuous with respect to
	$\lambda_\star$ in $\Omega \setminus \spt\mu_\star$.
	Moreover, we have $\nu_\star = \mathfrak{e}_\star \lambda_\star$,
	with
	\begin{equation}\label{eq:nu*-leq-zeta*}
		\mathfrak{e}_\star(x_0) \leq 2 {\rm K}_\beta \, \mathfrak{v}_\star(x_0)
		\quad \mbox{ for $\lambda_\star$-almost every }
		x_0 \in \mathfrak{S}_\star,
	\end{equation}
 	where $\mathfrak{e}_\star : \mathfrak{S}_\star \to \R^+$ denotes the
 	density of $\nu_\star$ and the constant ${\rm K}_\beta$ depends only on $\beta$
 	and on $\Omega$.
\end{prop}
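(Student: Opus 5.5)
The plan is to compare $\nu_\star$ and $\zeta_\star$ on small balls centred at points of $\mathfrak{S}_\star$, using the whole-ball energy estimate of Proposition~\ref{prop:analogue-of-Bethuel-4.3} together with the monotonicity of $\zeta_\star$ (Proposition~\ref{prop:analogue-of-Bethuel-Prop-6}). Fix $x_0\in\mathfrak{S}_\star$ and a compact $K\subset\Omega\setminus\spt\mu_\star$ containing $B(x_0,2r)$. For $\eps$ small we have $\abs{\Q_\eps}\geq 1/2$ on $B(x_0,2r)$, so Proposition~\ref{prop:analogue-of-Bethuel-4.3}, applied with $R=2r$, gives
\[
E_\eps(\M_\eps;B(x_0,r))\leq {\rm K}_\beta\Big\{\zeta_\eps(B(x_0,3r/2))+\eps\int_{B(x_0,2r)}\abs{\nabla\Q_\eps}^2+\frac{\eps}{2r}E_\eps(\M_\eps;B(x_0,2r))+\frac{\eps^2}{2r}\int_{B(x_0,2r)}\abs{\nabla \Q_\eps}^2\Big\}.
\]
By the local bound on $\int_K\abs{\nabla\Q_\eps}^2$ from Theorem~\ref{lemma:energy-est} (equivalently \eqref{eq:GL-decay}) and the global bound $E_\eps\leq\mathcal{E}_0$, every term except the first on the right vanishes as $\eps\to0$. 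Item~\ref{item:tilde-nu-eps} of Theorem~\ref{thm:properties-nu*} then lets us pass to the limit on the left via $\widetilde\nu_\eps=$ the $E_\eps$-density, and weak* convergence $\zeta_\eps\rightharpoonup^*\zeta_\star$ handles the right (lower semicontinuity on the open ball $B(x_0,r)$ on the left, upper semicontinuity on a closed ball on the right). We obtain
\[
\nu_\star(B(x_0,r))\leq {\rm K}_\beta\,\zeta_\star\big(\overline{B(x_0,3r/2)}\big)\leq {\rm K}_\beta\,\zeta_\star(B(x_0,2r)).
\]

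Dividing by $2r$ and letting $r\to0$, item~\ref{item:zeta*-density} of Proposition~\ref{prop:analogue-of-Bethuel-Prop-6} identifies $\lim_{r\to0}\zeta_\star(B(x_0,2r))/(2r)=2\mathfrak{v}_\star(x_0)$. Hence
\[
\mathfrak{e}^\star(x_0)=\limsup_{r\to0}\frac{\nu_\star(B(x_0,r))}{2r}\leq 2{\rm K}_\beta\,\mathfrak{v}_\star(x_0)
\]
for \emph{every} $x_0\in\mathfrak{S}_\star$, and trivially at points of $(\Omega\setminus\spt\mu_\star)\setminus\mathfrak{S}_\star$, where $\nu_\star$ vanishes nearby by Lemma~\ref{lemma:analogue-of-Bethuel-7.1}. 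Since $\mathfrak{v}_\star$ is finite everywhere (by the monotonicity bound \eqref{eq:zeta*-density}), the upper $1$-density of $\nu_\star$ is finite at every point of $\mathfrak{S}_\star$. A standard density theorem for measures with finite upper density relative to $\H^1$ (e.g.\ \cite[Theorem~3.2]{Simon} type comparison, $\nu_\star\mres A\leq C\sup_A\mathfrak{e}^\star\,\H^1\mres A$) then shows that $\nu_\star\mres(\Omega\setminus\spt\mu_\star)$ is absolutely continuous with respect to $\lambda_\star$. Writing $\nu_\star=\mathfrak{e}_\star\lambda_\star$, Besicovitch differentiation at good points (using the density-one property \eqref{eq:H1-density-1}) identifies $\mathfrak{e}_\star(x_0)$ with $\lim\nu_\star(B(x_0,r))/(2r)$ $\lambda_\star$-a.e., giving \eqref{eq:nu*-leq-zeta*}.

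Equivalence with $\zeta_\star$ then follows by combining $\zeta_\star\leq\nu_\star$ (item~\ref{item:zeta*-leq-nu*}) with $\nu_\star\leq 2{\rm K}_\beta\mathfrak{v}_\star\lambda_\star=2{\rm K}_\beta\zeta_\star$ (item~\ref{item:zeta*<<lambda*}). The main delicate step is the first one: I must check carefully that the $\Q_\eps$-perturbation terms in Proposition~\ref{prop:analogue-of-Bethuel-4.3} really vanish in the limit. This needs $\eps\int_{B}\abs{\nabla\Q_\eps}^2\to0$ locally away from $\spt\mu_\star$, which holds by the strong $W^{1,2}_{\rm loc}$ convergence. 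I must also keep track of the constant: with the radii chosen as above the factor should come out as exactly $2{\rm K}_\beta$, and if the radii ratio spoils it, I will instead use the averaging radii inside the proof of that proposition.
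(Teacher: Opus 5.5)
Your proposal is correct and is essentially the paper's argument. You pass to the limit in the whole-ball estimate of Proposition~\ref{prop:analogue-of-Bethuel-4.3} to get $\nu_\star(B(x_0,r))\le{\rm K}_\beta\,\zeta_\star(\overline{B(x_0,3r/2)})$, divide by the radius, and use the monotonicity of $\zeta_\star$, which yields exactly the constant $2{\rm K}_\beta$.

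The only difference lies in how absolute continuity and equivalence are obtained, and both routes are valid:
\begin{itemize}
\item You get $\nu_\star\ll\lambda_\star$ from the upper-density bound at \emph{every} point of $\mathfrak{S}_\star$ via the standard density comparison theorem, and then read off equivalence from $\nu_\star\le 2{\rm K}_\beta\zeta_\star$.
\item The paper derives $\nu_\star\ll\zeta_\star$ from the ball inequality by a covering and outer-regularity argument, and chains it with $\zeta_\star\ll\lambda_\star$.
\end{itemize}
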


\begin{proof}
The proof combines \cite[Lemma~9.7]{Bethuel-AC-Acta}
with \cite[Corollary~9.8]{Bethuel-AC-Acta} and
Proposition~\ref{prop:analogue-of-Bethuel-Prop-6}.
To keep it more transparent, we divide it into two steps.

\setcounter{step}{0}
\begin{step}[$\nu_\star \sim \zeta_\star$ in $\Omega \setminus \spt \mu_\star$]
From Proposition~\ref{prop:analogue-of-Bethuel-4.3} (in
particular, from~\eqref{eq:est-M-whole-B''}) and the logarithmic
bound on the energy $\F_\eps$ in~\eqref{eq:log-bound}, we derive
that, for any ball $B(x_0,\,R) \csubset \Omega\setminus \spt\mu_\star$,
there holds
\[
	\nu_\eps\left(B(x_0,\,R/2) \right) \leq  {\rm K}_\beta\,
	\zeta_\eps\left( \overline{B(x_0,\,3R/4)} \right)
	+ \o_{\eps \to 0}(1),
\]
where ${\rm K}_\beta$ is the constant in the right-hand side
of~\eqref{eq:est-M-whole-B''}, which depends only
on $\beta$ and $\Omega$. Thus, passing to the limit $\eps \to 0$, we get
\begin{equation}\label{eq:zeta*-geq-nu*-partial}
	\nu_\star(B(x_0,\,R/2)) \leq {\rm K}_\beta \,
	\zeta_\star\left( \overline{B(x_0,\,3R/4)} \right),
\end{equation}
which is the same conclusion as in \cite[Lemma~9.7]{Bethuel-AC-Acta}.
As a consequence of~\eqref{eq:zeta*-geq-nu*-partial}, if we fix any
compact set $K \subset \Omega \setminus \spt\mu_\star$ and we take any
Borel set $E \subset K$ which is
null for the measure $\zeta_\star$, by covering $E$ with balls $B(x_j,\,R/2)$
such that $B(x_j,\,4R/5) \subset K$
and then using the outer regularity of $\zeta_\star$, it follows that $E$ is null
also for the measure $\nu_\star$,
hence $\nu_\star \ll \zeta_\star$ in $K$. In view of~\eqref{eq:zeta*-leq-nu*},
it follows that $\nu_\star$ and $\zeta_\star$ have the same null sets
in $K$, for any compact set $K \subset \Omega \setminus \spt \mu_\star$.
By exhausting $\Omega \setminus \spt\mu_\star$ by a countable family of
nested compact sets, the same holds true
in $\Omega \setminus \spt \mu_\star$, and therefore $\nu_\star \sim \zeta_\star$
in $\Omega \setminus \spt \mu_\star$.
\end{step}

\begin{step}[{Proof of~\eqref{eq:nu*-leq-zeta*}} and conclusion]
	From~\eqref{eq:density-nu*-good-point-ac},~\eqref{eq:density-nu*-good-point-sing},~\eqref{eq:zeta*-geq-nu*-partial} and~\eqref{eq:zeta*-density}, as in \cite[Corollary~9.8]{Bethuel-AC-Acta},
	denoting $\overline{\D}_{\lambda_\star}(\nu_\star)(x_0)$ the upper density
	of $\nu_\star$ with respect to $\lambda_\star$ at $x_0$,
	it follows that, for every
	$x_0 \in \mathfrak{S}_\star \setminus \mathfrak{E}_\star$,
	\begin{equation}\label{eq:density-nu-controlled-by-zeta}
	\begin{split}
		\overline{\D}_{\lambda_\star}(\nu_\star)(x_0)
		&= \limsup_{r \to 0} \frac{\nu_\star(B(x_0,\,r))}{2r}
		= \limsup_{r \to 0} \frac{\nu_\star(B(x_0,\,r/2))}{r} \\
		&\leq {\rm K}_\beta \, \lim_{r \to 0} \frac{\zeta_\star(B(x_0,\,r))}{r}
		= 2 {\rm K}_\beta \, \mathfrak{v}_\star(x_0).
	\end{split}
	\end{equation}
	This implies that $\nu_\star$ is absolutely continuous with respect
	to $\lambda_\star$.
	Moreover,
	since $x_0 \in \mathfrak{S}_\star \setminus \mathfrak{E}_\star$,
	and, therefore, $\mathfrak{e}_\star(x_0) = {\overline \D}_{\lambda_\star}(\nu_\star)(x_0)$,
	\eqref{eq:nu*-leq-zeta*} follows as well. The proof is complete.
\end{step}

\end{proof}

\begin{remark}\label{rk:nu-controlled-by-zeta}
	A straightforward consequence of the absolute continuity of
	$\nu_\star$, $\zeta_\star$ with respect to $\lambda_\star$
	on $\mathfrak{S}_\star$ is the fact that such measures do not
	have singular part with respect to $\lambda_\star$, i.e.,
	$\nu^{\rm sing}_\star$, $\zeta^{\rm sing}_\star \equiv 0$ on
	$\mathfrak{S}_\star$. In particular, the inequality~\eqref{eq:nu*-leq-zeta*}
	implies that
	\[
		\spt \zeta_\star = \mathfrak{S}_\star
	\]
	and that
	\[
		\nu_\star \leq 2 {\rm K}_\beta \, \zeta_\star,
	\]
	as measures in $\Omega \setminus \spt\mu_\star$. Therefore, we can
	really study the properties of $\mathfrak{S}_\star$ using $\zeta_\star$
	rather than $\nu_\star$.
\end{remark}

As immediate consequences of the absolute continuity of $\nu_\star$, $\zeta_\star$ with
respect to $\lambda_\star$ on $\mathfrak{S}_\star$
(in other words, of the fact that $\nu^{\rm sing}_\star$, $\zeta^{\rm sing}_\star \equiv 0$ on $\mathfrak{S}_\star$), we have the following stronger versions of
Proposition~\ref{prop:analogue-of-Bethuel-Prop5}
and of Lemma~\ref{lemma:analogue-of-Bethuel-Lemma2}.
\begin{theorem}\label{thm:analogue-of-Bethuel-Thm5}
	We have the identities
\begin{equation}\label{eq:bethuel-thm5}
	2 \zeta_\star = \mathbbm{m}_{\star,\perp,\perp} - \mathbbm{m}_{\star,||,||},
	\qquad \mathbbm{m}_{\star,\perp,||} = 0,
\end{equation}
where the measures $\mathbbm{m}_{\star,\perp,\perp}$, $\mathbbm{m}_{\star,||,||}$,
and $\mathbbm{m}_{\star,\perp,||}$ are defined
in \eqref{eq:m*-perp-par},~\eqref{eq:m*-perp-par-bis}.
\end{theorem}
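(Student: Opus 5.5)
The plan is to upgrade the pointwise identities of Proposition~\ref{prop:analogue-of-Bethuel-Prop5} (Remark~\ref{rk:bethuel-prop5}) to identities of measures. The step that needs care is showing that every measure in~\eqref{eq:bethuel-thm5} is absolutely continuous with respect to $\lambda_\star$; once that is known, the identities hold $\lambda_\star$-a.e. and hence as measures. The argument is local in $\Omega\setminus\spt\mu_\star$, which is where the intrinsic frame of Remark~\ref{rk:moving-frame} is defined. On $\spt\mu_\star$, a finite set, both sides vanish: $\zeta_\star(\spt\mu_\star)=0$ by Proposition~\ref{prop:analogue-of-Bethuel-Prop-6}, and the intrinsic measures are defined through $\lambda_\star$.

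First I show that the measures $\mathbbm{m}_{\star,j,k}$ have no singular part on $\Omega\setminus\spt\mu_\star$. Since $\abs{\eps\,\partial_j\M_\eps\cdot\partial_k\M_\eps}\le \eps\abs{\nabla\M_\eps}^2\le 2\widetilde\nu_\eps$ pointwise, item~\ref{item:tilde-nu-eps} of Theorem~\ref{thm:properties-nu*} gives, by lower semicontinuity of total variation under weak* convergence, $\abs{\mathbbm{m}_{\star,j,k}}\le 2\nu_\star$ as measures on compact subsets of $\Omega\setminus\spt\mu_\star$. Proposition~\ref{prop:nu*<<lambda*} and Remark~\ref{rk:nu-controlled-by-zeta} give $\nu_\star\ll\lambda_\star$ there. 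Hence $\mathbbm{m}^{\rm sing}_{\star,j,k}=0$ on $\Omega\setminus\spt\mu_\star$, and likewise $\zeta_\star^{\rm sing}=0$. Every measure involved is therefore of the form (density)$\,\lambda_\star$, with the densities $\mathfrak{v}_\star$ and $\mathfrak{m}_{\star,j,k}$ introduced in~\eqref{eq:density-zeta}--\eqref{eq:density-m}.

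Next I use the intrinsic frame. At each $x\in\mathfrak{S}_\star\setminus\mathfrak{A}_\star$, rotate the Cartesian frame by the angle $\gamma_\star(x)$, so that $\e_1=\e_x$. The densities transform as quadratic forms, with the same rules as in the remark on changing coordinates in the Hopf differential. This shows that $\mathfrak{m}_{\star,\perp,\perp}$, $\mathfrak{m}_{\star,||,||}$ and $\mathfrak{m}_{\star,\perp,||}$ are $\lambda_\star$-measurable functions; measurability of $\gamma_\star$ follows from the measurability of the approximate tangent of a rectifiable set. Then~\eqref{eq:m*-perp-par-bis} defines them as measures. By Proposition~\ref{prop:analogue-of-Bethuel-Prop5}, at every good point $2\mathfrak{v}_\star=\mathfrak{m}_{\star,\perp,\perp}-\mathfrak{m}_{\star,||,||}$ and $\mathfrak{m}_{\star,\perp,||}=0$. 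Good points have full $\lambda_\star$-measure by~\eqref{eq:E*-bis}. Multiplying these identities by $\lambda_\star$ and using $\zeta_\star=\mathfrak{v}_\star\lambda_\star$ (item~\ref{item:zeta*<<lambda*} of Proposition~\ref{prop:analogue-of-Bethuel-Prop-6}) yields~\eqref{eq:bethuel-thm5}.

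The main obstacle is the first step, namely making sure the $\mathbbm{m}_{\star,j,k}$ carry no singular part. One point to watch is that $\nu_\star\ll\lambda_\star$ is only known away from $\spt\mu_\star$. I will therefore state the identities on $\Omega\setminus\spt\mu_\star$, and extend them to $\Omega$ via $\zeta_\star(\spt\mu_\star)=0$ and the fact that $\lambda_\star$ does not charge points.
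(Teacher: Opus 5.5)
Your proposal is correct and follows essentially the paper's route. The paper combines the pointwise identities of Proposition~\ref{prop:analogue-of-Bethuel-Prop5} at good points with the absolute continuity of $\nu_\star$ and $\zeta_\star$ with respect to $\lambda_\star$, and uses the bound $-2\nu_\star\le\mathbbm{m}_{\star,j,k}\le 2\nu_\star$ to exclude singular parts. The paper obtains that bound on all of $\Omega$ directly from $\nu_\eps\ge\frac{\eps}{2}\abs{\nabla\M_\eps}^2$, so the detour through $\widetilde{\nu}_\eps$ is unnecessary, but your local version works just as well.
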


\begin{proof}
	The identities in~\eqref{eq:bethuel-thm5} follow both from
	Proposition~\ref{prop:analogue-of-Bethuel-Prop5} and the fact
	that $\nu_\star^{\rm sing}$, $\zeta_\star^{\rm sing}$ are identically
	zero on $\mathfrak{S}_\star \setminus \mathfrak{E}_\star$,
	along with the trivial observation that
	\[
		-2 \nu_\star \leq \mathbbm{m}_{\star,j,k} \leq 2 \nu_\star
	\]
	for any $(j,\,k) \in \{1,\,2\}^2$.
\end{proof}

\begin{prop}\label{prop:analogue-of-Bethuel-Lemma2-strong}
	Let $(\e_1,\,\e_2)$ be any orthonormal basis of $\R^2$.
	We have the identity
	\begin{equation}\label{eq:Bethuel-Lemma2-strong}
		\omega_\star = - 2 e^{-2i\gamma_\star} \zeta_\star,
	\end{equation}
	where $\gamma_\star(x) \in \left[-\frac{\pi}{2},\,\frac{\pi}{2}\right]$ denotes
	the angle between $\e_1$ and the tangent vector $\e_{x}$ to $\mathfrak{S}_\star$
	at $x \in \mathfrak{S}_\star \setminus \mathfrak{E}_\star$.
\end{prop}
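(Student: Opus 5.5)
The plan is to show that $\omega_\star$ is a measure absolutely continuous with respect to $\lambda_\star$, and then to read off its density from Lemma~\ref{lemma:analogue-of-Bethuel-Lemma2}. The statement is understood in $\Omega \setminus \spt\mu_\star$, where $\omega_\star$ is defined.

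First I split $\omega_\star = \omega_\star^{\rm Q} + \omega_\star^{\rm M}$. This splitting holds as measures in $\Omega\setminus\spt\mu_\star$, by the strong $W^{1,2}_{\rm loc}$ convergence~\eqref{eq:q*} together with~\eqref{eq:omega-star-M}.

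For the $\M$-part, \eqref{eq:hopf-m*} expresses $\omega_\star^{\rm M}$ through the measures $\mathbbm{m}_{\star,j,k}$. Each of these satisfies $|\mathbbm{m}_{\star,j,k}| \le 2\nu_\star$. By Proposition~\ref{prop:nu*<<lambda*}, $\nu_\star \ll \lambda_\star$ in $\Omega\setminus\spt\mu_\star$, so $\omega_\star^{\rm M}\ll\lambda_\star$ and its singular part vanishes. Its density at a good point $x\in\mathfrak{S}_\star\setminus\mathfrak{E}_\star$ is computed in the tangent frame. There, Theorem~\ref{thm:analogue-of-Bethuel-Thm5} gives $\mathfrak{m}_{\star,||,||}-\mathfrak{m}_{\star,\perp,\perp} = -2\mathfrak{v}_\star$ and $\mathfrak{m}_{\star,\perp,||}=0$, so the density is $-2\mathfrak{v}_\star$. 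Rotating back to the fixed frame $(\e_1,\e_2)$, the density picks up the factor $e^{-2i\gamma_\star}$, by the transformation rule for Hopf differentials recalled in the remark on changing coordinates. This gives $\omega_\star^{\rm M} = -2e^{-2i\gamma_\star}\mathfrak{v}_\star\lambda_\star = -2e^{-2i\gamma_\star}\zeta_\star$, using Proposition~\ref{prop:analogue-of-Bethuel-Prop-6}\ref{item:zeta*<<lambda*}. Since $\lambda_\star(\mathfrak{E}_\star)=0$, this identity of densities at good points suffices. It is essentially Lemma~\ref{lemma:analogue-of-Bethuel-Lemma2}, upgraded to an identity of measures because the singular parts now vanish.

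For the $\Q$-part, $\omega_\star^{\rm Q}$ is a smooth (indeed holomorphic) function on $\Omega\setminus\spt\mu_\star$. It therefore defines a measure absolutely continuous with respect to Lebesgue measure, not with respect to $\lambda_\star$. This is the main obstacle: the identity as stated forces $\omega_\star^{\rm Q}$ to be invisible. I would handle it in one of two ways, and I expect the intended reading is the first. Either I interpret $\omega_\star$ in~\eqref{eq:Bethuel-Lemma2-strong} through its action against $\partial_{\bar z}$-derivatives of test functions, as in~\eqref{eq:hopf=hopf-M}, where $\omega_\star^{\rm Q}$ drops out by~\eqref{eq:hopf-Q*}. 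Or I note that, as in Lemma~\ref{lemma:analogue-of-Bethuel-Lemma2}, the statement concerns the part of $\omega_\star$ concentrated on $\mathfrak{S}_\star$, i.e.\ $\omega_\star\mres\mathfrak{S}_\star$, which equals $\omega_\star^{\rm M}$ because the Lebesgue-absolutely continuous $\omega_\star^{\rm Q}$ gives no mass to the $\mathscr{L}^2$-null set $\mathfrak{S}_\star$. In either reading, the proof reduces to the computation of $\omega^{\rm M}_\star$ above.
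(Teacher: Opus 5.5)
Your proposal is correct and follows essentially the paper's route: upgrade the pointwise identity of Lemma~\ref{lemma:analogue-of-Bethuel-Lemma2} to an identity of measures, using that the singular parts vanish (Theorem~\ref{thm:analogue-of-Bethuel-Thm5} and Proposition~\ref{prop:analogue-of-Bethuel-Prop-6}). Your observation that the smooth part $\omega_\star^{\rm Q}$ cannot be concentrated on $\mathfrak{S}_\star$ is accurate, so the identity must really be read for $\omega_\star^{\rm M}$, or modulo the holomorphic part as in \eqref{eq:hopf=hopf-M}; the paper makes this reduction only implicitly, by citing \eqref{eq:hopf=hopf-M} and \eqref{eq:hopf-M}, and only this $\M$-form is used later, in the proof of Theorem~\ref{thm:stationary-varifold}.
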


\begin{proof}
	The lemma follows immediately by
	combining Lemma~\ref{lemma:analogue-of-Bethuel-Lemma2}
	with~\eqref{prop:hopf},~\eqref{eq:hopf-M},
	Proposition~\ref{prop:analogue-of-Bethuel-Prop-6}, and~\eqref{eq:bethuel-thm5}.
\end{proof}

%
%

\section{Proof of Theorem~\ref{mainthm:B+C} and Theorem~\ref{mainthm:balance-law}}
\label{sec:S*}

As an easy consequence of
Proposition~\ref{prop:analogue-of-Bethuel-Prop-6}, Corollary~\ref{cor:CR},
and Theorem~\ref{thm:analogue-of-Bethuel-Thm5} (with the aid
of Proposition~\ref{prop:analogue-of-Bethuel-Lemma2-strong}),
we conclude that the limiting
measure $\zeta_\star$ is the weight measure
of a countably $\H^1$-rectifiable varifold
$\mathbb{V}_\star := \v(\mathfrak{S}_\star,\,\mathfrak{v}_\star)$ in $\Omega$,
which is also stationary in $\Omega \setminus \spt\mu_\star$. This gives further structure to the set
$\mathfrak{S}_\star = \spt\nu_\star \setminus \spt\mu_\star$, which turns out
to be locally a union of segments with locally constant densities.
In addition, we are able to compute the first variation of
$\mathbb{V}_\star$.
As a consequence, we obtain the proof of Theorem~\ref{mainthm:B+C}
and of Theorem~\ref{mainthm:balance-law}.

\paragraph{Varifolds}
For the reader's convenience, we briefly recall the concepts of \emph{varifold},
\emph{rectifiable varifold},
\emph{first variation of a varifold}, and \emph{stationary varifold}.
We refer to \cite{Simon} for a comprehensive treatment of varifolds.

A $k$-varifold~$\mathbb{V}$
on an open set $U \subset \R^n$ is a Radon measure
on $G_k(U) := U \times G(n,\,k)$, where $G(n,\,k)$
denotes the Grassmanian manifold of $k$-dimensional
planes in $\R^n$. The \emph{weight measure} $\norm{\mathbb{V}}$ of $\mathbb{V}$
is the Radon measure defined by setting
\[
	\int_U \phi(x) \,{\d} \norm{\mathbb{V}}(x)
	:= \int_{G_k(U)} \phi(x)\,{\d}\mathbb{V}(x,\,S),
\]
for all $\phi \in C_c(U)$.
A $k$-varifold is \emph{$\H^k$-rectifiable}
(or \emph{$k$-rectifiable}, for short)
if there exist a
countably $\H^k$-rectifiable set $\mathfrak{S}$ and
a locally $\H^k$-integrable function
$\theta\colon\mathfrak{S} \to \R^+$ (called \emph{density function}
or \emph{multiplicity}) so that
\[
	\int_{G_k(U)} \varphi(x,\,S) \,{\d} \mathbb{V}(x,\,S) =
	\int_{\mathfrak{S}} \varphi(x,\,\T_x \mathfrak{S})\,\theta(x) \,{\d}\H^k(x)
\]
for all $\varphi \in C_c(G_k(U))$, where $\T_x \mathfrak{S}$ denotes
the approximate tangent space to $\mathfrak{S}$ at $x \in \mathfrak{S}$.
Thus, if $\mathbb{V}$ is $k$-rectifiable, then
\[
	\norm{\mathbb{V}} = \theta \H^k \mres \mathfrak{S}.
\]
Two pairs $(\mathfrak{S},\,\theta)$ and
$(\mathfrak{S}',\,\theta')$ identify the same $\H^k$-rectifiable varifold
if and only if the symmetric difference $\mathfrak{S} \Delta \mathfrak{S}'$
is a $\H^k$-null set and $\theta = \theta'$ on~$\H^k$-almost all of
$\mathfrak{S} \cap \mathfrak{S}'$.
These conditions define an equivalence relation,
so that rectifiable varifolds are actually equivalence classes of
pairs $(\mathfrak{S},\,\theta)$.
We will write $\mathbb{V} = \v(\mathfrak{S}, \, \theta)$
for the rectifiable varifold carried by~$\mathfrak{S}$
with density function~$\theta$
(see e.g.~\cite[Chapter~4]{Simon}).
The \emph{first variation} of a varifold~$\mathbb{V}$
is the distribution $\delta \mathbb{V}$ in $U$ satisfying
\[
	\delta\mathbb{V}(\X) :=
	\int_{\mathfrak{S}} \div_{\T_x \mathfrak{S}} \X(x) \,{\d} \H^k(x)
\]
for all $\X \in C^1_c(U,\,\R^n)$. If it happens that $\delta{\mathbb{V}}(\X) = 0$
for any $\X \in C^1_c(U,\,\R^n)$, then $\mathbb{V}$ is said to be \emph{stationary}
in $U$.

\begin{theorem}\label{thm:stationary-varifold}
	The pair $(\mathfrak{S}_\star,\,\mathfrak{v}_\star)$ is
	a representative of a countably
	$\H^1$-rectifiable varifold in $\Omega$ whose
	weight measure is $\zeta_\star$. In addition, the varifold
	$\mathbb{V}_\star := \v(\mathfrak{S}_\star,\,\mathfrak{v}_\star)$ is stationary
	in $\Omega \setminus \spt\mu_\star$.
\end{theorem}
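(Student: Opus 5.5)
The first claim is essentially definitional once the earlier results are collected. By Proposition~\ref{prop:S_star}, $\mathfrak{S}_\star$ is countably $\H^1$-rectifiable with locally finite $\H^1$-measure in $\Omega\setminus\spt\mu_\star$. Item~\ref{item:zeta*-density} of Proposition~\ref{prop:analogue-of-Bethuel-Prop-6} shows $\mathfrak{v}_\star$ is everywhere defined and locally bounded: $\mathfrak{v}_\star(x_0)\le \zeta_\star(B(x_0,R))/(2R)$, and $\zeta_\star(\Omega)\le\mathcal{E}_0$. Hence $\mathfrak{v}_\star$ is locally $\H^1\mres\mathfrak{S}_\star$-integrable. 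Item~\ref{item:zeta*<<lambda*} gives $\zeta_\star=\mathfrak{v}_\star\,\H^1\mres\mathfrak{S}_\star$ in all of $\Omega$, using $\zeta_\star(\spt\mu_\star)=0$. So $\v(\mathfrak{S}_\star,\mathfrak{v}_\star)$ is a well-defined rectifiable $1$-varifold in $\Omega$ with weight $\zeta_\star$. Its approximate tangent line at $\H^1$-a.e.\ $x$ is $\R\e_x$, the tangent from the tangent line property at regular points.

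For stationarity, I would fix $\X\in C^1_c(\Omega\setminus\spt\mu_\star,\R^2)$ and write $\div_{\T_x\mathfrak{S}_\star}\X=\e_x\cdot(\D\X\,\e_x)$. Then
\[
\delta\mathbb{V}_\star(\X)=\int \e_x\cdot(\D\X\,\e_x)\,\d\zeta_\star .
\]
The plan is to express this through the Hopf-differential identity~\eqref{eq:omega*-zeta*} and show it vanishes. Identify $\X$ with the complex function $X=X_1+iX_2$ and let $\gamma_\star(x)$ be the angle of $\e_x$. Writing $\e_x=e^{i\gamma_\star}$, a direct computation in Wirtinger derivatives gives
\[
\e_x\cdot(\D\X\,\e_x)=\Re\!\left(\frac{\partial X}{\partial z}\right)+\Re\!\left(e^{2i\gamma_\star}\frac{\partial X}{\partial \bar z}\right).
\]
Here the trace part is $\tfrac12\div\X=\Re\,\partial_zX$, and the trace-free part of the tangential projection pairs with $\partial_{\bar z}X$. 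I will check the sign and conjugation conventions carefully, and this is the step I would double check.

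Integrating against $\zeta_\star$, the second term becomes $\Re\langle e^{2i\gamma_\star}\zeta_\star,\partial_{\bar z}X\rangle$. By Proposition~\ref{prop:analogue-of-Bethuel-Lemma2-strong}, $e^{2i\gamma_\star}\zeta_\star$ is expressible through $\omega_\star$, namely $\omega_\star=-2e^{-2i\gamma_\star}\zeta_\star$. Up to the conjugation convention in the pairing, this yields
\[
\Re\langle e^{2i\gamma_\star}\zeta_\star,\partial_{\bar z}X\rangle=-\tfrac12\Re\langle\omega_\star,\partial_{\bar z}X\rangle .
\]
By~\eqref{eq:hopf=hopf-M} and~\eqref{eq:omega*-zeta*}, this equals $-\Re\langle\zeta_\star,\partial_zX\rangle$. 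It therefore cancels exactly the first term $\Re\langle\zeta_\star,\partial_z X\rangle$, so $\delta\mathbb{V}_\star(\X)=0$.

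The only delicate point is bookkeeping. The complex pairing $\langle\cdot,\cdot\rangle$ in Proposition~\ref{prop:hopf} must be read consistently: I will treat it as bilinear and, if needed, test with $X$ and $iX$ separately, as in the proof of Proposition~\ref{prop:hopf}. The factor conventions between $e^{2i\gamma_\star}$ and $e^{-2i\gamma_\star}$ must also match. If the conventions fight, I would instead verify stationarity directly in the moving frame. Using Theorem~\ref{thm:analogue-of-Bethuel-Thm5}, $2\zeta_\star=\mathbbm{m}_{\star,\perp,\perp}-\mathbbm{m}_{\star,||,||}$ and $\mathbbm{m}_{\star,\perp,||}=0$. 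Plugging into the real Pohozaev identity~\eqref{eq:hopf-compu1} in the limit, with the $\Q$-part eliminated by~\eqref{eq:pohozaev-Q*}, the limiting stress tensor becomes $\mathbbm{m}_{\star,j,k}-(\zeta_\star+\tfrac12\mathrm{tr}\,\mathbbm{m}_\star)\delta_{jk}$. At good points this equals $-\zeta_\star\,\e_x\otimes\e_x$. Its divergence vanishing is precisely $\int \e_x\cdot\D\X\,\e_x\,\d\zeta_\star=0$.
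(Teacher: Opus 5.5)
Your argument is correct and is essentially the paper's. The paper writes the stationarity condition in Cartesian form and matches it, after integrating by parts, with the modified Cauchy--Riemann relations of Corollary~\ref{cor:CR}, rewritten via Theorem~\ref{thm:analogue-of-Bethuel-Thm5} and Proposition~\ref{prop:analogue-of-Bethuel-Lemma2-strong}; this is exactly your complex computation in real coordinates. Fix the sign you flagged: with $X = X_1 + iX_2$ and $\e_x = \cos\gamma_\star\,\e_1 + \sin\gamma_\star\,\e_2$ one has $\div_{\T_x\mathfrak{S}_\star}\X = \Re(\partial_z X) + \Re\bigl(e^{-2i\gamma_\star}\partial_{\bar z}X\bigr)$, so $e^{-2i\gamma_\star}\zeta_\star = -\tfrac12\omega_\star$ enters directly with no conjugation (your two intermediate formulas with $e^{2i\gamma_\star}$ are each off by a conjugation and the slips cancel), and in your fallback the limiting tensor at good points is $-2\zeta_\star\,\e_x\otimes\e_x$, not $-\zeta_\star\,\e_x\otimes\e_x$, which is harmless.
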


\begin{proof}
We divide the proof into two steps. First, we formalise the fact, already
implied by the results in the above sections, that
$(\mathfrak{S}_\star,\,\mathfrak{v}_\star)$ is a representative of
a countably $\H^1$-rectifiable
varifold in $\Omega$. Second, following the argument
of \cite[Theorem~1.5]{Bethuel-AC-Acta}, we show that it is
indeed stationary as a varifold in $\Omega \setminus \spt\mu_\star$
(i.e., with respect to variations supported in $\Omega \setminus \spt\mu_\star$).
\setcounter{step}{0}
\begin{step}[Rectifiability]
	By Proposition~\ref{prop:S_star},
	$\mathfrak{S}_\star$ is a countably $\H^1$-rectifiable set.
	By Proposition~\ref{prop:analogue-of-Bethuel-Prop-6},
	$\zeta_\star$ is
	absolutely continuous with respect to the measure
	$\lambda_\star = \H^1 \mres \mathfrak{S}_\star$ as measures in
	$\Omega$, with associated
	density $\mathfrak{v}_\star$.
	It then follows from the very definition of countably $\H^1$-rectifiable
	varifold (e.g., \cite[Chapter~4]{Simon}) that the pair
	$(\mathfrak{S}_\star,\,\mathfrak{v}_\star)$
	is a representative of a countably $\H^1$-rectifiable
	varifold in $\Omega$ whose weight measure is precisely $\zeta_\star$.
\end{step}

\begin{step}[Stationarity]
	We argue exactly as in \cite[Section~11]{Bethuel-AC-Acta}.
	The key point is proving that the `modified Cauchy-Riemann
	relations' \eqref{eq:CR} are, in fact, equivalent to the
	stationarity condition
	\begin{equation}\label{eq:def-stationary}
		\int_{\mathfrak{S}_\star} \div_{\T_x \mathfrak{S}_\star} \X\,{\d}\zeta_\star = 0
	\end{equation}
	for any $\X = (X_1,\,X_2) \in C^1_c(\Omega \setminus \spt\mu_\star,\,\R^2)$,
	where (by definition)
	\begin{equation}\label{eq:def-div}
		\div_{\T_x \mathfrak{S}_\star} \X(x)
		= \left( \e_x \cdot \nabla \X(x) \right) \cdot \e_x
	\end{equation}
	at any $x \in \mathfrak{S}_\star \setminus \mathfrak{E}_\star$.

	To this purpose, we consider the measure
	\[
		\N_\star := 2 \zeta_\star - \mathbbm{m}_{\star,2,2} + \mathbbm{m}_{\star,1,1},
	\]
	For any $x \in \mathfrak{S}_\star \setminus \mathfrak{E}_\star$, the
	unit tangent vector $\e_{x}$ can be decomposed along the Cartesian frame
	$(\e_1,\,\e_2)$ as
	\[
		\e_{x} = \cos \gamma(x) \e_1 + \sin \gamma(x) \e_2,
	\]
	where $\gamma(x)$ denotes the angle between $\e_1$ and $\e_x$
	(possibly flipped, so that $\gamma(x) \in [-\pi/2,\,\pi/2]$).
	Plugging this identity in~\eqref{eq:def-div}, we obtain
	\[
	\begin{split}
		\div_{\T_x \mathfrak{S}_\star} \X(x) = &\cos^2 \gamma(x) \frac{\partial X_1}{\partial x_1}(x) + \sin^2 \gamma(x) \frac{\partial X_2}{\partial x_2}(x)  \\
		&+ \sin \gamma(x) \cos \gamma(x)\left[ \frac{\partial X_2}{\partial x_1}(x)
		+ \frac{\partial X_1}{\partial x_2}(x)\right],
	\end{split}
	\]
	so that the stationarity condition~\eqref{eq:def-stationary} can
	be recast in the form (cf.~\cite[(11.2)]{Bethuel-AC-Acta})
	\begin{equation}\label{eq:bethuel-11.2}
		\ip{\zeta_\star}{\cos^2 \gamma(x) \frac{\partial X_1}{\partial x_1} + \sin^2 \gamma(x) \frac{\partial X_2}{\partial x_2}
		+ \sin \gamma(x) \cos \gamma(x)\left[ \frac{\partial X_2}{\partial x_1}
		+ \frac{\partial X_1}{\partial x_2}\right]} = 0.
	\end{equation}
	On the other hand, in view of
	Proposition~\ref{prop:analogue-of-Bethuel-Lemma2-strong},
	we may rewrite
	\[
		\N_\star = 4 \sin^2(\gamma) \zeta_\star,
	\]
	so that, by Theorem~\ref{thm:analogue-of-Bethuel-Thm5},
	the modified Cauchy-Riemann relations~\eqref{eq:CR} can be rewritten in the form
	\begin{equation}\label{eq:bethuel-11.1}
	\begin{cases}
		&-\frac{\partial}{\partial x_2} (\sin(2\gamma) \zeta_\star) = \frac{\partial}{\partial x_1}( 1 + \cos(2 \gamma) \zeta_\star), \\
		&-\frac{\partial}{\partial x_1} (\sin(2\gamma) \zeta_\star) = \frac{\partial}{\partial x_2}( 1 - \cos(2 \gamma) \zeta_\star).
	\end{cases}
	\end{equation}
	Integrating~\eqref{eq:bethuel-11.2} by parts in the sense of distributions
	(recalling that $X_1$, $X_2$ can be chosen independently of each other)
	and combining the
	result with~\eqref{eq:bethuel-11.1}, the conclusion follows.
	\qedhere
\end{step}
\end{proof}

We now take advantage of the stationarity of
$\mathbb{V}_\star$
in $\Omega \setminus \spt\mu_\star$ to compute 
its first variation as a varifold in $\Omega$.
Before stating the result, we establish some further notation. First, we shall
denote $\delta \mathbb{V}_\star$ the first variation of
$\mathbb{V}_\star$, taken in the sense of distributions
in $\Omega$. Next, we recall 
that $\spt\mu_\star$ is a finite set, with cardinality $N_\star \in \mathbb{N}$
and we denote $a_j$, for $j \in \{1,\,\dots,\,n_\star\}$
and $n_\star \leq N_\star$, the points of $\spt\mu_\star$ lying in $\Omega$.
Finally, we write $\nabla_{a_j} \mathbb{W}_\star(a_1,\,\dots,\,a_{n_\star})$
for the gradient of the renormalised energy with respect to its $j$-th variable.
We recall that, from \cite[Theorem~VII.4 and Theorem~VIII.3]{BBH}
and \cite[Theorem~5.1]{Lin96},
$\nabla_{a_j} \mathbb{W}_\star(a_1,\,\dots,\,a_{n_\star})$ satisfies
\begin{equation}\label{eq:ren-energy-BBH}
	\e\cdot \nabla_{a_j} \mathbb{W}_\star(a_1,\,\dots,\,a_{n_\star}) =
	\int_{\partial B_\rho(a_j)} \left( \partial_\e \Q_\star \cdot \partial_\nnu \Q_\star - \frac{1}{2}(\nnu\cdot\e) \abs{\nabla \Q_\star}^2 \right) \,{\d}s
\end{equation}
for any constant vector field $\e \in \R^2$ and
where $\rho > 0$ is any radius small enough that,
for any $j$, $k \in \{1,\,\dots,\,n_\star\}$, there holds
$\overline{B_\rho(a_j)} \cap \overline{B_\rho(a_k)} = \emptyset$ whenever $j \neq k$.

We are now ready for the proof of Theorem~\ref{mainthm:balance-law}. For the 
reader's convenience, we recall its statement below.
\begin{theorem}\label{thm:first-variation}
	The first variation $\delta \mathbb{V}_\star$ of
	$\mathbb{V}_\star := \v(\mathfrak{S}_\star,\,\mathfrak{v}_\star)$
	as a varifold in $\Omega$ is the
	distribution in $\Omega$ defined by
	\begin{equation}\label{eq:first-variation}
		\delta \mathbb{V}_\star(\X) = \sum_{j=1}^{n_\star} z_j \cdot \X(a_j),
	\end{equation}
	for any $\X \in C_c^\infty(\Omega,\,\R^2)$,
	where
	\begin{equation}\label{eq:zj}
		z_j := -\frac{1}{2} \nabla_{a_j} \mathbb{W}_\star(a_1,\,\dots,\,a_{n_\star}),
	\end{equation}
	for any $j \in \{1,\,\dots,\,n_\star\}$.
\end{theorem}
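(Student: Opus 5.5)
The plan is to reduce everything to a local computation near each point $a_j$, using the stationarity already proved away from $\spt\mu_\star$ (Theorem~\ref{thm:stationary-varifold}). Since $\delta\mathbb{V}_\star$ is a distribution supported on the finite set $\{a_1,\dots,a_{n_\star}\}$, it is a finite sum of derivatives of Dirac masses. The first task is to show it has order zero, i.e.\ $\delta\mathbb{V}_\star(\X)=\sum_j z_j\cdot\X(a_j)$ for some vectors $z_j$. For this I take $\X\in C^\infty_c(\Omega,\R^2)$ and split $\X=\sum_j \varphi_\rho(\cdot-a_j)\X+\X_{\rm out}$, where $\varphi_\rho$ is a cut-off equal to $1$ on $B_{\rho}$ and supported in $B_{2\rho}$, with $|\nabla\varphi_\rho|\lesssim\rho^{-1}$. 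The outer part contributes nothing by stationarity. For the localized part I estimate $|\delta\mathbb{V}_\star(\varphi_\rho(\X-\X(a_j)))|\lesssim \|\nabla\X\|_\infty\,\zeta_\star(B_{2\rho}(a_j))+\rho^{-1}\|\nabla\X\|_\infty\rho\,\zeta_\star(B_{2\rho}(a_j))$. By the monotonicity in Proposition~\ref{prop:analogue-of-Bethuel-Prop-6} (valid on annuli around $a_j$), $\zeta_\star(B_{2\rho}(a_j))\lesssim\rho$, so this tends to $0$ as $\rho\to0$. Hence $\delta\mathbb{V}_\star(\X)=\sum_j\delta\mathbb{V}_\star(\varphi_\rho(\cdot-a_j))\cdot\X(a_j)$, and $z_j\cdot\e:=\delta\mathbb{V}_\star(\varphi_\rho(\cdot-a_j)\e)$ is independent of $\rho$.

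Next comes the identification of $z_j$. Fix $\e\in\R^2$ and test the $\eps$-level stress-energy identity (Lemma~\ref{lemma:stressenergy}) with $\X=\varphi_\rho(\cdot-a_j)\e$, whose gradient is supported in the annulus $A=B_{2\rho}(a_j)\setminus B_\rho(a_j)\csubset\Omega\setminus\spt\mu_\star$. This gives $\int_A T^\eps_{jk}\partial_jX_k=0$. On $A$ I pass to the limit separately in the $\Q$ and $\M$ parts. The $\Q$-part converges strongly in $W^{1,2}(A)$ to $\int_A(\partial_j\Q_\star\cdot\partial_k\Q_\star-\tfrac12|\nabla\Q_\star|^2\delta_{jk})\partial_jX_k$. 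Integrating by parts with the harmonicity of $\Q_\star$, i.e.\ $\div T^{\Q_\star}=0$ via \eqref{eq:pohozaev-Q*}, this reduces to the boundary integral on $\partial B_\rho(a_j)$, which equals $-\e\cdot\nabla_{a_j}\mathbb{W}_\star$ by \eqref{eq:ren-energy-BBH}, up to a sign fixed by orientation. The $\M$-part, with $\tfrac1{\eps^2}f_\eps$ replaced by $\tfrac1\eps V$ through Lemma~\ref{lemma:limit-f-minus-V}, converges to $\ip{\mathbbm{m}_{\star,j,k}-(\zeta_\star+\tfrac12\mathrm{tr}\,\mathbbm m_\star)\delta_{jk}}{\partial_jX_k}$. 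By Theorem~\ref{thm:analogue-of-Bethuel-Thm5} and Proposition~\ref{prop:analogue-of-Bethuel-Lemma2-strong}, the limiting tensor of $\M$ equals $-2\zeta_\star\,\e_x\otimes\e_x$ on $\mathfrak{S}_\star$. That is, $\mathbbm m_\star=\mathbbm m_{\perp\perp}\,\e_x^\perp\otimes\e_x^\perp$ with $\mathbbm m_{||,||}=0$ (the Allen–Cahn equipartition-type fact), so the $\M$-part equals $-2\int\div_{\T_x\mathfrak{S}_\star}\X\,{\d}\zeta_\star=-2\,\delta\mathbb{V}_\star(\X)$. Equating the two parts yields $2z_j\cdot\e=-\e\cdot\nabla_{a_j}\mathbb{W}_\star$, which is \eqref{eq:zj}.

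The main obstacle is the identification of the limiting $\M$-stress tensor with $-2\zeta_\star\,\e_x\otimes\e_x$ in the annulus. Theorem~\ref{thm:analogue-of-Bethuel-Thm5} only gives $2\zeta_\star=\mathbbm m_{\perp\perp}-\mathbbm m_{||,||}$ and $\mathbbm m_{\perp,||}=0$, whereas the diagonal combination $\mathbbm m_{\perp\perp}+\mathbbm m_{||,||}$ enters with $\delta_{jk}$. I expect this to cancel algebraically: $\mathbbm m_{jk}-\tfrac12\mathrm{tr}(\mathbbm m)\delta_{jk}$ is trace-free and determined by $\mathbbm m_{\perp\perp}-\mathbbm m_{||,||}$ and $\mathbbm m_{\perp,||}$, so only $\zeta_\star$ survives and no equipartition is actually needed. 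I would check this sign bookkeeping carefully, together with the fact that $\zeta_\star$ and the $\mathbbm m$'s have no mass on $\partial B_\rho,\partial B_{2\rho}$ for a.e.\ $\rho$, so that the weak* limits pass.
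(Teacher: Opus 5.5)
Your proposal is correct and is essentially the paper's proof. It uses the same ingredients: the order-zero reduction via stationarity in $\Omega\setminus\spt\mu_\star$ and $\zeta_\star(\spt\mu_\star)=0$; the identification of the limiting $\M$-stress tensor with $-2\zeta_\star\,\e_x\otimes\e_x$ from Theorem~\ref{thm:analogue-of-Bethuel-Thm5}, so that the $\M$-part equals $-2\,\delta\mathbb{V}_\star(\X)$; and the balance against the $\Q_\star$-flux identified through~\eqref{eq:ren-energy-BBH}. Your weak-form passage to the limit on the annulus, integrating by parts only for the smooth harmonic $\Q_\star$, is just the $\varphi'$-weighted, Fubini-integrated version of the paper's circle fluxes $J_\eps(\rho)$, and it spares you the $L^p(s/2,s)$ convergence of $J_\eps$.

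Your trace-free observation is exactly what the paper uses, so drop the claim $\mathbbm{m}_{\star,||,||}=0$. That claim amounts to equipartition $\nu_\star=2\zeta_\star$ on $\mathfrak{S}_\star$, which the paper does not prove (it only proves $\nu_\star\le 2{\rm K}_\beta\,\zeta_\star$), and your argument does not need it.
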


\begin{proof}
We begin by observing that, since $\spt\mu_\star$ is a finite set and
$\mathbb{V}_\star := \v(\mathfrak{S}_\star,\,\mathfrak{v}_\star)$ is stationary in
$\Omega \setminus \spt\mu_\star$ by Theorem~\ref{thm:stationary-varifold},
it suffices to work at fixed $j \in \{1,\,\dots,\,n_\star\}$
and it is enough to consider vector fields $\X = (X_1,\,X_2)$
compactly supported near $\spt \mu_\star \cap \Omega$, i.e.,
$\X \in C_c^\infty\left(\cup_{j=1}^{n_\star} B_s(a_j),\,\R^2\right)$,
for some small $s > 0$.
In fact, by linearity of the first variation it
is enough to consider vector fields of the form
$\X = \e \varphi$, for $\e \in \R^2$ a constant vector field and
$\varphi \in C_c^\infty(B_s(a_j))$ such that
\begin{equation}\label{eq:varphi}
	\varphi(x) = \varphi(\abs{x}), \qquad
	\varphi(a_j) = 1  \quad \mbox{in } B_{s/2}(a_j), \qquad
	\abs{\varphi'(x)} \lesssim 1/s,
\end{equation}
for any $x \in \spt\varphi$.
Here and in the rest of the proof, we assume that
$s > 0$ is any radius small enough that $B_s(a_j) \csubset \Omega$
and $B_s(a_j)$ does not contain points of $\spt\mu_\star \cap \Omega$ other than
$a_j$.

\setcounter{step}{0}
\begin{step}\label{step:dV*=I*}
\emph{
We let
\begin{multline}\label{eq:I*}
	\mathbb{I}_\star(\X) := \lim_{\eps \to 0} \int_\Omega \left( \eps \abs{\partial_1 \M_\eps}^2 \partial_1 X_1 + \eps \partial_1 \M_\eps \cdot \partial_2 \M_\eps (\partial_1 X_2 + \partial_2 X_1)\right. \\
	\left.+ \eps \abs{\partial_2 \M_\eps}^2 \partial_2 X_2 - \frac{\eps}{2} \abs{\nabla \M_\eps}^2 \div \X - \frac{1}{\eps} V(x,\,\M_\eps) \div \X \right)\,{\d}x
\end{multline}
for any $\X \in C^\infty_c(\Omega,\,\R^2)$, and we claim that
\begin{equation}\label{eq:deltaV=I*}
	\delta \mathbb{V}_\star = -\frac{1}{2} \mathbb{I}_\star \qquad
	\mbox{in } \mathscr{D}'(\Omega,\,\R^2).
\end{equation}
}
\begin{proof}[Proof of~\eqref{eq:deltaV=I*}]\renewcommand{\qedsymbol}{\ensuremath{\blacksquare}}
To prove the claim, we first use that, by~\eqref{eq:m*},
the various terms on the right-hand side of the
definition of $\mathbb{I}_\star$ have a limit in the sense of distributions in
$\Omega$ as $\eps \to 0$, so that,
when $\mathbb{I}_\star$ is applied to $\X$, we can write (dropping the subscript $\star$
for the measure $\mathbbm{m}_{\star,j,k}$ for ease)
\[
\begin{split}
	\mathbb{I}_\star(\X) = &\int_\Omega \partial_1 X_1 \, {\d}\mathbbm{m}_{1,1}
	+ \int_\Omega (\partial_1 X_2 + \partial_2 X_1) \, {\d}\mathbbm{m}_{1,2}
	+\int_\Omega \partial_2 X_2 \, {\d}\mathbbm{m}_{2,2} \\
	&-\frac{1}{2} \int_\Omega (\partial_1 X_1 + \partial_2 X_2) \, {\d}(\mathbbm{m}_{1,1} + \mathbbm{m}_{1,2})
	-\int_\Omega (\partial_1 X_1 + \partial_2 X_2) \, {\d}\zeta_\star.
\end{split}
\]
Now, we pass to the moving frame associated with $\mathfrak{S}_\star$ and we
rewrite in this basis the right-hand side of $\mathbb{I}_\star$.
To this purpose, for any good point $x \in \mathfrak{S}_\star$,
we let $\ttau(x) = \e_{x}$ be a
unit tangent vector to $\mathfrak{S}_\star$ at $x$ and we write $\gamma(x)$ for the
angle formed by $\ttau(x)$ with the
canonical orthonormal frame $(\e_1,\,\e_2)$. Up to flipping $\ttau(x)$ if necessary, we
may assume that $\gamma(x) \in [-\pi/2,\,\pi/2]$, so that we can write
\[
\begin{cases}
	\ttau(x) = \e_1 \cos\gamma(x) + \e_2 \sin \gamma(x), \\
	\nnu(x) = -\e_1 \sin\gamma(x) + \e_2 \cos \gamma(x),
\end{cases}
\]
where $\nnu = \ttau^\perp$. By elementary computations and
recalling from Theorem~\ref{thm:analogue-of-Bethuel-Thm5} that
\[
\begin{cases}
	2 \zeta_\star = \mathbbm{m}_{\perp,\perp} - \mathbbm{m}_{||,||}
	= \mathbbm{m}_{\nnu,\nnu} - \mathbbm{m}_{\ttau,\ttau}, \\
	\mathbbm{m}_{\perp,||,||} = \mathbbm{m}_{\nnu,\ttau} = 0,
\end{cases}
\]
it follows that
\begin{equation}\label{eq:first-variation-compu1}
	\mathbb{I}_\star(\X) = -2 \int_\Omega \ttau\cdot \partial_\ttau \X \,{\d} \zeta_\star
	= -2 \int_{\mathfrak{S}_\star} \div_{\T_x \mathfrak{S}_\star} \X(x)
	\,{\d}\zeta_\star(x),
\end{equation}
because, in view of Proposition~\ref{prop:nu*<<lambda*} and of item~\ref{item:zeta*<<lambda*}
of Proposition~\ref{prop:analogue-of-Bethuel-Prop-6},
$\zeta_\star \ll \H^1 \mres \mathfrak{S}_\star$ in $\Omega$. Thus, the claim follows
from~\eqref{eq:first-variation-compu1} and the definition of first variation of
a varifold.
\end{proof}
\end{step}

\begin{step}\label{step:balance-1}
\emph{For any $\e \in \R^2$ and for any $j \in \{1,\,\dots,\,n_\star\}$,
there holds
\begin{equation}\label{eq:balance-1}
\begin{split}
	-\e \cdot \nabla_{a_j} &\mathbb{W}_\star(a_1,\dots,a_{n_\star}) \\
	&= \lim_{\eps \to 0}
	\int_{\partial B_\rho(a_j)} \left( \eps \partial_\e \M_\eps \cdot \partial_\nnu  \M_\eps - \frac{\eps}{2} (\e \cdot \nnu) \abs{\nabla \M_\eps}^2 - \frac{1}{\eps}(\e \cdot \nnu) V(\M_\eps) \right).
\end{split}
\end{equation}
}
\begin{proof}[Proof of~\eqref{eq:balance-1}]\renewcommand{\qedsymbol}{\ensuremath{\blacksquare}}
The proof of~\eqref{eq:balance-1} is almost immediate.
Indeed, since
\[
	\zeta_* \mres (\Omega \setminus \spt\mu_\star)
	= {\mbox (weak)}^*{\mbox -}\lim_{\eps \to 0}
	\frac{1}{\eps^2} f_\eps(\Q_\eps,\,\M_\eps)
\]
by Lemma~\ref{lemma:limit-f-minus-V},~\eqref{eq:balance-1} follows
from the stress-energy identity~\eqref{stren4}
with the choices $G = B_\rho(a_j)$ and $\X = \e \chi$, where $\chi$
is a smooth-cut off function such that $\chi \equiv 1$ in a neighbourhood
of $B_\rho(a_j)$,
and from~\eqref{eq:ren-energy-BBH}.
\end{proof}
\end{step}

\begin{step}\label{step:zj}
\emph{There exist $z_1,\,\dots,z_{n_\star} \in \R^2$ such
that~\eqref{eq:first-variation} holds, i.e., such that
\[
	\delta \mathbb{V}_\star(\X) = \sum_{j=1}^{n_\star}z_j \cdot \X(a_j)
\]
for any vector field $\X \in C_c^\infty(\Omega,\,\R^2)$.
}

\begin{proof}[Proof of~\eqref{eq:first-variation}]\renewcommand{\qedsymbol}{\ensuremath{\blacksquare}}
We begin by observing that
for any $\X \in C^\infty_c(\Omega,\,\R^2)$ and any
$\mathbf{Y} \in C^\infty_c(\Omega,\,\R^2)$ vector fields
such that
\begin{equation}\label{eq:XY}
	\X(a_j) = \mathbf{Y}(a_j), \qquad
	\spt \mu_\star \cap \spt \X = \spt \mu_\star \cap \spt \mathbf{Y}
	= \{a_j\}.
\end{equation}
there holds
\begin{equation}\label{eq:dVX=dVY}
	\delta \mathbb{V}_\star(\X) = \delta \mathbb{V}_\star(\mathbf{Y}).
\end{equation}
To prove~\eqref{eq:dVX=dVY}, we first notice that,
by linearity, we may assume $\mathbf{Y} = 0$. Let $s > 0$ be as small
as specified at the beginning of the proof, and
assume that
$\varphi \in C_c^\infty([0,\,s))$ satisfies~\eqref{eq:varphi}.
Then, by linearity of the first variation,
\[
	\delta \mathbb{V}_\star(\X)
	= \delta \mathbb{V}_\star((1-\varphi)\X)
	+ \delta \mathbb{V}_\star(\varphi \X)
	= \delta \mathbb{V}_\star(\varphi \X),
\]
where the second equality follows
because $(\mathfrak{S}_\star,\,\mathfrak{v}_\star)$ is stationary
in $\Omega \setminus \spt\mu_\star$. On the other hand, by definition,
\[
	\delta \mathbb{V}_\star(\X)
	= \int_{\mathfrak{S}_\star} \div_{\T_x \mathfrak{S}_\star}(\varphi \X) \,{\d}\zeta_\star
\]
and, by the bound on $\varphi'$ in~\eqref{eq:varphi},
\[
	\norm{\nabla(\varphi \X)}_{L^\infty(B_s(a_j))} \lesssim
	\frac{1}{s} \norm{\X}_{L^\infty(B_s(a_j))} + \norm{\nabla \X}_{L^\infty(B_s(a_j))}
	\lesssim 1,
\]
because $\X(a_j) = 0$ by~\eqref{eq:XY}. Thus,
\[
	\abs{\delta \mathbb{V}_\star(\X)} \lesssim
	\int_{\mathfrak{S}_\star \cap B_s(a_j)} \,{\d}\zeta_\star \to 0
	\qquad \mbox{as } s \to 0
\]
because $\zeta_\star(\spt\mu_\star) = 0$ by~\eqref{eq:z*-sptmu*}.
This shows that~\eqref{eq:dVX=dVY} holds for any two vector fields
$\X$, $\mathbf{Y} \in C_c^\infty(\Omega,\,\R^2)$ satisfying~\eqref{eq:XY}.
On the other hand,
by letting $a_j$ vary in $\spt\mu_\star \cap \Omega$ and using appropriate
cut-off functions as in the above, it follows from the stationarity
of $\mathbb{V}_\star$
in $\Omega \setminus \spt\mu_\star$ that
\begin{equation}\label{eq:XY-bis}
	\X(a_j) = \mathbf{Y}(a_j) \quad \mbox{for any }
	j \in \{1,\,\dots,\,n_\star\} \implies
	\delta \mathbb{V}_\star(\X) = \delta \mathbb{V}_\star(\mathbf{Y}).
\end{equation}
Finally, let $s > 0$ be again as in the above, let
$j$, $k \in \{1,\,\dots,\,n_\star\}$ be any two indexes such that
$j \neq k$. Let $\mathbf{V}_1$, $\mathbf{V}_2 \in C_c^\infty(B_s(0),\,\R^2)$
be any two vector fields with $\mathbf{V}_1(0) = \e_1$
and $\mathbf{V}_2(0) = \e_2$. Given $\X \in C_c^\infty(\Omega,\,\R^2)$,
define
\[
	\overline{\X}(x) := \X(x) - \sum_{j=1}^{n_\star} \sum_{k=1}^2 X_k(a_j) \mathbf{V}_k(x-a_j).
\]
Clearly, $\overline{\X}(a_j) = 0$ for any $j \in \{1,\,\dots,\,n_\star\}$
and thus, from~\eqref{eq:XY-bis}, we have
\[
	0 = \delta \mathbb{V}_\star(\overline{\X}) = \delta \mathbb{V}_\star(\X) - \sum_{j=1}^{n_\star} \sum_{k=1}^2 X_k(a_j) \mathbf{V}_k(x-a_j)
\]
Therefore, defining $z_j$ by setting
\begin{equation}\label{eq:def-zj}
	(z_j)_k := \delta \mathbb{V}_\star(\mathbf{V}_k(\cdot - a_j))
\end{equation}
for any $k \in \{1,\,2\}$ and any $j \in \{1,\,\dots,\,n_\star\}$
concludes the proof of~\eqref{eq:first-variation}.
\end{proof}
\end{step}

\begin{step}\label{step:nablaW-zj}
\emph{
For the same vectors $z_j$ defined by~\eqref{eq:def-zj}, there holds
\begin{equation}\label{eq:nablaW-zj}
	\nabla_{a_j} \mathbb{W}_\star(a_1,\dots,a_{n_\star}) = - 2 z_j
\end{equation}
for any $j \in \{1,\,\dots,\,n_{\star}\}$.
}
\begin{proof}[Proof of~\eqref{eq:nablaW-zj}]\renewcommand{\qedsymbol}{\ensuremath{\blacksquare}}
As observed at the beginning of the proof, we can restrict
to vector fields of the form $\X = \e \varphi$, where
$\varphi \in C^\infty([0,\,s)$ satisfies $\varphi \equiv 1$
for $\rho$, say, in $[0,\,s/2)$ and $\abs{\varphi'} \lesssim 1/s$
and where $s > 0$ is small enough
that $B_s(a_j) \csubset \Omega$ does not contain points of
$\spt\mu_\star$ other than $a_j$.

Keeping the above in mind, for any fixed $\eps > 0$ and any $\e \in \R^2$, we let
\[
	J_\eps(\rho) := \int_{\partial B_\rho(a_j)} \left( \eps \partial_\e \M_\eps \cdot \partial_\nnu  \M_\eps - \frac{\eps}{2} (\e \cdot \nnu) \abs{\nabla \M_\eps}^2 - \frac{1}{\eps}(\e \cdot \nnu) V(\M_\eps) \right).
\]
We already know from Step~\ref{step:dV*=I*} and Step~\ref{step:zj}
that $\mathbb{I}_\star(\X) = -2 \e \cdot z_j$ if $\X = \e \varphi$.
Upon using the explicit expression for $\X$, we can compute
\[
	\partial_i X_k = e_k \varphi'(\abs{x-a_j}) \nu_i, \qquad
	\div \X = (\e \cdot \nnu) \varphi'(\abs{x-a_j}),
\]
where $\nnu(x) = \frac{x - a_j}{\abs{x- a_j}}$, we may
write the integral on the right-hand side of~\eqref{eq:I*}
in a more explicit form, that is,
\begin{equation}\label{eq:I*-J*-compu1}
\begin{split}
	\mathbb{I}_\star(\X) &= \lim_{\eps \to 0}
	\int_{B_s(a_j)} \varphi'(\abs{x-a_j})
	\left( \eps \partial_\e \M_\eps \cdot \partial_\nnu \M_\eps - \eps(\e \cdot \nnu) \left( \frac{1}{2} \abs{\nabla \M_\eps}^2 + \frac{1}{\eps} V(\M_\eps) \right)\right) \\
	&= \lim_{\eps \to 0} \int_0^s \varphi'(\rho) J_\eps(\rho) \,{\d}\rho,
\end{split}
\end{equation}
where the second line follows from Fubini's theorem and the
definition of $J_\eps$. From Step~\ref{step:balance-1}, we
already know that
\[
	\lim_{\eps \to 0} J_\eps(\rho) = J_\star
\]
pointwise for $\rho \in [0,\,s)$, where $J_\star$ does not
depend on $\rho$ and, in fact,
\begin{equation}\label{eq:J*}
	J_\star = - \e \cdot \nabla_{a_j} \mathbb{W}_\star(a_1,\,\dots,\,a_{n_\star}),
\end{equation}
as it is easily seen from~\eqref{eq:balance-1}. On the other hand,
recalling item~\ref{item:mainthm-asymp-strong-conv} of Theorem~\ref{mainthm:asymp},
Lemma~\ref{lemma:V-f-limit-Lp},
and~\eqref{eq:strong-p-conv-rhoeps}, 
we have 
\[
	\nabla \Q_\eps \to \nabla \Q_\star, \qquad
	\frac{1}{\eps}(\abs{\Q_\eps} - 1) \to \kappa_\star, \qquad
	\frac{1}{\eps^2}f_\eps(\Q_\eps,\,\M_\eps) - \frac{1}{\eps}V(\M_\eps) \to 0
\]
strongly in $L^p(B_s(a_j) \setminus B_{s/2}(a_j))$,
for any $p$ with $1 \leq p < \infty$.
This implies that $J_\eps \to J_\star$
strongly in $L^p((s/2,\,s))$ for any $p$ with $1 \leq p < \infty$,
and recalling that $\varphi(\rho) = 1$
for any $\rho \in (0,\,s/2)$, we can pass to the limit
inside the integral on the right-hand side of the first line
in~\eqref{eq:I*-J*-compu1} to obtain
\[
	\mathbb{I}_\star(\X) = J_\star \int_0^s \varphi'(s) \,{\d}s
	= - J_\star.
\]
Thus, from~\eqref{eq:J*} it follows that
\begin{equation}
	\mathbb{I}_\star(\X) = \mathbb{I}_\star(\e \varphi) = -2 \e \cdot z_j
	= \e \cdot \nabla_{a_j} \mathbb{W}_\star(a_1,\,\dots,\,a_{n_\star}),
\end{equation}
and we obtain~\eqref{eq:nablaW-zj} because $\e \in \R^2$ was
arbitrary.
\end{proof}
\end{step}

\begin{step}[Conclusion]
	Since~\eqref{eq:first-variation} was already proved
	in Step~\ref{step:zj} and~\eqref{eq:zj} is just a
	restatement of~\eqref{eq:nablaW-zj}, the desired
	conclusion follows by combining~\eqref{eq:first-variation}
	and~\eqref{eq:nablaW-zj}.
	\qedhere
\end{step}
\end{proof}

Joining Theorem~\ref{thm:stationary-varifold}, Theorem~\ref{thm:properties-nu*},
Proposition~\ref{prop:nu*<<lambda*}, a classical
results of Allard and Almgren \cite{AllardAlmgren} on the structure of stationary
$1$-varifolds with strictly positive density, and \cite[Theorem~1.3]{Bethuel-AC-Acta},
we obtain in Theorem~\ref{thm:structure-S*} the refined structure properties of 
$\mathfrak{S}_\star$ contained in item~\ref{item:S_*} of Theorem~\ref{mainthm:B+C}. 

\begin{theorem}\label{thm:structure-S*}
	The set $\mathfrak{S}_\star = \spt\nu_\star \setminus \spt\mu_\star$
	is locally around $\H^1$-almost every point a union of segments with
	locally constant densities.
	In fact, for every compact set $K \subset \Omega \setminus \spt\mu_\star$
	and any $x_0 \in \mathfrak{S}_{\star, K} \setminus \mathfrak{E}_\star$,
	there exist a unit vector $\e_{x_0}$ and positive numbers $\rho_0$, $c_{x_0}$
	so that $B(x_0,\, \rho_0) \csubset K$,
	\begin{equation}\label{eq:structure-S*-1}
		\mathfrak{S}_\star \cap B(x_0,\,\rho_0)
		= (x_0 - \rho_0 \e_{x_0},\,x_0 + \rho_0 \e_{x_0})
	\end{equation}
	and
	\begin{equation}\label{eq:structure-S*-2}
		\zeta_\star \mres B(x_0,\,\rho_0)
		= c_{x_0}\left(\H^1 \mres (x_0 - \rho_0 \e_{x_0},\,x_0 + \rho_0 \e_{x_0}) \right).
	\end{equation}
	The number $c_{x_0} = \mathfrak{v}_\star(x_0)$ is bounded below by the
	ratio $\eta_{\star, K} \,/\, 2{\rm K}_\beta$, where
	$\eta_{\star,K}$ is given by~\eqref{eq:eta*K}
	and ${\rm K}_\beta$ is the number in~\eqref{eq:density-nu-controlled-by-zeta}.
\end{theorem}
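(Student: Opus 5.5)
The plan is to combine three ingredients already available: the stationarity of $\mathbb{V}_\star$ in $\Omega\setminus\spt\mu_\star$ (Theorem~\ref{thm:stationary-varifold}); two–sided density bounds for $\mathfrak{v}_\star$ on compact sets $K\subset\Omega\setminus\spt\mu_\star$; and the Allard–Almgren structure theorem for stationary $1$-varifolds with density bounded below~\cite{AllardAlmgren}. The local picture near good points then comes from the argument of \cite[Theorem~1.3]{Bethuel-AC-Acta}.

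First I would establish the density bounds. Fix $K$ and $x_0\in\mathfrak{S}_{\star,K}\setminus\mathfrak{E}_\star$. By item~\ref{item:consequence-cl-o} of Theorem~\ref{thm:properties-nu*} and Lemma~\ref{lemma:analogue-of-Bethuel-7.1}, $\mathfrak{e}_\star(x_0)\geq\eta_{\star,K}$. Proposition~\ref{prop:nu*<<lambda*} gives $\mathfrak{e}_\star\le 2{\rm K}_\beta\mathfrak{v}_\star$ at good points, so $\mathfrak{v}_\star(x_0)\geq \eta_{\star,K}/2{\rm K}_\beta$. For the upper bound, item~\ref{item:zeta*-density} of Proposition~\ref{prop:analogue-of-Bethuel-Prop-6} gives $\mathfrak{v}_\star(x)\le \zeta_\star(B(x,R))/2R\le \mathcal{E}_0/2R$, with $R$ uniform over $K$. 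Because $\mathfrak{v}_\star$ is defined at every point as a monotone limit, and it is upper semicontinuous by monotonicity, the lower bound can be extended from good points to every point of $\mathfrak{S}_\star\cap K$. To do this I would use $\zeta_\star(B(x,r))\ge\lim$ of nearby good-point balls together with closedness of $\mathfrak{S}_{\star,K}$.

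Next I would apply Allard–Almgren. A stationary integral-type $1$-varifold in an open set $U\Subset\Omega\setminus\spt\mu_\star$, with density bounded below by a positive constant, has support that is locally a finite union of segments (a finite geodesic network), and the density is constant on each segment. Stationarity makes the weighted sum of unit tangents vanish at junctions. Applied on a ball $B(x_0,2\rho)\Subset\Omega\setminus\spt\mu_\star$, this yields that $\mathfrak{S}_\star$ is locally a union of segments, each carrying constant density.

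Finally, at a good point $x_0$ I would identify a single segment. By Proposition~\ref{prop:analogue-of-Bethuel-prop-4} (tangent cone property) and Proposition~\ref{prop:analogue-of-Bethuel-8.4}, $\mathfrak{S}_\star\cap B(x_0,\rho)$ lies in a thin cone around $\e_{x_0}$. Within the finite segment network, this forces $x_0$ to lie in the interior of one segment, or at a junction whose incident segments are all tangent to $\e_{x_0}$. Junction points form a discrete, hence $\H^1$-null, set, so they can be absorbed into $\mathfrak{E}_\star$. Choosing $\rho_0$ below the distance to the nearest junction then gives \eqref{eq:structure-S*-1}. Since $\zeta_\star=\mathfrak{v}_\star\H^1\mres\mathfrak{S}_\star$ with constant density on that segment, we obtain \eqref{eq:structure-S*-2} with $c_{x_0}=\mathfrak{v}_\star(x_0)$.

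The main obstacle is verifying the hypotheses of Allard–Almgren. Their theorem needs the density lower bound everywhere on the support, not only almost everywhere, and possibly some integrality-type structure. I would first attempt the upper-semicontinuity extension described above. If that fails, I would fall back on citing \cite[Theorem~1.3]{Bethuel-AC-Acta} directly, since its proof uses only stationarity and density bounds, which we have by Theorem~\ref{thm:stationary-varifold} and the first step.
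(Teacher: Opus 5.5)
Your density bounds are fine and agree with the paper's Step~1: the lower bound $\eta_{\star,K}/2{\rm K}_\beta$ at good points comes from item~(iii) of Theorem~\ref{thm:properties-nu*} and Proposition~\ref{prop:nu*<<lambda*}, and the upper bound from monotonicity. Extending the lower bound to all of $\mathfrak{S}_\star\cap K$ by upper semicontinuity of the monotone density is also correct.

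\textbf{Gap: what Allard--Almgren actually gives.} You use the theorem as if it produced a locally finite network. For a stationary $1$-varifold whose density is only real-valued and bounded below, \cite{AllardAlmgren} gives the structure ``open segments with constant density'' only away from a closed set of $\|\mathbb{V}_\star\|$-measure zero. This is exactly the set $S_{V,K}$ in the paper's Step~1. The theorem does not say that the support is a locally finite network around every point, nor that junctions form a discrete set. That picture belongs to the integral case, and integrality of $\mathfrak{v}_\star$ is not known here (Remark~\ref{rk:integrality}).

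\textbf{Why this breaks your single-segment step.} A good point $x_0$ is not, a priori, outside this singular set, since $\mathfrak{E}_\star$ is a specific null set that does not contain it by construction. So $x_0$ could be an accumulation point of junctions and short segments, all lying inside the thin cones of Proposition~\ref{prop:analogue-of-Bethuel-prop-4}. In that case your tangent-cone argument has no finite configuration to act on, and nothing in your proposal rules this out. Excluding it is the actual content of the refined step. The paper obtains it by running the argument of \cite[Section~10]{Bethuel-AC-Acta} at good points, on top of the Allard--Almgren structure.

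So the real obstacle is not the hypotheses of Allard--Almgren but the weakness of its conclusion at a given point. Your fallback of citing \cite[Theorem~1.3]{Bethuel-AC-Acta} is in effect the paper's route. To justify it, you must check that the good-point properties established in Section~\ref{sec:good-points} hold here; stationarity and density bounds alone are not what that argument uses.

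\textbf{Absorbing junctions into $\mathfrak{E}_\star$.} This is not allowed: the theorem is stated for the fixed set $\mathfrak{E}_\star$ of \eqref{eq:E*}. It is also unnecessary in the locally finite situation. If every edge at $x_0$ lies along $\pm\e_{x_0}$, stationarity at $x_0$ forces the total densities on the two sides to be equal. Hence $x_0$ is an interior point of a single segment of constant density $\mathfrak{v}_\star(x_0)$, and no genuine junction occurs there.
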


\begin{proof}
Let $K \subset \Omega \setminus \spt\mu_\star$ be any compact set and
let $\mathfrak{S}_{\star, K} = \mathfrak{S}_\star \cap K$.
Recall that, because of item~\ref{item:S_*} of
Theorem~\ref{thm:properties-nu*}, $\mathfrak{S}_{\star,K}$ is also characterised
as the set on which $\mathfrak{e}_\star \geq \eta_{\star,K}$, where
$\eta_{\star,K}$ is given by~\eqref{eq:eta*K}.
\setcounter{step}{0}
\begin{step}[General local structure]\label{step:general-struct}
	From item~\ref{item:cl-o} of Theorem~\ref{thm:properties-nu*} and from
	Proposition~\ref{prop:nu*<<lambda*}, we know that,
	on $\mathfrak{S}_{\star,K}$, the density $\mathfrak{v}_\star$ is bounded
	both from above and from below, by a strictly positive number. More specifically,
	the lower bound is given by $\eta_{\star,K} / 2 K_\beta$,
	by~\eqref{eq:S-gotico} and~\eqref{eq:density-nu-controlled-by-zeta}.
	We are therefore in a position to
	apply \cite[Theorem in Section~3]{AllardAlmgren} with
	$V = (\mathfrak{S}_{\star, K},\,\mathfrak{v}_\star)$.
	Calling $S_{V,K}$ the set of points of $\mathfrak{S}_{\star,K}$
	around which the density $\mathfrak{v}_\star$ is not constant,
	we have $\| V \| = \zeta_\star(S_{V,K}) = 0$
	by \cite[Remark below the Theorem in Section~3]{AllardAlmgren},
	so that we have as well $\H^1(S_{V,K}) = 0$,
	and that
	$\mathfrak{S}_{\star,K} \setminus S_V$ is, in fact, a union of
	segments open relative to $\mathfrak{S}_{\star,K}$,
	on each of which $\mathfrak{v}_\star$ is constant, as claimed.
	Since $\mathfrak{S}_{\star}$ can be look at as the countable union
	of the sets $\mathfrak{S}_{\star, K}$ over an(y) ascending sequence of compact sets
	$K_n \subset \Omega \setminus \spt \mu_\star$,
	it follows that
	$\H^1(S_V) = 0$, where $S_V := \cup_{n \in \mathbb{N}} S_{V,K}$
	does not depend on the chosen sequence $\{K_n\}$.
\end{step}

\begin{step}[Refined local structure]
	Let $x_0 \in \mathfrak{S}_{\star,K} \setminus \mathfrak{E}_\star$.
	Reasoning word-for-word as in \cite[Section~10]{Bethuel-AC-Acta},
	one may improve on Step~\ref{step:general-struct} to show
	that~\eqref{eq:structure-S*-1} and~\eqref{eq:structure-S*-2} hold.
	Being the argument totally analogous to that in \cite{Bethuel-AC-Acta},
	with no but notational changes, we address the reader
	to~\cite{Bethuel-AC-Acta} for full details.
	\qedhere
\end{step}
\end{proof}

\begin{remark}\label{rk:integrality}
	By applying the change of variables $\M_\eps \to \u_\eps$ 
	described in~\cite[Section~3.1]{CDS1} (which 
	amounts to project $\M_\eps$ on the eigenframe 
	of $\Q_\eps$) and then blowing-up, the system~\eqref{EL-M} becomes
	\begin{equation}\label{eq:entire-AC}
		-\Delta \u + \nabla_{\u} h(\u) = 0,
	\end{equation}
	where $\u\colon \R^2 \to \R^2$ and $h\colon \R^2 \to \R$ is exactly the same
	potential as defined in~\cite[Section~3.1]{CDS1}. In particular, the equation
	for the component $u_2$ reads
	\begin{equation}\label{eq:entire-AC-u2}
		-\Delta u_2 + \left(\abs{\u}^2 - 1 + \sqrt{2}\beta\right)u_2 = 0.
	\end{equation}
	Multiplying~\eqref{eq:entire-AC-u2} by $u_2$ and assuming that
	$\sqrt{2}\beta \geq 1$, it is readily seen that $\abs{u_2}^2$ must be a subharmonic function. Therefore, bounded solutions of~\eqref{eq:entire-AC} with~$\sqrt{2}\beta \geq 1$ must have constant~$u_2$, and in fact
	$u_2 \equiv 0$
	(because 0 is the only constant solving~\eqref{eq:entire-AC-u2}).
	Consequently, every bounded entire solution of~\eqref{eq:entire-AC}
	is, in fact, one-dimensional. Recalling the well-known integrality
	result in \cite{HutchinsonTonegawa}, this suggests, at least formally
	and at least if $\sqrt{2}\beta \geq 1$, that the limiting varifold
	$\mathbb{V}_\star$ is \emph{integral}, i.e., the density function
	$\mathfrak{v}_\star$ is integer-valued. However,
	we will not further
	pursue this issue in this paper, deferring rigorous
	analysis of it to future work.
\end{remark}

We have now all the ingredients to complete the proof of Theorem~\ref{mainthm:B+C}.
\begin{proof}[{Proof of Theorem~\ref{mainthm:B+C}}]
	The statement $\zeta_\star = \mathfrak{v}_\star \H^1 \mres \mathfrak{S}_\star$
	is already contained in Proposition~\ref{prop:analogue-of-Bethuel-Prop-6}.
	Combining
	Theorem~\ref{thm:properties-nu*} and Proposition~\ref{prop:nu*<<lambda*},
	we obtain the boundedness from below of the density
	$\mathfrak{v}_\star$ of $\zeta_\star$ on $\mathfrak{S}_\star \cap K$,
	for any compact set $K \subset \Omega \setminus \spt\mu_\star$, with a constant
	$c_K$ depending only on $K$, $\beta$, and the energy bound $\mathcal{E}_0$
	in Proposition~\ref{prop:Eeps-bounded}.
	The boundedness from above, again with a constant
	$C_K$ depending on $K$ and $\mathcal{E}_0$,
	follows from~\eqref{eq:zeta*-density}
	and Proposition~\ref{prop:Eeps-bounded}.  
	The conclusion of
	Theorem~\ref{mainthm:B+C} now follows from
	Theorem~\ref{thm:first-variation} and Theorem~\ref{thm:structure-S*}.
\end{proof}

\bibliographystyle{plain}
\bibliography{UnifConv}

\begin{flushright}
\Addresses
\end{flushright}

\end{document}